\documentclass[10pt, a4paper]{amsart}

%-------Packages---------
\usepackage{amssymb,amsfonts}
\usepackage[all,arc]{xy}
\usepackage{enumerate}
\usepackage{mathrsfs}
\usepackage{tikz-cd}
\usetikzlibrary{decorations.pathmorphing}
\usepackage{enumitem}
\usepackage{empheq}
\usepackage{kotex}
\usepackage{ifpdf}
\usepackage{mathtools}
\usepackage{pst-node, pst-plot, pstricks}
\usepackage{mathrsfs}
\usepackage{amsmath}
\usepackage{epsfig}
\usepackage{amscd}
\usepackage{graphicx, color}
\graphicspath{ {./images/} }

\def\E{\ifmmode{\mathbb E}\else{$\mathbb E$}\fi} %natural numbers
\def\N{\ifmmode{\mathbb N}\else{$\mathbb N$}\fi} %natural numbers%
\def\R{\ifmmode{\mathbb R}\else{$\mathbb R$}\fi} %real numbers
\def\Q{\ifmmode{\mathbb Q}\else{$\mathbb Q$}\fi} %rational numbers
\def\C{\ifmmode{\mathbb C}\else{$\mathbb C$}\fi} %complex numbers
\def\H{\ifmmode{\mathbb H}\else{$\mathbb H$}\fi} %complex numbers
\def\Z{\ifmmode{\mathbb Z}\else{$\mathbb Z$}\fi} %integers
\def\P{\ifmmode{\mathbb P}\else{$\mathbb P$}\fi} %real numbers
\def\T{\ifmmode{\mathbb T}\else{$\mathbb T$}\fi} %real numbers
\def\SS{\ifmmode{\mathbb S}\else{$\mathbb S$}\fi} %real numbers
\def\DD{\ifmmode{\mathbb D}\else{$\mathbb D$}\fi} %real numbers

\DeclareSymbolFont{yhlargesymbols}{OMX}{yhex}{m}{n}

\DeclareMathAccent{\widetriangle}{\mathord}{yhlargesymbols}{"E6}

\def\E{\ifmmode{\mathbb E}\else{$\mathbb E$}\fi} %natural numbers
\def\N{\ifmmode{\mathbb N}\else{$\mathbb N$}\fi} %natural numbers%
\def\R{\ifmmode{\mathbb R}\else{$\mathbb R$}\fi} %real numbers
\def\Q{\ifmmode{\mathbb Q}\else{$\mathbb Q$}\fi} %rational numbers
\def\C{\ifmmode{\mathbb C}\else{$\mathbb C$}\fi} %complex numbers
\def\H{\ifmmode{\mathbb H}\else{$\mathbb H$}\fi} %complex numbers
\def\Z{\ifmmode{\mathbb Z}\else{$\mathbb Z$}\fi} %integers
\def\P{\ifmmode{\mathbb P}\else{$\mathbb P$}\fi} %real numbers
\def\T{\ifmmode{\mathbb T}\else{$\mathbb T$}\fi} %real numbers
\def\SS{\ifmmode{\mathbb S}\else{$\mathbb S$}\fi} %real numbers
\def\DD{\ifmmode{\mathbb D}\else{$\mathbb D$}\fi} %real numbers

\newcommand{\ben}{\begin{enumerate}}
\newcommand{\een}{\end{enumerate}}
\newcommand{\be}{\begin{equation}}
\newcommand{\ee}{\end{equation}}
\newcommand{\bea}{\begin{eqnarray}}
\newcommand{\eea}{\end{eqnarray}}
\newcommand{\beastar}{\begin{eqnarray*}}
\newcommand{\eeastar}{\end{eqnarray*}}
\newcommand{\bc}{\begin{center}}
\newcommand{\ec}{\end{center}}

\theoremstyle{theorem}
\newtheorem{thm}{Theorem}[section]
\newtheorem{cor}[thm]{Corollary}
\newtheorem{lem}[thm]{Lemma}
\newtheorem{prop}[thm]{Proposition}
\newtheorem{'thm'}[thm]{'Theorem'}

\theoremstyle{definition}
\newtheorem{defn}[thm]{Definition}
\newtheorem{rem}[thm]{Remark}

\newtheorem{exam}[thm]{Example}

\newtheorem{cond}[thm]{Condition}
\newtheorem{proof-sketch}[thm]{Proof-Sketch}

\newtheorem{claim}[thm]{Claim}

\newtheorem{lem-defn}[thm]{Lemma-Definition}
\newtheorem{prop-defn}[thm]{Proposition-Definition}
\newtheorem{thm-defn}[thm]{Theorem-Definition}

\newtheorem{notation}[thm]{\rm\bfseries{Notation}}

\newtheorem*{thm*}{Theorem}

\numberwithin{equation}{section}

\hsize=5.0truein \hoffset=.25truein \vsize=8.375truein
\voffset=.15truein
\def\R{{\mathbb R}}

\def\E{{\mathbb E}}
\def\Z{{\mathbb Z}}
\def\C{{\mathbb C}}
\def\R{{\mathbb R}}
\def\P{{\mathbb P}}

%%$\displaystyle{\mathrel{\mathop\rightarrow_{U_F}}}$
\def\N{{\mathbb N}}

\def\11{{\mathbb I}}

\def\sgn{{\text{\rm sgn}}}

\def\C{\mathbb{C}}
\def\Z{\mathbb{Z}}

\def\T{\mathbb{T}}

\def\Q{\mathbb{Q}}

\def\E{\ifmmode{\mathbb E}\else{$\mathbb E$}\fi} %natural numbers
\def\N{\ifmmode{\mathbb N}\else{$\mathbb N$}\fi} %natural numbers
\def\R{\ifmmode{\mathbb R}\else{$\mathbb R$}\fi} %real numbers
\def\Q{\ifmmode{\mathbb Q}\else{$\mathbb Q$}\fi} %rational numbers
\def\C{\ifmmode{\mathbb C}\else{$\mathbb C$}\fi} %complex numbers
\def\H{\ifmmode{\mathbb H}\else{$\mathbb H$}\fi} %complex numbers
\def\Z{\ifmmode{\mathbb Z}\else{$\mathbb Z$}\fi} %integers
\def\P{\ifmmode{\mathbb P}\else{$\mathbb P$}\fi} %real numbers
\def\SS{\ifmmode{\mathbb S}\else{$\mathbb S$}\fi} %real numbers
\def\DD{\ifmmode{\mathbb D}\else{$\mathbb D$}\fi} %real numbers

\def\R{{\mathbb R}}

\def\E{{\mathbb E}}
\def\Z{{\mathbb Z}}
\def\C{{\mathbb C}}
\def\R{{\mathbb R}}

%%$\displaystyle{\mathrel{\mathop\rightarrow_{U_F}}}$
\def\N{{\mathbb N}}
%\def\RR{{\mathcal R}}
%\def\HH{{\mathcal H}}
%\def\BB{{\mathcal B}}
%\def\SS{{\mathcal S}}

%\def\NN{{\mathcal N}}
%\def\DD{{\mathcal D}}
%\def\WW{{\mathcal W}}

%\def\PP{{\mathcal P}}

%\def\oh{{\mathcal O}}

%\def\GG{{\mathcal G}}
%\def\EE{{\mathcal E}}

%\def\oh{{\mathcal O}}

%
%%---------------------------------------------------
%\def\a{\alpha}
%\def\b{\beta}
%\def\c{\chi}
%\def\d{\delta}  \def\D{\Delta}
%\def\e{\widehat{\varepsilon}} \def\ep{\epsilon}
%\def\f{\phi}  \def\F{\Phi}
%\def\g{\gamma}  \def\G{\Gamma}
%\def\k{\kappa}
%\def\l{\lambda}  \def\La{\Lambda}
%\def\m{\mu}
%\def\n{\nu}
%\def\r{\rho}
%\def\vr{\varrho}
%\def\o{\omega}  \def\O{\Omega}
%\def\p{\psi}
%%\def\P{\Psi}
%\def\s{\sigma}  \def\S{\Sigma}
%\def\th{\theta}  \def\vt{\vartheta}
%\def\t{\tau}
%\def\w{\varphi}
%\def\x{\xi}
%\def\z{\zeta}
%\def\U{\Upsilon}
%

\def\CV{{\mathcal V}}

%

%

%%%%%%%%%%%%%%%%

%-----------------------------------------------------
%Math

\def\darr#1{\raise1.5ex\hbox{$\leftrightarrow$}
\mkern-16.5mu #1}

   %Feynman dagger
 %Feynman dagger
\def\roughly#1{\raise.3ex\hbox{$#1$\kern-.75em
\lower1ex\hbox{$\sim$}}}

\def\opname#1{\mathop{\kern0pt{\rm #1}}\nolimits}

\def\dim{\opname{dim}}

\def\group#1{\opname{#1}}

\def\U{\group{U}}

\def\Incl{\operatorname{Incl}}
\def\Eval{\operatorname{Eval}}

\begin{document}

\quad \vskip1.375truein

\bibliographystyle{plain}

%--------Meta Data: Fill in your info------
\title[The moduli of pseudoholomorphic disks]
{$L_\infty$-Kuranishi spaces and the moduli space of pseudoholomorphic disks}

\author{Taesu Kim}
\thanks{}

%\date{March, 2021; Revised in April, 2021}

\begin{abstract}
We show that the moduli space of pseudoholomorphic disks is an example of the $L_{\infty}$-Kuranishi spaces introduced in \cite{Kim1}, provided that a condition for the existence of a stratification with a system of tubular neighborhoods holds on each chart. With respect to this structure, the forgetful and evaluation maps for the moduli space lift to morphisms between $L_{\infty}$-Kuranishi spaces.
\end{abstract}

\keywords{Kuranishi spaces, $L_{\infty}[1]$-algebras, Pseudoholomorphic disks, Lagrnagian submanifolds}
\subjclass[2020]{Primary 53D35; Secondary 18N70, 55U35}
\thanks{This work was carried out while the author was affiliated with the Department of Mathematics at Pohang University of Science and Technology (POSTECH) and was supported by the BK21 FOUR program funded by the Ministry of Education (MOE), Korea (No. 4120240414885)}

\maketitle

\date{}

\tableofcontents

\section{Introduction}

Let $(M, \omega)$ be a symplectic manifold and $L$ its compact Lagrangian submanifold. We take an almost complex structure $J$ on $M$ which is tamed by $\omega.$ We denote by $\mathcal{M}_{k+1}(L, \beta)$ the compactified moduli space of pseudoholomorphic disks in a given homology class $\beta \in H_2(X;L)$ with Lagnrangian boundary condition with $k+1$ boundary marked points. (For its precise definition, see Definitions \ref{tmsdef} and \ref{relms}.)

Fukaya-Oh-Ohta-Ono \cite{FOOO1}, \cite{FOOO2} showed that (the compactification of) the moduli spaces $\mathcal{M}_{k+1}(\beta, L)$ carries a Kuranishi structure (with corners), which was introduced in \cite{FO} and \cite{FOOO1}. It now serves as the foundation for Gromov-Witten invariant and Fukaya category. The theory of Kuranishi spaces, however, has some limitations. Most notably, there is no natural notion of \textit{morphisms} between Kuranishi spaces. This is partly due to the rigid definition of coordinate changes, which makes it difficult to ensure compatibilities among chart level maps.

In this paper, we apply the notion $L_{\infty}$-Kuranishi spaces of \cite{Kim1} to the study of the moduli space $\mathcal{M}_{k+1}(\beta, L)$ and bring it into a categorical framework. For this purpose, we adopt the \cite{FOOO3}'s settings throughout, while modifying them properly to derive an $L_{\infty}$-version of their theory. The main result is as follows.

\begin{thm}\label{mk1lk1}
Under Condition \ref{condusi1} (explained below) on the bases of Kuranishi charts, the moduli space $\mathcal{M}_{k+1}(\beta, L)$ determines an $L_{\infty}$-Kuranishi space.
\end{thm}

This categorical perspective proves sufficiently natural to have both forgetful and evaluation maps for the moduli space as morphisms. In other words, the forgetful and evaluation maps in \cite{FOOO1} have their $L_{\infty}$-analogues:
\begin{thm}
With respect to the $L_{\infty}$-Kuranishi space structure in Theorem \ref{mk1lk1}, there exist morphisms of $L_{\infty}$-Kuranishi spaces
\[
\begin{cases}
\mathrm{Ft}_i : \mathcal{M}_{k+1}(\beta, L) \rightarrow \mathcal{M}_{k}(\beta, L),\\
\mathrm{Ev}_i : \mathcal{M}_{k+1}(\beta, L) \rightarrow L, \ i= 0, \cdots, k,
\end{cases}
\]
whose underlying topological maps are the ordinary forgetful and evaluation maps.
\end{thm}
The main ingredient of its proof is to show that $\mathrm{Ft}_i$'s and $\mathrm{Ev}_i$'s are compatible with the $L_\infty$-version coordinate changes. Flexible structures that originates from the $L_{\infty}[1]$-algebras on charts allow these conditions to be satisfied easily compared to the approach of \cite{FOOO1}.

\ \
For this purpose, we consider an $L_{\infty}$-Kuranishi chart for each point $\mathbf{p} \in \mathcal{M}_{k+1}(L, \beta)$: 
\begin{equation}\label{kchrt}
\mathcal{U}_{\mathbf{p}}= \left(U_{\mathbf{p}}, {E}_{\mathbf{p}}, s_{\mathbf{p}}, \Gamma_{\mathbf{p}}, \psi_{\mathbf{p}}\right),
\end{equation}
where $U_{\mathbf{p}} = (U_{\mathbf{p}}, \omega_{\mathbf{p}})$ is a smooth manifold (possibly with boundary and) with a two-form $\omega_{\mathbf{p}} \in \Omega^2(U_{\mathbf{p}})$ given by (\ref{opy}). $E_{\mathbf{p}} \rightarrow U_{\mathbf{p}}$ is a vector bundle with a distinguished smooth section $s_{\mathbf{p}}.$ Let $\Gamma_{\mathbf{p}}$ be a finite group acting on $U_{\mathbf{p}}$ that restricts to $ s_{\mathbf{p}}^{-1}(0),$ and $\psi_{\mathbf{p}} : s_{\mathbf{p}}^{-1}(0)/\Gamma_{\mathbf{p}} \hookrightarrow \mathcal{M}_{k+1}(\beta, L)$ a homeomorphism onto a neighborhood of $\mathbf{p}$.

Here, the base manifold $U_{\mathbf{p}}$ is equipped with a closed two-form $\omega_{\mathbf{p}}$ whose value at $\mathbf{y} = \big((\Sigma_{\mathbf{y}}, \vec{z}_{\mathbf{y}}), u_{\mathbf{y}}\big) \in U_{\mathbf{p}}$ is given by
\begin{equation}\label{opy}
\omega_{\mathbf{p}, \mathbf{y}}(X_{\mathbf{y}}, Y_{\mathbf{y}}) := \int\limits_{\Sigma} \omega_{\mathbf{p}}(X_{\mathbf{y}}, Y_{\mathbf{y}}) \mathrm{dvol}_{\Sigma}
\end{equation}
for a volume form $\mathrm{dvol}_{\Sigma}$ on $\Sigma.$ We have:

\begin{thm}
$\omega_{\mathbf{p}}= \{\omega_{\mathbf{p,y}}\}_{\mathbf{y}}$ is closed.
\end{thm}

For the aforementioned statements to be meaningful, we need to assume an essential condition on the two-form $\omega_{\mathbf{p}}$ for each $\mathbf{p}$.

\begin{cond}\label{condusi1}
We assume that the closed two-form $\omega_{\mathbf{p}}$ on $U_{\mathbf{p}}$ allows the stratification 
\begin{equation}\label{dfaddsfa}
\U_{\mathbf{p}} = \bigcup\limits_i \mathcal{S}_{\mathbf{p},i},
\end{equation}
into \textit{submanifolds} $ \mathcal{S}_{\mathbf{p},i} := \{y \in U_{\mathbf{p}} \mid \mathrm{rk}\ker\left(\omega_{\mathbf{p}, \mathbf{y}}\right) = i\}$ together with their tubular neighborhoods:
\begin{equation}
\begin{cases}
\iota_{i} : N_i \rightarrow  U_{\mathbf{p}}, \text{ an open neighborhood of each (possibly non-connected) }  \mathcal{S}_{\mathbf{p},i},\\
\pi_{i} : N_i \rightarrow \mathcal{S}_{\mathbf{p},i}, \text{ the associated projection}.
\end{cases}
\end{equation}
\end{cond}

By virtue of this condition, it is possible to obtain a presymplectic manifold which we call a \textit{local presymplectic neighborhood} of ${\mathbf{x}}$
\[
W_{\mathbf{x}} = (W_{\mathbf{x}}, \omega_{\mathbf{p},W_{\mathbf{x}}}) := \left(\pi_{i}^{-1}(\overset{\circ}{W_{\mathbf{x}}}), \pi^*_{i}(\iota_{i}^*\omega_{\mathbf{p}})\right)
\] 
for each interior zero point ${\mathbf{x}} \in s_{\mathbf{p}}^{-1}(0)  \cap \text{int}\left(U_{\mathbf{p}}\right) \cap \mathcal{S}_{\mathbf{p},i}$ and a choice of its open neighborhood $\overset{\circ}{W_{\mathbf{x}}} \subset \mathcal{S}_{\mathbf{p},i}.$ (For boundary zero points, see (\ref{adslfjkdh}) for the definition of $W_{\mathbf{x}}.$)

The Condition \ref{condusi1} also leads to \textit{the local $L_{\infty}[1]$-algebra} on each $W_{\mathbf{x}},$ 
\[
\mathcal{C}_{\mathbf{p},\mathbf{x}} := \bigwedge{}^{-\bullet}\Gamma(E^*_{\mathbf{p}}|_{W_{\mathbf{x}}}) \oplus \Omega_{\mathrm{aug}}^{\bullet +1}(\mathcal{F}_{\mathbf{x}}).
\]
Here $\bigwedge{}^{-\bullet}\Gamma(E^*_{\mathbf{p}}|_{W_{\mathbf{x}}})$ is given by the Koszul complex of the $C^{\infty}(W_{\mathbf{x}})$-module $\Gamma(E|_{W_{\mathbf{x}}})$ (considered as an $\mathbb{R}$-module) with vanishing higher-degree operations, while $\Omega^{\bullet+1}_{\mathrm{aug}}(\mathcal{F}_{\mathbf{x}})$ is the foliation de Rham complex with augmentation, obtained from the regular foliation (i.e., one in which each leaf has the same dimension) $T\mathcal{F}_{\mathbf{x}} \subset TW_{\mathbf{x}}$ given by the kernel of the presymplectic form $\pi^*_{i}(\iota_{i}^*\omega_{\mathbf{p}})$. We can equip $\Omega^{\bullet+1}_{\mathrm{aug}}(\mathcal{F}_{\mathbf{x}})$ with the $L_{\infty}[1]$-algebra structure following \cite{OP} and consider its extension to the augmentation.

We close the introduction by making a remark on Condition \ref{condusi1}. In fact, \cite[Corollary 6.2]{KO} proves that there exists a residual subset of such closed two-forms. We conjecture that the same can be achieved by a generic choice of almost complex structure $J$. For a more general statement, we may be able to eliminate this condition either by proving this conjecture or by developing a method that accomplishes the same task using a general foliation that is not necessarily regular (possibly via some derived geometry), which we intend to explore in future work.

We outline the contents of this paper. In Section 2, we briefly review $L_{\infty}$-Kuranishi spaces from \cite{Kim1}. Section 3 recalls the notion of the moduli space of pseudoholomorphic disks, following \cite{FOOO1} and \cite{FOOO2}, and equips its chart with the structure of an $L_{\infty}$-Kuranishi chart. In Section 4, we construct coordinate changes between two charts to obtain Kuranishi atlases and spaces. Section 5 concerns examples of morphisms of the moduli space: the evaluation and forgetful morphisms. We show that these naturally extend their classical analogues. For the sake of completeness, the appendices revisit the notion of $L_{\infty}[1]$-algebras and their higher homotopy theory, together with a more detailed description of the structures placed on neighborhoods of the zero locus. Moreover, we mention statements on $L_{\infty}[1]$-algebras arising from $V$-algebras. Finally, we explain the rather technical definitions of equivalence relations of charts and pre-morphisms, which are needed in the definition of $L_{\infty}$-Kuranishi spaces.

\subsection*{Acknowledgement}
We thank Yong-Geun Oh and Kaoru Ono for helpful comments. This work was supported by the BK21 FOUR program funded by the Ministry of Education (MOE), Korea (No. 4120240414885).

\section{$L_{\infty}$-Kuranishi spaces and their category}

In this section, we recall the definition of $L_{\infty}$-Kuranishi spaces in \cite{Kim1} and show that their collection form a category where the category of smooth manifolds embeds.

\subsection{$L_{\infty}$-Kuranishi charts}

We define Kuranishi charts by associating $L_{\infty}[1]$-algebras to presymplectic neighborhoods of the zero locus of the Kuranishi section.

\begin{defn}[$L_\infty$-Kuranishi charts]\label{kurdef}
Let $X$ be a compact metrizable space. An $L_\infty$-\textit{Kuranishi chart} of $X$ is given by a tuple
\begin{equation}\nonumber
\mathcal{U} = (U, E, s, \Gamma, \psi),
\end{equation}
where
\begin{enumerate}
\item[--] $U = (U, \alpha)$ is a pair of a smooth manifold (possibly with boundary and) with a closed two-form $\alpha \in Z^2(U).$
\item[--] $\pi : E \rightarrow U$ is a (finite rank) vector bundle.
\item[--] $s : U \rightarrow E$ is a smooth section.
\item[--] $\Gamma$ is a finite group acting on $U$ that restricts to the zero set of $s$, that is, $\Gamma \cdot s^{-1}(0) \subset s^{-1}(0).$
\item[--] $\psi : {s^{-1}(0)}/{\Gamma} \overset{\simeq}{\hookrightarrow} X$ is a homeomorphism onto the image.
\end{enumerate}
The \textit{dimension} of $\mathcal{U}$ is defined by $\dim \mathcal{U} := \dim U - \text{rk}E.$

We require that the chart  $\mathcal{U}$ be endowed with the following structures:
\begin{itemize}
\item[--] $U$ has a decomposition 
\begin{equation}\label{wstri}
U = \bigcup\limits_i \mathcal{S}_i,
\end{equation}
into (possibly non-connected) submanifolds,
\[
\mathcal{S}_i := \{x \in U \mid \text{rk} (\ker\alpha_x) =i \}, \ 0 \leq i \leq \dim U,
\]
together with their tubular neighborhoods for each $i$:
\[
\begin{cases}
\iota_{i} : N_i \rightarrow  U, \text{ an open neighborhood of each }  \mathcal{S}_{i},\\
\pi_{i} : N_i \rightarrow \mathcal{S}_{i}, \text{ the associated projection}.
\end{cases}
\]

In case we have $\partial U \neq \emptyset$, we further require the decomposition (\ref{wstri}) restricts to $\partial U$, that is, we have a decomposition
\begin{equation}\label{adffff}
\partial U = \bigcup_i \mathcal{S}_i \cap \partial U,
\end{equation}
where $\mathcal{S}_i \cap \partial U$ is a submanifold of $\partial U$ given by
\[
\mathcal{S}_i \cap \partial U = \{x \in \partial U \mid \text{rk} (\ker\alpha_x) = i - 1 \}, \ 1 \leq i \leq \dim U,
\]
together with the corresponding tubular neighborhood in $\partial U$ for each $i$:
\[
\begin{cases}
\iota^{\partial}_{i} : N^{\partial}_i \rightarrow  \partial U, \text{ an open neighborhood of each }  \mathcal{S}_{i},\\
\pi^{\partial}_{i} : N^{\partial}_i \rightarrow \mathcal{S}_{i} \cap \partial U, \text{ the associated projection}.
\end{cases}
\]

\item[--] To each zero point $x \in s^{-1}(0),$ we assign:
\begin{enumerate}[label = (\roman*)]
\item A presymplectic open neighborhood $W_x$ of $x$ in $U$ with $W_x \simeq B^n,$
\item A local $ L_{\infty}[1]$-algebra $\mathcal{C}_x$
\[
\mathcal{C}_x := \overbrace{\bigwedge\nolimits^{-\bullet}\Gamma(E^*|_{W_x})}^{\text{Koszul}} \oplus \overbrace{\Omega^{\bullet + 1}_{\mathrm{aug}}(\mathcal{F}_x)}^{\text{de Rham}},
\]
\end{enumerate}
whose detailed descriptions are provided in Appendix A.
\end{itemize}
\end{defn}

\begin{rem}
\cite[Corollary 6.2]{KO} proves that there exists a residual subset of such closed two-forms. Furthermore, the stratification structure turns out to be Whitney (cf. \cite[Definition 6.5 \& Theorem 6.6]{KO}), which leads to the construction of \textit{stratified $L_{\infty}$-spaces.}
\end{rem}

\subsection{Morphisms of charts and Kuranishi atlases}

We define morphisms of Kuranishi charts, with embeddings as a special case, which yields the construction of a global object called a Kuranishi atlas.

Let $f : X \rightarrow X'$ be a continuous map between compact topological spaces. 

\begin{defn}[Chart morphisms]
A \textit{chart morphism} between $L_{\infty}$-Kuranishi charts ${\mathcal{U}} = (U,E,\Gamma,s,\psi)$ and ${\mathcal{U}}' = (U',E',\Gamma',s',\psi')$ of $X$ and $X',$ respectively, is defined by a pair
\[
\Phi = (\phi, \widehat{\phi}) : {\mathcal{U}} \rightarrow {\mathcal{U}}' ,
\]
given by:
\begin{enumerate}
\item[--] $\phi : U \rightarrow U', \text{ a } (\Gamma, \Gamma')\text{-equivariant map of manifolds,}$
\item[--] $\widehat{\phi} = \left\{\widehat{\phi}_x :  \mathcal{C}'_{\phi(x)} \rightarrow \mathcal{C}_x \right\}_{x \in s^{-1}(0)}$ is a family of $L_{\infty}[1]$-morphisms,
\end{enumerate}
satisfying 
\begin{enumerate}[label = (\roman*)]
\item $\psi' \circ \phi = f \circ \psi$ on $s^{-1}(0),$
\item $\phi(W_x) \subset W'_{\phi(W_x)},$
\item $\widehat{\phi}_x$ factors through $\mathcal{C}'_{\phi(x), \phi},$ that is, we have $\widehat{\phi}_x = \widehat{\phi}_x^{\mathrm{c}} \circ \varepsilon_{\phi(x), \phi}$ for some $L_{\infty}[1]$-morphism, $\widehat{\phi}^{\mathrm{c}}_x :  \mathcal{C}'_{\phi(x), \phi} \rightarrow \mathcal{C}_x.$
\end{enumerate}
Here, $\mathcal{C}_{\phi(x),\phi}'$ stands for the \textit{completion} of $\mathcal{C}'_{\phi(x)}$ at the image of $\phi,$ induced from the \text{completed} V-algebra at the image. (See Definition \ref{ladef} for the detailed definition.) On the other hand, $\widehat{\varepsilon}_{\phi(x), \phi} : \mathcal{C}'_{\phi(x)} \rightarrow \mathcal{C}'_{\phi(x), \phi}$ is the $L_{\infty}[1]$-morphism given by considering elements in $\mathcal{C}'_{\phi(x)}$ naturally as trivial, which can be easily shown to define an $L_{\infty}[1]$-morphism. (See Definition \ref{varep} and Lemma \ref{vecpf}.)
\end{defn}

\begin{defn}[Embedding of charts]\label{ourcemb}
A chart morphism $(\phi, \widehat{\phi})$ is called an \textit{embedding} if $\phi$ is an (equivariant) embedding of manifolds and $\widehat{\phi} = \left\{\widehat{\phi}_x\right\}_{x \in s^{-1}(0)}$ consists of quasi-isomorphic $\widehat{\phi}_x$'s.
\end{defn}

A \textit{coordinate change} of $L_{\infty}$-Kuranishi charts provides a main example of embedding:

\begin{defn}[Coordinate changes]\label{kstr}
For two points $p, q \in X$ with $\mathrm{Im}\psi_p \cap \mathrm{Im}\psi_q \neq \emptyset,$ we define a \textit{coordinate change} $\Phi_{pq} : \mathcal{U}_p \rightarrow \mathcal{U}_q$ by a tuple
\[
\Phi_{pq} := \left(U_{pq}, \phi_{pq}, \left\{\widehat{\phi}_{pq,x}\right\}\right),
\]
where $U_{pq} \subset U_p$ is an open submanifold, and
\[
\left(\phi_{pq}, \widehat{\phi}_{pq}\right) : \mathcal{U}_p|_{U_{pq}} \rightarrow \mathcal{U}_q
\]
is an embedding of $L_{\infty}$-Kuranishi charts from $\mathcal{U}_p|_{U_{pq}},$ that is, the chart restricted to $U_{pq}.$ They are required to satisfy:

\begin{enumerate}[label = (\roman*)]
\item $\Phi_{pp} = \mathrm{id}_{\mathcal{U}_p},$
\item $\psi_q \circ \phi_{pq} = \psi_p$ on $s_{p}^{-1}(0) \cap U_{pq},$
\item $\phi_{qr} \circ \phi_{pq} = \phi_{pr}$ on $s_p^{-1}(0) \cap \phi_{qr}^{-1}(U_{pq}) \cap U_{pr},$
\item $\psi_p\big(s_p^{-1}(0) \cap U_{pq}\big) =\mathrm{Im} \psi_p \cap\mathrm{Im} \psi_q,$
\end{enumerate}
\end{defn}

\begin{defn}
A pair of the compact topological space $X$ and a collection of Kuranishi charts with coordinate changes
\[
(X, \widehat{\mathcal{U}}),
\]
where $ \widehat{\mathcal{U}}=\left(\{\widehat{\mathcal{U}}_p\}, \left\{\Phi_{pq}\right\}\right),$ is called an $L_{\infty}$\textit{-Kuranishi atlas}. For technical reasons, we assume that $\max\limits_{p \in X} \dim U_p < \infty$ with the compactness of $X$.
\end{defn}

\subsection{Definition of $L_{\infty}$-Kuranishi spaces and their categorical structures}

By considering equivalence relations on atlases, we define $L_{\infty}$-Kuranishi spaces. We also define morphisms between them, obtaining the category of $L_{\infty}$-Kuranishi spaces.

Two atlases are said to be equivalent and are denoted by $(X, \widehat{\mathcal{U}}) \sim (X, \widehat{\mathcal{U}}'),$ or simply $\widehat{\mathcal{U}} \sim \widehat{\mathcal{U}}'$ if 
\begin{equation}\label{equu}
\widehat{\mathcal{U}}^0 \times V = \widehat{\mathcal{U}}^{'0} \times V',
\end{equation}
whose precise meaning is provided in Definition \ref{eqats}, for some finite dimensional vector spaces $V, V'$ and for the restrictions to some open subsets $\widehat{\mathcal{U}}^0 = \widehat{\mathcal{U}}|_{U^0 \subset U}$ and $\widehat{\mathcal{U}}^{'0} = \widehat{\mathcal{U}}'|_{U^{'0} \subset U'}.$ 

\begin{defn}[$L_{\infty}$-Kuranishi spaces]
We define an $L_{\infty}$-\textit{Kuranishi space} to be an equivalence class with respect to the relation $\sim$
\[
\mathfrak{X} := (X, [\widehat{\mathcal{U}}]).
\]
\end{defn}

Let $\mathfrak{X} = \left(X, [\widehat{\underline{\mathcal{U}}}]\right)$ and $\mathfrak{Y} = \left(Y,[\underline{\widehat{\mathcal{U}}}']\right)$ be $L_{\infty}$-Kuranishi spaces.
We consider two atlases $\left(X, \widehat{\mathcal{U}}\right)$ and $\left(X', \widehat{\mathcal{U}}'\right)$ with $\widehat{\mathcal{U}} = \left(\{\widehat{\mathcal{U}}_p\}, \left\{\Phi_{pq}\right\}\right) $ and $\widehat{\mathcal{U}}' = \left(\{\widehat{\mathcal{U}}'_{p'}\}, \left\{\Phi'_{p'q'}\right\}\right),$ satisfying $[\widehat{\mathcal{U}}] = [\underline{\widehat{\mathcal{U}}}]$ and $[\widehat{\mathcal{U}}'] = [\underline{\widehat{\mathcal{U}}}'].$

\begin{defn}[Pre-morphisms]\label{morphismkur}
A \textit{pre-morphism} is defined by the following tuple
\[
\overline{F} := \left(\widehat{\mathcal{U}}, \widehat{\mathcal{U}}', f, \{f_p\}, \left\{\widehat{f}_{p,x}\right\}\right).
\]
$f : X \rightarrow Y$ is a continuous map between the zero loci, while $\left\{(f_p, \{\widehat{f}_{p,x}\})\right\}$ is a collection of chart morphisms. Then $\overline{F}$ is required to satisfy the following compatibilities with respect to the coordinate change $\Phi_{pq}=\left(\phi_{pq}, \left\{\widehat{\phi}_{pq,x}\right\}\right)$:
For $p,q \in X$ with Im$\psi_p \cap\mathrm{Im} \psi_q \neq \emptyset,$ we require
\begin{enumerate}[label = (\roman*)]
\item $\phi'_{f(p)f(q)} \circ f_p = f_q \circ \phi_{pq}$ on $s_{p}^{-1}(0) \cap U_{pq},$
\item $\widehat{\phi}_{pq,x} \circ \widehat{f}_{q, \phi_{pq}(x)} = \widehat{f}_{p,x} \circ \widehat{\phi}'_{f(p)f(q),f_p(x)}$ for each $x \in s_p^{-1}(0) \cap U_{pq}$ up to $L_{\infty}[1]$-homotopy.
\end{enumerate}
\end{defn}

We can define an equivalence relation $\sim$ on the set of pre-morphisms as in Definition \ref{defmoreq}.

\begin{defn}[Morphism of Kuranishi spaces]
We define a \textit{morphism} from $\mathfrak{X} = \left(X, [\widehat{\mathcal{U}}]\right)$ to $\mathfrak{X}' = \left(X', [\widehat{\mathcal{U}}']\right)$
by an equivalence class (with respect to $\sim$) of a pre-morphism $\overline{F}$ from $\mathfrak{X}$ to $\mathfrak{X}':$
\[F := [\overline{F}] : \mathfrak{X} \rightarrow \mathfrak{X}'.\]
\end{defn}

\begin{defn}[Composition of morphisms]
Let $\mathfrak{X} = (X, [\widehat{\mathcal{U}}]),$ $\mathfrak{X}' = (X', [\widehat{\mathcal{U}'}]),$ and $\mathfrak{X}'' = (X'', [\widehat{\mathcal{U}''}])$ be Kuranishi spaces. Let $F : \mathfrak{X} \rightarrow \mathfrak{X}'$ and $G : \mathfrak{X}' \rightarrow \mathfrak{X}''$ be morphisms between them represented by 
\begin{equation}\nonumber
\begin{cases}
\overline{F} = \left(\widehat{\mathcal{U}}, \widehat{\mathcal{U}'}, f, \left\{f_{p}\right\}, \left\{\widehat{f}_{p,x}\right\}\right),\\
\overline{G} = \left(\underline{\widehat{\mathcal{U}'}}, \widehat{\mathcal{U}''}, g, \left\{g_{f(p)}\right\}, \left\{\widehat{g}_{f(p),y}\right\}\right),
\end{cases}
\end{equation}
respectively with $[ \widehat{\mathcal{U}'}] = [\underline{\widehat{\mathcal{U}'}}]$.

There exist \textit{extended} pre-morphisms
\begin{equation}\label{extfg1}
\begin{cases}
\overline{F}^{n_d, n'_t} = \left(\widehat{\mathcal{U}}^0 \times \mathbb{R}^{n_d}, \widehat{\mathcal{U}}^{'0} \times \mathbb{R}^{n'_t}, f, \left\{\widetilde{f}_{p} \right\}, \left\{\widetilde{\widehat{f}}_{p,x}\right\}\right),\\
\overline{G}^{\underline{n}_d, \underline{n}'_t} = \left(\underline{\widehat{\mathcal{U}}}^{'0} \times \mathbb{R}^{\underline{n}_d}, \widehat{\mathcal{U}''} \times \mathbb{R}^{\underline{n}_t'}, g, \left\{\widetilde{g}_{p'}\right\}, \left\{\widetilde{\widehat{g}}_{p',x'}\right\}\right),
\end{cases}
\end{equation}
of $\overline{F}$ and $\overline{G},$ respectively, so that $\widehat{\mathcal{U}}^{'0} \times \mathbb{R}^{n'_t} = \underline{\widehat{\mathcal{U}}}^{'0} \times \mathbb{R}^{\underline{n}_d}$ (cf. Definition \ref{eqats}) holds for some open subatlases $\widehat{\mathcal{U}}^{'0} < \widehat{\mathcal{U}}^{'},$ and $\underline{\widehat{\mathcal{U}}}^{'0} < \underline{\widehat{\mathcal{U}}}^{'},$ and that all the base maps $\widetilde{f}_p$ and $\widetilde{g}_{p'}$ are surjective.

Note that we can always assume that ${\widehat{\mathcal{U}}}^{'} = \underline{\widehat{\mathcal{U}}}^{'}$; if not, we can take the extension as (\ref{extfg1}). We then define the \textit{composition} $G \circ F$ to be the following equivalence class:
\begin{equation}\label{morcom}
G \circ F := \left[\left(\widehat{\mathcal{U}}^0 \times \mathbb{R}^{n_d}, \widehat{\mathcal{U}}'' \times \mathbb{R}^{\underline{n}'_t}, g\circ f, \left\{{g}_{f(p)} \circ {f}_{p}\right\}, \left\{{\widehat{f}}_{p,x} \circ {\widehat{g}}_{f(p),f_p(x)}\right\} \right)\right].
\end{equation}
\end{defn}

\begin{prop}\cite[Proposition 3.16]{Kim1}\label{ppidx}
The composition is well-defined and associative with the identity given by
\begin{equation}\label{idxm}
\mathrm{id}_{\mathfrak{X}}:= \left[\left(\widehat{\mathcal{U}}, \widehat{\mathcal{U}}, \mathrm{id}_X, \left\{\mathrm{id}_p\right\}, \left\{\widehat{\mathrm{id}}_{p,x}\right\}\right)\right]
\end{equation}
of each $\mathfrak{X} = \left(X, [\widehat{\mathcal{U}}]\right).$
\end{prop}

The above data give rise to a category denoted by $\mathbf{Kur}$ that consists of:
\begin{equation}\nonumber
\begin{cases}
\text{Ob}(\textbf{Kur}) =\{L_{\infty}\text{-Kuranishi spaces}\}\\
\text{Mor}(\textbf{Kur}) = \{\text{Equivalence classes of pre-morphisms} \}.
\end{cases}
\end{equation}

\begin{exam}[{Smooth manifolds}]\label{man}
Smooth manifolds are Kuranishi spaces endowed with a Kuranishi atlas 
\[
\widehat{\mathcal{U}}^{\mathrm{man}} = \left(\left\{\mathcal{U}^{\mathrm{man}}_p\right\}, \left\{\Phi_{pq}\right\}\right) = \left(\left\{\left(U_p, E_p, s_p, \Gamma_p, \psi_p\right)\right\}, \left\{\left(U_{pq}, \phi_{pq}, \left\{\widehat{\phi}_{pq,x}\right\}\right)\right\}\right)
\] 
of the following restrictive type:
\begin{itemize}
\item[--] $U_p = (U_p, \alpha)$ is the pair of a Euclidean space $\mathbb{R}^n$ of fixed dimension $n$ for all $p$ and the zero form $\alpha \equiv 0.$
\item[--] The decompositions (\ref{wstri}) and (\ref{adffff}) exist for trivial reasons.
\item[--] $E_p = U_p \times \{0\} \simeq U_p$ is the zero-rank vector bundle.
\item[--] $s_p : U_p \xrightarrow{\simeq} E_p$ is the zero section.
\item[--] $\Gamma_p$ is the trivial group.
\item[--] $\psi_p : s^{-1}_p(0) \simeq U_p \hookrightarrow \mathbb{R}^n$ is the manifold coordinate chart.
\item[--] $x \in W_x \subset U_p$ is an open ball $\simeq B^n.$
\item[--] $T\mathcal{F}_x = TU_p|_{W_x}$ is the total tangent bundle,
\item[--] $\mathcal{C}_{p,x} := \Omega_{\mathrm{aug}}^{\bullet +1}(W_x)$ is the augmented de Rham complex with the $L_{\infty}[1]$-algebra $\{l^{\mathrm{man}}_k\}_{k \geq 1}$ with $l^{\mathrm{man}}_{k \geq 2} = 0$. In other words, $\mathcal{C}_{p,x}$ is only a chain complex.

Let $\mathcal{U}_p$ and $\mathcal{U}_q$ be Kuranishi charts at $p$ and $q,$ respectively. The coordinate change $\Phi_{pq} := \left(U_{pq}, \phi_{pq}, \left\{\widehat{\phi}_{pq,x}\right\}\right) : \mathcal{U}_p \rightarrow \mathcal{U}_q$ is given by:
\begin{enumerate}
\item[--] $U_{pq} := \psi_p^{-1}\left(\mathrm{Im} \psi_p \cap \mathrm{Im} \psi_q\right).$
\item[--] $\phi_{pq} : U_{pq} \rightarrow U_q$ is the (usual) coordinate change for manifolds
\begin{equation}\nonumber
\phi_{pq} := \psi_q^{-1} \circ \psi_p\big|_{U_{pq}},
\end{equation}
which is an open embedding.
\item[--] $\widehat{\phi}_{pq, x}^{\mathrm{c}}: \left(\mathcal{C}'_{\phi_{pq}(x)}\right)_{\phi_{pq}} \rightarrow \mathcal{C}_x$ at each $x \in s^{-1}_p(0) \cap U_{pq} = U_{pq}$ is an isomorphism constructed as follows with $\widehat{\phi}_{pq, x} = \widehat{\phi}_{pq, x}^{\mathrm{c}} \circ \widehat{\varepsilon}_{\phi_{pq}(x), \phi_{pq}}.$ For its construction see Example 3.3 in \cite{Kim1}.
\end{enumerate}
\end{itemize}
\end{exam}

Indeed, $\mathbf{Kur}$ contains the category of smooth manifolds as a subcategory, allowing us to treat Kuranishi spaces and smooth manifolds on equal footing.
\begin{thm}\cite[Propositions 3.16 \& 3.18]{Kim1}
$L_{\infty}$-Kuranishi spaces form a category that admits a natural embedding from the category of smooth manifolds.
\end{thm}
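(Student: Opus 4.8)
The plan is to verify the category axioms for $\mathbf{Kur}$ and then to construct a fully faithful functor $\iota : \mathbf{Man} \to \mathbf{Kur}$. I would organize the argument around four tasks: identities, composition together with its well-definedness, associativity and the unit laws, and finally the embedding of $\mathbf{Man}$. Throughout, the guiding principle is that all the structural equalities one would want hold only up to $L_{\infty}[1]$-homotopy, and that whenever a discrepancy is a quasi-isomorphism it can be filled by Theorem \ref{efhqi} (together with homotopy invertibility from the Whitehead theorem).

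First, for identities: given $\mathfrak{X} = (X, [\widehat{\mathcal{U}}])$, fix a representative atlas and take the pre-morphism $\overline{\mathrm{id}}_{\mathfrak{X}} := (\widehat{\mathcal{U}}, \widehat{\mathcal{U}}, \mathrm{id}_X, \{\mathrm{id}_{U_p}\}, \{\mathrm{id}_{\mathcal{C}_{p,x}}\})$. Conditions (i) and (ii) of the morphism definition are immediate, since every base map is an identity and $\phi'_{f(p)f(q)} = \phi_{pq}$. For condition (iii) the octahedron \eqref{octa} degenerates: the two coordinate-change legs coincide and the $\widehat{\varepsilon}$-localizations pair with their canonical quasi-inverses, so the outer boundary commutes up to a canonical homotopy. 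I would then record that the class $[\overline{\mathrm{id}}_{\mathfrak{X}}]$ is independent of the chosen representative atlas, using the expansion-invariance built into the equivalence \eqref{equu}.

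Second, composition. Given $\mathscr{F} = [\overline{F}] : \mathfrak{X} \to \mathfrak{Y}$ and $\mathscr{G} = [\overline{G}] : \mathfrak{Y} \to \mathfrak{Z}$, the first step is to \emph{align representatives}: using the equivalence relation on pre-morphisms together with expansion-invariance of atlases, replace $\overline{F}$ and $\overline{G}$ by equivalent pre-morphisms whose $\mathfrak{Y}$-atlases agree, by passing to a common expansion $\widehat{\mathcal{U}}_Y^0 \times V$. On matching atlases I define the composite by $g \circ f$ on the base, by the equivariant composites $g_{f(p)} \circ f_p$ on chart base maps, and by $\{\widehat{f}_{p,x} \circ \widehat{\varepsilon} \circ \widehat{g}_{f(p),f_p(x)}\}$ on the $L_{\infty}[1]$-components, where $\widehat{\varepsilon}$ is the canonical localization inserted between the two morphisms. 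The substantive point is condition (iii) for the composite, which I would obtain by vertically pasting the two homotopy-commutative octahedra of $\overline{F}$ and $\overline{G}$: since each $\widehat{\varepsilon}$ is a quasi-isomorphism, the intermediate $\widehat{\varepsilon}\circ\widehat{\varepsilon}^{-1}$ pairs are homotopic to identities, and Theorem \ref{efhqi} supplies the filling homotopy making the pasted rectangle homotopy commute.

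Third, well-definedness, associativity, and units; this is the step I expect to be the main obstacle. Independence of the composite's class from the chosen representatives $\overline{F}, \overline{G}$ and from the alignment choices reduces to comparing homotopy-commutative diagrams whose discrepancies are quasi-isomorphisms, which are filled by Theorem \ref{efhqi}; the delicate part is checking that the homotopies assembled during alignment and pasting are \emph{coherent} up to the equivalence relation (iii), so that every such choice yields the same class. Associativity then holds because composition of base maps and of $L_{\infty}[1]$-morphisms is strictly associative, while the localization insertions and their homotopies compose coherently, the residual discrepancy being a fillable homotopy; the unit laws follow from the degeneration established for $\overline{\mathrm{id}}$. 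Finally, for the embedding I define $\iota$ by sending a smooth manifold $M$ to the Kuranishi space with the single chart $(M, \underline{0}, \{e\}, 0, \mathrm{id}_M)$ carrying the rank-zero bundle, trivial group, zero section, and $\beta = 0$, so that the Koszul part sits in degree $0$, the foliation is by the whole contractible ball, and $\mathcal{C}_x$ is the cohomologically trivial augmented de Rham algebra; a smooth map $f : M \to N$ induces the pre-morphism with base map $f$ and $L_{\infty}[1]$-components given by pullback $f^*$ of foliation forms. Functoriality follows from $(\mathrm{id})^* = \mathrm{id}$ and $(g\circ f)^* = f^*\circ g^*$; faithfulness is immediate because the smooth base map is recovered from condition (i); and fullness holds because a $\mathbf{Kur}$-morphism $\iota(M)\to\iota(N)$ is, on the single trivial chart, exactly an equivariant smooth map $M\to N$ whose $L_{\infty}[1]$-component is pinned down up to homotopy by condition (iii). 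Hence $\iota$ is fully faithful and exhibits $\mathbf{Man}$ as a subcategory of $\mathbf{Kur}$.
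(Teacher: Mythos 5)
Your overall architecture (identities, composition on aligned atlases, well-definedness/associativity via filling homotopies, then a functor from $\mathbf{Man}$) matches the paper's, but there are two concrete gaps.

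First, your composition of the $L_{\infty}[1]$-components does not typecheck. You propose $\widehat{f}_{p,x}\circ\widehat{\varepsilon}\circ\widehat{g}_{f(p),f_p(x)}$, which is a map out of $\mathcal{C}''_{\,\cdot\,,\,g_{f(p)}}$, the localization at $\mathrm{Im}\,g_{f(p)}$. But for the composite to be a pre-morphism over $g\circ f$ its $L_{\infty}[1]$-component must have source $\mathcal{C}''_{\,\cdot\,,\,g_{f(p)}\circ f_p}$, the localization at the smaller image $\mathrm{Im}(g_{f(p)}\circ f_p)$. Since $I_{g_{f(p)}}\subseteq I_{g_{f(p)}\circ f_p}$, the canonical comparison map goes from the localization at $g_{f(p)}$ to the localization at $g_{f(p)}\circ f_p$ — the wrong direction for precomposition — and it is not a quasi-isomorphism in general, so Theorem \ref{efhqi} cannot be invoked to repair it. The paper resolves exactly this by first extending $\overline{F}$ and $\overline{G}$ to expanded atlases on which the chart-level base maps $\widetilde{f}_p,\widetilde{g}_{p'}$ are \emph{surjective} (always possible by contractibility of the $U_p$), so that by Lemma \ref{surjcp} all localizations trivialize and the composite $\{\widetilde{\widehat{f}}_{p,x}\circ\widetilde{\widehat{g}}_{f(p),f_p(x)}\}$ is defined without inserting any $\widehat{\varepsilon}$. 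Your "alignment" step handles matching the middle atlases but omits this surjectivity device, which is the load-bearing part of the construction; the same device is also needed when you verify that $\iota$ respects composition.

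Second, your model of a manifold as a Kuranishi space uses a single global chart with base $M$. Definition \ref{kstr} requires each chart base $U_p$ of an atlas to be contractible (and Assumption \ref{contrassmp} requires the $U_\alpha$ to be contractible), so a single chart on a non-contractible $M$ is not an admissible atlas. The paper instead takes the coordinate atlas $\{U_p\cong\mathbb{R}^n\}_{p\in M}$ with the usual coordinate changes, and then must (and does) verify the compatibility (iii) of the pullback $L_{\infty}[1]$-components across coordinate changes — a check your single-chart shortcut avoids but only by leaving the category. Finally, you claim fullness of $\iota$; the theorem only asserts an embedding, and the paper proves injectivity on objects and on morphism sets but does not claim fullness, so you should either drop that claim or supply a separate argument that every class of pre-morphisms between manifold-type atlases (including those defined only after expansion) contains one induced by a smooth map.
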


\section{The moduli space of pseudoholomorphic maps $\mathcal{M}_{k+1}(L, \beta)$}\label{ksmod}

In this section, we prove that the classical moduli space $\mathcal{M}_{k+1}(L, \beta)$ of pseudoholomorphic disks can be endowed with the structure of $L_{\infty}$-Kuranishi space.

\subsection{FOOO's setting}
The construction of $L_{\infty}$-Kuranishi space structure relies heavily on the existing theory developed by Fukaya-Oh-Ohta-Ono, and in particular, we adopt the framework established in \cite{FOOO1}, \cite{FOOO2}, and \cite{FOOO3}.
 
Let $(M, \omega)$ be  a symplectic manifold and $L$ its compact Lagrangian submanifold.  We take an almost complex structure $J$ on $M$ which is tamed by $\omega.$ We fix a homology class $\beta \in H_2(X, L).$

\begin{defn}[The moduli space]\label{tmsdef}
We define $\mathcal{M}_{k+1}(L, \beta),$ the \textit{moduli space of pseudoholomorphic disks} by the set of tuples $\big((\Sigma, \vec{z}), u\big)$ modulo the equivalence relation $\sim$, where each component is given by:

\begin{itemize}
\item[--] $\Sigma$ is a bordered Riemann surface with genus 0 which has at worst nodal singularities.

\item[--] $\vec{z} = (z_0, \ldots, z_k) \subset \partial \Sigma$ are mutually distinct marked points, away from nodal points and enumerated counterclockwise.

\item[--] $u: (\Sigma, \partial \Sigma) \longrightarrow (M, L)$ is a continuous map with the condition $u_*\big([{\Sigma}, \partial \Sigma]\big) = \beta$ that is smooth and satisfies $\overline{\partial}_J u = 0$ on each irreducible component.

\item[--] $\big((\Sigma, \vec{z}), u\big)$ is stable, i.e., the automorphism group $\mathrm{Aut}\big((\Sigma, \vec{z}), u\big)$ is finite, where its definition is given below.
\end{itemize}
\end{defn}

\begin{defn}\label{relms}
For two tuples $\big((\Sigma, \vec{z}), u\big)$ and $\big((\Sigma', \vec{z}\,'), u'\big)$, we call a homeomorphism $g: \Sigma \to \Sigma'$ an \textit{isomorphism} if 
\begin{itemize}
\item[(i)] $g$ is biholomorphic on each irreducible component of $\Sigma$.
\item[(ii)] $u' \circ g = u$.
\item[(iii)] $g(\vec{z}_i) = \vec{z}\,'_i$, $i = 0, \ldots, k$.
\end{itemize}
We write $\big((\Sigma, \vec{z}), u\big) \sim \big((\Sigma', \vec{z}\,'), u'\big)$ if there exist an isomorphism between them. It immediately follows that $\sim$ defines an equivalence relation. We denote by $\mathrm{Aut}\big((\Sigma, \vec{z}), u\big)$ the set of isomorphism from $\big((\Sigma, \vec{z}), u\big)$ to itself, which naturally has a group structure.
\end{defn}

We denote by $\mathcal{X}_{k+1}(L,\beta)$ the set of all maps $(\Sigma, \vec{z}, u \big)$ satisfying all the axioms of $\mathcal{M}_{k+1}(L, \beta)$ except $u$ being pseudoholomorphic. Instead, we require $u$ to be of $C^2$-class on each irreducible component. Regarding $\mathcal{M}_{k+1}(L, \beta)$ as a subset of the space $\mathcal{X}_{k+1}(L,\beta)$ we can endow the pair $\big( \mathcal{X}_{k+1}(L,\beta), \mathcal{M}_{k+1}(L, \beta) \big)$ with a \textit{partial topology} whose definition we recall below.
\begin{defn}[Partial topology]\label{prttop}
Let $\mathcal{M}$ be a metrizable topological space and $\mathcal{X}$ a set that contains $\mathcal{M}.$ A \textit{partial topology} on the pair of sets $(\mathcal{X}, \mathcal{M})$ by definition assigns a neighborhood $B_{\epsilon}(\mathcal{X}, \mathbf{p}) \subset \mathcal{X}$ to each $\mathbf{p} \in \mathcal{M}$ and $\epsilon >0$ with the following properties:
\begin{enumerate}
\item[--] $\{B_{\epsilon}(\mathcal{X}, \mathbf{p}) \mid \mathbf{p} \in \mathcal{M}, \epsilon > 0 \}$ is a basis of the topology of $\mathcal{M}.$
\item[--] For each $\mathbf{p} \in \mathcal{M}$ and $\epsilon > 0$ and $\mathbf{q} \in B_{\epsilon}(\mathcal{X}, \mathbf{p}) \cap \mathcal{M},$ there exists $\delta >0$ such that $B_{\delta}(\mathcal{X}, \mathbf{q}) \subset B_{\epsilon}(\mathcal{X}, \mathbf{p}).$
\item[--] If $\epsilon_1 < \epsilon_2,$ then $B_{\epsilon_1}(\mathcal{X}, \mathbf{p}) \subset B_{\epsilon_2}(\mathcal{X}, \mathbf{p}).$ Moreover, we have $\{\mathbf{p}\} = \bigcap\limits_{\epsilon > 0} B_{\epsilon}(\mathcal{X}, \mathbf{p}).$
\end{enumerate}
\end{defn}

\begin{rem}
A partial topology on $(\mathcal{X}, \mathcal{M})$ allows us to consider a neighborhood of $\mathbf{p} \in \mathcal{M}$ in $\mathcal{X}$ without endowing (possibly pathological) $\mathcal{X}$ with a topology.
\end{rem}

\begin{thm}
There exists a topology on $\mathcal{M}_{k+1}(L,\beta)$ which is a compact and Hausdorff.
\end{thm}

\begin{proof}
We can use the stable map topology of \cite[Definition 10.3]{FO} and \cite[Definitoin 7.1.42]{FOOO2} on $\mathcal{M}_{k+1}(L,\beta).$
\end{proof}

With respect to this topology, we have:

\begin{prop}\cite[Proposition 4.3]{FOOO3}
The pair $\big(\mathcal{X}_{k+1}(L,\beta), \mathcal{M}_{k+1}(L,\beta)\big)$ defines a partial topology.
\end{prop}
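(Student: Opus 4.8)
The plan is to define the neighborhood basis $B_\epsilon\big(\mathcal{X}_{k+1}(L,\beta), \mathbf{p}\big)$ via the stable-map (Gromov) topology and then verify the three properties of Definition \ref{prttop} in turn. For a fixed $\mathbf{p} = \big((\Sigma_\mathbf{p}, \vec{z}_\mathbf{p}), u_\mathbf{p}\big) \in \mathcal{M}_{k+1}(L,\beta)$, I would first record the combinatorial type of $\Sigma_\mathbf{p}$ (its dual tree with nodal and marked points) and choose, on each irreducible component, a finite collection of cut-and-glue data: resolution parameters for each node, a deformation space of the complex structure, and neck regions around the nodes. Following the construction in [FOOO5] (and the stable-map topology of [FOOO2]), I would declare $\mathbf{q} = \big((\Sigma_\mathbf{q}, \vec{z}_\mathbf{q}), u_\mathbf{q}\big) \in B_\epsilon\big(\mathcal{X}_{k+1}(L,\beta), \mathbf{p}\big)$ whenever there exist resolution parameters of size $<\epsilon$, an identification of the thick (non-neck) part of $\Sigma_\mathbf{q}$ with that of the resolved $\Sigma_\mathbf{p}$ under which the complex structures and marked points are $\epsilon$-close, a pair $u_\mathbf{q}, u_\mathbf{p}$ that is $C^2$-close up to $\epsilon$ on the thick part, and a neck energy of $u_\mathbf{q}$ less than $\epsilon$. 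Crucially, $\mathbf{q}$ is allowed to lie in $\mathcal{X}_{k+1}(L,\beta)\setminus\mathcal{M}_{k+1}(L,\beta)$, so that $u_\mathbf{q}$ need only be of $C^2$-class, not pseudoholomorphic.

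For property (i) I would invoke that this prescription is precisely the restriction to $\mathcal{M}_{k+1}(L,\beta)$ of the Gromov topology, which is metrizable as part of the Gromov--Floer compactness package in [FOOO2]; the sets $B_\epsilon(\mathcal{X},\mathbf{p})\cap\mathcal{M}_{k+1}(L,\beta)$ then form a basis by construction. Property (iii) splits into two parts. Monotonicity $B_{\epsilon_1}\subset B_{\epsilon_2}$ for $\epsilon_1<\epsilon_2$ is immediate, since enlarging $\epsilon$ only relaxes each defining inequality. For $\bigcap_{\epsilon>0}B_\epsilon(\mathcal{X},\mathbf{p}) = \{\mathbf{p}\}$, I would argue that if $\mathbf{q}$ lies in every $B_\epsilon$, then letting $\epsilon\to 0$ forces the resolution parameters, the neck energy, and the $C^2$-distance on the thick part all to vanish; the thick-part identifications then converge to a biholomorphism $\nu:\Sigma_\mathbf{q}\to\Sigma_\mathbf{p}$ matching marked points with $u_\mathbf{p}\circ\nu = u_\mathbf{q}$, which exhibits $\mathbf{q}\sim\mathbf{p}$, i.e.\ equality as points of the moduli space.

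The main work, and the expected obstacle, is property (ii): given $\mathbf{q}\in B_\epsilon(\mathcal{X},\mathbf{p})\cap\mathcal{M}_{k+1}(L,\beta)$, I must produce $\delta>0$ with $B_\delta(\mathcal{X},\mathbf{q})\subset B_\epsilon(\mathcal{X},\mathbf{p})$. The subtlety is that, being $\epsilon$-near $\mathbf{p}$, the domain $\Sigma_\mathbf{q}$ is typically a partial resolution of $\Sigma_\mathbf{p}$, carrying a (possibly strictly) less degenerate nodal configuration, so a neighborhood of $\mathbf{q}$ is cut out using a smaller set of resolution and deformation data than that of $\mathbf{p}$. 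The key step is to show that any datum witnessing membership in $B_\delta(\mathcal{X},\mathbf{q})$ can be composed with the fixed datum realizing $\mathbf{q}\in B_\epsilon(\mathcal{X},\mathbf{p})$ to produce a datum witnessing membership in $B_\epsilon(\mathcal{X},\mathbf{p})$. This amounts to a quasi-triangle inequality for the resolution parameters, complex-structure distances, $C^2$-norms, and neck energies, together with the compatibility of neck regions under composition of resolutions. Here I would rely on the gluing and coordinate-change estimates of [FOOO5] and [FOOO6]: one chooses $\delta$ small enough (depending on $\mathbf{p}$, $\mathbf{q}$, $\epsilon$, and the uniform constants in those estimates) that the composed datum satisfies all the $\epsilon$-inequalities, using that $\mathbf{q}\in\mathcal{M}_{k+1}(L,\beta)$ is genuinely pseudoholomorphic so that elliptic estimates control $u_\mathbf{q}$ on the thick part of its own neighborhoods. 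The heart of the difficulty is the bookkeeping across the two distinct combinatorial types and the uniformity of the gluing constants; this is exactly the content of the neighborhood-compatibility arguments in the cited FOOO references, which I would adapt rather than reprove.
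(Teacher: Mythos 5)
Your proposal is correct and takes essentially the same route as the paper: the paper's proof consists solely of invoking the stable map topology of \cite[Definition 10.3]{FO} and deferring the verification of the partial-topology axioms to \cite[Proposition 4.3]{FOOO5}, which is exactly the citation-based argument you unpack. Your expanded discussion of the neighborhood data and of why axiom (ii) is the substantive point is a faithful elaboration of what those references contain, not a different method.
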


For $\mathbf{p} := \left[\left((\Sigma_{\mathbf{p}}, \vec{z}_{\mathbf{p}}), u_{\mathbf{p}} \right)\right] \in \mathcal{M}_{k+1}(L, \beta),$ we denote by
\begin{equation}\nonumber
\mathscr{U}_{\mathbf{p}} \subset \mathcal{X}_{k+1}(L, \beta)
\end{equation}
an open neighborhood of $\mathbf{p}$ in $\mathcal{X}_{k+1}(L, \beta)$ determined by Definition \ref{prttop}. 

We consider a finite dimensional subspace
\begin{equation}\nonumber
{E}_{\mathbf{p}}(\mathsf{x}) \subset C^2(\Sigma_{\mathsf{x}}; u_{\mathsf{x}}^*TX \otimes \Lambda^{0,1}),
\end{equation}
that consists of $C^2$-maps with the supports away from the nodal points.

\begin{defn}[Obstruction bundle data]
For each point $\mathsf{x} \in \mathscr{U}_{\mathbf{p}} \subset \mathscr{X}_{k+1}(L, \beta),$
we define \textit{obstruction bundle data} by a family of $C^2$-tangent spaces $\{E_{\mathbf{p}}(\mathsf{x})\}_{\mathsf{x} \in \mathscr{U}_{\mathbf{p}}}$ with the following properties:
\begin{itemize}
\item[--] (Transversality) The Fredholm operator
\begin{equation}\nonumber
D_{u_{\mathbf{p}}} \overline{\partial} : W_{m+1}^2(\Sigma_{\mathbf{p}}, \partial \Sigma_{\mathbf{p}}; u_{\mathbf{p}}^*TX, u_{\mathbf{p}}^*TL) \rightarrow L_m^2(\Sigma_{\mathbf{p}}; u_{\mathbf{p}}^*TX \otimes \Lambda^{0,1})
\end{equation}
satisfies Im$D_{u_{\mathbf{p}}} \overline{\partial} + E_{\mathbf{p}}(\mathbf{p}) = L_m^2(\Sigma_{\mathbf{p}}; u_{\mathbf{p}}^*TX \otimes \Lambda^{0,1}).$
\item[--] (Semi-continuity) If $\mathbf{p} \in \mathscr{U}_{\mathbf{q}} \cap  \mathcal{M}_{k+1}(L, \beta)$ and $\mathsf{x} \in \mathscr{U}_{\mathbf{p}} \cap \mathscr{U}_{\mathbf{q}},$ then we have ${E}_{\mathbf{p}}(\mathsf{x}) \subset {E}_{\mathbf{q}}(\mathsf{x}).$
\item[--] (Invariance under automorphisms) We require $v_{*}\big({E}_{\mathbf{p}}(\mathsf{x})\big) = {E}_{\mathbf{p}}(\mathsf{x})$ for the induced automorphism $v_* \in \text{Aut}\big(C^2(\Sigma_{\mathsf{x}}); u_{\mathsf{x}}^*TX \otimes \Lambda^{0,1}\big)$ from $v \in \text{Aut}(\mathsf{x}).$
\item[--] (Smoothness) ${E}_{\mathbf{p}}(\mathsf{x})$ depends smoothly on $\mathsf{x}$ in the sense of \cite{FOOO3} Definition 8.7.
\end{itemize}

Given obstruction bundle data $\{E_{\mathbf{p}}(\mathsf{x})\},$ we now construct a Kuranishi atlas on $\mathcal{M}_{k+1}(L, \beta)$. To each point $\mathbf{p} \in \mathcal{M}_{k+1}(L, \beta),$ we assign a Kuranishi chart
\begin{equation}\label{kchrt}
\mathcal{U}_{\mathbf{p}}= (U_{\mathbf{p}}, {E}_{\mathbf{p}}, s_{\mathbf{p}}, \Gamma_{\mathbf{p}}, \psi_{\mathbf{p}}),
\end{equation}
where each component is given by:
\begin{itemize}
\item[--] $U_{\mathbf{p}} := (U_{\mathbf{p}}, \omega_{{\mathbf{p}}}),$ where
\begin{itemize}
\item[--] $U_{\mathbf{p}} := \{ \mathsf{x} \in \mathscr{U}_{\mathbf{p}} \mid \overline{\partial}u_{\mathsf{x}} \in E_{\mathbf{p}} (\mathsf{x}) \}$ is a neighborhood of $\mathbf{p}$ in $\mathscr{U}_{\mathbf{p}}$ (cf. Remark \ref{asfpgo}).
\item[--] $\omega_{{\mathbf{p}}}$ is a closed two-form on $U_{\mathbf{p}}$ defined in Subsection \ref{c2for}, satisfying Condition \ref{condusi3}.
\end{itemize}
\end{itemize}
\begin{itemize}
\item[--] ${E}_{\mathbf{p}} : = \bigcup\limits_{\mathsf{x} \in {U}_{\mathbf{p}}} E_{\mathbf{p}}(\mathsf{x})\times \{\mathsf{x}\}$ is the vector bundle over ${U}_{\mathbf{p}}$ with fiber obtained from the obstruction bundle data $\{E_{\mathbf{p}}(\mathsf{x})\}.$
\item[--] $s_{\mathbf{p}} : U_{\mathbf{p}} \rightarrow {E}_{\mathbf{p}}$ is the smooth section given by $\mathbf{x} \mapsto (\overline{\partial} u_{\mathbf{x}}, \mathbf{x}).$
\item[--] $\Gamma_{\mathbf{p}}:= \mathrm{Aut}(\mathbf{p})$ is the automorphism group (cf. Remark \ref{asfpgo}).
\item[--] $\psi_{\mathbf{p}} : s^{-1}_{\mathbf{p}}(0)/\Gamma_{\mathbf{p}} \rightarrow \mathcal{M}_{k+1}(L, \beta)$ is the obvious homeomorphism on the image given by $\mathbf{x} \mapsto \mathbf{x}.$
\end{itemize}

\begin{rem}\label{asfpgo}
We may assume that the orbifold $U_{\mathbf{p}}$ in \cite{FOOO3} is a global quotient by taking an open subset of $\mathscr{U}_{\mathbf{p}}$ (containing the point $\mathbf{p}$), if necessary. In other words, we can regard our $U_{\mathbf{p}}$ as a manifold equipped with a effective group action by $\mathrm{Aut}(\mathbf{p}).$ (See \cite[Lemma 29.1]{FOOO1}.)
\end{rem}

In the remaining part of this subsection, we briefly review the definitions of the FOOO Kuranishi charts and their embeddings (cf. \cite{FOOO1}) in the general setting, and recall a result from \cite{Kim1} stating that Definition \ref{ourcemb} of an open embedding is its correct generalization.

\begin{defn}[FOOO Kuranishi charts]\label{deflkur}
Let $X$ be a compact metrizable space. We call a tuple $\mathscr{U} := (U, E, s, \Gamma, \psi)$ an \textit{FOOO Kuranishi chart} of $X$ if the following conditions are satisfied:
\begin{enumerate}[label = (\roman*)]
\item[--] $U$ is a {simply connected} orbifold.
\item[--] $E$ is a {trivial} vector bundle of finite rank on $U_p.$
\item[--] $s : U \rightarrow E$ is a smooth section.
\item[--] $\Gamma$ is a finite group acting on $U,$ preserving $s^{-1}(0).$
\item[--] $\psi : {s^{-1}(0)}/{\Gamma} \overset{\simeq}{\hookrightarrow} X$ is a homeomorphism onto the image.
\end{enumerate}
\end{defn}

\begin{defn}[FOOO chart embeddings]\label{foemb}
Given two FOOO Kuranishi charts $\mathcal{U}$ and $\mathcal{U}'$ of $X,$ an \textit{FOOO embedding} $\Phi := (\phi, \widetilde{\phi}) : \mathcal{U} \hookrightarrow \mathcal{U}'$ consists of:
\begin{enumerate}
\item[--] $\phi : U \hookrightarrow U',$ an orbifold embedding,
\item[--] $\widetilde{\phi} : E \hookrightarrow E',$ a vector bundle embedding,
\end{enumerate}
and we require $\Phi$ to satisfy the following conditions:
\begin{enumerate}
\item[(i)] $\widetilde{\phi} \circ s = s' \circ \phi,$
\item[(ii)] $\psi' \circ \phi = \psi$ on $s^{-1}(0),$
\item[(iii)](\textit{Tangent bundle condition}) $ds'$ induces an isomorphism
\begin{equation}\label{tancond}
[ds'_{\phi(x)}] : \frac{T_{\phi(x)}U'}{\phi_*(T_xU)} \xrightarrow{\simeq} \frac{E'_{\phi(x)}}{\widetilde{\phi}(E_{x})},
\end{equation}
at each $x \in s^{-1}(0).$
\end{enumerate}
\end{defn}

Before proceeding to the statement of Proposition \ref{afec}, we need the following additional conditions.

\begin{cond}[Additional conditions]\label{addcond}
Here we add two more conditions to the conditions (i), (ii), and (iii) in Definition \ref{foemb}. Before proceeding, we write $E^c$ for a complement of $\widetilde{\phi}(E)$ in $E'$ and $p^c: E' \twoheadrightarrow E^c$ for the canonical projection. We then additionally require:
\begin{enumerate}[label=(\roman*), start=4]
\item$ p^c(s')|_{\phi(U)} \equiv 0.$
\item (After fixing a local trivialization,) the tangent bundle condition holds
\begin{equation}\label{tancond4}
[d_ys'|_{W_x}] : \frac{T_{\phi(x)}W'_{\phi(x)}}{\phi_*(T_yW_x)} \xrightarrow{\simeq} \frac{E'_{\phi(y)}}{\widetilde{\phi}(E_{y})}
\end{equation}
for all $x \in s^{-1}(0)$ and for every $y \in W_x$ (and not for $x$ alone).
\end{enumerate}
\end{cond}
We provide justification for imposing the conditions (iv) and (v):
\begin{enumerate}[label=(\roman*), start=4]
\item This condition is indeed satisfied by the coordinate changes for the moduli space pseudoholomorphic disks, one of the primary examples of FOOO Kuranishi spaces (cf. \cite{FOOO2}).
\item The linearization (with a choice of local trivialization of $E$ over on $W_x$) being an isomorphism is an open condition with respect to $x \in W_x.$ Hence, by taking $W_x$ smaller if necessary, one can ensure that $\left[d_ys'|_{W_x}\right]$ is an isomorphism for all $y \in W_x.$ 
\end{enumerate}

Then we have:

\begin{prop}\cite[Proposition 2.30]{Kim1}\label{afec}
An FOOO embedding together with the above conditions $(iv)$ and $(v)$ determines an embedding of Kuranishi chart in the sense of Definition \ref{ourcemb}.
\end{prop}

\subsection{Base coordinate changes}\label{bccd}
The coordinate change for the base component is, in essence, largely consistent with the approach presented in the FOOO's works. Consequently, the material in this subsection can be regarded primarily as a review of \cite{FOOO2}.

Let $\left\{{E_{\mathbf{p'}}(\mathbf{x})}\right\}$ be obstruction bundle data. Let $\mathcal{U}_{\mathbf{p'}}$ and $\mathcal{U}_{\mathbf{q}}$ be two Kuranishi charts at $\mathbf{q} \in \mathcal{M}_{k+1}(L, \beta)$ and $\mathbf{p'} \in \mathscr{U}_{\mathbf{q}} \cap \mathcal{M}_{k+1}(L, \beta),$ respectively, with the property: $\mathbf{p'} \in \mathrm{Im} \psi_{\mathbf{q}}$.
We denote
\begin{equation}\nonumber
{U}_{\mathbf{p'}\mathbf{q}} := U_{\mathbf{p'}} \cap \mathscr{U}_{\mathbf{q}}.
\end{equation}
For $\mathbf{x} \in U_{\mathbf{p'}\mathbf{q}},$ by the semi-continuity of the obstruction bundle data, we have $\partial u_{\mathbf{x}} \in E_{\mathbf{p'}}(\mathbf{x}) \subseteq E_{\mathbf{q}}(\mathbf{x}),$ from which we obtain the inclusion map
\begin{equation}\nonumber
\phi_{\mathbf{p'}\mathbf{q}} : U_{\mathbf{p'}\mathbf{q}} \hookrightarrow U_{\mathbf{q}}.
\end{equation}

Moreover, we have the inclusion of the total space of vector bundles
\begin{equation}\nonumber
\widetilde{\phi}_{\mathbf{p'q}} : {E}_{\mathbf{p'}}|_{U_{\mathbf{p'}\mathbf{q}}} \hookrightarrow {E}_{\mathbf{q}},
\end{equation}
which gives rise to a fiber-wise injection of vector bundles on $U_{\mathbf{p'}\mathbf{q}}$
\begin{equation}\nonumber
{E}_{\mathbf{p'}}|_{U_{\mathbf{p'}\mathbf{q}}} \hookrightarrow \phi_{\mathbf{p'}\mathbf{q}}^*{E}_{\mathbf{q}} = {E_{\mathbf{q}}}|_{U_{\mathbf{p'q}}} \hookrightarrow E_{\mathbf{q}}.
\end{equation}

In fact, we have:
\begin{lem}\cite[Lemma 7.7]{FOOO3}
$\left\{\left(U_{\mathbf{p'}\mathbf{q}}, \phi_{\mathbf{p'q}}, \widetilde{\phi}_{\mathbf{p'q}}\right)\right\}$ defines  a coordinate change for an FOOO Kuranishi space.
\end{lem}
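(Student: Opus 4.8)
The plan is to verify, pair by pair, the defining axioms of an FOOO embedding (Definition \ref{foemb}) for $\Phi_{\mathbf{p'q}} = (\phi_{\mathbf{p'q}}, \widetilde{\phi}_{\mathbf{p'q}})$, and then to check the global compatibility conditions of Definition \ref{fo3ks}. First I would establish that $\phi_{\mathbf{p'q}}$ is a genuine (equivariant) embedding of manifolds. The point is that the inclusion is well defined set-theoretically: by the semi-continuity clause of the obstruction bundle data, any $\mathbf{x} \in U_{\mathbf{p'q}} = U_{\mathbf{p'}} \cap \mathscr{U}_{\mathbf{q}}$ satisfies $\overline{\partial} u_{\mathbf{x}} \in E_{\mathbf{p'}}(\mathbf{x}) \subseteq E_{\mathbf{q}}(\mathbf{x})$, so that $\mathbf{x} \in U_{\mathbf{q}}$. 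Smoothness and injectivity are then immediate, and after the global-quotient reduction of Remark \ref{asfpgo} we may regard it as an $(\Gamma_{\mathbf{p'}}, \Gamma_{\mathbf{q}})$-equivariant embedding of manifolds. The fiberwise inclusions $E_{\mathbf{p'}}(\mathbf{x}) \subseteq E_{\mathbf{q}}(\mathbf{x})$ assemble into the linear vector bundle embedding $\widetilde{\phi}_{\mathbf{p'q}}$ covering $\phi_{\mathbf{p'q}}$, with equivariance supplied by the invariance-under-automorphisms property of the obstruction spaces.

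Next I would dispatch the two elementary axioms. Condition (i) of Definition \ref{foemb}, namely $\widetilde{\phi}_{\mathbf{p'q}} \circ s_{\mathbf{p'}} = s_{\mathbf{q}} \circ \phi_{\mathbf{p'q}}$, holds because both sides send $\mathbf{x}$ to $\overline{\partial} u_{\mathbf{x}}$ under the chosen inclusions; condition (ii), $\psi_{\mathbf{q}} \circ \phi_{\mathbf{p'q}} = \psi_{\mathbf{p'}}$ on $s_{\mathbf{p'}}^{-1}(0) \cap U_{\mathbf{p'q}}$, holds because each $\psi$ is the tautological identification $\mathbf{x} \mapsto \mathbf{x}$ of a zero of the section with the corresponding point of $\mathcal{M}_{k+1}(L,\beta)$.

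The analytic heart is the tangent bundle condition (iii). Here I would use the implicit-function-theorem description of the Kuranishi neighborhoods: at $\mathbf{x}$ one identifies $T_{\mathbf{x}} U_{\mathbf{p'}}$ with the kernel of the composite of the linearized operator $D_{u_{\mathbf{x}}} \overline{\partial}$ with the projection modulo $E_{\mathbf{p'}}(\mathbf{x})$, and likewise $T_{\phi(\mathbf{x})} U_{\mathbf{q}}$ with the kernel modulo $E_{\mathbf{q}}(\mathbf{x})$. Since $E_{\mathbf{p'}}(\mathbf{x}) \subseteq E_{\mathbf{q}}(\mathbf{x})$, the former sits inside the latter, and $D_{u_{\mathbf{x}}}\overline{\partial}$ induces a map from the quotient $T_{\phi(\mathbf{x})}U_{\mathbf{q}}/\phi_*(T_{\mathbf{x}}U_{\mathbf{p'}})$ to $(E_{\mathbf{q}})_{\phi(\mathbf{x})}/\widetilde{\phi}((E_{\mathbf{p'}})_{\mathbf{x}})$. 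A diagram chase, using the transversality clause $\mathrm{Im}\,D_{u_{\mathbf{p}}}\overline{\partial} + E = L^2_m$ to control surjectivity at the relevant fibers, shows this induced map is the desired isomorphism $[ds_{\mathbf{q}}]$. I expect this to be the main obstacle, since it is precisely where the Fredholm/gluing analysis of the moduli problem enters; rather than reprove it, I would import it from the construction of [FOOO5], verifying only that its hypotheses are met by our choice of obstruction bundle data.

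Finally I would verify the coordinate-change compatibilities of Definition \ref{fo3ks}. The cocycle identity $\Phi_{\mathbf{p'r}}|_{U_{\mathbf{p'qr}}} = \Phi_{\mathbf{qr}} \circ \Phi_{\mathbf{p'q}}|_{U_{\mathbf{p'qr}}}$ reduces, for both the base and bundle components, to transitivity of the set-theoretic inclusions on the triple overlap; the identity axiom $\Phi_{\mathbf{pp}} = (U_{\mathbf{p}}, \mathrm{id}, \widehat{\mathrm{id}})$ is immediate from the definitions; and the image-matching condition $\psi_{\mathbf{p'}}\big(s_{\mathbf{p'}}^{-1}(0) \cap U_{\mathbf{p'q}}\big) = \mathrm{Im}\,\psi_{\mathbf{p'}} \cap \mathrm{Im}\,\psi_{\mathbf{q}}$ follows by unwinding $U_{\mathbf{p'q}} = U_{\mathbf{p'}} \cap \mathscr{U}_{\mathbf{q}}$ together with the tautological nature of the $\psi$'s and the semi-continuity of the obstruction data. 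Assembling these checks yields the claimed coordinate change.
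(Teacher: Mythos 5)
The paper does not actually prove this statement: it is quoted verbatim as \cite[Lemma 7.7]{FOOO5}, and the only argument supplied in the surrounding text is the observation (which you reproduce) that semi-continuity of the obstruction bundle data gives $\overline{\partial}u_{\mathbf{x}} \in E_{\mathbf{p'}}(\mathbf{x}) \subseteq E_{\mathbf{q}}(\mathbf{x})$ and hence the set-theoretic inclusions $\phi_{\mathbf{p'q}}$ and $\widetilde{\phi}_{\mathbf{p'q}}$. Your outline is consistent with that setup and with the list of properties (i)--(iv) the paper records immediately afterwards (equality of virtual dimensions, equivariance, vanishing of the complementary components of $s_{\mathbf{q}}$ on the image, and the tangent bundle condition), so as a reconstruction of the intended argument it is sound. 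The one place where you should be careful not to oversell is the tangent bundle condition: identifying $T_{\mathbf{x}}U_{\mathbf{p'}}$ with $\ker\bigl(\pi_{E_{\mathbf{p'}}^{\perp}}\circ D_{u_{\mathbf{x}}}\overline{\partial}\bigr)$ presupposes the smooth structure on the virtual neighborhood produced by the gluing/exponential-decay analysis of [FOOO5] and [FOOO7], especially at nodal configurations, so it is not a pure diagram chase; you correctly defer this to the cited construction, which is exactly what the paper itself does by citing the whole lemma. One minor bookkeeping point: the cocycle and image-matching conditions should be checked only for the pairs admitted in Definition \ref{fo3ks}, i.e.\ $\mathbf{p'}\in\mathrm{Im}\,\psi_{\mathbf{q}}$, rather than for arbitrary overlapping charts as in the paper's later $L_{\infty}$-version.
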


The above discussion yields a bundle embedding
\begin{equation}\nonumber
\begin{tikzcd}
E_{\mathbf{p}}|_{U_{\mathbf{p}\mathbf{q}}} \arrow[hook]{r}{\widetilde{{\phi}}_{\mathbf{p}\mathbf{q}}} \arrow{d} & E_{\mathbf{q}} \arrow{d}\\
U_{\mathbf{p}\mathbf{q}} \arrow[hook]{r}{\phi_{\mathbf{p}\mathbf{q}}} & U_{\mathbf{q}},
\end{tikzcd}
\end{equation}
hence an FOOO chart embedding. (Here, the upper horizontal line is understood as an \textit{inclusion} after the identification by parallel transport.) Moreover, the following properties are satisfied:
\begin{enumerate}[label = (\roman*)]
\item Their (virtual) dimensions are the same: dim $\mathcal{U}_{\mathbf{p}} = $ dim $\mathcal{U}_{\mathbf{q}}.$
\item $\phi_{\mathbf{p}\mathbf{q}}$ is $(\Gamma_p, \Gamma_q)$-equivariant as it is an inclusion and the group action coincides at points of both the domain and the image of $\phi_{\mathbf{p}\mathbf{q}}.$
\item Write $s_{\mathbf{p}} = (s_{\mathbf{p}}^{1}, \cdots, s_{\mathbf{p}}^{\mathrm{rk}E_{\mathbf{p}}})$ and $s_{\mathbf{q}} = (s_{\mathbf{q}}^{1}, \cdots, s_{\mathbf{q}}^{\mathrm{rk}E_{\mathbf{q}}})$ in the pre-chosen orthonormal frame, so that $\overline{\phi} \circ s^i_{\mathbf{p}} = s^i_{\mathbf{q}}$, $i = 1, \cdots, \mathrm{rk}\,E_{\mathbf{p}}$. Then we have $\phi_{\mathbf{pq}}^*s_2^{\mathrm{rk}E_{\mathbf{p}}+1} = \cdots = \phi_{\mathbf{pq}}^*s_2^{\mathrm{rk}E_{\mathbf{q}}} = 0$.
\item The FOOO tangent bundle condition holds, that is, there exists an isomorphism of vector spaces:
\begin{equation}
[ds_{\mathbf{q}, \phi_{\mathbf{pq}(\mathbf{x})}}] : \frac{T_{\phi_{\mathbf{pq}}(\mathbf{x})}U_{\mathbf{q}}}{\phi_{{\mathbf{pq}}*}(T_{\mathbf{x}}U_{\mathbf{p}})} \xrightarrow{\simeq} \frac{E_{\mathbf{q},\phi_{\mathbf{pq}}(\mathbf{x})}}{\widetilde{\phi}_{\mathbf{pq}}(E_{\mathbf{p},\mathbf{x}})}
\end{equation}
for each $\mathbf{x} \in s^{-1}_{\mathbf{p}}(0).$ Here, in fact we have $\mathbf{x} = \phi_{\mathbf{pq}}(\mathbf{x});$ however, we keep this expression to make the context clearer. 
\end{enumerate}

Given the above data with an implicit choice of $\mathbf{p}' \in \mathrm{Im}\psi_{\mathbf{p}} \cap \mathrm{Im}\psi_{\mathbf{q}},$ we obtain the tuple $\left(U_{\mathbf{p}\mathbf{q}}, \phi_{\mathbf{p}\mathbf{q}}, \widetilde{\phi}_{\mathbf{p}\mathbf{q}}\right),$ where we denote

\begin{enumerate}
\item[--] $U_{\mathbf{pq}} := U_{\mathbf{p'p}} \cap U_{\mathbf{p'q}},$
\item[--] $\phi_{\mathbf{pq}} := \phi_{\mathbf{p'q}}|_{U_{\mathbf{pq}}} : U_{\mathbf{p}\mathbf{q}} \hookrightarrow U_{\mathbf{q}},$
\item[--] $\widetilde{\phi}_{\mathbf{pq}} := \widetilde{\phi}_{\mathbf{p'q}}|_{U_{\mathbf{pq}}}.$
\end{enumerate}

\noindent
($\textit{The base coordinate change}$) 
We close this subsection by defining base coordinate change by the above-mentioned data: For our $L_{\infty}$-Kuranishi base coordinate change from $\mathcal{U}_{\mathbf{p}}$ to $\mathcal{U}_{\mathbf{q}}$ for the moduli space, we set
\[
\Phi_{pq} := \left(U_{\mathbf{p}\mathbf{q}}, \phi_{\mathbf{p}\mathbf{q}}, \widetilde{\phi}_{\mathbf{p}\mathbf{q}}\right).
\]

\subsection{The closed two-form $\omega_{\mathbf{p}}$}\label{c2for}
Importantly, the ambient symplectic form plays a crucial role in generating the de Rham part algebraic structure through the introduction of a closed two-form on each Kuranishi chart.

Using the symplectic form $\omega$ of $M$, we define a two-form $\omega_{\mathbf{p}} = \{\omega_{\mathbf{p},\mathbf{y}}\}_{\mathbf{y} \in U_{\mathbf{p}}}$ on $U_{\mathbf{p}}$ by
\begin{equation}\label{o2f}
\omega_{\mathbf{p}, \mathbf{y}}(X_{\mathbf{y}}, Y_{\mathbf{y}}) := \int\limits_{\Sigma} \omega(X_{\mathbf{y}}, Y_{\mathbf{y}}) \mathrm{dvol}_{\Sigma}
\end{equation}
for $X_{\mathbf{y}}, Y_{\mathbf{y}} \in T_{\mathbf{y}}U_{\mathbf{p}} \subset \Gamma(\Sigma, u_\mathbf{y}^*TM), \mathbf{y} \in U_{\mathbf{p}},$ and a fixed volume form $ \mathrm{dvol}_{\Sigma}$ on $\Sigma.$

\begin{rem}
We do not require $\mathrm{Aut}(\mathbf{p})$-invariance of $\omega_{\mathbf{p}}$, and hence it does not restrict to the moduli in general. This is because the essential ingredient is the algebraic structure constructed in Section 4, for which $\omega_{\mathbf{p}}$ serves merely as auxiliary data. In fact, the choice $\mathrm{dvol}_{\Sigma} = u_{\mathbf{p}}^* \omega$ (modulo the fact that it can possibly degenerate) would, for example, ensure such invariance; however, this choice depends on individual charts and would render the subsequent discussion significantly more complicated. We will address this issue in future work.
\end{rem}

\begin{lem}
$\omega_{\mathbf{p}}$ is a closed two-form on $U_{\mathbf{p}}.$
\end{lem}

\begin{proof}
We first compute for vector fields $X, Y,$ and $Z \in \Gamma(TU_{\mathbf{p}}),$
\begin{equation}\label{xapyz}
\begin{split}
X &\omega_{\mathbf{p}}(Y,Z) = X\Big\{\int\limits_{\Sigma} \omega(Y_{\mathbf{y}}, Z_{\mathbf{y}}) \mathrm{dvol}_{\Sigma}\Big\}_{\mathbf{y}} = \frac{d}{d\tau}\Big|_{\tau=0} \Big\{\int\limits_{\Sigma} \omega(Y_{\widetilde{\mathbf{y}}(\tau)}, Z_{\widetilde{\mathbf{y}}(\tau)}) \mathrm{dvol}_{\Sigma}\Big\}_{\mathbf{y}}\\
&=\Big\{\frac{d}{d\tau}\Big|_{\tau=0}\int\limits_{\Sigma} \omega(Y_{\widetilde{\mathbf{y}}(\tau)}, Z_{\widetilde{\mathbf{y}}(\tau)}) \mathrm{dvol}_{\Sigma}\Big\}_{\mathbf{y}}\overset{*}{=}\Big\{\int\limits_{\Sigma} \frac{d}{d\tau}\Big|_{\tau=0} \omega(Y_{\widetilde{\mathbf{y}}(\tau)}, Z_{\widetilde{\mathbf{y}}(\tau)}) \mathrm{dvol}_{\Sigma}\Big\}_{\mathbf{y}}\\
&=\Big\{\int\limits_{\Sigma} X_{{\mathbf{y}}} \left(\omega(Y_{{\mathbf{y}}}, Z_{{\mathbf{y}}})\right) \mathrm{dvol}_{\Sigma}\Big\}_{\mathbf{y}},
\end{split}
\end{equation}
where $\widetilde{\mathbf{y}} : (-1,1) \rightarrow U_{\mathbf{p}},$ is a curve that satisfies $ \widetilde{\mathbf{y}}(0) = \mathbf{y}$ and $\frac{d}{d\tau}\big|_{\tau=0}\widetilde{\mathbf{y}}(\tau) = X_{\mathbf{y}},$ and $\{\cdots\}_{\mathbf{y}}$ stands for a smooth family in $\mathbf{y} \in U_{\mathbf{p}}.$ Among the equalities in (\ref{xapyz}), $*$ non-trivially holds by the Leibniz integral rule (for a \textit{fixed} domain, that is, a $\tau$-independent $\Sigma$) and the Lagrangian boundary condition: For each $\mathbf{y},$ we have
\begin{equation}\nonumber
\begin{split}
\frac{d}{d\tau}&\Big|_{\tau=0}\int\limits_{\Sigma} \omega(Y_{\widetilde{\mathbf{y}}(\tau)}, Z_{\widetilde{\mathbf{y}}(\tau)}) \mathrm{dvol}_{\Sigma}\\ 
&= \int\limits_{\Sigma} \frac{\partial}{\partial\tau}\Big|_{\tau=0} \omega(Y_{\widetilde{\mathbf{y}}(\tau)}, Z_{\widetilde{\mathbf{y}}(\tau)}) \mathrm{dvol}_{\Sigma} + \int_{\partial \Sigma} \omega(Y_{\widetilde{\mathbf{y}}(\tau)}, Z_{\widetilde{\mathbf{y}}(\tau)}) \iota_{\vec{n}}(\mathrm{dvol}_{\Sigma})\\
&= \int\limits_{\Sigma} \frac{d}{d\tau}\Big|_{\tau=0} \omega(Y_{\widetilde{\mathbf{y}}(\tau)}, Z_{\widetilde{\mathbf{y}}(\tau)}) \mathrm{dvol}_{\Sigma},
\end{split}
\end{equation}
where $\iota_{\vec{n}}$ denotes the intreior product with unit normal vector at the boundary $\partial \Sigma.$

Using this, we obtain
\begin{equation}\nonumber
\begin{split}
d\omega_{\mathbf{p}}(X, Y,& Z) = X \omega_{\mathbf{p}}(Y,Z) - Y \omega_{\mathbf{p}}(X,Z) +  Z \omega_{\mathbf{p}}(X,Y)\\ 
&+ \omega_{\mathbf{p}}([X,Y], Z) + \omega_{\mathbf{p}}([X,Z],Y) +  \omega_{\mathbf{p}}([Y,Z],X)\\
&= \Big\{\int\limits_{\Sigma} X_{{\mathbf{y}}} \left(\omega(Y_{{\mathbf{y}}}, Z_{{\mathbf{y}}})\right) \mathrm{dvol}_{\Sigma} \Big\}_{\mathbf{y}} -  \Big\{\int\limits_{\Sigma} Y_{{\mathbf{y}}} \left(\omega(Z_{{\mathbf{y}}}, X_{{\mathbf{y}}})\right) \mathrm{dvol}_{\Sigma} \Big\}_{\mathbf{y}}\\
& +  \Big\{\int\limits_{\Sigma} Z_{{\mathbf{y}}} \left(\omega(X_{{\mathbf{y}}}, Y_{{\mathbf{y}}})\right) \mathrm{dvol}_{\Sigma} \Big\}_{\mathbf{y}} + \Big\{\int\limits_{\Sigma} \omega([X_{{\mathbf{y}}}, Y_{{\mathbf{y}}}], Z_{{\mathbf{y}}}) \mathrm{dvol}_{\Sigma} \Big\}_{\mathbf{y}}\\
&+  \Big\{\int\limits_{\Sigma} \omega([X_{{\mathbf{y}}}, Z_{{\mathbf{y}}}], Y_{{\mathbf{y}}}) \mathrm{dvol}_{\Sigma} \Big\}_{\mathbf{y}} +  \Big\{\int\limits_{\Sigma} \omega([Y_{{\mathbf{y}}}, Z_{{\mathbf{y}}}], X_{{\mathbf{y}}} ) \mathrm{dvol}_{\Sigma} \Big\}_{\mathbf{y}}\\
&=  \Big\{\int\limits_{\Sigma}\Big( X_{{\mathbf{y}}} \left(\omega(Y_{{\mathbf{y}}}, Z_{{\mathbf{y}}})\right) - Y_{{\mathbf{y}}} \left(\omega(Z_{{\mathbf{y}}}, X_{{\mathbf{y}}})\right) + Z_{{\mathbf{y}}} \left(\omega(X_{{\mathbf{y}}}, Y_{{\mathbf{y}}})\right)\\
& \quad \quad \quad + \omega([X_{{\mathbf{y}}}, Y_{{\mathbf{y}}}], Z_{{\mathbf{y}}}) + \omega([X_{{\mathbf{y}}}, Z_{{\mathbf{y}}}], Y_{{\mathbf{y}}}) + \omega([Y_{{\mathbf{y}}}, Z_{{\mathbf{y}}}], X_{{\mathbf{y}}} ) \Big)\mathrm{dvol}_{\Sigma} \Big\}_{\mathbf{y}}\\
&= \Big\{\int\limits_{\Sigma} d\omega \left( X_{{\mathbf{y}}}, Y_{{\mathbf{y}}}, Z_{{\mathbf{y}}}\right) \mathrm{dvol}_{\Sigma} \Big\}_{\mathbf{y}} = \Big\{\int\limits_{\Sigma} (d\omega)( X_{{\mathbf{y}}}, Y_{{\mathbf{y}}}, Z_{{\mathbf{y}}}) \mathrm{dvol}_{\Sigma} \Big\}_{\mathbf{y}} = 0.
\end{split}
\end{equation}
\end{proof}

We now make an important assumption on the closed two-form $\omega_{\mathbf{p}}.$ 
\begin{cond}[Condition on the two-form $\omega_{\mathbf{p}}$]\label{condusi3}
For each point $\mathbf{p},$ we assume that the closed two-form $\omega_{\mathbf{p}}$ on $U_{\mathbf{p}}$ allows the stratification 
\begin{equation}\label{fffffsdfd}
\U_{\mathbf{p}} = \bigcup\limits_i \mathcal{S}_{\mathbf{p},i},
\end{equation}
into \textit{submanifolds} $ \mathcal{S}_{\mathbf{p},i} := \{\mathbf{y} \in U_{\mathbf{p}} \mid \mathrm{rk}\left(\ker\omega_{\mathbf{p}, \mathbf{y}}\right) = i\} \ (0 \leq i \leq \dim U_{\mathbf{p}})$ and their tubular neighborhoods:
\[
\begin{cases}
\iota_{i} : N_i \rightarrow  U_{\mathbf{p}}, \text{ an open neighborhood of each (possibly non-connected) }  \mathcal{S}_{\mathbf{p},i},\\
\pi_{i} : N_i \rightarrow \mathcal{S}_{\mathbf{p},i}, \text{ the associated projection}.
\end{cases}
\]
\end{cond}

\begin{lem}
The decomposition (\ref{fffffsdfd}) restricts to the boundary. That is, we have a decomposition
\begin{equation}\label{adffff}
\partial U_{\mathbf{p}} = \bigcup_i \left( \mathcal{S}_i \cap \partial U_{\mathbf{p}} \right),
\end{equation}
where $\mathcal{S}_i \cap \partial U_{\mathbf{p}}$ is a submanifold of $\partial U_{\mathbf{p}}$ given by
\[
\mathcal{S}_i \cap \partial U_{\mathbf{p}} = \{\mathbf{y} \in \partial U_{\mathbf{p}} \mid \mathrm{rk} (\ker \omega_{\mathbf{p}, \mathbf{y}}) = i - 1 \}, \ 1 \leq i \leq \dim U_{\mathbf{p}},
\]
together with the corresponding tubular neighborhood in $\partial U$ for each $i$:
\[
\begin{cases}
\iota^{\partial}_{i} : N^{\partial}_i \rightarrow  \partial U_{\mathbf{p}}, \text{ an open neighborhood of each }  \mathcal{S}_{i},\\
\pi^{\partial}_{i} : N^{\partial}_i \rightarrow \mathcal{S}_{i} \cap \partial U_{\mathbf{p}}, \text{ the associated projection}.
\end{cases}
\]
\end{lem}

\begin{proof}
Recall that the normal direction at a boundary point concerns resolving the boundary singularity of the pseudoholomorphic disk, that is, the inverse of the gluing process (cf. \cite[Subsection 7.1.3]{FOOO2}). This means that the normal vector $v \in \Gamma\left(N_{\mathbf{y}} U\right)$ at $\mathbf{y} \in \partial U$ is identified with a vector field supported at the singular point of the Riemann surface $\Sigma$. Then, from the formula (\ref{o2f}) for $\omega_{\mathbf{p}}$, we conclude that $v \in \ker(\omega_{\mathbf{p}, \mathbf{y}})$. This further implies that $\mathcal{S}_i \cap \partial U_{\mathbf{p}}$ consists of the points $\mathbf{y}$ for which the rank of $\omega_{\mathbf{p},\mathbf{y}}$ drops by $1$, which amounts to the desired result.
\end{proof}

\end{defn}

\begin{rem}
According to [KO], a generic choice of the closed two-form makes it possible to obtain the stratification of Condition \ref{condusi3}. In this perspective, we conjecture that the same can be achieved by a generic choice of almost complex structure $J$ on the symplectic manifold $M.$ We will study this point in future work.
\end{rem}

Suppose that a base coordinate change 
\[
\Phi_{pq} := \left(U_{\mathbf{p}\mathbf{q}}, \phi_{\mathbf{p}\mathbf{q}}, \widetilde{\phi}_{\mathbf{p}\mathbf{q}}\right).
\]
from $\mathcal{U}_{\mathbf{p}}$ to $\mathcal{U}_{\mathbf{q}}$ (from the previous subsection) is given.

\begin{lem}\label{kwpq}
We have $\phi_{{\mathbf{p}\mathbf{q}}}^*\omega_{\mathbf{q}}' = \omega_{\mathbf{p}}.$
\end{lem}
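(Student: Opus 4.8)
The plan is to exploit the fact that the base coordinate change $\phi_{\mathbf{p}\mathbf{q}}$ constructed in Subsection \ref{bccd} is literally an inclusion: it sends a stable map $\mathbf{y} = \big((\Sigma_{\mathbf{y}}, \vec{z}_{\mathbf{y}}), u_{\mathbf{y}}\big) \in U_{\mathbf{p}\mathbf{q}}$ to the same stable map, now regarded as a point of $U_{\mathbf{q}}$. Consequently $\phi_{\mathbf{p}\mathbf{q}}(\mathbf{y}) = \mathbf{y}$, and the domain $\Sigma_{\mathbf{y}}$, the marked points $\vec{z}_{\mathbf{y}}$, and the map $u_{\mathbf{y}}$ are all left unchanged. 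The whole computation reduces to observing that the 2-form \eqref{o2f} is defined intrinsically in terms of the underlying stable map and the infinitesimal deformations, so it cannot distinguish between the two charts.

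First I would analyze the differential $(\phi_{\mathbf{p}\mathbf{q}})_*$ at a point $\mathbf{y} \in U_{\mathbf{p}\mathbf{q}}$. Both $T_{\mathbf{y}}U_{\mathbf{p}}$ and $T_{\phi_{\mathbf{p}\mathbf{q}}(\mathbf{y})}U_{\mathbf{q}}$ sit as subspaces of the common deformation space $\Gamma(\Sigma_{\mathbf{y}}, u_{\mathbf{y}}^*TM)$, and because $\phi_{\mathbf{p}\mathbf{q}}$ is the inclusion (with $E_{\mathbf{p}}(\mathbf{x}) \subseteq E_{\mathbf{q}}(\mathbf{x})$ identified by parallel transport), a tangent vector $X_{\mathbf{y}} \in T_{\mathbf{y}}U_{\mathbf{p}}$ is carried to the identical section $X_{\mathbf{y}} \in T_{\phi_{\mathbf{p}\mathbf{q}}(\mathbf{y})}U_{\mathbf{q}}$. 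Unwinding the definition of pullback and applying \eqref{o2f} to both forms then gives, at each $\mathbf{y} \in U_{\mathbf{p}\mathbf{q}}$ and each pair $X_{\mathbf{y}}, Y_{\mathbf{y}} \in T_{\mathbf{y}}U_{\mathbf{p}}$,
\[
(\phi_{\mathbf{p}\mathbf{q}}^*\omega_{\mathbf{q}})_{\mathbf{y}}(X_{\mathbf{y}}, Y_{\mathbf{y}}) = \omega_{\mathbf{q}, \phi_{\mathbf{p}\mathbf{q}}(\mathbf{y})}\big((\phi_{\mathbf{p}\mathbf{q}})_* X_{\mathbf{y}}, (\phi_{\mathbf{p}\mathbf{q}})_* Y_{\mathbf{y}}\big) = \int\limits_{\Sigma} u_{\mathbf{y}}^*\omega(X_{\mathbf{y}}, Y_{\mathbf{y}}) \mathrm{dvol}_{\Sigma},
\]
which is precisely $\omega_{\mathbf{p}, \mathbf{y}}(X_{\mathbf{y}}, Y_{\mathbf{y}})$ by \eqref{o2f}. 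This establishes the equality fibrewise, hence as forms on $U_{\mathbf{p}\mathbf{q}}$.

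The only point requiring genuine care—and thus the main obstacle—is confirming that the integrand $u_{\mathbf{y}}^*\omega$ and the measure $\mathrm{dvol}_{\Sigma}$ appearing in the two definitions truly coincide, rather than being merely formally analogous. For the integrand this is transparent, since it depends only on the map $u_{\mathbf{y}}$, which is intrinsic to the point $\mathbf{y}$ and untouched by the inclusion. For the measure I would emphasize that $\mathrm{dvol}_{\Sigma}$ is a datum attached to the domain $\Sigma_{\mathbf{y}}$, chosen $\mathrm{Aut}(\Sigma_{\mathbf{y}}, \vec{z}_{\mathbf{y}})$-invariantly as in the preceding lemma; since $\phi_{\mathbf{p}\mathbf{q}}$ preserves the domain, the same measure is used on both sides. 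To make this fully rigorous one should record, as part of the data of the moduli Kuranishi atlas, that the fibrewise choices of $\mathrm{Aut}$-invariant measures are made compatibly across the overlapping charts $\mathcal{U}_{\mathbf{p}}$ and $\mathcal{U}_{\mathbf{q}}$, which is harmless since the domain of each stable map is canonically determined by the point. With that compatibility in place, the identity $\phi_{\mathbf{p}\mathbf{q}}^*\omega_{\mathbf{q}} = \omega_{\mathbf{p}}$ follows immediately from the pointwise computation above.
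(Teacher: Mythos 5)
Your proposal is correct and is essentially the paper's own argument: the paper's proof is a one-line observation that $\phi_{\mathbf{p}\mathbf{q}}$ is an inclusion and that both forms are induced from the same ambient $\omega$ via (\ref{o2f}), which is exactly what you unwind pointwise. Your additional remark that the $\mathrm{Aut}$-invariant measures $\mathrm{dvol}_{\Sigma}$ must be chosen compatibly across overlapping charts is a reasonable point of care that the paper leaves implicit, but it does not change the route.
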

\begin{proof}
It follows from the fact that  $\phi_{\mathbf{p}\mathbf{q}}$ is an inclusion and that $\omega_{\mathbf{p}}$ and $\omega_{\mathbf{q}}'$ are induced from the same ambient symplectic form $\omega$ by the formula (\ref{o2f}). 
\end{proof}

For an open neighborhood of $\mathbf{x},$ $\overset{\circ}{W_{\mathbf{x}}} \subset \mathcal{S}_{\mathbf{p},i},$ we consider two-forms
\[
\omega_{\mathbf{p},W_{\mathbf{x}}} := \pi^*_{i}(\omega_{\mathbf{p}}|_{\overset{\circ}{W_{\mathbf{x}}}}) \in \Omega^2\left(W_{\mathbf{x}}\right)
\]
and 
\[
\omega'_{\mathbf{q}, W'_{\phi_{\mathbf{pq}}(\mathbf{x})}} := \pi_{i'}^{\prime *}\left(\omega_{\mathbf{q}}'\big|_{\overset{\circ}{W}'_{\phi_{\mathbf{pq}}(\mathbf{x})}}\right) \in \Omega^2\left({W}'_{\phi_{\mathbf{pq}}(\mathbf{x})}\right)
\]
that are presymplectic by construction.

Let
\[
\pi_{\mathbf{pq,x}} : W'_{\phi_{\mathbf{pq}}(\mathbf{x})} \twoheadrightarrow \phi_{\mathbf{pq}} \left(W_{\mathbf{x}}\right)
\]
be an implicitly chosen projection for the embedding $\phi_{\mathbf{pq}}$ and
\[
\phi_{\mathbf{pq}} \left(W_{\mathbf{x}}\right) \hookrightarrow W_{\mathbf{x}}
\]
the obvious embeddinig. We then denote 
\begin{equation}\label{tf0x}
T{\mathcal{F}}^{'0}_{\phi_{\mathbf{p}\mathbf{q}(\mathbf{x})}} := \ker\left( \left(\phi^{-1}_{\mathbf{pq}} \circ \pi_{\mathbf{pq,x}}\right)^*\left(\omega_{\mathbf{p},W_{\mathbf{x}}}\right)\right) \subset TU'_{\mathbf{q}}|_{W'_{\phi_{\mathbf{pq}}(\mathbf{x})}}. 
\end{equation} 

The following corollary plays a useful role in the construction of $L_{\infty}$-coordinate changes in Section 4.
\begin{cor}\label{fctfv} We have an identification of vector bundles over $\phi_{\mathbf{p}\mathbf{q}}(W_{\mathbf{x}})$:
\[
T\mathcal{F}^{'0}_{\phi_{\mathbf{p}\mathbf{q}}(\mathbf{x})}\mid_{\phi_{\mathbf{pq}}(W_{\mathbf{x}})} \simeq (\phi^{-1}_{\mathbf{p}\mathbf{q}})^{*}T\mathcal{F}_{\mathbf{x}} \oplus V
\]
for some ($\dim U_{\mathbf{q}} - \dim U_{\mathbf{p}}$)-dimensional vector space $V.$ 
\end{cor}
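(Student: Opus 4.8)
The plan is to reduce the statement to a pointwise linear-algebra computation at $\phi_{\mathbf{pq}}(\mathbf{x})$, exploiting that $T\mathcal{F}^{'0}_{\phi_{\mathbf{pq}}(\mathbf{x})}$ is by definition (\ref{tf0x}) the kernel of the form obtained by pulling $\omega_{\mathbf{p},W_{\mathbf{x}}}$ back along $\pi_{\mathbf{pq,x}} \circ \phi^{-1}_{\mathbf{pq}}$. Write $\eta := (\phi^{-1}_{\mathbf{pq}})^*\omega_{\mathbf{p},W_{\mathbf{x}}}$ for the presymplectic form induced on $\phi_{\mathbf{pq}}(W_{\mathbf{x}})$. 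The first step is the elementary identity $\ker(\pi^*_{\mathbf{pq,x}}\eta)_{w'} = (d\pi_{\mathbf{pq,x}})^{-1}\big(\ker\eta_{\pi_{\mathbf{pq,x}}(w')}\big)$, valid at every $w'$ because $\pi_{\mathbf{pq,x}}$ is a submersion and $(\pi^*_{\mathbf{pq,x}}\eta)(X,Y) = \eta(d\pi_{\mathbf{pq,x}}X, d\pi_{\mathbf{pq,x}}Y)$. Since $\pi_{\mathbf{pq,x}}$ restricts to the identity on $\phi_{\mathbf{pq}}(W_{\mathbf{x}})$ by the choice of retraction in Definition \ref{ourcemb}, evaluating at $w' = \phi_{\mathbf{pq}}(\mathbf{x})$ keeps the base point fixed and gives $T\mathcal{F}^{'0}_{\phi_{\mathbf{pq}}(\mathbf{x})}|_{\phi_{\mathbf{pq}}(\mathbf{x})} = (d\pi_{\mathbf{pq,x}})^{-1}\big(\ker\eta_{\phi_{\mathbf{pq}}(\mathbf{x})}\big)$. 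As a preliminary remark I would note that $\pi^*_{\mathbf{pq,x}}\eta$ has constant rank equal to that of $\omega_{\mathbf{p},W_{\mathbf{x}}}$, so that $T\mathcal{F}^{'0}$ is a genuine constant-rank distribution and the fiberwise statement is meaningful.

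Next I would identify $\ker\eta$. Because $\phi_{\mathbf{pq}}$ is an embedding and $\eta$ is the corresponding pushforward of $\omega_{\mathbf{p},W_{\mathbf{x}}}$, a vector $X' \in T(\phi_{\mathbf{pq}}(W_{\mathbf{x}}))$ lies in $\ker\eta$ iff $d\phi^{-1}_{\mathbf{pq}}(X') \in \ker\omega_{\mathbf{p},W_{\mathbf{x}}} = T\mathcal{F}_{\mathbf{x}}$; hence $\ker\eta = (\phi_{\mathbf{pq}})_* T\mathcal{F}_{\mathbf{x}}$, a subbundle of $T(\phi_{\mathbf{pq}}(W_{\mathbf{x}}))$. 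The retraction property of $\pi_{\mathbf{pq,x}}$ supplies the splitting $T_{\phi_{\mathbf{pq}}(\mathbf{x})}W'_{\phi_{\mathbf{pq}}(\mathbf{x})} = T_{\phi_{\mathbf{pq}}(\mathbf{x})}(\phi_{\mathbf{pq}}(W_{\mathbf{x}})) \oplus K$, where $K := \ker(d\pi_{\mathbf{pq,x}})_{\phi_{\mathbf{pq}}(\mathbf{x})}$ and $d\pi_{\mathbf{pq,x}}$ is the projection onto the first summand. Since $\ker\eta_{\phi_{\mathbf{pq}}(\mathbf{x})}$ already lies in the first summand, its preimage is $(\phi_{\mathbf{pq}})_* T\mathcal{F}_{\mathbf{x}} \oplus K$, yielding the tangent-level decomposition $T\mathcal{F}^{'0}_{\phi_{\mathbf{pq}}(\mathbf{x})}|_{\phi_{\mathbf{pq}}(\mathbf{x})} = (\phi_{\mathbf{pq}})_* T\mathcal{F}_{\mathbf{x}} \oplus K$. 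Dualizing and using $\big((\phi_{\mathbf{pq}})_* T\mathcal{F}_{\mathbf{x}}\big)^* \simeq (\phi^{-1}_{\mathbf{pq}})^* T^*\mathcal{F}_{\mathbf{x}}$ then gives the claimed isomorphism with $V := K^*$.

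Finally I would pin down the dimension. The rank of $K$ equals $\dim W'_{\phi_{\mathbf{pq}}(\mathbf{x})} - \dim \phi_{\mathbf{pq}}(W_{\mathbf{x}}) = \dim U_{\mathbf{q}} - \dim U_{\mathbf{p}}$, since $W_{\mathbf{x}}$ and $W'_{\phi_{\mathbf{pq}}(\mathbf{x})}$ are open in $U_{\mathbf{p}}$ and $U_{\mathbf{q}}$ and $\phi_{\mathbf{pq}}$ is an embedding, so $V = K^*$ has exactly the prescribed dimension. The main obstacle here is bookkeeping rather than depth: one must keep the two pullback directions straight and genuinely use that $\pi_{\mathbf{pq,x}}$ restricts to the identity on $\phi_{\mathbf{pq}}(W_{\mathbf{x}})$, so that the base point is fixed and the splitting is the literal projection with kernel $K$; this is guaranteed by the retraction built into Definition \ref{ourcemb} together with the compatible system of projections set up in Assumption \ref{assmpiii}.
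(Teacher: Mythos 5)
Your proof is correct and follows essentially the same route as the paper's (very terse) argument: the pushforward $\phi_{\mathbf{pq}*}T\mathcal{F}_{\mathbf{x}}$ sits inside $T\mathcal{F}^{'0}$ and the complementary directions are exactly the kernel of $d\pi_{\mathbf{pq},\mathbf{x}}$, which you identify explicitly where the paper only asserts "the other components must be the kernel directions." Your version is a fully detailed rendering of the same decomposition, working directly from the definition (\ref{tf0x}) rather than citing Lemma \ref{kwpq}, and the dimension count for $V = K^*$ matches.
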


\begin{proof}
By Lemma \ref{kwpq}, we know that $(\phi^{-1}_{\mathbf{pq,x}})^*(T\mathcal{F}_{\mathbf{x}}) \subset T{\mathcal{F}}^{'0}_{\phi_{\mathbf{p}\mathbf{q}}(\mathbf{x})}|_{\phi_{\mathbf{p}\mathbf{q}}(W_{\mathbf{x}})}.$
\end{proof}

\section{$L_{\infty}$-coordinate changes}

In this section, which may be regarded as the core of this paper, we provide a construction of the $L_{\infty}$-component of coordinate changes for the moduli space $\mathcal{M}_{k+1}(L, \beta)$, including all the technicalities. As a consequence, we show that $\mathcal{M}_{k+1}(L, \beta)$ can be understood as an $L_{\infty}$-Kuranishi space. The prerequisite for this section is the content of Appendices B and C on the local $L_{\infty}[1]$-algebras and the $L_{\infty}[1]$-structures arising from V-algebras, respectively.

\subsection{$L_{\infty}$-coordinate changes}
The construction of the $L_{\infty}$-component coordinate change $\widehat{\phi}_{\mathbf{p}\mathbf{q}} = \left\{\widehat{\phi}_{\mathbf{p}\mathbf{q}, \mathbf{x}}\right\}_{\mathbf{x} \in s_{\mathbf{p}}^{-1}(0)}$ is now in order.

\ \

\noindent
($\textit{The }L_{\infty}\textit{-coordinate change}$) 
For each zero point $\mathbf{x} \in s_{\mathbf{p}}^{-1}(0),$ our $L_{\infty}$-component coordinate change is given by the $L_{\infty}[1]$-morphism
\begin{equation}\nonumber
\widehat{\phi}_{\mathbf{p}\mathbf{q},\mathbf{x}} : (\mathcal{C}'_{\mathbf{q}, \phi_{\mathbf{p}\mathbf{q}}(\mathbf{x})})_{\phi_{\mathbf{p}\mathbf{q}}} \rightarrow \mathcal{C}_{\mathbf{p},\mathbf{x}},
\end{equation}
where 
\[
(\mathcal{C}'_{\mathbf{q}, \phi_{\mathbf{p}\mathbf{q}}(\mathbf{x})})_{\phi_{\mathbf{p}\mathbf{q}}} :=  C^{\infty}_{\phi_{\mathbf{p}\mathbf{q}}}(W_{\mathbf{x}}) \otimes_{C^{\infty}(W_{\mathbf{x}})} \mathcal{C}'_{\mathbf{q}, \phi_{\mathbf{p}\mathbf{q}}(\mathbf{x})}
\]
is the completion of $\mathcal{C}'_{\mathbf{q}, \phi_{\mathbf{p}\mathbf{q}}(\mathbf{x})}$ at the image of $\phi_{\mathbf{p}\mathbf{q}}.$ Here we consider the inverse limit
\begin{equation}\label{ivlmt}
C_{\phi}^{\infty}(W) := \lim_{\longleftarrow} C^{\infty}(W)/I_{\phi}^{j} \cdot C^{\infty}(W)
\end{equation}
for the ideal $I_{\phi}^{j} := \{f \in C^{\infty}(W) \mid f|_{\mathrm{Im}\phi} \equiv 0\}.$ See Definition \ref{ladef} for more details including the $L_{\infty}[1]$-structure on $(\mathcal{C}'_{\mathbf{q}, \phi_{\mathbf{p}\mathbf{q}}(\mathbf{x})})_{\phi_{\mathbf{p}\mathbf{q}}}$. 

Then $\widehat{\phi}_{\mathbf{p}\mathbf{q},\mathbf{x}}$ is defined by the composition 
\begin{equation}\label{1phi4}
\begin{split}
\widehat{\phi}_{\mathbf{p}\mathbf{q}, \mathbf{x}} &:= \widehat{\eta}_{\mathbf{p}\mathbf{q}, \mathbf{x}} \circ \widehat{\kappa}_{\mathbf{p}\mathbf{q}, \mathbf{x}}\\
 &: (\mathcal{C}'_{\mathbf{q}, \phi_{\mathbf{p}\mathbf{q}}(\mathbf{x})})_{\phi_{\mathbf{p}\mathbf{q}}} \xrightarrow{\widehat{\kappa}_{\mathbf{p}\mathbf{q}, \mathbf{x}}} ({\mathcal{C}}^{'0}_{\mathbf{q}, \phi_{\mathbf{p}\mathbf{q}}(\mathbf{x})})_{\phi_{\mathbf{p}\mathbf{q}}} \xrightarrow{\widehat{\eta}_{\mathbf{p}\mathbf{q}, \mathbf{x}}} \mathcal{C}_{\mathbf{p}, \mathbf{x}}
\end{split}
\end{equation}
whose components are soon defined. Here $({\mathcal{C}}^{'0}_{\mathbf{q}, \phi_{\mathbf{p}\mathbf{q}}(\mathbf{x})})_{\phi_{\mathbf{p}\mathbf{q}}}$ denotes the $L_{\infty}[1]$-algebra given by
\[
({\mathcal{C}}^{'0}_{\mathbf{q},_{\phi_{\mathbf{p}\mathbf{q}}(\mathbf{x})}})_{\phi_{\mathbf{p}\mathbf{q}}} := \left(\bigwedge{}^{-\bullet}\Gamma(E^{'*}_{\mathbf{q}}|_{W'_{{\phi_{\mathbf{p}\mathbf{q}}(\mathbf{x})}}})\right)_{\phi_{\mathbf{p}\mathbf{q}}} \oplus \Omega_{\mathrm{aug},{\phi_{\mathbf{p}\mathbf{q}}}}^{\bullet +1}({\mathcal{F}}^{'0}_{\phi_{\mathbf{p}\mathbf{q}}(\mathbf{x})})
\]
where $\left(\bigwedge{}^{-\bullet}\Gamma(E^{'*}_{\mathbf{q}}|_{W'_{{\phi_{\mathbf{p}\mathbf{q}}(\mathbf{x})}}})\right)_{\phi_{\mathbf{p}\mathbf{q}}} $ is the Koszul complex completed at $\mathrm{Im}\phi_{\mathbf{p}\mathbf{q}}.$ And $\Omega_{\mathrm{aug},{\phi_{\mathbf{p}\mathbf{q}}}}^{\bullet +1}({\mathcal{F}}^{'0}_{\phi_{\mathbf{p}\mathbf{q}}(\mathbf{x})})$ is the augmented foliation de Rham complex determined by the regular foliation 
\[
T{\mathcal{F}}^{'0}_{\phi_{\mathbf{p}\mathbf{q}(\mathbf{x})}} := \ker\left( \left(\phi^{-1}_{\mathbf{pq}} \circ \pi_{\mathbf{pq,x}}\right)^*\left(\omega_{\mathbf{p},W_{\mathbf{x}}}\right)\right)
\]
of (\ref{tf0x}). The $L_{\infty}[1]$-algebra structure on $\Omega_{\mathrm{aug},{\phi_{\mathbf{p}\mathbf{q}}}}^{\bullet +1}({\mathcal{F}}^{'0}_{\phi_{\mathbf{p}\mathbf{q}}(\mathbf{x})})$ depends on the choice of splitting, but it only makes an isomorphic difference by Corollary \ref{liiifdr} (iv) .

With respect to the Koszul and the de Rham parts, $\widehat{\phi}_{\mathbf{p}\mathbf{q},\mathbf{x}}$ decomposes as 
\[
\widehat{\phi}_{\mathbf{p}\mathbf{q},\mathbf{x}} := \widehat{\phi}^{\mathrm{K}}_{\mathbf{p}\mathbf{q},\mathbf{x}} \oplus \widehat{\phi}_{\mathbf{p}\mathbf{q},\mathbf{x}}^{\mathrm{dR}}.
\]
We define $\widehat{\phi}^{\mathrm{K}}_{\mathbf{p}\mathbf{q},\mathbf{x}}$ similarly as the Koszul component introduced in Proposition \ref{afec}. Namely, 
\[
\widehat{\phi}^{\mathrm{K}}_{\mathbf{p}\mathbf{q},\mathbf{x}} : \left(\bigwedge\nolimits^{-\bullet}\Gamma(E^{'*}_{\mathbf{q}}|_{W'_{{\phi_{\mathbf{p}\mathbf{q}}(\mathbf{x})}}})\right)_{\phi_{\mathbf{p}\mathbf{q}}} \rightarrow \bigwedge\nolimits^{-\bullet}\Gamma(E^*_{\mathbf{p}}|_{W_{\mathbf{x}}})
\]
is defined by the compositions of the following maps
\[
\begin{split}
\left(\bigwedge\nolimits^{-\bullet}\Gamma(E^{'*}_{\mathbf{q}}|_{W'_{{\phi_{\mathbf{p}\mathbf{q}}(\mathbf{x})}}})\right)_{\phi_{\mathbf{p}\mathbf{q}}} &\xrightarrow{(1), \simeq} \left(\bigwedge\nolimits^{-\bullet}\Gamma(E^{'*}_{\mathbf{q}}|_{W'_{{\phi_{\mathbf{p'}\mathbf{q}}(\mathbf{x})}}})\right)_{\phi_{\mathbf{p'}\mathbf{q}}} \xrightarrow{(2), \simeq} \bigwedge\nolimits^{-\bullet}\Gamma(E^*_{\mathbf{p'}}|_{W'_{\mathbf{x}}})\\ 
&\xrightarrow{(3), \simeq} \bigwedge\nolimits^{-\bullet}\Gamma(E^*_{\mathbf{p'}}|_{W_{\mathbf{x}}})_{i_{\mathbf{p'p}}} \xrightarrow{(4),\simeq} \bigwedge\nolimits^{-\bullet}\Gamma(E^*_{\mathbf{p}}|_{W_{\mathbf{x}}}),
\end{split}
\]
where we use the notations in Subsection \ref{bccd} and denote $W'_{\mathbf{x}} := W_{\mathbf{x}} \cap U_{\mathbf{p'q}}$ and $i_{\mathbf{p'p}} : W'_{\mathbf{x}} \hookrightarrow W_{\mathbf{x}}.$ The $L_{\infty}[1]$-quasi-isomorphisms (1) through (4) are given as follows: (1) is defined to be the $L_{\infty}[1]$-morphism induced by the injection 
\[
C^{\infty}_{\phi_{\mathbf{pq}}}(W'_{\phi_\mathbf{pq}(\mathbf{x})}) \hookrightarrow C^{\infty}_{\phi_{\mathbf{p'q}}}(W'_{\phi_\mathbf{p'q}(\mathbf{x})}),
\]
which is again induced from $I_{\phi_{\mathbf{p'}\mathbf{q}}} \rightarrow I_{\phi_{\mathbf{p}\mathbf{q}}}$ and $\mathrm{Im}{\phi_{\mathbf{p}\mathbf{q}}} \hookrightarrow \mathrm{Im}{\phi_{\mathbf{p'}\mathbf{q}}}$ with the observation $\phi_{\mathbf{pq}}(\mathbf{x}) = \phi_{\mathbf{p'q}}(\mathbf{x}).$

Indeed, it fits in the following commutative diagram:
\[
\begin{tikzcd}
\left(\bigwedge\nolimits^{-\bullet}\Gamma(E^{'*}_{\mathbf{q}}|_{W'_{{\phi_{\mathbf{p}\mathbf{q}}(\mathbf{x})}}})\right)_{\phi_{\mathbf{p}\mathbf{q}}} \arrow{rr}{(1)}  & {} & \left(\bigwedge\nolimits^{-\bullet}\Gamma(E^{'*}_{\mathbf{q}}|_{W'_{{\phi_{\mathbf{p'}\mathbf{q}}(\mathbf{x})}}})\right)_{\phi_{\mathbf{p'}\mathbf{q}}}\\
{} &  \bigwedge\nolimits^{-\bullet}\Gamma\left(E^{'*}_{\mathbf{q}}|_{W'_{{\phi_{\mathbf{p}\mathbf{q}}(\mathbf{x})}}}\right). \arrow{ul}{\widehat{\varepsilon}^{\mathrm{K}}_{\mathbf{q},\phi_{\mathbf{pq}}(\mathbf{x}),\phi_{\mathbf{pq}}}} \arrow{ur}[swap]{\widehat{\varepsilon}^{\mathrm{K}}_{\mathbf{q},\phi_{\mathbf{p'q}}(\mathbf{x}),\phi_{\mathbf{p'q}}}} & {}
\end{tikzcd}
\]
Then by Lemma  \ref{cochqim} and Corollary \ref{coinle}, we know ${\widehat{\varepsilon}^{\mathrm{K}}_{\mathbf{q},\phi_{\mathbf{pq}}(\mathbf{x}),\phi_{\mathbf{pq}}}}$ and ${\widehat{\varepsilon}^{\mathrm{K}}_{\mathbf{q},\phi_{\mathbf{p'q}}(\mathbf{x}),\phi_{\mathbf{p'q}}}}$ are quasi-isomorphism. Thus, ${\widehat{\varepsilon}^{\mathrm{K}}_{\mathbf{p'p}}}$ is also a quasi-isomorphism.
The $L_{\infty}[1]$-quasi-isomorphisms (2), (3), and (4) are obtained from Proposition \ref{afec}, Lemma \ref{cochqim}, and Lemma \ref{itavsteai}, respectively.

Then it remains to construct 
\begin{equation}\nonumber
\widehat{\phi}_{\mathbf{p}\mathbf{q}, \mathbf{x}}^{\mathrm{dR}} :  \Omega_{\mathrm{aug},{\phi_{\mathbf{p}\mathbf{q}}}}^{\bullet +1}\left(\mathcal{F}'_{\phi_{\mathbf{p}\mathbf{q}}(\mathbf{x})}\right) \rightarrow \Omega_{\mathrm{aug}}^{\bullet +1}(\mathcal{F}_{x}).
\end{equation}
It is again given by the following composition:
\begin{equation}\label{phi4}
 \Omega_{\mathrm{aug}}^{\bullet +1}(\mathcal{F}_{\mathbf{x}}) \xrightarrow{\widehat{\phi}^{\mathrm{dR},1}_{\mathbf{p}\mathbf{q},\mathbf{x}}} \Omega_{\mathrm{aug},{\phi_{\mathbf{p}\mathbf{q}}}}^{\bullet +1}\left(\mathcal{F}^{'0}_{\phi_{\mathbf{pq}}(\mathbf{x})}\right) \xrightarrow{\widehat{\phi}^{\mathrm{\mathrm{dR}},2}_{\mathbf{p}\mathbf{q},\mathbf{x}}}  \Omega_{\mathrm{aug},{\phi_{\mathbf{p}\mathbf{q}}}}^{\bullet +1}\left(\mathcal{F}'_{\phi_{\mathbf{p}\mathbf{q}}(\mathbf{x})}\right),
\end{equation}
i.e., $\widehat{\phi}^{\mathrm{dR}}_{\mathbf{p}\mathbf{q}, \mathbf{x}} := \widehat{\phi}^{\mathrm{dR},2}_{\mathbf{p}\mathbf{q}, \mathbf{x}} \circ \widehat{\phi}^{\mathrm{dR},1}_{\mathbf{p}\mathbf{q}, \mathbf{x}}.$ 

Our definitions of $\widehat{\phi}_{\mathbf{p}\mathbf{q}, \mathbf{x}}^{\mathrm{dR},1}$ and $\widehat{\phi}_{\mathbf{p}\mathbf{q}, \mathbf{x}}^{\mathrm{dR},2}$ proceed by considering them as homotopy inverses of some other $L_{\infty}[1]$-morphisms.

\noindent
(\textit{The map} $\widehat{\phi}_{\mathbf{p}\mathbf{q}, \mathbf{x}}^{\mathrm{dR},1}$) 
We first consider a family of $\mathbb{R}$-linear maps ${\widehat{\eta}}_{\mathbf{pq},\mathbf{x}} := \{{\widehat{\eta}}_{\mathbf{pq},\mathbf{x},k}\}_{k \geq 1},$
\begin{equation}\nonumber
{\widehat{\eta}}_{\mathbf{pq},\mathbf{x},k} : \ \Omega^{\bullet+1} \left( \mathcal{F}_{\mathbf{x}} \right)^{\otimes k}
\to \Omega_{\phi_{\mathbf{p}\mathbf{q}}}^{\bullet+1} \left({\mathcal{F}^{'0}_{\phi_{\mathbf{p}\mathbf{q}}(\mathbf{x})}} \right)
\end{equation}
defined by
\begin{equation}\nonumber
{\widehat{\eta}}_{\mathbf{pq}, \mathbf{x}, k}({\xi}_1, \dots, {\xi}_k) :=
\begin{cases}
1 \otimes \overline{{\xi}_1} &\text{ if } k=1, \\
(0,0) &\text{ if } k \geq 2,
\end{cases}
\end{equation}
where we denote 
\[
\overline{\xi} := \left( \phi_{\mathbf{p}\mathbf{q}}^{-1} \circ {\pi}_{{\mathbf{pq,x}}}\right)^* (\xi) \in \Omega^{\bullet+1}\left({\mathcal{F}}^{'0}_{\phi_{\mathbf{p}\mathbf{q}}(\mathbf{x})}\right).
\]

\begin{lem} 
${\widehat{\eta}}_{\mathbf{pq},\mathbf{x}} := \{{\widehat{\eta}}_{\mathbf{pq},\mathbf{x},k} \}_{k \geq 1}$ is an $L_\infty[1]$-quasi-isomorphism.
\end{lem}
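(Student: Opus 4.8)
The plan is to verify the two defining properties of an $L_\infty[1]$-quasi-isomorphism separately: first that $\widehat{\eta}_{\mathbf{pq},\mathbf{x}}$ is an $L_\infty[1]$-morphism, and then that its linear part $\widehat{\eta}_{\mathbf{pq},\mathbf{x},1}$ induces an isomorphism on cohomology. The whole argument runs parallel to the de Rham part of the construction of $\widehat{\eta}_x$ in Proposition \ref{afec}, the essential new feature being that here the base two-form $\omega_{\mathbf{p},W_{\mathbf{x}}}$ is genuinely presymplectic, so both $\mathcal{F}_{\mathbf{x}}$ and $\mathcal{F}^{'0}_{\phi_{\mathbf{pq}}(\mathbf{x})}$ are nontrivial foliations rather than the full tangent bundles of the FOOO (zero-form) case. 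Since $\widehat{\eta}_{\mathbf{pq},\mathbf{x},k}=0$ for $k\geq 2$, the morphism condition reduces to checking, for each $k\geq 1$, that $\widehat{\eta}_{\mathbf{pq},\mathbf{x},1}\big(l^{\mathcal{F}_{\mathbf{x}}}_k(\xi_1,\dots,\xi_k)\big)=l^{\mathcal{F}^{'0},\phi}_k\big(\widehat{\eta}_{\mathbf{pq},\mathbf{x},1}(\xi_1),\dots,\widehat{\eta}_{\mathbf{pq},\mathbf{x},1}(\xi_k)\big)$.

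First I would prove the morphism property. Writing $\overline{\xi}=\pi_{\mathbf{pq,x}}^*(\phi_{\mathbf{pq}}^{-1})^*\xi$, the operations $l^{\mathcal{F}_{\mathbf{x}}}_k$ and $l^{\mathcal{F}^{'0}}_k$ are by Lemma \ref{valinf} the iterated Nijenhuis--Schouten brackets with the respective Poisson bivectors $P$ and $P'$, post-composed with the projections $\Pi$, $\Pi'$ of the associated V-algebras. The key input is that $\mathcal{F}^{'0}$ is by definition \eqref{tf0x} the kernel of $\pi_{\mathbf{pq,x}}^*(\phi_{\mathbf{pq}}^{-1})^*(\omega_{\mathbf{p},W_{\mathbf{x}}})$, which together with $\phi_{\mathbf{pq}}^*\omega_{\mathbf{q}}=\omega_{\mathbf{p}}$ (Lemma \ref{kwpq}) makes $P'$ decompose as the image of $P$ under the maps induced by $\pi_{\mathbf{pq,x}}$ and $\phi_{\mathbf{pq}}$, plus extra terms lying in the normal/fiber directions of the retraction $\pi_{\mathbf{pq,x}}$. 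Then, exactly as in the equalities $(1)$--$(4)$ in the proof that $\widehat{\eta}_x$ is an $L_\infty[1]$-morphism in Proposition \ref{afec}, the normal-direction terms are annihilated by the iterated brackets against the $\overline{\xi}_i$ (which are constant in those directions), the Nijenhuis--Schouten bracket commutes with the pushforwards, and the analogue of the V-algebra square \eqref{34diag2} intertwines $\Pi$ with $\Pi'$. This gives $l^{\mathcal{F}^{'0}}_k(\overline{\xi}_1,\dots,\overline{\xi}_k)=\overline{l^{\mathcal{F}_{\mathbf{x}}}_k(\xi_1,\dots,\xi_k)}$, and the surrounding localization bookkeeping (the $1\otimes$ tensor factors and the $[1]_j$ coefficients) is inert since we only ever insert $1\otimes(\cdot)$, precisely as in Lemma \ref{vecpf} and Definition \ref{defldrc}.

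For the quasi-isomorphism I would compute cohomology. By the foliation Poincar\'e lemma (Lemma \ref{pcrlem}) the source complex $\Omega^{\bullet+1}(\mathcal{F}_{\mathbf{x}})$ has $H^{\bullet}=0$ for $\bullet\geq 0$, with $H^{-1}$ equal to the space $C^\infty(W_{\mathbf{x}})_{\mathcal{F}_{\mathbf{x}}}$ of leafwise-constant functions; by Lemma \ref{lemab}(ii) the localized target $\Omega^{\bullet+1}_{\phi_{\mathbf{pq}}}(\mathcal{F}^{'0}_{\phi_{\mathbf{pq}}(\mathbf{x})})$ likewise has cohomology concentrated in degree $-1$. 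Thus it suffices to show $\widehat{\eta}_{\mathbf{pq},\mathbf{x},1}$ restricts to an isomorphism on these degree $-1$ groups. Injectivity is immediate, since both $\pi_{\mathbf{pq,x}}^*(\phi_{\mathbf{pq}}^{-1})^*$ and the localization map $1\otimes(\cdot)$ are injective. For surjectivity onto $H^{-1}$ of the target I would invoke Corollary \ref{fctfv}: because the leaves of $\mathcal{F}^{'0}$ are the $\pi_{\mathbf{pq,x}}$-fibers together with the pullbacks of the $\mathcal{F}_{\mathbf{x}}$-leaves, a function is leafwise-constant for $\mathcal{F}^{'0}$ precisely when it is a pullback $\pi_{\mathbf{pq,x}}^*(\phi_{\mathbf{pq}}^{-1})^*$ of an $\mathcal{F}_{\mathbf{x}}$-leafwise-constant function, so the induced map on $H^{-1}$ is bijective.

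The main obstacle I expect is the morphism computation of the second paragraph: making the decomposition of the Poisson bivector $P'$ on $\mathcal{F}^{'0}$ precise and verifying that its normal-direction components drop out of every iterated bracket after applying $\Pi'$. Unlike Proposition \ref{afec}, where the base foliation was the full tangent bundle, here one must carefully distinguish genuine foliation directions from kernel directions of the presymplectic form, and it is exactly the compatibility \eqref{wwwdiag} of the tubular-neighborhood projections (Assumption \ref{assmpiii}) that keeps the foliated retraction $\pi_{\mathbf{pq,x}}$ coherent across all of $W'_{\phi_{\mathbf{pq}}(\mathbf{x})}$, so that the decomposition of $P'$ is valid there; organizing this bookkeeping is where the real work lies.
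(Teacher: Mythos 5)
Your proposal is correct, and for the main content of the proof --- the verification that $\widehat{\eta}_{\mathbf{pq},\mathbf{x}}$ is an $L_\infty[1]$-morphism --- it follows exactly the paper's route: the same decomposition of the Poisson bivector on $T^*\mathcal{F}^{'0}$ into the pushforward of $P$ plus fiber-direction terms, the same observation that the fiber terms are killed in every iterated bracket because the $\overline{\xi}_i$ are fiberwise constant, the commutation of the Nijenhuis--Schouten bracket with pushforwards, and the V-algebra diagram intertwining $\Pi$ with $\Pi'$ (the analogue of \eqref{34diag2}). Where you genuinely diverge is the quasi-isomorphism step. The paper disposes of it in one sentence by asserting that both the domain and the target are acyclic; you instead identify the surviving cohomology --- concentrated in degree $-1$ as the leafwise-constant functions, via the foliation Poincar\'e lemma and Lemma \ref{lemab}(ii) --- and check that $\widehat{\eta}_{\mathbf{pq},\mathbf{x},1}$ induces an isomorphism there using Corollary \ref{fctfv}. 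Your version is arguably the more honest one for the unaugmented complexes, which are \emph{not} acyclic in degree $-1$; the paper's one-liner is really an appeal to the augmented versions (Corollary \ref{corpoinc}), where acyclicity does hold and any morphism is automatically a quasi-isomorphism. The price of your route is the extra verification that a function leafwise-constant for $\mathcal{F}^{'0}$ is precisely a pullback under $\pi_{\mathbf{pq},\mathbf{x}}^*(\phi_{\mathbf{pq}}^{-1})^*$ of an $\mathcal{F}_{\mathbf{x}}$-leafwise-constant function \emph{after localization at $\mathrm{Im}\,\phi_{\mathbf{pq}}$}; this is plausible from Corollary \ref{fctfv} but deserves a line of justification, whereas the paper's (implicit) augmented-acyclicity argument sidesteps it entirely.
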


\begin{proof}
Consider the following commutative diagram of bundles:
\begin{equation}\label{35diag}
\begin{tikzcd}
TT^*\mathcal{F}_{\mathbf{x}} \arrow{r}{((\phi^{-1}_{\mathbf{p}\mathbf{q}})^*)_*} \arrow{d} & T\left((\phi_{\mathbf{p}\mathbf{q}}^{-1})^*T^*\mathcal{F}_{\mathbf{x}}\right) \arrow[r, hook, "\widetilde{i}_*"] \arrow{d} & TT^*\mathcal{F}^{'0}_{\phi_{\mathbf{p}\mathbf{q}}(\mathbf{x})}|_{T^*\mathcal{F}^{'0}_{\phi_{\mathbf{p}\mathbf{q}}(\mathbf{x})}|_{\phi_{\mathbf{p}\mathbf{q}(W_{\mathbf{x}})}}} \arrow{r}{(\pi_{\mathbf{pq,x}}^*)_*} \arrow{d}{} &  TT^*\mathcal{F}^{'0}_{\phi_{\mathbf{p}\mathbf{q}}(\mathbf{x})}|_{W'_{\phi_{\mathbf{pq}}(\mathbf{x})}} \arrow{d}{}\\
T^*\mathcal{F}_{\mathbf{x}} \arrow{r}{(\phi_{\mathbf{pq}}^{-1})^*} \arrow{d} & (\phi_{\mathbf{pq}}^{-1})^*T^*\mathcal{F}_{\mathbf{x}} \arrow[r, hook, "\widetilde{i}"] \arrow{d} & {T^*\mathcal{F}^{'0}_{\phi_{\mathbf{p}\mathbf{q}}(\mathbf{x})}|_{\phi_{\mathbf{p}\mathbf{q}}(W_{\mathbf{x}})}} \arrow{r}{\pi_{\mathbf{pq,x}}^*} \arrow{d} & T^*\mathcal{F}^{'0}_{\phi_{\mathbf{p}\mathbf{q}}(\mathbf{x})}|_{W'_{\phi_{\mathbf{pq}}(\mathbf{x})}} \arrow{d} \\
W_{\mathbf{x}} \arrow{r}{\phi_{\mathbf{pq}}} & \phi_{\mathbf{p}\mathbf{q}}(W_{\mathbf{x}}) \arrow{r}{=} & \phi_{\mathbf{p}\mathbf{q}(W_{\mathbf{x}})} & W'_{\phi_{\mathbf{pq}}(\mathbf{x})} \arrow[swap]{l}{\pi_{\mathbf{pq,x}}},\\
\end{tikzcd}
\end{equation}
where $\widetilde{i}$ and $\widetilde{i}_*$ denote the obvious inclusion map and the map induced from it, respectively, obtained from Corollary \ref{fctfv}. Observe that from the above commuting diagram, we know that
\begin{equation}\nonumber
(\pi^*_{\mathbf{pq,x}})_* \circ \tilde{i}^* \circ \big((\phi_{\mathbf{p}\mathbf{q}}^{-1})^*\big)_*|_{T^*\mathcal{F}_{\mathbf{x}}} = \pi_{\mathbf{pq,x}}'^* \circ \widetilde{i} \circ (\phi_{\mathbf{p}\mathbf{q}}^{-1})^*.
\end{equation}

Furthermore, we obtain the following diagram for the corresponding V-algebras:
\begin{equation}\label{cdgr1}
\begin{aligned}
&\begin{tikzcd}
\lim\limits_{\longleftarrow}\frac{\Gamma(\bigwedge^{\bullet +1}TT^*\mathcal{F}_{\mathbf{x}}) }{I^n \cdot \Gamma(\bigwedge^{\bullet +1}TT^*\mathcal{F}_{\mathbf{x}})} \arrow{r}{((\phi^{-1}_{\mathbf{p}\mathbf{q}})^*)_*} \arrow{d}{\Pi} &\lim\limits_{\longleftarrow}\frac{\Gamma(\bigwedge^{\bullet +1}T(\phi_{\mathbf{pq}}^{-1})^*T^*\mathcal{F}_{\mathbf{x}}) }{I^n \cdot \Gamma(\bigwedge^{\bullet +1}T(\phi_{\mathbf{pq}}^{-1})^*T^*\mathcal{F}_{\mathbf{x}})} \cdots \arrow{d} \\
\Gamma\left(\bigwedge^{\bullet+1}T^*\mathcal{F}_{\mathbf{x}}\right)\arrow{r}{(\phi_{\mathbf{pq}}^{-1})^*} & \Gamma\left(\bigwedge^{\bullet +1}(\phi_{\mathbf{pq}}^{-1})^*T^*\mathcal{F}_{\mathbf{x}}\right) \cdots
\end{tikzcd}
\\
& \quad \quad \quad \begin{tikzcd}
{\cdots} \arrow[r, hook, "{\widetilde{i}_*}"] &\lim\limits_{\longleftarrow}\frac{\Gamma(\bigwedge^{\bullet +1}TT^*\mathcal{F}^{'0}_{\phi_{\mathbf{p}\mathbf{q}}(\mathbf{x})}|_{T^*\mathcal{F}^{'0}_{\phi_{\mathbf{p}\mathbf{q}}(\mathbf{x})}|_{\phi_{\mathbf{p}\mathbf{q}}(W_{\mathbf{x}})}})}{I^n \cdot \Gamma(\bigwedge^{\bullet +1}TT^*\mathcal{F}^{'0}_{\phi_{\mathbf{p}\mathbf{q}}(\mathbf{x})}|_{T^*\mathcal{F}^{'0}_{\phi_{\mathbf{p}\mathbf{q}}(\mathbf{x})}|_{\phi_{\mathbf{p}\mathbf{q}}(W_{\mathbf{x}})}})} \arrow{r}{(\pi_{\mathbf{pq,x}}^*)_*} \arrow{d}{} &  
\lim\limits_{\longleftarrow}\frac{\Gamma(\bigwedge^{\bullet +1}TT^*\mathcal{F}^{'0}_{W'_{\phi_{\mathbf{pq}}(\mathbf{x})}})}{I^n \cdot \Gamma(\bigwedge^{\bullet +1}TT^*\mathcal{F}^{'0}_{W'_{\phi_{\mathbf{pq}}(\mathbf{x})}})} \arrow{d}{\Pi'}\\
{\cdots} \arrow[r, hook,"{\widetilde{i}}"] & \Gamma\left(\bigwedge^{\bullet+1}T^*T^*\mathcal{F}^{'0}_{\phi_{\mathbf{p}\mathbf{q}}(\mathbf{x})}|_{T^*\mathcal{F}^{'0}_{\phi_{\mathbf{p}\mathbf{q}}(\mathbf{x})}|_{\phi_{\mathbf{p}\mathbf{q}}(W_{\mathbf{x}})}}\right) \arrow{r}{\pi_{\mathbf{pq,x}}^*} & \Gamma\left(\bigwedge^{\bullet+1}T^*\mathcal{F}^{'0}_{\phi_{\mathbf{p}\mathbf{q}}(\mathbf{x})}|_{W'_{\phi_{\mathbf{pq}}(\mathbf{x})}}\right).
\end{tikzcd}
\end{aligned}
\end{equation}
See Definition \ref{Voronov1} and (\ref{dasfda}) for the definition of V-algebras and the related notations. Here the top horizontal line of the graded Lie algebras is given by the fact that the maps $((\phi^{-1}_{\mathbf{p}\mathbf{q}})^*)_*, \widetilde{i}_*,$ and $(\pi_{\mathbf{pq,x}}^*)_*$  in (\ref{35diag}) are bundle maps. The bottom line consists of the abelian subalgebras. $I$'s are the ideals of the functions on the tangent bundles $TT^*\mathcal{F}_{\mathbf{x}}, T\left((\phi_{\mathbf{p}\mathbf{q}}^{-1})^*T^*\mathcal{F}_{\mathbf{x}}\right), TT^*\mathcal{F}^{'0}_{\phi_{\mathbf{p}\mathbf{q}}(\mathbf{x})}|_{T^*\mathcal{F}^{'0}_{\phi_{\mathbf{p}\mathbf{q}}(\mathbf{x})}|_{\phi_{\mathbf{p}\mathbf{q}(W_{\mathbf{x}})}}},$ and $TT^*\mathcal{F}^{'0}_{\phi_{\mathbf{p}\mathbf{q}}(\mathbf{x})}|_{W'_{\phi_{\mathbf{pq}}(\mathbf{x})}}$ that vanish on the zero-sections, respectively. We use the same symbol $I$ by abuse of notation.
 
The two Poisson structures 
\begin{equation}\nonumber
\begin{cases}
P &\in \lim\limits_{\longleftarrow}\frac{\Gamma(\bigwedge^{\bullet +1}TT^*\mathcal{F}_{\mathbf{x}}) }{I^n \cdot \Gamma(\bigwedge^{\bullet +1}TT^*\mathcal{F}_{\mathbf{x}})},\\
P^{'0} &\in 
\lim\limits_{\longleftarrow}\frac{\Gamma(\bigwedge^{\bullet +1}TT^*\mathcal{F}^{'0}_{W'_{\phi_{\mathbf{pq}}(\mathbf{x})}})}{I^n \cdot \Gamma(\bigwedge^{\bullet +1}TT^*\mathcal{F}^{'0}_{W'_{\phi_{\mathbf{pq}}(\mathbf{x})}})}
\end{cases}
\end{equation}
are induced from the presymplectic structures on 
\[
(W_{\mathbf{x}}, \omega_{\mathbf{p},W_{\mathbf{x}}})
\]
and 
\[
\left(W'_{\phi_{\mathbf{pq}}(\mathbf{x})}, (\phi^{-1}_{\mathbf{pq,x}} \circ \pi_{\mathbf{pq,x}})^*(\omega_{\mathbf{p},W_{\mathbf{x}}})\right)
\] 
as in (\ref{pico}), respectively.

For the $L_{\infty}$-relation, we need to show that
\begin{equation}\nonumber
\begin{split}
l_{\phi_{\mathbf{pq}},k}^{'\mathcal{F}^{'0}}\big(\eta_{\phi_{\mathbf{pq}}(\mathbf{x}),1}(\xi_1), \cdots,& \eta_{\phi_{\mathbf{pq}}(\mathbf{x}),1}({\xi}_k)\big) = l_{\phi_{\mathbf{pq}},k}^{\mathcal{F}^{'0}}(1 \otimes \overline{{\xi}_1}, \cdots, 1 \otimes \overline{{\xi}_k})
\\ 
= 1 \otimes l_k^{'\mathcal{F}^{'0}}(\overline{{\xi}_1}, \cdots, \overline{{\xi}_k})& \overset{*}{=} 1 \otimes \overline{l_k^{\mathcal{F}}({\xi}_1, \dots, {\xi}_k)} = \widehat{\eta}_{\mathbf{x},1}\big(l_k^{\mathcal{F}}(\overline{\xi_1}, \dots, \overline{\xi_k})\big),
\end{split}
\end{equation}
which follows once we verify that
\begin{equation}\label{lkfol}
l_k'^{\mathcal{F}^{'0}}(\overline{{\xi}_1}, \dots, \overline{{\xi}_k}) \overset{*}{=} \overline{l_k^{\mathcal{F}}({\xi}_1, \dots, {\xi}_k)}
\end{equation}
holds. Here $\left\{l_{\phi_{\mathbf{pq}},k}^{\mathcal{F}^{'0}}\right\}$ is the $L_{\infty}[1]$-structure on the completion discussed in Definition~\ref{ladef}. 

\begin{claim}
(\ref{lkfol}) holds.
\end{claim}

\begin{proof}
Applying the formula (\ref{dlpk}), we obtain:

\begin{equation}\nonumber
\begin{split}
{l'}_k^{\mathcal{F}^{'0}}(\overline{\xi_1}, \dots, \overline{\xi_k}) =& \Pi^{'} \left[ \cdots \left[P^{'0}, \big((\pi^*_{\mathbf{pq,x}})_* \circ \tilde{i}^* \circ (\phi_{\mathbf{p}\mathbf{q}}^{-1})^*\big)_*({\xi}_1)\right], \cdots, \big((\pi^*_{\mathbf{pq,x}})_* \circ \tilde{i}^* \circ (\phi^{-1}_{\mathbf{p}\mathbf{q}})^*\big)_*({\xi}_k)\right]\\
\overset{(1)}{=} & {\Pi}^{'} \bigg[ \cdots \left[ (\pi_{\mathbf{pq,x}}^*)_*(P^{'0}|_{T^*\mathcal{F}^{'0}_{\phi_{\mathbf{p}\mathbf{q}}(\mathbf{x})}|_{\phi_{\mathbf{p}\mathbf{q}}(\mathbf{x})}}),\big((\pi^*_{\mathbf{pq,x}})_* \circ \tilde{i}^* \circ (\phi_{\mathbf{p}\mathbf{q}}^{-1})^*\big)_*({\xi}_1)\right],\\
&\quad \quad \quad \quad \quad \quad \quad \quad \quad \quad \quad \quad \quad \quad\quad \quad \cdots, \big((\pi_{\mathbf{pq,x}}^*)_* \circ \tilde{i}^* \circ (\phi_{\mathbf{p}\mathbf{q}}^{-1})^*\big)_*({\xi}_k)\bigg]\\
\overset{(2)}{=} & {\Pi}^{'} \Big[\cdots \left[ \big((\pi^*_{\mathbf{pq,x}})_* \circ \tilde{i}^* \circ (\phi_{\mathbf{p}\mathbf{q}}^{-1})^*\big)_*P,  \left((\pi^*_{\mathbf{pq,x}})_* \circ \tilde{i}^* \circ (\phi_{\mathbf{p}\mathbf{q}}^{-1})^*\right)_*({\xi}_1)\right],\\
&\quad \quad \quad \quad \quad \quad \quad \quad \quad \quad \quad \quad \quad \quad\quad \quad  \cdots, \big((\pi^*_{\mathbf{pq,x}})_* \circ \tilde{i}^* \circ (\phi^{-1}_{\mathbf{p}\mathbf{q}})^*\big)_*({\xi}_k)\Big]\\
\overset{(3)}{=} & {\Pi}^{'} \left((\pi_{\mathbf{pq,x}}^*)_* \circ \tilde{i}^* \circ (\phi^{-1}_{\mathbf{p}\mathbf{q}})^*\right)_* \left[ \cdots [ P, {\xi}_1], \dots, {\xi}_k \right] \\
\overset{(4)}{=} & (\pi^*_{\mathbf{pq,x}})_* \circ \tilde{i}^* \circ (\phi^{-1}_{\mathbf{p}\mathbf{q}})^* \circ {\Pi}[ \cdots [ P, {\xi}_1], \dots, {\xi}_k ] \\
= & (\pi^*_{\mathbf{pq,x}})_* \circ \tilde{i}^* \circ (\phi^{-1}_{\mathbf{p}\mathbf{q}})^*  \big( l^{\mathcal{F}}_k({\xi}_1, \dots, {\xi}_k)\big) = \overline{l_k^{\mathcal{F}}({\xi}_1, \dots, {\xi}_k)}.
\end{split}
\end{equation}

We explain how we obtain the equalities $(1)$ through $(4)$:

\begin{itemize}
  \item[(1)] All $((\pi_{\mathbf{pq,x}}^* \circ \tilde{i}^* \circ (\phi_{\mathbf{p}\mathbf{q}}^{-1})^*)_*({\xi}_i)$'s are constant in the fiber direction.
  \item[(2)] From the formula (\ref{pico}), it is not difficult to show that the two Poisson structures are related by:
\[
 P^{'0}|_{T^*\mathcal{F}^{'0}|_{\phi_{\mathbf{p}\mathbf{q}}(W_{\mathbf{x}})}}
  =  (\tilde{i}^*)_{*} \circ \big((\phi_{\mathbf{p}\mathbf{q}}^{-1})^*\big)_*(P) + \overbrace{\left( \sum\limits_{\gamma'} \frac{\partial}{\partial q'_{\gamma'}} \wedge \frac{\partial}{\partial p^{'\gamma'}}  \right),}^{\text{fiber direction components}}
\]
and for the same reason as (1), the repeated bracket vanishes for the components $\sum\limits_{\gamma'} \frac{\partial}{\partial q'_{\gamma'}} \wedge \frac{\partial}{\partial p^{'\gamma'}}$ in the fiber direction. 
  \item[(3)] The Nijenhuis–Schouten bracket commutes with pushforwards.
  \item[(4)] From the commutative diagram $(\ref{cdgr1})$, we have
\[
  \Pi' \circ \big( \pi_{\mathbf{pq,x}}^* \circ \tilde{i} \circ (\phi_{\mathbf{p}\mathbf{q}}^{-1})^*\big)_* = \pi_{\mathbf{pq,x}}^* \circ \tilde{i} \circ (\phi_{\mathbf{p}\mathbf{q}}^{-1})^* \circ \Pi.
\]
\end{itemize}
\end{proof}
Note that $\eta_{\mathbf{pq},\mathbf{x},k}$ is quasi-isomorphic, as both the domain and the target are acyclic (cf. Proposition \ref{augomega}). This proves that $\{\widehat{\eta}_{\mathbf{pq},\mathbf{x},k} \}$ is an $L_\infty[1]$-quasi-isomorphism. 
\end{proof}

We then define
\begin{equation}\nonumber
\widehat{\phi}_{\mathbf{p}\mathbf{q}, \mathbf{x}}^{\mathrm{dR},1} :  \Omega_{\mathrm{aug},\phi_{\mathbf{p}\mathbf{q}}}^{\bullet+1} \left({\mathcal{F}^{'0}_{\phi_{\mathbf{p}\mathbf{q}}}}|_{W_{\mathbf{x}}} \right) \rightarrow \Omega_{\mathrm{aug}}^{\bullet+1} \left( \mathcal{F}_{\mathbf{x}} \right)
\end{equation}
by
\begin{equation}\nonumber
\widehat{\phi}^{\mathrm{dR},1}_{\mathbf{pq}, \mathbf{x}} := \text{ a homotopy inverse of } \widehat{\eta}_{\mathbf{pq},\mathbf{x}},
\end{equation}
using Theorem \ref{wht}.

\subsection{Family of presymplectic forms}

To obtain the map $\widehat{\phi}^{\mathrm{dR},2}_{\mathbf{pq}, \mathbf{x}}$, we need to connect 

\[
\omega'_{\mathbf{q}, W'_{\phi_{\mathbf{pq}}(\mathbf{x})}} 
:= \pi_{i'}^{\prime *}\left(\omega_{\mathbf{q}}'\big|_{\overset{\circ}{W}'_{\phi_{\mathbf{pq}}(\mathbf{x})}}\right)  \in \Omega^2\left(W'_{\phi_{\mathbf{pq}}(\mathbf{x})}\right)
\]
and 
\[
\pi_{\mathbf{pq,x}}^* \circ (\phi_{\mathbf{pq}}^{-1})^*(\omega_{\mathbf{p},W_{\mathbf{x}}}) := \pi_{\mathbf{pq,x}}^* \circ (\phi_{\mathbf{pq}}^{-1})^* \circ \pi_i^*\left( \omega_{\mathbf{p}}|_{\overset{\circ}{W_{\mathbf{x}}}} \right) \in \Omega^2\left(W'_{\phi_{\mathbf{pq}}(\mathbf{x})}\right)
\]
with a family of presymplectic forms, so that Corollary \ref{indliso} yields the desired result.

\noindent
(\textit{The map} $\widehat{\phi}_{\mathbf{p}\mathbf{q}, \mathbf{x}}^{\mathrm{dR},2}$) Our plan is to consider concatenation of the families of presymplectic forms on $W'_{\phi_{\mathbf{pq}}(\mathbf{x})}$, (A), (B), and (C):
\[
\begin{tikzcd}
\omega'_{\mathbf{q}, W'_{\phi_{\mathbf{pq}}(\mathbf{x})}} \arrow[r, "(\mathrm{A})", rightsquigarrow]  & \pi_i^{' *} \circ \overset{\circ}{\pi}_{\mathbf{pq,x}}^{'*} 
\left(\omega_{\mathbf{q}}' \big|_{\overset{\circ}{W}'_{\phi_{\mathbf{pq}}(\mathbf{x})} \cap \phi_{\mathbf{pq}}(\overset{\circ}{W}_{\mathbf{x}})}\right) \arrow[d, "(\mathrm{B})", rightsquigarrow] \\
\pi_{\mathbf{pq,x}}^* \circ (\phi_{\mathbf{pq}}^{-1})^*(\omega_{\mathbf{p},W_{\mathbf{x}}}) & \pi_{\mathbf{pq,x}}^* \circ (\phi_{\mathbf{pq}}^{-1})^* \circ \pi_i^{*} \circ \overset{\circ}{\pi}_{\mathbf{pq,x}}^* 
\left(\omega_{\mathbf{p}} \big|_{\phi_{\mathbf{pq}}^{-1}(\overset{\circ}{W}'_{\phi_{\mathbf{pq}}(\mathbf{x})} \cap \phi_{\mathbf{pq}}(\overset{\circ}{W}_{\mathbf{x}}))}\right), \arrow[l, "(\mathrm{C})", rightsquigarrow]  
\end{tikzcd}
\]
where 
\begin{equation}\nonumber
\begin{cases}
\pi_{\mathbf{pq,x}} : W'_{\phi_{\mathbf{pq}}(\mathbf{x})} \twoheadrightarrow \phi_{\mathbf{pq}} \left(W_{\mathbf{x}}\right),\\
\overset{\circ}{\pi}_{\mathbf{pq,x}}^{'*} :  \overset{\circ}{W}'_{\phi_{\mathbf{pq}}(\mathbf{x})} \twoheadrightarrow {\overset{\circ}{W}'_{\phi_{\mathbf{pq}}(\mathbf{x})} \cap \phi_{\mathbf{pq}}(\overset{\circ}{W}_{\mathbf{x}})}\\
\overset{\circ}{\pi}_{\mathbf{pq,x}} : \overset{\circ}{W}_{\mathbf{x}} \twoheadrightarrow \phi_{\mathbf{pq}}^{-1}(\overset{\circ}{W}'_{\mathbf{pq}(\mathbf{x})} \cap \phi_{\mathbf{pq}}(\overset{\circ}{W}_{\mathbf{x}}))
\end{cases}
\end{equation}
denote choices of projections between the contractible spaces, and
\begin{equation}\nonumber
\begin{cases}
\pi_{i} : N_i \rightarrow \mathcal{S}_{\mathbf{p},i},\\
\pi'_{i} : N'_{i'} \rightarrow \mathcal{S}'_{\mathbf{q},i'}
\end{cases}
\end{equation}
are the associated projection for the tubular neighborhoods.

\begin{lem}\label{eajekl}
The two-forms
\[
\pi_i^{' *} \circ \overset{\circ}{\pi}_{\mathbf{pq,x}}^{'*} 
\left(\omega_{\mathbf{q}}' \big|_{\overset{\circ}{W}'_{\phi_{\mathbf{pq}}(\mathbf{x})} \cap \phi_{\mathbf{pq}}(\overset{\circ}{W}_{\mathbf{x}})}\right)   \in \Omega^2\left(W'_{\phi_{\mathbf{pq}}(\mathbf{x})}\right)
\]
and
\[
\pi_{\mathbf{pq,x}}^* \circ (\phi_{\mathbf{pq}}^{-1})^* \circ \pi_i^{*} \circ \overset{\circ}{\pi}_{\mathbf{pq,x}}^* \left(\omega_{\mathbf{p}} \big|_{\phi_{\mathbf{pq}}^{-1}(\overset{\circ}{W}'_{\mathbf{pq}(\mathbf{x})} \cap \phi_{\mathbf{pq}}(\overset{\circ}{W}_{\mathbf{x}}))}\right)   \in \Omega^2\left(W'_{\phi_{\mathbf{pq}}(\mathbf{x})}\right)
\]
are presymplectic.
\end{lem}

\begin{proof}
Writing the restrictions as pullbacks under the embeddings, we see that the closedness is obvious. The constant rank conditions is a consequence of the restrictions to (subspaces) of the strata   

\[
\phi_{\mathbf{pq}}^{-1}(\overset{\circ}{W}'_{\mathbf{pq}(\mathbf{x})} \cap \phi_{\mathbf{pq}}(\overset{\circ}{W}_{\mathbf{x}})) \subset \overset{\circ}{W}_{\mathbf{x}} \subset \mathcal{S}_{\mathbf{p},i} \text{ (for some }i),
\]

\[
\overset{\circ}{W}'_{\phi_{\mathbf{pq}}(\mathbf{x})} \cap \phi_{\mathbf{pq}}(\overset{\circ}{W}_{\mathbf{x}}) \subset \overset{\circ}{W}'_{\phi_{\mathbf{pq}}(\mathbf{x})} \subset \mathcal{S}'_{\mathbf{q},i'} \text{ (for some }i').
\]
\end{proof}

The family (A) is a 1-parameter family of presymplectic forms on $W'_{\phi_{\mathbf{pq}}(\mathbf{x})}$ of the same rank (cf. Lemma \ref{lemdw}). Similarly, (C) is also such a family. Then by Corollary \ref{indliso}, we obtain $L_{\infty}[1]$-isomorphisms
\begin{equation}\label{ggdef}
\begin{split}
\widehat{\gamma}'_{\mathbf{pq},\mathbf{x}} &: \Omega^{\bullet+1}(\mathcal{F}'_{\phi_{\mathbf{pq}}(\mathbf{x})}(\omega'_{\mathbf{q},W_{\phi_{\mathbf{pq}}(\mathbf{x})}})) 
\xrightarrow{\simeq}
\Omega^{\bullet + 1}\left(\mathcal{F}'_{\phi_{\mathbf{pq}}(\mathbf{x})}\left( \pi_i^{'*} \circ \overset{\circ}{\pi}^{'*}_{\mathbf{pq},\mathbf{x}}\left(\omega_{\mathbf{q}}' \big|_{\overset{\circ}{W}'_{\phi_{\mathbf{pq}}(\mathbf{x})} \cap \phi_{\mathbf{pq}}(W_{\mathbf{x}})}\right)\right)\right),\\
\widehat{\gamma}_{\mathbf{pq},\mathbf{x}} &: \Omega^{\bullet+1}(\mathcal{F}'_{\phi_{\mathbf{pq}(\mathbf{x})}}(\pi_{\mathbf{pq,x}}^* \circ (\phi_{\mathbf{pq}}^{-1})^*(\omega_{\mathbf{p},W_{\mathbf{x}}}))\\
& \quad \quad \quad \quad \xrightarrow{\simeq}
\Omega^{\bullet+1}\left(\mathcal{F}'_{\phi_{\mathbf{pq}}(\mathbf{x})}\left(\pi_{\mathbf{pq,x}}^* \circ (\phi_{\mathbf{pq}}^{-1})^* \circ \pi_i^{*} \circ \overset{\circ}{\pi}_{\mathbf{pq,x}}^* 
\left(\omega_{\mathbf{p}} \big|_{\phi_{\mathbf{pq}}^{-1}(\overset{\circ}{W}'_{\phi_{\mathbf{pq}}(\mathbf{x})} \cap \phi_{\mathbf{pq}}(\overset{\circ}{W}_{\mathbf{x}}))}\right)\right)\right),
\end{split}
\end{equation}
where $\mathcal{F}_{\phi_{\mathbf{pq}}(\mathbf{x})}'(\cdots)$ stands for the foliation arising from the presymplectic form $(\cdots).$
 
For the family (B), we recall the presymplectic version of the Darboux theorem, whose proof can be found in Theorem 2.1 of \cite{GLRR}, for example.
\begin{thm}[Presymplectic Darboux theorem]  
Let $(M,\omega)$ be a presymplectic manifold of dimension $2m+k$ and of rank $2m$.  
Then there exists a local coordinate system at each point of $M$
\[
\{x_1,\ldots,x_m, \; x^{'1},\ldots,x^{'m}, \; q^1,\ldots,q^k\}
\]
such that $\omega$ is written as
\[
\omega = \sum_{i=1}^m dx_i \wedge dx^{'
i}.
\]
In this system, the kernel of $\omega$ is spanned as
\[
\ker \omega = \mathrm{span}\left\{ \frac{\partial}{\partial q^1}, \ldots, \frac{\partial}{\partial q^k} \right\}.
\]
\end{thm}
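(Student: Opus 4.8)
The plan is to reduce the statement to the classical symplectic Darboux theorem by passing to the local leaf space of the characteristic foliation $\ker\omega$. Since $\omega$ is closed and of constant rank $2m$, the distribution $T\mathcal{F} := \ker\omega \subset TM$ has constant rank $k = \dim M - 2m$, and I would first record that it is involutive: for $X, Y \in \Gamma(\ker\omega)$, the identity $[L_X, \iota_Y] = \iota_{[X,Y]}$ together with $\iota_Y\omega = 0$ and $d\omega = 0$ gives $\iota_{[X,Y]}\omega = L_X\iota_Y\omega - \iota_Y L_X\omega = 0 - \iota_Y(d\iota_X\omega + \iota_X d\omega) = 0$, so $[X,Y] \in \ker\omega$. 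By the Frobenius theorem $T\mathcal{F}$ integrates to a foliation, and one may choose local coordinates $(y_1, \ldots, y_{2m}, q^1, \ldots, q^k)$ at the given point in which the plaques are the level sets $\{y = \mathrm{const}\}$, so that $\partial/\partial q^1, \ldots, \partial/\partial q^k$ span $\ker\omega$ pointwise; this already accounts for the asserted description of the kernel.

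Next I would show that $\omega$ is \emph{basic} with respect to this foliation, i.e.\ that it is the pullback of a form on the local leaf space. By Cartan's formula, $L_{\partial/\partial q^j}\omega = d\iota_{\partial/\partial q^j}\omega + \iota_{\partial/\partial q^j}d\omega = 0$, and since also $\iota_{\partial/\partial q^j}\omega = 0$, the form $\omega$ has no $dq^j$-components and its coefficients are independent of the $q^j$. Concretely, fixing a transversal slice $S := \{q^1 = \cdots = q^k = 0\}$ of dimension $2m$ with inclusion $\jmath : S \hookrightarrow M$ and the projection $\pi : M \to S$ along the plaques, one obtains $\omega = \pi^*\bar\omega$ with $\bar\omega := \jmath^*\omega \in \Omega^2(S)$. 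The form $\bar\omega$ is closed because $\jmath^*$ commutes with $d$, and it is nondegenerate on the $2m$-dimensional slice precisely because $\omega$ has rank $2m$ while $\ker\omega$ is spanned by the $\partial/\partial q^j$, which are transverse to $S$; hence $(S, \bar\omega)$ is symplectic.

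I would then invoke the ordinary symplectic Darboux theorem on $(S, \bar\omega)$ to obtain coordinates $x_1, \ldots, x_m, x^{'1}, \ldots, x^{'m}$ on a (possibly smaller) $S$ with $\bar\omega = \sum_{i=1}^m dx_i \wedge dx^{'i}$. Pulling these back by $\pi$ and adjoining $q^1, \ldots, q^k$ yields a coordinate system on a neighborhood of the base point in $M$, and in it $\omega = \pi^*\bar\omega = \sum_{i=1}^m dx_i \wedge dx^{'i}$, with $\ker\omega = \mathrm{span}\{\partial/\partial q^1, \ldots, \partial/\partial q^k\}$ by construction.

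The main obstacle is not a single deep step but rather the careful verification that $\omega$ genuinely descends: one must check that the local leaf space is a manifold (which is automatic from the Frobenius chart) and, more delicately, that $\bar\omega$ is well-defined independently of the chosen transversal and is nondegenerate. The nondegeneracy is exactly where the constant-rank hypothesis is essential---without it the kernel dimension could jump and no uniform splitting into symplectic and null directions could be arranged. If one prefers to avoid the quotient entirely, the same conclusion follows from a Moser-type deformation argument interpolating between $\omega$ and its constant-coefficient model $\sum_{i} dx_i \wedge dx^{'i}$, carried out within the leaves, where the relevant cohomological obstruction vanishes by the foliated Poincar\'{e} lemma (Lemma \ref{pcrlem}). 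Either route reduces the statement to facts already available in the paper.
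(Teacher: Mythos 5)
Your proof is correct. Note that the paper itself does not prove this statement: it quotes the presymplectic Darboux theorem and refers the reader to Theorem~2.1 of [GLRR] for the proof, so there is no in-paper argument to compare against. Your reduction to the classical symplectic Darboux theorem --- involutivity of $\ker\omega$ via Cartan's formula, the Frobenius chart, the observation that $\iota_{\partial/\partial q^j}\omega=0$ and $L_{\partial/\partial q^j}\omega=0$ make $\omega$ basic, nondegeneracy of the restriction to a transversal slice, and pullback of Darboux coordinates from the slice --- is the standard argument, and each of these steps is verified correctly; since the statement is purely local, fixing a single transversal suffices and the independence-of-transversal issue you flag does not actually need to be addressed.
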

The special choice of coordinates in the preceding theorem affect the $L_{\infty}[1]$-algebras that they determine; however, they only make isomorphic changes by Lemma \ref{liiifdr} (v).

Note that we can take \textit{the same} Darboux coordinates for the presymplectic forms in Lemma \ref{eajekl}
\[
\pi_i^{' *} \circ \overset{\circ}{\pi}_{\mathbf{pq,x}}^{'*} 
\left(\omega_{\mathbf{q}}' \big|_{\overset{\circ}{W}'_{\mathbf{pq}(\mathbf{x})} \cap \phi_{\mathbf{pq}}(\overset{\circ}{W}_{\mathbf{x}})}\right) \in \Omega^2(W'_{\phi_\mathbf{pq}(\mathbf{x})})
\] 
and
\[
\pi_{\mathbf{pq,x}}^* \circ (\phi_{\mathbf{pq}}^{-1})^* \circ \pi_i^{*} \circ \overset{\circ}{\pi}_{\mathbf{pq,x}}^* 
\left(\omega_{\mathbf{p}} \big|_{\phi_{\mathbf{pq}}^{-1}(\overset{\circ}{W}'_{\mathbf{pq}(\mathbf{x})} \cap \phi_{\mathbf{pq}}(\overset{\circ}{W}_{\mathbf{x}}))}\right) \in \Omega^2(W'_{\phi_\mathbf{pq}(\mathbf{x})}).
\]
by taking sufficiently small $\overset{\circ}{W}_{\mathbf{x}}$ and  $\overset{\circ}{W}'_{\mathbf{pq}(\mathbf{x})}$ if necessary, so that they are expressed in the following forms of skew symmetric matrices, respectively:
\[
\begin{split}
\pi_i^{' *} \circ \overset{\circ}{\pi}_{\mathbf{pq,x}}^{'*} 
\left(\omega_{\mathbf{q}}' \big|_{\overset{\circ}{W}'_{\mathbf{pq}(\mathbf{x})} \cap \phi_{\mathbf{pq}}(\overset{\circ}{W}_{\mathbf{x}})}\right) &\text{ in the Darboux coordinates }\\
&=\left(\begin{array}{ c | c | c| c}
S_{2m} & O_{k \times 2m} & O_{2m' \times 2m} & O_{k' \times 2m} \\
\hline
O_{2m \times k} & O_{k \times k} & O_{m' \times m} & O_{k' \times k}  \\
\hline
O_{2m \times 2m'} & O_{k \times 2m'} & S_{2m'} & O_{k' \times 2m'} \\
\hline
O_{2m' \times k'} & O_{k \times k'}  & O_{2m' \times k'}& O_{k' \times k'}
\end{array}\right).\\
\end{split}
\]
\[
\begin{split}
\pi_{\mathbf{pq,x}}^* \circ (\phi_{\mathbf{pq}}^{-1})^* \circ \pi_i^{*} \circ \overset{\circ}{\pi}_{\mathbf{pq,x}}^* 
&\left(\omega_{\mathbf{p}} \big|_{\phi_{\mathbf{pq}}^{-1}(\overset{\circ}{W}'_{\mathbf{pq}(\mathbf{x})} \cap \phi_{\mathbf{pq}}(\overset{\circ}{W}_{\mathbf{x}}))}\right) \text{ in the Darboux coordinates } \\
&\quad \quad \quad \quad = \left(\begin{array}{ c | c | c| c}
S_{2m} & O_{k \times 2m} & O_{2m' \times 2m}& O_{k' \times 2m} \\
\hline
O_{2m \times k} & O_{k \times k} & O_{m' \times m} & O_{k' \times k}  \\
\hline
O_{2m \times 2m'} & O_{k \times 2m'} & O_{2m'} & O_{k' \times 2m'} \\
\hline
O_{2m' \times k'} & O_{k \times k'}  & O_{2m' \times k'}& O_{k' \times k'}
 \end{array}\right),
\end{split}
\]
where, for each $\ell \geq 0,$ we denote
\[
S_{2\ell} :=
\begin{pmatrix}
O_{\ell \times \ell} & -I_{\ell} \\
I_{\ell} & O_{\ell \times \ell}
\end{pmatrix}
\]
with $O_{\ell \times \ell'}$ and $I_{\ell}$ being the $\ell \times \ell'$ zero matrix and the $\ell \times \ell$ identity matrix, respectively. 

To join them with a family of presymplectic forms, we need the following theorem:

\begin{thm}\cite[Theorem 3.4]{HW}\label{hwthm}
Let $M$ be a closed manifold. Assume that presymplectic two-forms $\omega_0, \omega_1 \in \Omega^2(M)$ are joined by a path $\{\omega_t\}_{t\in [0,1]}$ of nondegenerate two-forms. Then $\omega_0$ and $\omega_1$ are homotopic through presymplectic forms.
\end{thm}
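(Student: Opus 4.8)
The plan is to read this as an h-principle statement and to manufacture the desired path of presymplectic forms by deforming the given nondegeneracy homotopy so as to enforce closedness while pinning the endpoints. First I would regard the homotopy $\{\omega_t\}_{t\in[0,1]}$ as the \emph{formal} (purely pointwise) datum: at each $(x,t)\in M\times[0,1]$ it prescribes a nondegenerate alternating form on $T_xM$, hence a reduction of the structure group and, in particular, a well-defined rank together with a symplectic distribution. Since $\omega_0$ and $\omega_1$ are already closed of the relevant rank, the problem is relative to the boundary $M\times\{0,1\}$, and what remains is to realize the pointwise data by genuinely closed forms over the interior.

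Second, I would separate the two constraints packaged into ``presymplectic'': the open, linear-algebraic condition (nondegeneracy on the symplectic distribution, i.e. control of the rank), which the path $\{\omega_t\}$ already supplies up to homotopy, and the global differential constraint $d\eta_t=0$. The strategy is to keep the former fixed while deforming to achieve the latter. Because the differential relation ``closed $2$-form of the prescribed rank'' is open and $\mathrm{Diff}$-invariant, I would invoke Gromov's convex integration, in the parametric Eliashberg--Mishachev form, to convert the formal solution given by $\{\omega_t\}$ into an honest family of closed forms agreeing with $\omega_0,\omega_1$ at the ends; locally this reduces to an explicit Darboux/Moser deformation in which one interpolates constant-coefficient skew matrices and verifies closedness coordinate-wise, exactly as the normal-form matrices appearing just before the statement suggest.

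The hard part will be the de Rham cohomology bookkeeping. On a \emph{closed} manifold closedness is not an open condition, so convex integration does not by itself respect a prescribed class $[\eta_t]\in H^2(M;\mathbb{R})$, and the rank can be forced to jump precisely where the endpoint classes differ. I would address this by first fixing a continuous path of cohomology classes interpolating $[\omega_0]$ and $[\omega_1]$, representing it by a smooth family of closed forms, and then running the h-principle relative to this family so that only the exact part is modified by a Moser-type correction. Controlling the rank across the locus where the class transitions---guaranteeing that the constructed closed forms stay presymplectic throughout and match the two presymplectic endpoints---is the principal obstacle, and it is here that the hypothesis of a connecting path through nondegenerate forms is essential, since it certifies that the formal (topological) solution exists and that no obstruction blocks the passage to a genuine one.
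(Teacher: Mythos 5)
The central step of your argument---invoking Gromov's convex integration for an ``open, $\mathrm{Diff}$-invariant relation'' to upgrade the formal path $\{\omega_t\}$ to a path of closed forms of prescribed rank---is not available here, and this is where the proposal breaks down. Convex integration gives the (parametric) $h$-principle for closed $2$-forms subject to open conditions only on \emph{open} manifolds; on a closed manifold the $h$-principle for closed nondegenerate $2$-forms fails already at the zero-parametric level (by Taubes' Seiberg--Witten results there are closed $4$-manifolds carrying a formal symplectic structure but no symplectic form), so there is no general convex-integration theorem to invoke. Relatedly, the relation you describe as open is not: nondegeneracy (or constant rank) is an open condition, but $d\eta_t=0$ is a genuine differential constraint, and your final paragraph treats its failure only as cohomological bookkeeping rather than as the obstruction that makes the closed-manifold case hard. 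There is also a rank mismatch you gloss over: in the situation the theorem is used for, the endpoints are presymplectic of strictly smaller (and mutually different) ranks while the interior of the path is nondegenerate, so the ``well-defined rank and symplectic distribution'' you extract from the formal data is not constant in $t$ and cannot simply be ``kept fixed.'' In effect, the statement you hope to obtain from standard machinery is precisely the nontrivial content of Hajduk--Walczak's theorem, so the proposal assumes what it needs to prove.

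For comparison, the paper does not reprove this result: its proof-sketch quotes from \cite{HW} the homotopy equivalence $S_{\mathrm{presymp}}(M,a)\hookrightarrow S_{\mathrm{nondeg}}(M)$ between the space of presymplectic forms of a fixed cohomology type $a$ and the space of nondegenerate $2$-forms, and then deduces the path statement from the induced injection on path components. If you want an actual proof rather than a citation, you would need to reproduce the argument of \cite{HW}, which handles the closed-manifold case by means going beyond the formal $h$-principle package---and which is also where the fixed cohomology type $a$ (the datum your improvised ``path of cohomology classes'' is trying to substitute for) enters in an essential way.
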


\begin{proof}[Proof-sketch]
We use the fact that there exists a homotopy equivalence
\[
S_{\mathrm{presymp}}(M,a) \hookrightarrow S_{\mathrm{nondeg}}(M)
\]
from the space of presymplectic forms on $M$ of fixed cohomology type $a$ to the space of nondegenerate two-forms. For the more details, see \cite{HW}.
\end{proof}

Since all the entries of the above-mentioned matrices are filled with constant functions, we can extend them to the closure of the open ball $W'_{\phi_{\mathbf{pq}(\mathbf{x})}}.$ We can now apply the preceding theorem to our situation.

At the moment, we assume that both $k$ and $k'$ are even numbers. We have
\[
\widetilde{\delta}_1(t) := \left(
\begin{array}{ c | c | c | c}
S_{2m} & O_{k \times m} &  O_{2m' \times 2m} & O_{k' \times 2m} \\
\hline
O_{m \times k} & (1-t)g \cdot S_{2k} & O_{m' \times m} & O_{k' \times k}  \\
\hline
 O_{2m \times 2m'} & O_{k \times 2m'} & S_{2m'} & O_{k' \times 2m'} \\
\hline
O_{2m' \times k'} & O_{k \times k'}  & O_{2m' \times k'}& (1-t)g' \cdot S_{2k'}
  \end{array} 
\right),
\]
and
\[
\widetilde{\delta}_2(t) =  \left(\begin{array}{ c | c | c| c}
S_{2m} & O_{k \times m} & O_{2m' \times 2m} & O_{k' \times 2m} \\
\hline
O_{m \times k} & (1-t)g \cdot S_{2k} & O_{m' \times m} & O_{k' \times k}  \\
\hline
 O_{2m \times 2m'} & O_{k \times 2m'} & (1-t)g \cdot S_{2m'} & O_{k' \times 2m'} \\
\hline
O_{2m' \times k'} & O_{k \times k'}  & O_{2m' \times k'}& (1-t)g' \cdot S_{2k'}
  \end{array}\right)
\]
for $t \in [0,1]$ and some positive functions $g, g'$ in $C^{\infty}(W'_{\phi_{\mathbf{pq}}(\mathbf{x})}; \mathbb{R}_{>0}).$

We now observe that the matrices $ \widetilde{\delta}_1(t)$ and $ \widetilde{\delta}_2(t)$ are nondegenerate at each $0 < t < 1,$ that is, $\det \widetilde{\delta}_1(t), \; \det \widetilde{\delta}_2(t) \neq 0, 
\quad 0 < t < 1,$
and consider their concatenation
\[
\widetilde{\delta}(t) := \widetilde{\delta}_1(t) \# \widetilde{\delta}_2(t) =  
\begin{cases}
\widetilde{\omega}'_1(1 - 2t), & 0 \leq t \leq \tfrac{1}{2}, \\[6pt]
\widetilde{\omega}'_2(2t - 1), & \tfrac{1}{2} \leq t \leq 1,
\end{cases}
\]
so that they determines a family of nondegenerate two-forms. Note that $\widetilde{\delta}(t)$ is not smooth in general at $t = \tfrac{1}{2}$. However,
we can locally deform it near $t = \tfrac{1}{2}$ to a smooth path,
while preserving the nondegeneracy condition at each $t$, by the fact that nondegeneracy is an open condition. We write $\widetilde{\omega}'(t)$ for the resulting smooth family. 

For the case when $k'$ is odd and $k$ is even we can use the following path:
\begin{equation}\label{mmqq1}
\left(\begin{array}{c|c}
\begin{array}{ c | c | c| c}
S_{2m} & O_{k \times m} & O_{2m' \times 2m} & O_{(k'-1) \times 2m} \\
\hline
O_{m \times k} & (1-t)g \cdot S_{2k} & O_{m' \times m} & O_{(k'-1) \times k}  \\
\hline
O_{2m \times 2m'} & O_{k \times 2m'} & \cdots & O_{(k'-1) \times 2m'} \\
\hline
O_{2m' \times (k'-1)} & O_{k \times (k'-1)}  & O_{2m' \times (k'-1)}& (1-t)g' \cdot S_{2(k'-1)}
  \end{array} & O_{1 \times (2(m + m') + k + k' -1)}\\
\hline
O_{(2(m + m') + k + k' -1) \times 1}& O_{1 \times 1}
\end{array}\right),
\end{equation}
where the upper left $(2(m + m') + k + k' -1) \times (2(m + m') + k + k' -1)$ block is nondegenerate. It is possible to apply the same method to connect the two presymplectic forms using the family (\ref{mmqq1}) without changing the other blocks. The other two cases can be treated in exactly the same way, so we omit them.

We then obtain the family (B) by the following corollary.

\begin{cor}
There exists a smooth family of presymplectic forms denoted by
\[
\left\{\widetilde{\omega}'(t)\right\}_{t \in [0,1]}
\]
that connects 
\[
\widetilde{\omega}'(0) := \pi_i^{' *} \circ \overset{\circ}{\pi}_{\mathbf{pq,x}}^{'*} 
\left(\omega_{\mathbf{q}}' \big|_{\overset{\circ}{W}'_{\mathbf{pq}(\mathbf{x})} \cap \phi_{\mathbf{pq}}(\overset{\circ}{W}_{\mathbf{x}})}\right) \in \Omega^2(W'_{\phi_\mathbf{pq}(\mathbf{x})}), 
\]
and
\[
\widetilde{\omega}'(1) := \pi_{\mathbf{pq,x}}^* \circ (\phi_{\mathbf{pq}}^{-1})^* \circ \pi_i^{*} \circ \overset{\circ}{\pi}_{\mathbf{pq,x}}^* 
\left(\omega_{\mathbf{p}} \big|_{\phi_{\mathbf{pq}}^{-1}(\overset{\circ}{W}'_{\phi_{\mathbf{pq}}(\mathbf{x})} \cap \phi_{\mathbf{pq}}(\overset{\circ}{W}_{\mathbf{x}}))}\right) \in \Omega^2(W'_{\phi_\mathbf{pq}(\mathbf{x})}).
\]
Moreover, we can take the number of times that the rank of $\widetilde{\omega}'(t)$ changes on $[0,1]$ to be finite.
\end{cor}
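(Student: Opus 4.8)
The plan is to gather the constructions of the preceding paragraphs into a single application of Theorem \ref{hwthm}. The two forms to be connected, $\widetilde{\omega}'(0)$ and $\widetilde{\omega}'(1)$, are closed (hence presymplectic but degenerate) $2$-forms on $W'_{\phi_{\mathbf{pq}}(\mathbf{x})}$, and by Lemma \ref{kwpq} together with Corollary \ref{fctfv} they admit \emph{common} Darboux coordinates in which they are given by the two explicit block skew-symmetric matrices displayed above. First I would record that the piecewise-defined path $\widetilde{\delta}(t) = \widetilde{\delta}_1(t) \# \widetilde{\delta}_2(t)$ has the correct endpoints: setting $t=1$ in $\widetilde{\delta}_1$ and in $\widetilde{\delta}_2$ kills exactly the blocks $(1-t)g\cdot S_{2k}, (1-t)g\cdot S_{2m'}, (1-t)g'\cdot S_{2k'}$ and reproduces the matrices of $\widetilde{\omega}'(0)$ and $\widetilde{\omega}'(1)$ respectively, while for $0<t<1$ every $\widetilde{\delta}_i(t)$ is nondegenerate. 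The only failures of smoothness occur at the junction $t=\tfrac12$, and since $\widetilde{\delta}_1(0)$ and $\widetilde{\delta}_2(0)$ differ only by a positive scaling $S_{2m'}\rightsquigarrow g\cdot S_{2m'}$ of one block, a short auxiliary segment $((1-s)+sg)\cdot S_{2m'}$ interpolates between them through nondegenerate forms; because nondegeneracy is an open condition, the junction may then be smoothed while keeping each form nondegenerate. The parity cases ($k$ or $k'$ odd) are handled verbatim by replacing the offending block with the matrix (\ref{mmqq1}), whose upper-left corner is nondegenerate.

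Next I would pass from the open ball to its closure. Because every entry of the Darboux normal forms is constant (a fixed scalar, or one of the positive functions $g, g'$), the family extends over $\overline{W'_{\phi_{\mathbf{pq}}(\mathbf{x})}}$, putting us in the closed-manifold setting required by Theorem \ref{hwthm}. Applying that theorem to $\omega_0 := \widetilde{\omega}'(0)$ and $\omega_1 := \widetilde{\omega}'(1)$, joined by the smoothed nondegenerate path $\widetilde{\delta}(t)$, yields a homotopy \emph{through presymplectic forms} connecting them, and this homotopy is the desired smooth family $\{\widetilde{\omega}'(t)\}_{t\in[0,1]}$. It is essential that the conclusion of Theorem \ref{hwthm} produces presymplectic (closed, locally constant-rank) forms rather than arbitrary closed $2$-forms, since this is precisely the input required to run the $1$-parameter V-algebra argument of Corollary \ref{indliso} in the subsequent construction of $\widehat{\phi}^{\mathrm{dR},2}_{\mathbf{pq},\mathbf{x}}$.

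Finally, for the rank statement I would argue that $\mathrm{rk}\,\widetilde{\omega}'(t)$ is piecewise constant with only finitely many jumps: in the Darboux normal forms the kernel dimension is governed by the vanishing of finitely many block determinants, each an affine (hence real-analytic) function of $t$, so the set of rank-change times is finite, and this structure is preserved under the deformation supplied by the homotopy equivalence $S_{\mathrm{presymp}}(M,a)\hookrightarrow S_{\mathrm{nondeg}}(M)$ underlying Theorem \ref{hwthm}. The main obstacle I anticipate is exactly the compatibility between the hypothesis of Theorem \ref{hwthm}, which is stated for a closed manifold, and our genuinely local, boundaried ball $W'_{\phi_{\mathbf{pq}}(\mathbf{x})}$: one must check that the constant-coefficient extension to the closure together with the smoothing near $t=\tfrac12$ produces a legitimate nondegenerate path on a closed manifold, that the resulting presymplectic homotopy restricts back to $W'_{\phi_{\mathbf{pq}}(\mathbf{x})}$, and that under this restriction both the foliation kernels $\ker\widetilde{\omega}'(t)$ and the finiteness of the rank changes remain under control.
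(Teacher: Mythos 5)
Your treatment of the existence statement matches the paper's: build the concatenated path $\widetilde{\delta}(t)=\widetilde{\delta}_1(t)\#\widetilde{\delta}_2(t)$ in the common Darboux coordinates, smooth the junction using openness of nondegeneracy, extend to the closure of the ball via the constancy of the entries, and invoke Theorem \ref{hwthm}. Your observation that $\widetilde{\delta}_1(0)$ and $\widetilde{\delta}_2(0)$ differ by the positive rescaling $S_{2m'}\rightsquigarrow g\cdot S_{2m'}$ of one block, repaired by a short nondegenerate interpolation, is a legitimate (and welcome) patch of a point the paper glosses over.

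The gap is in the finiteness claim. You argue that the rank-change times of the explicit path $\widetilde{\delta}(t)$ are finite because the relevant block determinants are polynomial in $t$, and then assert that ``this structure is preserved under the deformation supplied by the homotopy equivalence $S_{\mathrm{presymp}}(M,a)\hookrightarrow S_{\mathrm{nondeg}}(M)$.'' That last step is unjustified: Theorem \ref{hwthm} replaces your nondegenerate path by \emph{some} homotopy through presymplectic forms, and nothing in its statement (or in the homotopy-equivalence argument behind it) controls how often the rank of the \emph{output} family jumps; the output is not obtained by a pointwise algebraic modification of your matrices, so the determinant argument does not transfer. The paper's actual resolution is different and avoids the issue entirely: if the chosen $g,g'$ do not yield finiteness, set $g=g'=1$. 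Then every entry of $\widetilde{\delta}_1(t),\widetilde{\delta}_2(t)$ is a constant function on $W'_{\phi_{\mathbf{pq}}(\mathbf{x})}$, so the connecting family is automatically \emph{closed}, hence already consists of presymplectic (indeed symplectic, for $0<t<1$) forms. No application of Theorem \ref{hwthm} is needed to convert it, the family itself serves as $\{\widetilde{\omega}'(t)\}$, and its rank is full on the open interval and drops only at the endpoints (and, in the odd-rank cases, the same trick is applied to the nondegenerate upper-left block of (\ref{mmqq1})). You should either adopt this choice of $g=g'=1$ or supply an argument that the presymplectic homotopy produced by Theorem \ref{hwthm} can be chosen with finitely many rank changes; as written, your final sentence asserts what needs to be proved.
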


\begin{proof}
By the discussion in the previous paragraph and Theorem \ref{hwthm}, we only need to verify the finiteness, which is straightforward once we notice that if the given path for some $g$ and $g'$ does not give us the desired finiteness, then we can choose $g = g' =1,$ which makes the family $\left\{\widetilde{\omega}^{'}(t)\right\}_{t \in (0,1)}$ consist of closed nondegenerate two-forms, that is, symplectic forms for the case of even $k$ and $k'.$ Even when either of $k$ or $k'$ is an odd number, we can use the same trick for the upper left block of the matrix (\ref{mmqq1}).
\end{proof}

Denote by $t_0 = 0 < t_1 < \cdots < t_N < t_{N+1} = 1$ the numbers where the nullity of $\widetilde{\omega}'(t)$ (in the preceding corollary) jumps. If we draw a graph of nullity $\widetilde{\omega}(t)$ versus $t \in [0,1],$ its shape over each interval $[t_i, t_{i+1}]$ falls into one of the following four types by the upper semi-continuity of the nullity function.

\begin{figure}[h]
    \centering
\begin{tikzpicture}[>=stealth,scale=1.2]

% Top label
\node at (-0.05,3) {$(1)\ \mathrm{rk}(\ker \widetilde{\omega}'(t))$};

% Big vertical arrow (on the left)
\draw[->,thick] (-0.5,0) -- (-0.5,2.8);
\draw[thick] (1,1.1) -- (1.4,1.1);
\draw[thick] (0,0.8) -- (-0.4,0.8);

% Vertical dashed lines
\draw[dashed] (0,0.1) -- (0,2.7);
\draw[dashed] (1,0.1) -- (1,2.7);

% Time labels
\node at (0,-0.3) {$t_i$};
\node at (1,-0.3) {$t_{i+1}$};

% Horizontal segment with filled dots
\draw[thick] (0,1.5) -- (1,1.5);
\filldraw (0,1.5) circle (2pt);
\filldraw (1,1.5) circle (2pt);

% Yellow arrow on top of the horizontal line
\draw[->,line width=2pt,yellow!40!green] (0.05,1.8) -- (0.95,1.8);

% Open circles
\draw (0,0.8) circle (2pt);
\draw (1,1.1) circle (2pt);

% Top label
\node at (2.45,3) {$(2)\ \mathrm{rk}(\ker \widetilde{\omega}'(t))$};

% Big vertical arrow (on the left)
\draw[->,thick] (2,0) -- (2,2.8);
\draw[thick] (3.5,2.1) -- (3.9,2.1);
\draw[thick] (2.5,0.8) -- (2.1,0.8);

% Vertical dashed lines
\draw[dashed] (2.5,0.1) -- (2.5,2.7);
\draw[dashed] (3.5,0.1) -- (3.5,2.7);

% Time labels
\node at (2.5,-0.3) {$t_i$};
\node at (3.5,-0.3) {$t_{i+1}$};

% Horizontal segment with filled dots
\draw[thick] (2.5,1.5) -- (3.5,1.5);
\filldraw (2.5,1.5) circle (2pt);
\filldraw (3.5,2.1) circle (2pt);

% Yellow arrow on top of the horizontal line
\draw[->,line width=2pt,yellow!40!green] (2.55,1.8) -- (3.45,1.8);

% Open circles
\draw (2.5,0.8) circle (2pt);
\draw (3.5,1.5) circle (2pt);

% Top label
\node at (4.95,3) {$(3)\ \mathrm{rk}(\ker \widetilde{\omega}'(t))$};

% Big vertical arrow (on the left)
\draw[->,thick] (4.5,0) -- (4.5,2.8);
\draw[thick] (6,0.8) -- (6.4,0.8);
\draw[thick] (5,2.3) -- (4.6,2.3);

% Vertical dashed lines
\draw[dashed] (5,0.1) -- (5,2.7);
\draw[dashed] (6,0.1) -- (6,2.7);

% Time labels
\node at (5,-0.3) {$t_i$};
\node at (6,-0.3) {$t_{i+1}$};

% Horizontal segment with filled dots
\draw[thick] (5,1.3) -- (6,1.3);
\filldraw (5,2.3) circle (2pt);
\filldraw (6,1.3) circle (2pt);

% Yellow arrow on top of the horizontal line
\draw[->,line width=2pt,yellow!40!green] (5.95,1.5) -- (5.05,1.5);

% red arrow on top of the horizontal line
\draw[->,line width=2pt,red!40!] (5.05,1.8) -- (5.95,1.8);

% Middle label
\node at (5.45,2) {$\textrm{h-inv.}$};

% Open circles
\draw (5,1.3) circle (2pt);
\draw (6,0.8) circle (2pt);

% Top label
\node at (7.45,3) {$(4)\ \mathrm{rk}(\ker \widetilde{\omega}'(t))$};

% Big vertical arrow (on the left)
\draw[->,thick] (7,0) -- (7,2.8);
\draw[thick] (8.75,1.5) -- (9.15,1.5);
\draw[thick] (7.5,2.3) -- (7.1,2.3);

% Vertical dashed lines
\draw[dashed] (7.5,0.1) -- (7.5,2.7);
\draw[dashed] (8.75,0.1) -- (8.75,2.7);

% Time labels
\node at (7.5,-0.3) {$t_i$};
\node at (8.5,-0.3) {$t_{i+1}$};
\node at (8.2, 0.05) {$\frac{t_{i} + t_{i+1}}{2}$};

% Horizontal segment with filled dots
\draw[thick] (7.5,0.8) -- (8.75,0.8);
\filldraw (7.5,2.3) circle (2pt);
\filldraw (8.75,1.5) circle (2pt);

% Yellow arrow on top of the horizontal line
\draw[->,line width=2pt,yellow!40!green] (8.075,1.15) -- (7.55,1.15);
\draw[->,line width=2pt,yellow!40!green] (8.175,1.15) -- (8.69,1.15);

% Yellow arrow on top of the horizontal line
\draw[line width=1pt,yellow!40!green] (8.125,0.3) -- (8.125,2.4);

% red arrow on top of the horizontal line
\draw[->,line width=2pt,red!40!] (7.55,1.65) -- (8.075,1.65);

% Middle label
\node at (7.7,1.85) {$\textrm{h-inv.}$};

% Open circles
\draw (7.5,0.8) circle (2pt);
\draw (8.75,0.8) circle (2pt);

\end{tikzpicture}

\caption{4 types of the nullity graphs over [$t_i, t_{i+1}$]}

\end{figure}
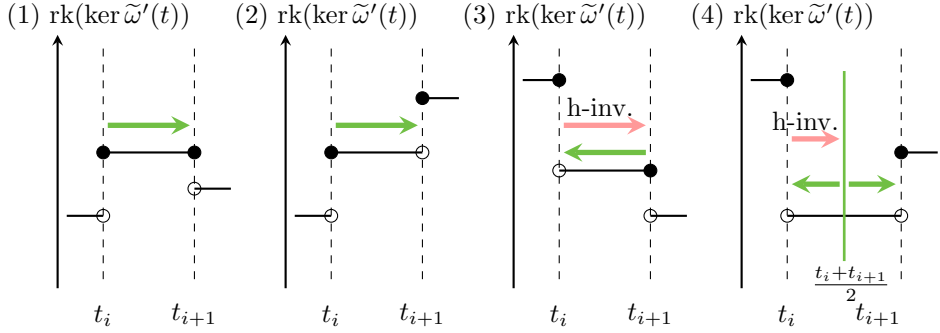
\noindent In Figure 1, the green arrows represents the direction that the $L_{\infty}[1]$-morphisms are constructed, while the pink ones mean that we take homotopy inverses of the corresponding quasi-isomorphisms. 

We denote
\begin{equation}\nonumber
A_{(\ell)} : = \{ 0 < i < N \mid [t_i, t_{i+1}] \text{ corresponds to the case } (\ell) \}
\end{equation}
for $\ell \in \{1,2,3,4\}.$
\begin{enumerate}
\item  If $i \in A_{(\ell)},$ the family $\big\{\widetilde{\omega}'(t)\big\}_{t \in [t_i, t_{i+1}]}$ determines an $L_{\infty}[1]$-isomorphism
\[
\widehat{\kappa}_{\mathbf{pq},\mathbf{x}}^{(i)} : \Omega^{\bullet +1}(\mathcal{F}'_{t_i,{\phi_{\mathbf{p}\mathbf{q}}(\mathbf{x})}})_{\phi_{\mathbf{p}\mathbf{q}}} \rightarrow \Omega^{\bullet +1}(\mathcal{F}'_{t_{i+1},{\phi_{\mathbf{p}\mathbf{q}}(\mathbf{x})}})_{\phi_{\mathbf{p}\mathbf{q}}}
\]
by Corollary \ref{indliso}, hence its augmented version (obtained by Lemma \ref{auglmo}) as well
\[
\widehat{\kappa}_{\mathbf{pq},\mathbf{x}}^{(i)} : \Omega^{\bullet +1}_{\mathrm{aug},\phi_{\mathbf{p}\mathbf{q}}}(\mathcal{F}'_{t_i,{\phi_{\mathbf{p}\mathbf{q}}(\mathbf{x})}}) \rightarrow \Omega^{\bullet +1}_{\mathrm{aug},\phi_{\mathbf{p}\mathbf{q}}}(\mathcal{F}'_{t_{i+1},{\phi_{\mathbf{p}\mathbf{q}}(\mathbf{x})}}).
\]

Here, both sides are $L_{\infty}[1]$-algebras that arise from the corresponding presymplectic structures $\widetilde{\omega}'(t_i)$ and $\widetilde{\omega}'(t_{i+1})$ together with the splittings (in (\ref{wdecomp})) $G'(t_i)$ and $G'(t_{i+1}),$ respectively.

\item By the construction described below, we obtain the induced morphisms from the family $\big\{\widetilde{\omega}'(t)\big\}_{t \in [t_i, t_{i+1}]},$ 
\[
\widehat{\kappa}_{\mathbf{pq},\mathbf{x}}^{(i)} : \Omega^{\bullet +1}(\mathcal{F}'_{t_i, \phi_{\mathbf{p}\mathbf{q}}(\mathbf{x})})_{\phi_{\mathbf{p}\mathbf{q}}} \rightarrow \Omega^{\bullet +1}(\mathcal{F}'_{t_{i+1}, \phi_{\mathbf{p}\mathbf{q}}(\mathbf{x})})_{\phi_{\mathbf{p}\mathbf{q}}},
\]
and its augmented version by Lemma \ref{auglmo}
\[
\widehat{\kappa}_{\mathbf{pq},\mathbf{x}}^{(i)} : \Omega^{\bullet +1}_{\mathrm{aug},{\phi_{\mathbf{p}\mathbf{q}}}}(\mathcal{F}'_{t_i,\phi_{\mathbf{p}\mathbf{q}}(\mathbf{x})}) \rightarrow \Omega^{\bullet +1}_{\mathrm{aug},{\phi_{\mathbf{p}\mathbf{q}}}}(\mathcal{F}'_{t_{i+1},\phi_{\mathbf{p}\mathbf{q}}(\mathbf{x})}).
\]
Since both the domain and the target are acyclic, it induces the zero map on the cohomology, hence is an quasi-isomorphism.  More details are provided in Subsection \ref{tcsi}.

\item In the same way as the case $(2),$ but in the opposite direction, we have a quasi-isomorphism from $\Omega^{\bullet +1}_{\mathrm{aug}, \phi_{\mathbf{p}\mathbf{q}}}(\mathcal{F}'_{t_{i+1},\phi_{\mathbf{p}\mathbf{q}}(\mathbf{x})})$ to $\Omega^{\bullet +1}_{\mathrm{aug}, \phi_{\mathbf{p}\mathbf{q}}}(\mathcal{F}'_{t_{i},\phi_{\mathbf{p}\mathbf{q}}(\mathbf{x})}).$ Then we take its homotopy inverse and denote it by:
\[
\widehat{\kappa}_{\mathbf{pq},\mathbf{x}}^{(i)} : \Omega^{\bullet +1}_{\mathrm{aug}, \phi_{\mathbf{p}\mathbf{q}}}(\mathcal{F}'_{t_{i},\phi_{\mathbf{p}\mathbf{q}}(\mathbf{x})}) \rightarrow \Omega^{\bullet +1}_{\mathrm{aug}, \phi_{\mathbf{p}\mathbf{q}}}(\mathcal{F}'_{t_{i+1},\phi_{\mathbf{p}\mathbf{q}}(\mathbf{x})}).
\]

\item We split the family into two: (a) $\big\{\widetilde{\omega}'(t)\big\}_{t \in [t_i, \frac{t_i + t_{i+1}}{2}]},$ (b) $\big\{\widetilde{\omega}'(t)\big\}_{t \in [\frac{t_i + t_{i+1}}{2}, t_{i+1}]}.$ For  (b), we proceed in the same way as in $(3),$ and for (a), as in $(2)$ to obtain quasi-isomorphisms
\[
\widehat{\kappa}_{\mathbf{pq},\mathbf{x},(a)}^{(i)} : \Omega^{\bullet +1}_{\mathrm{aug},\phi_{\mathbf{p}\mathbf{q}}}({\mathcal{F}}'_{t_i,\phi_{\mathbf{p}\mathbf{q}}(\mathbf{x})}) \rightarrow \Omega^{\bullet +1}_{\mathrm{aug}, \phi_{\mathbf{p}\mathbf{q}}}(\underline{\mathcal{F}}'_{t_{i},\phi_{\mathbf{p}\mathbf{q}}(\mathbf{x})})
\]
and
\[
\widehat{\kappa}_{\mathbf{pq},\mathbf{x},(b)}^{(i)} : \Omega^{\bullet +1}_{\mathrm{aug}, \phi_{\mathbf{p}\mathbf{q}}}(\underline{\mathcal{F}}'_{t_i, \phi_{\mathbf{p}\mathbf{q}}(\mathbf{x})}) \rightarrow \Omega^{\bullet +1}_{\mathrm{aug}, \phi_{\mathbf{p}\mathbf{q}}}({\mathcal{F}}'_{t_{i+1},\phi_{\mathbf{p}\mathbf{q}}(\mathbf{x})}),
\]
where $ \Omega^{\bullet +1}_{\mathrm{aug},\phi_{\mathbf{p}\mathbf{q}}}(\underline{\mathcal{F}}'_{t_i, \phi_{\mathbf{p}\mathbf{q}}(\mathbf{x})})$ denotes the augmented localization of the $L_{\infty}[1]$-algebra determined by the presymplectic form $\widetilde{\omega}'(\frac{t_i+t_{i+1}}{2}).$ Then we define
\[
\widehat{\kappa}_{\mathbf{pq},\mathbf{x}}^{(i)} := \widehat{\kappa}_{\mathbf{pq},\mathbf{x},{\text{(b)}}}^{(i)} \circ \widehat{\kappa}_{\mathbf{pq},\mathbf{x},{\text{(a)}}}^{(i)}
\]
that is a quasi-isomorphism by construction.
\end{enumerate}

\subsection{The case (2)}\label{tcsi}
Among the four types of non-continuity in the previous subsection, we focus on the case (2), that is, the case when $\text{rk}(T^*\mathcal{F}_{\mathbf{q}}^{'(i)}|_{W'_{\phi_{\mathbf{p}\mathbf{q}}(\mathbf{x})}})$ remains constant at all $t \in [t_i, t_{i+1})$ and increases by $1$ at $t=t_{i+1}$. The cases (3) and (4) can be treated with minor modifications, so we leave them to the reader as exercises. We remark that constructions in this subsection depend heavily on the content in Appendix C. 

At each $t \in [t_i, t_{i+1}]$, we consider a family of the normal components, that is, a family
$$
\big\{G_{\mathbf{q}}^{(i)}(t)\big\}_{t \in [t_i, t_{i+1}]}
$$
of subbundles of $TU'_{\mathbf{q}}|_{W'_{\phi_{\mathbf{p}\mathbf{q}}(\mathbf{x})}}$
that smoothly depends on $t$ and satisfies
\begin{equation*}\label{end}
TU'_{\mathbf{q}}|_{W'_{\phi_{\mathbf{p}\mathbf{q}}(\mathbf{x})}} = T \mathcal{F}_{t,W'_{\phi_{\mathbf{p}\mathbf{q}}(\mathbf{x})}}^{'(i)} \oplus G_{t, W'_{\phi_{\mathbf{p}\mathbf{q}}(\mathbf{x})}}^{(i)},
\end{equation*}
for the foliation tangent bundle $T F_{\mathbf{q},W_{\phi_{\mathbf{p}\mathbf{q}}(\mathbf{x})}}^{(i)}(t) := \ker\bigl(\omega_{\mathbf{q},W_{\phi_{\mathbf{p}\mathbf{q}}(\mathbf{x})}}^{(i)}(t)\bigr)$ for each $i$.  We emphasize that both $\mathrm{rk}\,T\mathcal{F}_{\mathbf{q},W_{\phi_{\mathbf{p}\mathbf{q}}(\mathbf{x})}}^{(i)}(t)$ and $\mathrm{rk}\,G_{\mathbf{q},W_{\phi_{\mathbf{p}\mathbf{q}}(\mathbf{x})}}^{(i)}(t)$ are not continuous at $t = t_{i+1}.$

At $t\in [t_i,t_{i+1})$, we have
\begin{equation}\label{txfgx1}
\begin{split}
\Gamma(T\mathcal{F}^{'(i)}_{t,{\phi_{\mathbf{p}\mathbf{q}}(\mathbf{x})}}) &= \text{span}_{C^{\infty}(W'_{{\phi_{\mathbf{p}\mathbf{q}}(\mathbf{x})}})} \Bigg\{ \frac{\partial}{\partial \mathbf{q}^{(i)}_{t,1}}, \cdots, \frac{\partial}{\partial \mathbf{q}^{(i)}_{t,n-k}} \Bigg\}, \\
\Gamma(G^{(i)}_{t,\mathbf{x}}) &= \text{span}_{C^{\infty}(W'_{{\phi_{\mathbf{p}\mathbf{q}}(\mathbf{x})}})}  \Bigg\{ \frac{\partial}{\partial \mathbf{y}^{(i)}_{t,j}} + \sum\limits_{\alpha =1}^{n-k} \mathbf{R}^{(i),\alpha}_{t,j} \frac{\partial}{\partial \mathbf{q}^{(i)}_{t,\alpha}} \Bigg\}_{1 \leq j \leq k}.
\end{split}
\end{equation}

The Poisson structure with respect to the presymplectic form $\widetilde{\omega}^{'(i)}(t)$ is given by
\[
\mathbf{P}^{'(i)}_t
= \sum\limits_{j,j'} \frac{1}{2}\,\widetilde{\omega}^{'(i)}_{jj'}\bigl(t\bigr)\,\mathbf{e}^{(i),j}_t\wedge \mathbf{e}^{(i),j'}_t
+ \sum\limits_{\alpha} \frac{\partial}{\partial \mathbf{p}^{(i),}_{t,\alpha}}\wedge \frac{\partial}{\partial \mathbf{q}^{(i),\alpha}_{t}},
\]
where we denote
\[
\mathbf{e}_t^{(i),j} := \frac{\partial}{\partial \mathbf{y}^{(i)}_{t,j}} + \sum\limits_{\alpha}\mathbf{R}_{t,j}^{(i),\alpha} \frac{\partial}{\partial \mathbf{q}^{(i),\alpha}} - \sum\limits_{\beta, \nu}\mathbf{p}^
{(i)}_{t,\beta} \frac{\partial \mathbf{R}^{(i),\beta}_{t,j}}{\partial \mathbf{q}^{(i),\nu}_t} \frac{\partial}{\partial \mathbf{p}^{(i)}_{t,\nu}},
\]
where $\mathbf{R}_{t,j}^{(i),\alpha}$ is from (\ref{txfgx1}). See Example \ref{gtyemb} for more details.

At $t = t_{i+1}$, new kernel directions appear. For simplicity of presentation, we assume that the number of the new directions is $1$. The other cases can be treated in a similar manner.

\begin{equation}\label{txfgx}
\begin{split}
\Gamma(T\mathcal{F}^{'(i)}_{t_{i+1},{\phi_{\mathbf{p}\mathbf{q}}(\mathbf{x})}}) &= \text{span}_{C^{\infty}(W'_{{\phi_{\mathbf{p}\mathbf{q}}(\mathbf{x})}})}  \Bigg\{ \frac{\partial}{\partial \mathbf{q}^{(i)}_{t_{i+1},1}}, \cdots, \frac{\partial}{\partial \mathbf{q}^{(i)}_{t_{i+1},n-k}},  \overbrace{\frac{\partial}{\partial \mathbf{q}^{(i)}_{t_{i+1},n-k+1}}}^{\text{the newly appearing kernel direction}}\Bigg\}, \\
\Gamma(G^{(i)}_{t_{i+1},\mathbf{x}}) &= \text{span}_{C^{\infty}(W'_{{\phi_{\mathbf{p}\mathbf{q}}(\mathbf{x})}})}  \left\{ \frac{\partial}{\partial \mathbf{y}^{(i)}_{t_{i+1},j}} + \sum\limits_{\alpha =1}^{n-k'} \mathbf{R}^{(i),\alpha}_{t_{i+1},j} \frac{\partial}{\partial \mathbf{q}^{(i)}_{t_{i+1},\alpha}} \right\}_{1 \leq j \leq k'}
\end{split}
\end{equation}

Their effect on the Poisson structure is given by
\begin{equation}\label{paosdfjil}
\mathbf{P}^{'(i)}_{t_{i+1}} = \overbrace{\sum\limits_{j,j'} \frac{1}{2}\widetilde{\omega}^{'(i)}_{jj'}\bigl(t_{i+1}\bigr) \mathbf{e}^{(i),j}_{t_{i+1}} \wedge \mathbf{e} ^{(i),j'}_{t_{i+1}} + \sum\limits_{\alpha} \frac{\partial}{\partial \mathbf{p}^{(i)}_{t_{i+1},\alpha}}\wedge \frac{\partial}{\partial \mathbf{q}_{t_{i+1}}^{(i),\alpha}}}^{ \overset{\circ}{\mathbf{P}}^{'(i)}_{t_{i+1}} := } + \overbrace{ \frac{\partial}{\partial \mathbf{p}'_\gamma}\wedge \frac{\partial}{\partial \mathbf{q}^{'\gamma}}}^{\text{the newly appearing term}},
\end{equation}
and
\begin{equation}\nonumber
\begin{split}
\mathbf{e}_{t_{i+1}}^{(i),j} := \frac{\partial}{\partial \mathbf{y}^{(i)}_{t_{i+1},j}} + \sum\limits_{\alpha} \mathbf{R}_{t_{i+1},j}^{(i),\alpha} \frac{\partial}{\partial \mathbf{q}^{(i),\alpha}_{t_{i+1}}} - &\sum\limits_{\beta, \nu}\mathbf{p}_{\beta} \frac{\partial \mathbf{R}^{(i),\beta}_{t_{i+1},j}}{\partial \mathbf{q}^{(i),\nu}_{t_{i+1}}} \frac{\partial}{\partial \mathbf{p}^{(i)}_{t_{i+1}, \nu}}\\
& (\text{including the newly appearing term}),
\end{split}
\end{equation}
where $\mathbf{R}^{\alpha}_j$ is from (\ref{txfgx}). We write
\[
{\overset{\circ}{\mathbf{P}}^{'(i)}_{t_{i+1}}} := \sum\limits_{j,j'} \frac{1}{2}\widetilde{\omega}^{'(i)}_{jj'}\bigl(t_{i+1}\bigr) \mathbf{e}^{(i),j}_{t_{i+1}} \wedge \mathbf{e} ^{(i),j'}_{t_{i+1}} + \sum\limits_{\alpha} \frac{\partial}{\partial \mathbf{p}^{(i)}_{t_{i+1},\alpha}}\wedge \frac{\partial}{\partial \mathbf{q}_{t_{i+1}}^{(i),\alpha}}
\]
for the first braced term in (\ref{paosdfjil}).

At $t = t_{i+1}-\epsilon$ for sufficiently small positive $\epsilon,$ we have
\[
{\mathbf{P}}^{'(i)}_{t_{i+1}-\epsilon} =
\frac{1}{2} \sum_{j,j'=1}^{m+1} \widetilde{\omega}^{'(i)}_{jj'}(t_{i+1}-\epsilon) \mathbf{e}^{(i),j}_{t_{i+1}-\epsilon} \wedge \mathbf{e}^{(i),j'}_{t_{i+1}-\epsilon}
+ \sum_{\alpha=1}^{m+1} \frac{\partial}{\partial \mathbf{p}_{t_{i+1}-\epsilon, \alpha}^{(i)}} \wedge \frac{\partial}{\partial \mathbf{q}_{t_{i+1}-\epsilon}^{(i), \alpha}},
\]
and we write
\[
\overset{\circ}{\mathbf{P}}^{'(i)}_{t_{i+1}-\epsilon} := {\mathbf{P}}^{'(i)}_{t_{i+1}-\epsilon} - \frac{1}{2}\sum_{j=1}^{m+1} \widetilde{\omega}^{'(i)}_{m+1,j}(t_{i+1}-\epsilon) (\cdots).
\]
In this case, we can actually take the coordinate system $\left( \{\mathbf{y}^{(i)}_{t_{i+1},j}\}_j, \{\mathbf{q}^{(i),\alpha}_{t_{i+1}}\}_{\alpha}\right)$ at $t= t_{i+1}$, so that it exhibits the following limiting behavior under $\epsilon \rightarrow 0:$
\begin{equation}
\begin{cases}
\frac{\partial}{\partial \mathbf{y}^{(i)}_{t_{i+1} - \epsilon,j}}
\overset{\epsilon \rightarrow 0}{\longmapsto} \frac{\partial}{\partial \mathbf{y}^{(i)}_{t_{i+1},j}}, j =1, \cdots, m,\\ 
\frac{\partial}{\partial  \mathbf{y}^{(i)}_{t_{i+1}- \epsilon,j}}
\overset{\epsilon \rightarrow 0}{\longmapsto} \frac{\partial}{\partial  \mathbf{q}^{(i), k+1}_{t_{i+1}- \epsilon}},\\
\frac{\partial}{\partial  \mathbf{q}^{(i), \alpha}_{t_{i+1} - \epsilon}}
\overset{\epsilon \rightarrow 0}{\longmapsto} \frac{\partial}{\partial  \mathbf{q}^{(i),\alpha}_{t_{i+1}}}, \alpha = 1, \cdots, k,\\
\frac{\partial}{\partial  \mathbf{p}^{(i)}_{t_{i+1}-\epsilon, \alpha}}
\overset{\epsilon \rightarrow 0}{\longmapsto} \frac{\partial}{\partial  \mathbf{p}^{(i)}_{t_{i+1}-\epsilon, \alpha}}, \alpha = 1, \cdots, k,
\end{cases}
\end{equation}
which is possible up to isomorphic changes and without loss of generality (cf. Lemma \ref{liiifdr} (iv)).

Note that $\widetilde{\omega}^{'(i)}(t)$ is closed, being a presymplectic form, and its kernel for $t \in [t_i, t_{i+1})$  is of constant rank by construction. The closedness implies that
\[
\left[\mathbf{P}_t^{'(i)},\mathbf{P}_t^{'(i)}\right] = 0.
\]
Hence $\mathbf{P}_t^{'(i)}$ as a Maurer-Cartan element determines an $L_\infty[1]$-algebra. In other words, we obtain a family of V-algebras
\[
\mathcal{V}^{(i)}_t = \left(\mathfrak{h}^{(i)}_t,\mathfrak{a}^{(i)}_t,\Pi^{(i)}_t\right), \ t \in [t_i, t_{i+1}]
\]
together with Poisson structures $\mathbf{P}_t^{'(i)}\in (\mathfrak{h}^{(i)}_t)^1$, where we denote
\begin{equation}\label{valgit}
\begin{cases}
\mathfrak{h}^{(i)}_t := \lim\limits_{\longleftarrow} \frac{\Gamma(T^*\mathcal{F}_{\mathbf{q},W'_{\phi_{\mathbf{p}\mathbf{q}}(\mathbf{x})}}^{'(i)}, \bigwedge^{\bullet +1}TT^*\mathcal{F}_{\mathbf{q},W'_{\phi_{\mathbf{p}\mathbf{q}}(\mathbf{x})}}^{'(i)})}{{I}^n \cdot \Gamma(T^*\mathcal{F}_{\mathbf{q},W'_{\phi_{\mathbf{p}\mathbf{q}}(\mathbf{x})}}^{'(i)}, \bigwedge^{\bullet +1}TT^*\mathcal{F}_{\mathbf{q},W'_{\phi_{\mathbf{p}\mathbf{q}}(\mathbf{x})}}^{'(i)})},\\
\mathfrak{a}^{(i)}_t := \Gamma(W'_{\phi_{\mathbf{p}\mathbf{q}}(\mathbf{x})}, \bigwedge^{\bullet +1}T^*\mathcal{F}_{\mathbf{q},W'_{\phi_{\mathbf{p}\mathbf{q}}(\mathbf{x})}}^{'(i)}),\\
\Pi^{(i)}_t : \mathfrak{h}^{(i)}_t \to \mathfrak{a}^{(i)}_t, \text{ the projection to the subspace.}
\end{cases}
\end{equation}
Here, $\mathcal{V}^{(i)}_t$ and $\mathbf{P}^{'(i)}_t$ fail to be continuous
at $t,$ where $\text{rk}(T\mathcal{F}^{(i)}(t))$ jumps.

From the V-algebra $\mathcal{V}^{(i)}_t$ with the Poisson structure $\mathbf{P}^{'(i)}(t)$, we can also consider its localization at $\mathrm{Im}\phi:$
\begin{equation}\nonumber
\CV^{(i)}_{t,\phi} := (\mathfrak{h}^{(i)}_{t,\phi}, \mathfrak{a}^{(i)}_{t,\phi}, \Pi^{(i)}_{t,\phi}),
\end{equation}
where we denote
\begin{equation}\nonumber
\begin{cases}
\mathfrak{h}^{(i)}_{t,\phi} &:= {C^{\infty}_{\phi_{\mathbf{pq}}}(W'_{{\phi_{\mathbf{p}\mathbf{q}}(\mathbf{x})}})} \otimes_{{C^{\infty}(W'_{{\phi_{\mathbf{p}\mathbf{q}}(\mathbf{x})}})}} \mathfrak{h}_t^{(i)},\\ \mathfrak{a}^{(i)}_{t,\phi} &:= {C^{\infty}_{\phi_{\mathbf{pq}}}(W'_{{\phi_{\mathbf{p}\mathbf{q}}(\mathbf{x})}})} \otimes_{{C^{\infty}(W'_{{\phi_{\mathbf{p}\mathbf{q}}(\mathbf{x})}})}}  \mathfrak{a}^{(i)}_t,\\ \Pi^{(i)}_{t, \phi} &:= \mathrm{id}_{C_{\phi_{\mathbf{pq}}}^{\infty}(W'_{\phi_{\mathbf{pq}}(\mathbf{x})})} \otimes_{C^{\infty}(W'_{\phi_{\mathbf{pq}}(\mathbf{x})})} \Pi_t^{(i)},
\end{cases}
\end{equation}
together with 
\[
\mathbf{P}^{'(i)}_{t,\phi} := 1 \otimes \mathbf{P}^{'(i)}_t.
\]

For $\epsilon \ll 1,$ $\mathfrak{h}^{(i)}_{t,\phi}$ determines a 1-parameter family of (localized) V-algebras with Maurer-Cartan elements,
\begin{equation}\nonumber
\begin{split}
\big(\mathfrak{h}_{\phi}(t), \mathfrak{a}_{\phi}(t), \Pi_\phi(t) \big) :
\big(&\mathfrak{h}_{\phi}(t_i), \mathfrak{a}_{\phi}(t_i), \Pi_\phi(t_i) \big) \rightsquigarrow
\big(\mathfrak{h}_{\phi}(t_{i+1} - \epsilon), \mathfrak{a}_{\phi}(t_{i+1} - \epsilon), \Pi_\phi(t_{i+1} - \epsilon) \big)\\
&\text{together with } \mathbf{P}^{'(i)}_{\phi,t_i} \quad\quad\quad\quad\quad \text{together with } \mathbf{P}^{'(i)}_{\phi,t_{i+1} - \epsilon}
\end{split}
\end{equation}
with $\mathfrak{a}_{\phi}(t_{i+1} -t_i - \epsilon) = U_{t_{i+1}-t_i-\epsilon}\left(\mathfrak{a}_{\phi}(0)\right).$

We have
\begin{equation}
\begin{split}
\ker\Pi^{(i)}_{t,\phi}& = \ker \left(\mathrm{id}_{C_{\phi_{\mathbf{pq}}}^{\infty}(W'_{\phi_{\mathbf{pq}}(\mathbf{x})})} \otimes_{C^{\infty}(W'_{\phi_{\mathbf{pq}}(\mathbf{x})})} \Pi_t^{(i)}\right)\\
&\simeq C^{\infty}_{\phi_{\mathbf{pq}}}(W'_{\phi_{\mathbf{pq}}(\mathbf{x})}) \otimes \ker\Pi^{(i)}_{t} \simeq C^{\infty}_{\phi_{\mathbf{pq}}}(W'_{\phi_{\mathbf{pq}}(\mathbf{x})}) \otimes \ker\Pi^{(i)}_{0}\\ 
&\simeq \ker \left(\mathrm{id}_{C_{\phi_{\mathbf{pq}}}^{\infty}(W'_{\phi_{\mathbf{pq}}(\mathbf{x})})} \otimes_{C^{\infty}(W'_{\phi_{\mathbf{pq}}(\mathbf{x})})}\Pi^{(i)}_{0} \right) = \ker\Pi^{(i)}_{0, \phi}
\end{split}
\end{equation}
from $\CV_{\phi}$ and  $\mathbf{P}^{'(i)}_{t, \phi}$ with Corollary \ref{indliso}. Then we obtain an $L_{\infty}[1]$-isomorphism
\[
\overset{\circ}{\kappa}_{\mathbf{p}\mathbf{q},\mathbf{x}}^{(i)} : \Omega^{\bullet +1}(\mathcal{F}^{'(i)}_{t_i,{\phi_{\mathbf{p}\mathbf{q}}(\mathbf{x})}})_{\phi_{\mathbf{p}\mathbf{q}}} \to {U}(t_{i+1} - t_{i} - \epsilon)\left(\Omega^{\bullet +1}(\mathcal{F}^{'(i)}_{t_i,{\phi_{\mathbf{p}\mathbf{q}}(\mathbf{x})}})_{\phi_{\mathbf{p}\mathbf{q}}}\right).
\]
Here ${U}(t_{i+1} - t_{i} - \epsilon)$ denotes the isomorphism obtained from Corollary \ref{indliso}.

We then define
\[
\widehat{\theta}^{(i)}_k :  {U}(t_{i+1} -t_i - \epsilon)\left(\Omega^{\bullet +1}(\mathcal{F}^{'(i)}_{t_i,{\phi_{\mathbf{p}\mathbf{q}}(\mathbf{x})}})_{\phi_{\mathbf{p}\mathbf{q}}}\right)^{\otimes k} \rightarrow \Omega^{\bullet +1}(\mathcal{F}^{'(i)}_{t_{i+1},{\phi_{\mathbf{p}\mathbf{q}}(\mathbf{x})}})_{\phi_{\mathbf{p}\mathbf{q}}}
\]
by
\[
\widehat{\theta}^{(i)}_k(\xi_1^{t_{i+1} - \epsilon}, \cdots, \xi_k^{t_{i+1} - \epsilon}) :=
\begin{cases}
\left(\lim\limits_{\epsilon \rightarrow 0} \xi_1^{t_{i+1} - \epsilon}\right)\Big|_{\mathbf{q}_{t_{i+1}}^{(i),k+1} = 0} & \text{if } k=1,\\
0, & \text{if } k \geq 2.
\end{cases}
\]
whose well-definedness is easy to see from the setting of the case (2).

\begin{claim}
$\widehat{\theta}^{(i)}:= \{\widehat{\theta}^{(i)}_k\}_{k\geq 1}$ is an $L_\infty[1]$-morphism.
\end{claim}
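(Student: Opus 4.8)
The plan is to reduce the statement to a single family of bracket identities. Because $\widehat{\theta}^{(i)}_k = 0$ for every $k \geq 2$, the defining $L_\infty[1]$-morphism relation (\ref{frel}) collapses: every term involving a factor $\widehat{\theta}^{(i)}_{k}$ with $k\geq 2$ vanishes, and since both the source $U(t_{i+1}-t_i-\epsilon)\bigl(\Omega^{\bullet +1}(\mathcal{F}^{'(i)}_{t_i,\phi_{\mathbf{p}\mathbf{q}}(\mathbf{x})})_{\phi_{\mathbf{p}\mathbf{q}}}\bigr)$ and the target $\Omega^{\bullet +1}(\mathcal{F}^{'(i)}_{t_{i+1},\phi_{\mathbf{p}\mathbf{q}}(\mathbf{x})})_{\phi_{\mathbf{p}\mathbf{q}}}$ are strict ($l_0 = 0$ by Lemma \ref{liiifdr}(i)), the relation is equivalent to the strict intertwining
\[
\widehat{\theta}^{(i)}_1 \circ l_k = l_k \circ \bigl(\widehat{\theta}^{(i)}_1\bigr)^{\otimes k}, \qquad k \geq 1 .
\]
Thus the whole claim reduces to showing that $\widehat{\theta}^{(i)}_1$ is a chain map that intertwines all the higher brackets.

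Second, both sides are computed by the V-algebra formula (\ref{dlpk}), i.e.\ $l_k(\xi_1,\dots,\xi_k)=\Pi[\cdots[[\mathbf{P},\xi_1],\xi_2],\dots,\xi_k]$, taken with $(\mathbf{P},\Pi)=(\mathbf{P}^{'(i)}_{t_{i+1}-\epsilon},\Pi^{(i)}_{t_{i+1}-\epsilon})$ on the source and $(\mathbf{P}^{'(i)}_{t_{i+1}},\Pi^{(i)}_{t_{i+1}})$ on the target. Since $\widehat{\theta}^{(i)}_1$ is the composite of $\lim_{\epsilon\to 0}$ with the restriction to $\{q^{(i),k+1}_{t_{i+1}}=0\}$, the plan is to commute these two operations past the repeated Nijenhuis--Schouten bracket and the projection. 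For fixed $\epsilon>0$ the rank is constant, so the bracket and $\Pi^{(i)}_{t_{i+1}-\epsilon}$ depend smoothly on the entries of $\mathbf{P}^{'(i)}_{t_{i+1}-\epsilon}$; together with the controlled limiting behaviour of the frame $\{\partial/\partial\mathbf{y},\partial/\partial\mathbf{q},\partial/\partial\mathbf{p}\}$ recorded in the transition formulas of this subsection, this guarantees that the $\epsilon\to 0$ limit of the source bracket exists and may be evaluated entry by entry.

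The heart of the argument would use the decomposition $\mathbf{P}^{'(i)}_{t_{i+1}} = \overset{\circ}{\mathbf{P}}^{'(i)}_{t_{i+1}} + \frac{\partial}{\partial\mathbf{p}'_\gamma}\wedge\frac{\partial}{\partial\mathbf{q}^{'\gamma}}$, first establishing that in the chosen coordinates $\lim_{\epsilon\to0}\mathbf{P}^{'(i)}_{t_{i+1}-\epsilon}=\overset{\circ}{\mathbf{P}}^{'(i)}_{t_{i+1}}$; indeed the degenerating pairing $\tfrac12\sum_j \widetilde{\omega}^{'(i)}_{m+1,j}(t_{i+1}-\epsilon)(\cdots)$ tends to $0$, while the old $\mathbf{y}$-direction is relabelled as the new kernel direction $q^{(i),k+1}_{t_{i+1}}$. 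The main obstacle is that $\Pi^{(i)}_t$ is genuinely \emph{discontinuous} at $t_{i+1}$, the kernel rank jumping by one, so the limit cannot be pushed naively through $\Pi$. I would instead treat the two summands of $\mathbf{P}^{'(i)}_{t_{i+1}}$ separately: the repeated bracket of $\overset{\circ}{\mathbf{P}}^{'(i)}_{t_{i+1}}$ with the inputs, after applying $\Pi^{(i)}_{t_{i+1}}$ and restricting to $\{q^{(i),k+1}_{t_{i+1}}=0\}$, reproduces exactly the part of the target operation transverse to the new direction, whereas the new term $\frac{\partial}{\partial\mathbf{p}'_\gamma}\wedge\frac{\partial}{\partial\mathbf{q}^{'\gamma}}$ contributes only differentiation along $\mathbf{q}^{'\gamma}$; on elements in the image of $\widehat{\theta}^{(i)}_1$, which are independent of $q^{(i),k+1}_{t_{i+1}}$ after restriction, this contributes nothing for $k\geq 2$ and reproduces the new foliation-differential direction for $k=1$, matching the target $l_k$.

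Finally, the comparison is carried out coordinate-wise by the same inductive bookkeeping as in the proof of Lemma \ref{liiifdr}, and the well-definedness of $\widehat{\theta}^{(i)}_1$ on the localizations follows from its compatibility with the inverse-system projections $p_{j+1,j}$, exactly as in Lemma \ref{vlocp} and Definition \ref{defldrc}. Once the intertwining identity is verified for all $k\geq 1$, $\widehat{\theta}^{(i)}_1$ is a strict $L_\infty[1]$-morphism, hence $\theta^{(i)}=\{\widehat{\theta}^{(i)}_k\}_{k\geq1}$ is an $L_\infty[1]$-morphism. I expect the precise matching of the new-direction terms across the discontinuity of $\Pi$ to be the delicate point; the vanishing of all higher components is immediate from $\widehat{\theta}^{(i)}_{k\geq2}=0$.
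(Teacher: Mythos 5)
Your proposal is correct and follows essentially the same route as the paper: reduce to the strict intertwining identity for $\widehat{\theta}^{(i)}_1$ (all higher components vanish), split $\mathbf{P}^{'(i)}_{t_{i+1}}$ into $\overset{\circ}{\mathbf{P}}^{'(i)}_{t_{i+1}}$ plus the new kernel-direction term, note that the degenerating pairing $\widetilde{\omega}^{'(i)}_{m+1,j}(t_{i+1}-\epsilon)\to 0$, and commute the limit and the restriction through the repeated Nijenhuis--Schouten bracket and $\Pi$. The one small imprecision is your remark that the new term $\frac{\partial}{\partial\mathbf{p}'_\gamma}\wedge\frac{\partial}{\partial\mathbf{q}^{'\gamma}}$ ``reproduces the new foliation-differential direction for $k=1$'': in fact it contributes nothing for $k=1$ as well, since elements in the image of $\widehat{\theta}^{(i)}_1$ are independent of the new kernel coordinate $q^{(i),k+1}_{t_{i+1}}$ --- which is precisely how the paper's equality $(2)$ discards it for all $k$.
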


\begin{proof}
It suffices to show that
\begin{equation}
\widehat{\theta}^{(i)}_1\bigl(l_k^{t_{i+1}-\epsilon}(\xi_1^\epsilon, \cdots, \xi_k^\epsilon)\bigr)
= l_k^{t_{i+1}}\bigl(\widehat{\theta}_1^{(i)}(\xi_1^\epsilon), \cdots, \widehat{\theta}_1^{(i)}(\xi_k^\epsilon)\bigr).
\end{equation}
The left-hand side is given by:
\[
\begin{split}
\widehat{\theta}^{(i)}_1\bigl(l_k^{t_{i+1}-\epsilon}(\xi_1^\epsilon, \cdots, \xi_k^\epsilon)\bigr) &= \widehat{\theta}^{(i)}_1 \left(\left[\cdots \left [\mathbf{P}^{'(i)}_{t_{i+1}-\epsilon}, \xi_1^\epsilon \right], \cdots, \xi_l^\epsilon \right]\right)\\
&= \widehat{\theta}^{(i)}_1 \left(\sum_j \widetilde{\omega}_{m+1,j}^{\epsilon'} (\cdots)\right)
+ \widehat{\theta}^{(i)}_1 \left(\left[ \cdots \left[\overset{\circ}{\mathbf{P}}^{'(i)}_{t_{i+1}-\epsilon}, \xi_1 \right], \cdots, \xi_k \right]\right)\\
&\overset{(1)}{=}\widehat{\theta}^{(i)}_1\left(\left[ \cdots \left[\overset{\circ}{\mathbf{P}}^{'(i)}_{t_{i+1}-\epsilon}, \xi_1\right], \cdots, \xi_k\right]\right).
\end{split}
\]
Here, for the equality $(1),$ we use $\lim_{\epsilon \to 0} \widetilde{\omega}'_{m+1,j}(t_{i+1} - \epsilon) = 0,$ which follows from the fact that 
\[
\frac{\partial}{\partial \mathbf{q}^{(i),k+1}_{t_{i+1}}} \in \ker \lim\limits_{\epsilon \rightarrow 0}\widetilde{\omega}'_{m+1,j}(t_{i+1}-\epsilon)
\]
for all $j.$ 

The right-hand side is given by:
\[
\begin{split}
l_k^1\bigl(\widehat{\theta}_1^{(i)}(\xi_1^\epsilon), \cdots, \widehat{\theta}_1^{(i)}(\xi_k^\epsilon)\bigr) &= \left[ \cdots \left[ \mathbf{P}^{'(i)}_{t_{i+1}}, \widehat{\theta}^{(i)}_1(\xi_1^\epsilon)\right], \cdots, \widehat{\theta}^{(i)}_1(\xi_k^\epsilon)\right]\\
&= \left[ \cdots \left[  {\mathbf{P}}^{'(i)}_{t_{i+1}},  \left(\lim_{\epsilon \to 0} \xi_1^\epsilon\right)|_{q_{t_{i+1}}^{(i),k+1}=0}\right], \cdots,  \left(\lim_{\epsilon \to 0} \xi_k^\epsilon\right)|_{q_{t_{i+1}}^{(i),k+1}=0}\right]\\
&\overset{(2)}{=} \left[ \cdots \left[ \overset{\circ}{\mathbf{P}}^{'(i)}_{t_{i+1}}, \left(\lim_{\epsilon \to 0} \xi_1^\epsilon\right)|_{q_{t_{i+1}}^{(i),k+1}=0}\right], \cdots,  \left(\lim_{\epsilon \to 0} \xi_k^\epsilon\right)|_{q_{t_{i+1}}^{(i),k+1}=0}\right]\\
&\overset{(3)}{=} \left(\left[ \cdots \left[\overset{\circ}{\mathbf{P}}^{'(i)}_{t_{i+1}}, \left(\lim_{\epsilon \to 0} \xi_1^\epsilon\right)\right], \cdots, \left(\lim_{\epsilon \to 0} \xi_k^\epsilon\right)\right]\right)\Bigg|_{q_{t_{i+1}}^{(i),k+1}=0}\\
&\overset{(4)}{=}\left(\lim_{\epsilon \to 0}\left[ \cdots \left[\overset{\circ}{\mathbf{P}}^{'(i)}_{t_{i+1}-\epsilon}, \xi_1^\epsilon\right], \cdots, \xi_k^\epsilon\right]\right)\Bigg|_{q_{t_{i+1}}^{(i),k+1}=0}\\
&= \widehat{\theta}_1\left(\left[ \cdots \left[\overset{\circ}{\mathbf{P}}^{'(i)}_{t_{i+1} - \epsilon}, \xi_1^\epsilon\right], \cdots, \xi_k^\epsilon\right]\right).
\end{split}
\]
We explain how we obtain the equalities (2) through (4): (2) is a consequence of the fact that $ (\lim\limits_{\epsilon \to 0} \xi_1^\epsilon)|_{\mathbf{q}_{t_{i+1}}^{(i),k+1}=0}$ has no $q_{t_{i+1}}^{(i),k+1}$-dependence. For (3), we use the fact that $\overset{\circ}{\mathbf{P}}^{'(i)}_{t_{i+1}}$ has no term that involves the differentiation $\frac{\partial}{\partial \mathbf{q}_{t_{i+1}}^{(i),k+1}}.$ (4) follows when we take sufficiently small $\epsilon$ and a different parametrization ${\mathbf{P}}^{'(i)}_{t}$ in $t$ if necessary, so that we achieve the desired uniform convergence as $\epsilon \rightarrow 0.$ Thus we can assume that the bracket is interchangeable with the limit $\epsilon \rightarrow 0.$
\end{proof}

Now we define
\[
\widehat{\kappa}_{\mathbf{p}\mathbf{q},\mathbf{x}}^{(i)} : \Omega^{\bullet +1}(\mathcal{F}^{'(i)}_{t_i,{\phi_{\mathbf{p}\mathbf{q}}(\mathbf{x})}})_{\phi_{\mathbf{p}\mathbf{q}}} \to \Omega^{\bullet +1}(\mathcal{F}^{'(i)}_{t_{i+1},{\phi_{\mathbf{p}\mathbf{q}}(\mathbf{x})}})_{\phi_{\mathbf{p}\mathbf{q}}}
\]
by
\[
\widehat{\kappa}_{\mathbf{p}\mathbf{q},\mathbf{x}}^{(i)} := \widehat{\theta}_{\mathbf{p}\mathbf{q},\mathbf{x}}^{(i)} \circ \overset{\circ}{\kappa}_{\mathbf{p}\mathbf{q},\mathbf{x}}^{(i)} .
\]

At the same $t,$ but with different choices of splitting, the resulting two $L_{\infty}[1]$-algebras are related by the isomorphism of Lemma \ref{liiifdr} (iv):
\[
\widehat{\tau}_{\mathbf{p}\mathbf{q},\mathbf{x}}^{(i)} : \Omega^{\bullet +1}(\mathcal{F}^{'(i)}_{t_{i+1},{\phi_{\mathbf{p}\mathbf{q}}(\mathbf{x})}})_{\phi_{\mathbf{p}\mathbf{q}}} \xrightarrow{\simeq} \Omega^{\bullet +1}(\mathcal{F}^{'(i+1)}_{t_{i+1},{\phi_{\mathbf{p}\mathbf{q}}(\mathbf{x})}})_{\phi_{\mathbf{p}\mathbf{q}}}.
\]

We then obtain their augmented versions (written in the same notation) by Proposition \ref{htphi}:
\[
\widehat{\kappa}_{\mathbf{p}\mathbf{q},\mathbf{x}}^{(i)} : \Omega_{\mathrm{aug},{\phi_{\mathbf{p}\mathbf{q}}}}^{\bullet +1}(\mathcal{F}^{'(i)}_{t_i,{\phi_{\mathbf{p}\mathbf{q}}(\mathbf{x})}}) \to \Omega_{\mathrm{aug},{\phi_{\mathbf{p}\mathbf{q}}}}^{\bullet +1}(\mathcal{F}^{'(i)}_{t_{i+1},{\phi_{\mathbf{p}\mathbf{q}}(\mathbf{x})}}),
\]

\[
\widehat{\tau}_{\mathbf{p}\mathbf{q},\mathbf{x}}^{(i)} : \Omega_{\mathrm{aug},{\phi_{\mathbf{p}\mathbf{q}}}}^{\bullet +1}(\mathcal{F}^{'(i)}_{t_{i+1},{\phi_{\mathbf{p}\mathbf{q}}(\mathbf{x})}}) \xrightarrow{\simeq} \Omega_{\mathrm{aug},{\phi_{\mathbf{p}\mathbf{q}}}}^{\bullet +1}(\mathcal{F}^{'(i+1)}_{t_{i+1},{\phi_{\mathbf{p}\mathbf{q}}(\mathbf{x})}}).
\]

Finally, our second component of the de Rham coordinate change $\widehat{\phi}^{\mathrm{dR},2}_{\mathbf{pq}, \mathbf{x}}$ is given by:
\begin{equation}\nonumber
\begin{split}
\widehat{\phi}^{\mathrm{dR},2}_{\mathbf{pq}, \mathbf{x}}
:= &\text{ a homotopy inverse of }\\
& \quad (\widehat{\gamma}_{\mathbf{pq,x}})^{-1} \circ \widehat{\tau}_{\mathbf{pq},\mathbf{x}}^{(N)} \circ \widehat{\kappa}^{(N)}_{\mathbf{pq},\mathbf{x}}\circ \widehat{\tau}_{\mathbf{pq},\mathbf{x}}^{(N-1)}\circ \widehat{\kappa}^{(N-1)}_{\mathbf{pq},\mathbf{x}}\circ \cdots \circ \widehat{\tau}_{\mathbf{pq},\mathbf{x}}^{(1)} \circ \widehat{\kappa}_{\mathbf{pq},\mathbf{x}}^{(1)}\circ \widehat{\gamma}'_{\mathbf{pq,x}},
\end{split}
\end{equation}
whose existence is guaranteed by the quasi-isomorphism property of the maps $\widehat{\kappa}_{\mathbf{pq},\mathbf{x}}^{(i)}$'s and $\widehat{\tau}^{(i)}_{\mathbf{pq},\mathbf{x}}$'s, and by virtue of the Whitehead theorem (Theorem \ref{wht}). Also, note that $\widehat{\gamma}_{\mathbf{pq,x}}$ and $\widehat{\gamma}'_{\mathbf{pq,x}}$ defined in (\ref{ggdef}) are $L_{\infty}[1]$-isomorphisms, so we can take theire inverses.

\begin{defn}
We can write our \textit{de Rham part coordinate change} as:
\[
\widehat{\phi}^{\mathrm{dR}}_{\mathbf{pq}, \mathbf{x}}:\ \Omega_{\mathrm{aug},{\phi}_{\mathbf{pq}}}^{\bullet +1}(\mathcal{F}'_{\mathbf{q}, W_{\mathbf{p}\mathbf{q}}(\mathbf{x})}) \rightarrow \Omega_{\mathrm{aug}}^{\bullet}(\mathcal{F}_{\mathbf{p}, \mathbf{x}})
\]
as
\begin{equation}\nonumber
\widehat{\phi}^{\mathrm{dR}}_{\mathbf{pq}, \mathbf{x}}
:= \widehat{\phi}^{\mathrm{dR},1}_{\mathbf{pq}, \mathbf{x}} \circ \widehat{\phi}^{\mathrm{dR},2}_{\mathbf{pq}, \mathbf{x}}.
\end{equation}
\end{defn}

In fact, we have almost shown:
\begin{thm}
$\widehat{\phi}_{\mathbf{p}\mathbf{q},\mathbf{x}}$ is a quasi-isomorphism for each $\mathbf{p}, \mathbf{q},$ and $\mathbf{x}$.
\end{thm}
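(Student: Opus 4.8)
The plan is to exploit the fact that, by its very definition, $\widehat{\phi}_{\mathbf{p}\mathbf{q},\mathbf{x}}$ splits as a direct sum $\widehat{\phi}^{\mathrm{K}}_{\mathbf{p}\mathbf{q},\mathbf{x}} \oplus \widehat{\phi}^{\mathrm{dR}}_{\mathbf{p}\mathbf{q},\mathbf{x}}$ of a Koszul component and a de Rham component, the operations $l_k = l_k^{\mathrm{K}} \oplus l_k^{\mathrm{dR}}$ being defined separately on the two summands. Since $H^*(\mathcal{C}_{\mathbf{p},\mathbf{x}})$ is the direct sum of the cohomologies of its Koszul and de Rham parts, and the map induced on cohomology by $(\widehat{\phi}_{\mathbf{p}\mathbf{q},\mathbf{x}})_1 = (\widehat{\phi}^{\mathrm{K}}_{\mathbf{p}\mathbf{q},\mathbf{x}})_1 \oplus (\widehat{\phi}^{\mathrm{dR}}_{\mathbf{p}\mathbf{q},\mathbf{x}})_1$ is the corresponding direct sum, it suffices to prove that each component is a quasi-isomorphism. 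For this I will use two elementary closure properties: a composition of quasi-isomorphisms is a quasi-isomorphism, and, by the $L_\infty[1]$-Whitehead theorem (Theorem \ref{wht}, valid over a field for strict $L_\infty[1]$-algebras), a homotopy inverse of a quasi-isomorphism is again a quasi-isomorphism.

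For the Koszul part I would simply record that $\widehat{\phi}^{\mathrm{K}}_{\mathbf{p}\mathbf{q},\mathbf{x}}$ is the composition of the four maps (1)--(4) entering its construction. Map (1) is quasi-isomorphic via the commutative triangle relating it to $\widehat{\varepsilon}^{\mathrm{K}}_{\mathbf{q},\phi_{\mathbf{pq}}(\mathbf{x}),\phi_{\mathbf{pq}}}$ and $\widehat{\varepsilon}^{\mathrm{K}}_{\mathbf{q},\phi_{\mathbf{p'q}}(\mathbf{x}),\phi_{\mathbf{p'q}}}$, both of which are quasi-isomorphisms by Lemma \ref{cochqim} and Corollary \ref{coinle}; maps (2), (3), (4) are quasi-isomorphic by Proposition \ref{afec} (whose hypotheses, in particular the tangent bundle condition, are supplied by the coordinate change of Subsection \ref{bccd} together with Condition \ref{addcond}), Lemma \ref{cochqim}, and Lemma \ref{itavsteai}, respectively. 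Their composition is therefore a quasi-isomorphism.

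For the de Rham part, $\widehat{\phi}^{\mathrm{dR}}_{\mathbf{p}\mathbf{q},\mathbf{x}} = \widehat{\phi}^{\mathrm{dR},1}_{\mathbf{p}\mathbf{q},\mathbf{x}} \circ \widehat{\phi}^{\mathrm{dR},2}_{\mathbf{p}\mathbf{q},\mathbf{x}}$. The first factor is defined as a homotopy inverse of $\widehat{\eta}_{\mathbf{p}\mathbf{q},\mathbf{x}}$, which was already shown to be a quasi-isomorphism, and the second factor is a homotopy inverse of the long composition $(\widehat{\gamma}_{\mathbf{pq,x}})^{-1} \circ \widehat{\tau}^{(N)}_{\mathbf{pq},\mathbf{x}} \circ \widehat{\kappa}^{(N)}_{\mathbf{pq},\mathbf{x}} \circ \cdots \circ \widehat{\tau}^{(1)}_{\mathbf{pq},\mathbf{x}} \circ \widehat{\kappa}^{(1)}_{\mathbf{pq},\mathbf{x}} \circ \widehat{\gamma}'_{\mathbf{pq,x}}$. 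Here each $\widehat{\gamma}$ and each $\widehat{\tau}^{(i)}$ is an $L_\infty[1]$-isomorphism (Corollary \ref{indliso} and Lemma \ref{liiifdr} (iv)), while each $\widehat{\kappa}^{(i)}_{\mathbf{pq},\mathbf{x}}$ is a quasi-isomorphism by the case analysis over the nullity jumps: in case (1) it is an $L_\infty[1]$-isomorphism, and in cases (2)--(4) it is a composition of the induced morphism $\widehat{\theta}^{(i)}$ and the $L_\infty[1]$-isomorphism $\overset{\circ}{\kappa}^{(i)}$ (with homotopy inverses inserted in (3) and (4)), every such factor being a quasi-isomorphism because the relevant augmented localized foliation de Rham complexes are acyclic by Corollary \ref{corpoinc}. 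Consequently the long composition is a quasi-isomorphism, and by Theorem \ref{wht} so is its homotopy inverse; thus $\widehat{\phi}^{\mathrm{dR}}_{\mathbf{p}\mathbf{q},\mathbf{x}}$ is a quasi-isomorphism, and combining with the Koszul part completes the argument.

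The genuine difficulty has already been discharged in the earlier construction, namely in establishing the quasi-isomorphicity of the individual $\widehat{\kappa}^{(i)}_{\mathbf{pq},\mathbf{x}}$ across the four nullity-jump types --- especially the limiting computation for $\widehat{\theta}^{(i)}$ in case (2) and the correct pairing of homotopy inverses in cases (3)--(4). The only care required at the present, final stage is bookkeeping: to confirm that every intermediate $L_\infty[1]$-algebra appearing in these compositions is strict and that we work over a field, so that the Whitehead theorem legitimately supplies the homotopy inverses and preserves quasi-isomorphicity. This is guaranteed since all the (localized, augmented) de Rham pieces carry strict structures and are acyclic (Lemma \ref{liiifdr}, Corollary \ref{corpoinc}), and the Koszul pieces are ordinary chain complexes; hence I expect no new obstacle beyond assembling the established facts.
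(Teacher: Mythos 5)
Your proposal is correct and takes essentially the same route as the paper: the paper's proof simply cites Proposition \ref{afec} (together with the quasi-isomorphicity of $\widehat{\eta}_x$) for the Koszul component and declares the de Rham component a quasi-isomorphism ``by construction,'' which is exactly the chain of compositions, acyclicity facts, and Whitehead-theorem homotopy inverses you spell out. Your version merely makes the bookkeeping explicit; no new ideas or corrections are needed.
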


\begin{proof}
According to (\cite[Lemma 2.32]{Kim1} and) Proposition \ref{afec}, the Koszul part map $\widehat{\phi}^{\mathrm{K}}_{\mathbf{p}\mathbf{q},\mathbf{x}}$ is a quasi-isomorphism, which is also the case with the de Rham part $\widehat{\phi}^{\mathrm{dR}}_{\mathbf{p}\mathbf{q},\mathbf{x}}$ by construction.
\end{proof}

\subsection{$\mathcal{M}_{k+1}(L, \beta)$ as an $L_{\infty}$-Kuranishi space}

In this subsection, we prove that the moduli space $\mathcal{M}_{k+1}(L, \beta)$ is indeed an $L_{\infty}$-Kuranishi space.

\begin{prop}
The tuple 
\[
\Phi_{\mathbf{p}\mathbf{q}} = \left(U_{\mathbf{pq}}, \phi_{\mathbf{p}\mathbf{q}}, \left\{\widehat{\phi}_{\mathbf{p}\mathbf{q}, \mathbf{x}}\right\}\right)
\]
for $\mathbf{p}, \mathbf{q} \in \mathcal{M}_{k+1}(L, \beta)$ with $\mathrm{Im}  \psi_{\mathbf{p}} \cap \mathrm{Im}\psi_{\mathbf{q}} \neq \emptyset$ is a coordinate change for Kuranishi charts from $\mathcal{U}_{\mathbf{\mathbf{p}}}$ to $\mathcal{U}_{\mathbf{\mathbf{q}}}.$ 
\end{prop}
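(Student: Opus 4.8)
The plan is to confirm that $\Phi_{\mathbf{p}\mathbf{q}}$ fulfills the definition of a coordinate change, namely that $(\phi_{\mathbf{p}\mathbf{q}}, \widehat{\phi}_{\mathbf{p}\mathbf{q}})$ constitutes an (open) embedding of $L_\infty$-Kuranishi charts $\mathcal{U}_{\mathbf{p}}|_{U_{\mathbf{p}\mathbf{q}}} \hookrightarrow \mathcal{U}_{\mathbf{q}}$ in the sense of Definition \ref{ourcemb}, together with the four axioms (i)--(iv) of Definition \ref{kstr}. Since the two main ingredients, the base embedding and the $L_\infty[1]$-component, have already been constructed in Subsection \ref{bccd} and the subsequent ones, the task reduces to assembling these facts and checking the base-level compatibilities; no condition is imposed on $\widehat{\phi}_{\mathbf{p}\mathbf{q}}$ beyond its being a quasi-isomorphism, since the $L_\infty$-cocycle compatibilities are deferred to the higher cocycle conditions of Part III.

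First I would dispatch the base-level data. By the discussion in Subsection \ref{bccd}, $\phi_{\mathbf{p}\mathbf{q}}$ is an inclusion of manifolds (after identification by parallel transport), hence an embedding; it is $(\Gamma_{\mathbf{p}}, \Gamma_{\mathbf{q}})$-equivariant, and the virtual dimensions agree, $\dim \mathcal{U}_{\mathbf{p}} = \dim \mathcal{U}_{\mathbf{q}}$, which yields the openness condition of Definition \ref{ouremb}. The chart-morphism condition $\psi_{\mathbf{q}} \circ \phi_{\mathbf{p}\mathbf{q}} = \psi_{\mathbf{p}}$ on $s_{\mathbf{p}}^{-1}(0) \cap U_{\mathbf{p}\mathbf{q}}$ (the map being over $\mathrm{id}_X$) is inherited from the FOOO embedding property, giving axiom (ii) and condition (i) of Definition \ref{ourchrtmor}, while the containment $\phi_{\mathbf{p}\mathbf{q}}(W_{\mathbf{x}}) \subset W'_{\phi_{\mathbf{p}\mathbf{q}}(\mathbf{x})}$ required by Definition \ref{ourchrtmor}(ii) is exactly the setup of the local neighborhoods fixed in Assumption \ref{assmpiii}(ii). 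Axiom (iv), $\psi_{\mathbf{p}}(s_{\mathbf{p}}^{-1}(0)\cap U_{\mathbf{p}\mathbf{q}}) = \mathrm{Im}\psi_{\mathbf{p}} \cap \mathrm{Im}\psi_{\mathbf{q}}$, follows from the definition $U_{\mathbf{p}\mathbf{q}} = U_{\mathbf{p}'\mathbf{p}} \cap U_{\mathbf{p}'\mathbf{q}}$ together with the homeomorphism property of the $\psi$'s and the semi-continuity of the obstruction bundle data. The cocycle condition (iii), $\phi_{\mathbf{q}\mathbf{r}} \circ \phi_{\mathbf{p}\mathbf{q}} = \phi_{\mathbf{p}\mathbf{r}}$ on the relevant zero locus, holds because all three maps are inclusions, so the composite and $\phi_{\mathbf{p}\mathbf{r}}$ coincide wherever both are defined; this is the restriction to our setting of the FOOO cocycle identity.

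Next I would address the $L_\infty[1]$-component. The embedding requirement of Definition \ref{ourcemb} demands that $\widehat{\phi}_{\mathbf{p}\mathbf{q},\mathbf{x}}$ be a quasi-isomorphism for every $\mathbf{x} \in s_{\mathbf{p}}^{-1}(0)$; this is precisely the content of the theorem immediately preceding this proposition, whose proof splits the claim into the Koszul part (via Proposition \ref{afec} and Lemma \ref{etqi}) and the de Rham part (built through the interpolating families of Corollary \ref{indliso} and the Whitehead theorem, Theorem \ref{wht}). Combined with the base embedding established above, this shows that $(\phi_{\mathbf{p}\mathbf{q}}, \widehat{\phi}_{\mathbf{p}\mathbf{q}})$ is an open embedding of charts. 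The degenerate case $\mathbf{p} = \mathbf{q}$ (axiom (i), $\Phi_{\mathbf{p}\mathbf{p}} = \mathrm{id}_{\mathcal{U}_{\mathbf{p}}}$) is handled by noting that $\phi_{\mathbf{p}\mathbf{p}}$ is the identity inclusion and, since the associated localization is trivial by Lemma \ref{surjcp}, the component $\widehat{\phi}_{\mathbf{p}\mathbf{p},\mathbf{x}}$ reduces to the identity $L_\infty[1]$-morphism.

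The only genuinely delicate point will be confirming that the verifications above are insensitive to the numerous auxiliary choices entering the construction of $\widehat{\phi}_{\mathbf{p}\mathbf{q},\mathbf{x}}$: the splitting $TW_{\mathbf{x}} = T\mathcal{F}_{\mathbf{x}} \oplus G_{\mathbf{x}}$, the interpolating families of presymplectic forms, the common Darboux coordinates, and the homotopy inverses selected along the way. Here I would lean on Lemma \ref{liiifdr}(iv)--(v) and Corollary \ref{indliso} to see that distinct choices produce $L_\infty[1]$-isomorphic, hence quasi-isomorphic, outputs, so that the embedding property is unaffected. Crucially, the coordinate-change axioms impose no constraint on $\widehat{\phi}_{\mathbf{p}\mathbf{q}}$ even up to homotopy, so this ambiguity is harmless at the present stage and is organized coherently only later through the higher cocycle conditions of Part III.
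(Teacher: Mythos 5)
Your proposal is correct and follows essentially the same route as the paper: the paper's proof simply observes that axioms (i)--(iv) of Definition \ref{kstr} concern only the base components and are already established in FOOO's work (Theorem 8.32 of [FOOO7]), with the quasi-isomorphism property of $\widehat{\phi}_{\mathbf{p}\mathbf{q},\mathbf{x}}$ supplied by the theorem immediately preceding the proposition and the $L_\infty$-cocycle compatibility deferred to the higher cocycle conditions, exactly as you note. Your write-up is a more self-contained expansion of the same reduction, verifying by hand the base-level checks that the paper delegates to the FOOO citation.
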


\begin{proof}
The conditions (i) through (iv) of Definition \ref{kstr} are all for the base components, which are already shown in Theorem 8.32 of \cite{FOOO3}.
\end{proof}

With regard to $\Phi_{\mathbf{p}\mathbf{q}},$ the following lemma highlights its favorable property, which will play an important role in Section \ref{morphsec}:

\begin{lem}\label{cochqim}
For each pair $\mathbf{p}', \mathbf{q} \in \mathcal{M}_{k+1}(L, \beta)$ with $\mathbf{p}' \in \mathrm{Im}\psi_{\mathbf{q}},$ the $L_{\infty}[1]$-morphism
\[
\widehat{\varepsilon}_{\mathbf{q},\phi_{\mathbf{p'q}}(\mathbf{x}),\phi_{\mathbf{p'q}}} : \mathcal{C}_{\mathbf{q},\phi_{\mathbf{p'q}}(\mathbf{x})} \rightarrow \mathcal{C}_{\mathbf{q},\phi_{\mathbf{p'q}}(\mathbf{x}),\phi_{\mathbf{p'q}}}
\]
for each $\mathbf{x} \in s_{\mathbf{p}'}^{-1}(0)$ induced from the FOOO coordinate change $\Phi_{\mathbf{p'q}} = \left(\phi_{\mathbf{p'q}}, \overline{\phi}_{\mathbf{p'q}} \right)$ is a quasi-isomorphism.
\end{lem}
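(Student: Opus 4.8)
The plan is to treat the Koszul and de Rham components of $\widehat{\varepsilon}_{\mathbf{q},\phi_{\mathbf{p'q}}(\mathbf{x}),\phi_{\mathbf{p'q}}}$ separately, reducing the whole statement to a cohomological comparison in the Koszul factor. Writing $\phi := \phi_{\mathbf{p'q}}$ and $\mathbf{y} := \phi(\mathbf{x})$, the linear part decomposes as $\widehat{\varepsilon}_1 = \widehat{\varepsilon}^{\mathrm{K}}_1 \oplus \widehat{\varepsilon}^{\mathrm{dR}}_1$. For the de Rham factor, the source $\Omega^{\bullet+1}_{\mathrm{aug}}(\mathcal{F}_{\mathbf{q},\mathbf{y}})$ is acyclic by the Poincar\'{e} lemma for foliations together with the augmentation, while the target $\Omega^{\bullet+1}_{\mathrm{aug},\phi}(\mathcal{F}_{\mathbf{q},\mathbf{y}})$ is acyclic by Corollary \ref{corpoinc}; hence $\widehat{\varepsilon}^{\mathrm{dR}}_1$ is automatically a quasi-isomorphism, being a chain map between acyclic complexes. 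It therefore remains to prove that the canonical localization map of Koszul complexes
\[
\widehat{\varepsilon}^{\mathrm{K}}_1 : \bigwedge\nolimits^{-\bullet}\Gamma(E_\mathbf{q}^*|_{W_\mathbf{y}}) \longrightarrow \Big(\bigwedge\nolimits^{-\bullet}\Gamma(E_\mathbf{q}^*|_{W_\mathbf{y}})\Big)_{\phi}, \quad a \mapsto 1 \otimes a,
\]
is a quasi-isomorphism.

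Second, I would exploit exactly the FOOO bundle splitting used in the proof of Proposition \ref{afec}. Using $E_\mathbf{q}|_{W_\mathbf{y}} \simeq \widetilde{\phi}(E_{\mathbf{p}'}|_{W_\mathbf{x}}) \oplus E^c$ and the corresponding decomposition $s_\mathbf{q} = s_{\mathbf{q},\phi} \oplus s_{\mathbf{q},c}$, the Koszul differential splits as $\iota_{s_\mathbf{q}} = \iota_{s_{\mathbf{q},\phi}} \pm \iota_{s_{\mathbf{q},c}}$, turning both $\bigwedge^{-\bullet}\Gamma(E_\mathbf{q}^*|_{W_\mathbf{y}})$ and its localization into bounded (finite) double complexes, with horizontal differential $\iota_{s_{\mathbf{q},\phi}}$ and vertical differential $\iota_{s_{\mathbf{q},c}}$. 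Since $\widehat{\varepsilon}^{\mathrm{K}}_1$ is simply $a \mapsto 1\otimes a$, it commutes with both differentials and is thus a morphism of double complexes. The key inputs from the moduli coordinate change (Subsection \ref{bccd}, conditions (iii) and (iv), together with Condition \ref{addcond}) are that $s_{\mathbf{q},c}^m \in I_\phi \setminus I_\phi^2$ for each $m$ and that $(s_{\mathbf{q},c}^1,\dots,s_{\mathbf{q},c}^r)$ is a regular sequence, exactly as in the Claim inside the proof of Proposition \ref{afec}.

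Third, I would compare the two double complexes column by column, that is, via the spectral sequence obtained by first taking vertical ($\iota_{s_{\mathbf{q},c}}$) cohomology. Each column is a free twist of the Koszul complex of the regular sequence $s_{\mathbf{q},c}$, so its un-localized vertical cohomology vanishes in negative degrees; Lemma \ref{lemab}(i) then shows the same for the localized columns, and $\widehat{\varepsilon}^{\mathrm{K}}_1$ induces an isomorphism there (namely $0 \to 0$). In degree $0$ the vertical cohomology is $C^\infty(W_\mathbf{y})/\langle s_{\mathbf{q},c}\rangle$ for the source and $C^\infty_\phi(W_\mathbf{y})/\langle s_{\mathbf{q},c}\rangle$ for the target; since $\iota_{s_{\mathbf{q},c}}\colon \Gamma(E^c) \twoheadrightarrow I_\phi$ is surjective (established in Proposition \ref{afec}), we have $\langle s_{\mathbf{q},c}\rangle = I_\phi$, so both equal $C^\infty(W_\mathbf{y})/I_\phi = C^\infty_\phi(W_\mathbf{y})/I_\phi$ and $\widehat{\varepsilon}^{\mathrm{K}}_1$ induces the identity. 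Hence $\widehat{\varepsilon}^{\mathrm{K}}_1$ is a quasi-isomorphism on the vertical $E_1$-page, and because both double complexes are finite, the standard filtration comparison upgrades this to a quasi-isomorphism of total complexes. Combined with the de Rham factor, this proves that $\widehat{\varepsilon}_{\mathbf{q},\phi_{\mathbf{p'q}}(\mathbf{x}),\phi_{\mathbf{p'q}}}$ is a quasi-isomorphism.

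The main obstacle will be the same subtlety that Lemma \ref{lemab} is designed to handle: the target is not literally $C^\infty(W_\mathbf{y})$-coefficients but the inverse limit $C^\infty_\phi(W_\mathbf{y}) = \lim\limits_{\longleftarrow} C^\infty(W_\mathbf{y})/I_\phi^j$, so one must check that taking vertical cohomology commutes with this completion and is compatible with the projections $p_{j+1,j}$ across the inverse system. Reducing the column comparison cleanly to Lemma \ref{lemab} (rather than reproving its inverse-limit bookkeeping) is what makes the argument go through, and verifying that $\widehat{\varepsilon}^{\mathrm{K}}_1$ genuinely respects both differentials of the double complex — so that the filtration and spectral-sequence comparison is legitimate — is the step requiring the most care.
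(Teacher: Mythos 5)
Your argument is correct in outline, but it takes a genuinely different route from the paper. The paper proves Lemma \ref{cochqim} ``softly'': it extends $\phi_{\mathbf{p'q}}$ to a \emph{surjective} chart map $\widetilde{\phi}_{\mathbf{p'q}} : U_{\mathbf{p'q}}\times\mathbb{R}^m \twoheadrightarrow U_{\mathbf{q}}$ (possible by contractibility of $U_{\mathbf{p}'}$), notes that localization along a surjection is trivial (Lemma \ref{surjcp}), invokes Proposition \ref{afec} and Lemma \ref{phxv} to see that $\widehat{\widetilde{\phi}}^{\mathrm{K}}_{\mathbf{p'q},\mathbf{x}}$, $\widehat{\pi}^{\mathrm{K}}_{(\mathbf{x},0)}$ and $\widehat{\phi}^{\mathrm{K}}_{\mathbf{p'q},\mathbf{x}}$ are all quasi-isomorphisms, and then reads off the quasi-isomorphicity of $\widehat{\varepsilon}^{\mathrm{K}}$ by a two-out-of-three argument in a homotopy-commutative square. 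You instead attack $\widehat{\varepsilon}^{\mathrm{K}}_1 : a\mapsto 1\otimes a$ head on, re-running the double-complex machinery \emph{inside} the proof of Proposition \ref{afec}: split $E_{\mathbf{q}} \simeq \widetilde{\phi}(E_{\mathbf{p}'})\oplus E^c$, take vertical $\iota_{s_{\mathbf{q},c}}$-cohomology, use the regular-sequence Claim and Lemma \ref{lemab}(i) for the negative rows, and compare degree-$0$ cokernels. Your route is more self-contained and makes the mechanism visible (the localization changes nothing because $\langle s_{\mathbf{q},c}\rangle = I_{\phi}$ kills exactly the functions that localization would complete along), at the cost of redoing the hardest analytic part of Proposition \ref{afec}; the paper's route is shorter but hides the content in the auxiliary extension and the asserted homotopy-commutativity of the sandwiching diagram.

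One point in your sketch needs more than a citation of Lemma \ref{lemab}: the identification of the degree-$0$ entry of the localized $E_1$-page with $C^{\infty}(W_{\mathbf{y}})/I_{\phi}\otimes\bigwedge^{p}\Gamma(\widetilde{\phi}(E^*))$, and the claim that $\widehat{\varepsilon}^{\mathrm{K}}_1$ induces the identity there. Lemma \ref{lemab}(i) only gives the vanishing in negative column degrees; the degree-$0$ cokernel computation requires (a) the surjectivity of the \emph{localized} $\iota_{s_{\mathbf{q},c}}$ onto $C^{\infty}_{\phi}\otimes I_{\phi}$ together with its compatibility with the projections $p_{j+1,j}$ --- this is exactly the Taylor-expansion/inverse-function-theorem argument in the second half of the proof of Proposition \ref{afec}, not part of Lemma \ref{lemab} --- and (b) the exactness of $\lim\limits_{\longleftarrow}$ on the resulting short exact sequences, which holds here because the transition maps $C^{\infty}/I_{\phi}^{j+1}\to C^{\infty}/I_{\phi}^{j}$ are surjective. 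With those two points supplied explicitly, your spectral-sequence comparison of the two bounded double complexes closes the argument. Note also that $\widehat{\varepsilon}^{\mathrm{K}}_1$ is in general not injective (functions flat along $\mathrm{Im}\phi$ die in the inverse limit), so it is right that you compare total complexes via the filtration rather than via an exact sequence of a quotient complex.
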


\begin{proof}
We first note that for each such pair $\mathbf{p', q},$ and for $m := \dim U_\mathbf{q} - \dim U_{\mathbf{p'}},$ there exists a morphism of charts
\[
\widetilde{\Phi}_{\mathbf{p'q}} = \left(U_{\mathbf{p'q}} \times \mathbb{R}^m, \widetilde{\phi}_{\mathbf{p'q}}, \left\{\widehat{\widetilde{\phi}}_{\mathbf{p'q},\mathbf{x}}\right\}\right) : \mathcal{U}_{\mathbf{p}'} \times \mathbb{R}^m|_{U_{\mathbf{p'q}} \times \mathbb{R}^m} \rightarrow \mathcal{U}_{\mathbf{q}}
\]
with the following properties:
\begin{enumerate}[label = (\roman*)]
\item $\widetilde{\phi}_{\mathbf{p'q}} : U_{\mathbf{p'q}} \times \mathbb{R}^m \hookrightarrow U_{\mathbf{q}}$ is an open embedding,
\item $\widetilde{\phi}_{\mathbf{p'q}}|_{U_{\mathbf{p'q}} \times \{0\}} \equiv \phi_{\mathbf{p'q}},$
\item $\overline{\phi}_{\mathbf{p'q}} : E_{\mathbf{p'}} \times \mathbb{R}^m|_{U_{\mathbf{p'q}}\times \mathbb{R}^m} \hookrightarrow E_{\mathbf{q}} \text{ is a bundle embedding,}$
\end{enumerate}
The existence of such a number $m$ is guaranteed under the assumption of the contractibility of $U_{\mathbf{p'}}$ and $U_{\mathbf{p'q}}$

Observe that the pair $(\widetilde{\phi}_{\mathbf{p'q}}, \widetilde{\overline{\phi}}_{p'q})$ determines an FOOO embedding with the tangent bundle condition satisfied:
\[
\left[d_{(\mathbf{x},0)}(s_{\mathbf{p}'} \times \mathrm{id}_{\mathbb{R}^m})\right] : \frac{T_{\widetilde{\phi}_{\mathbf{p'q}}(\mathbf{x},0)}U_{\mathbf{q}}}{\widetilde{\phi}_{\mathbf{p'q}*}\left(T_{(\mathbf{x},0)}(U_{\mathbf{p}'}\times \mathbb{R}^m)\right)} \xrightarrow{\simeq} \frac{E_{\mathbf{q}, \widetilde{\phi}_{\mathbf{p'q}}(\mathbf{x},0)}}{\overline{\phi}_{\mathbf{p'q}}(E_{\mathbf{p}',\mathbf{x}} \times \mathbb{R}^m)}.
\]
at each $\mathbf{x} \in s^{-1}_{\mathbf{p}'}(0).$ Then Proposition \ref{afec} implies the quasi-isomorphicity of the Koszul part of $\widehat{\widetilde{\phi}}_{\mathbf{p'q},\mathbf{x}}$ for each $\mathbf{x},$ with the additional conditions (iv) and (v) in Condition \ref{addcond} satisfied. Since the de Rham part $L_{\infty}[1]$-morphism, being a map between acyclic $L_{\infty}[1]$-algebras, is automatically quasi-isomorphic, $\widehat{\widetilde{\phi}}_{\mathbf{p'q},\mathbf{x}}$ itself is a quasi-isomorphism. Moreover, $\widetilde{\phi}_{\mathbf{p'q}}|_{W_{\mathbf{x}}} : W_{\mathbf{x}} \times \mathbb{R}^m \rightarrow W_{\widetilde{\phi}_{\mathbf{p'q}}(\mathbf{x})}$ can be assume surjective by taking the smaller neighborhoods, if necessary. Thus we add to the above properties:
\begin{enumerate}[label = (\roman*), start = 4]
\item $\widehat{\widetilde{\phi}}_{\mathbf{p'q},\mathbf{x}} : \mathcal{C}_{\mathbf{q},\widetilde{\phi}_{\mathbf{p'q}}(\mathbf{x},0)} = \mathcal{C}_{\mathbf{q},\widetilde{\phi}_{\mathbf{p'q}}(\mathbf{x},0), \widetilde{\phi}_{\mathbf{p'q}}} \xrightarrow{\simeq} \mathcal{C}_{\mathbf{p'},(\mathbf{x},0)}^{\mathbb{R}^m}.$
\end{enumerate}

Considering the way $\widehat{\pi}_{(x,0)}, \widehat{\phi}_{\mathbf{p'q},\mathbf{x}}$ and $\widehat{\phi}_{\mathbf{p'q},\mathbf{x}}$ are defined, it is straightforward to see that we have the commutative (up to $L_{\infty}[1]$-homotopy) diagram
\begin{equation}
\begin{tikzcd}
 \mathcal{C}_{\mathbf{p'},(\mathbf{x},0)}^{\mathbb{R}^m}  \arrow{r}{\widehat{\pi}^{\mathrm{K}}_{(\mathbf{x},0)},\simeq} \arrow{d}[swap]{\widehat{\widetilde{\phi}}^{{\mathrm{K}},-1}_{\mathbf{p'q},\mathbf{x}}, \simeq} & \mathcal{C}_{\mathbf{p}',\mathbf{x}}\arrow{r}{\widehat{\phi}_{\mathbf{p'q},\mathbf{x}}^{{\mathrm{K}},-1}, \simeq} & \mathcal{C}_{\mathbf{q},\phi_{\mathbf{p'q}}(\mathbf{x}),\phi_{\mathbf{p'q}}}\\
\mathcal{C}_{\mathbf{q},\widetilde{\phi}_{\mathbf{p'q}}(\mathbf{x},0), \widetilde{\phi}_\mathbf{p'q}} \arrow{r}{=} &  \mathcal{C}_{\mathbf{q},\widetilde{\phi}_{\mathbf{p'q}}(\mathbf{x},0)}  \arrow{r}{=} & \mathcal{C}_{\mathbf{q},\phi_{\mathbf{p'q}}(\mathbf{x})} \arrow{u}[swap]{"\widehat{\varepsilon}^{\mathrm{K}}_{\mathbf{q},\phi_{\mathbf{p'q}}(\mathbf{x}),\phi_{\mathbf{p'q}}}"}
\end{tikzcd}
\end{equation}
that consists of the \textit{Koszul} part morphisms only. Since all the other $L_{\infty}[1]$-morphisms are quasi-isomorphic, so is $\widehat{\varepsilon}^{\mathrm{K}}_{\mathbf{q},\phi_{\mathbf{p'q}}(\mathbf{x}),\phi_{\mathbf{p'q}}}.$ The de Rham part morphism is also a quasi-isomorphism, being an $L_{\infty}[1]$-morphism between acyclic algebras.
\end{proof}

\begin{cor}\label{coinle}
For each pair $\mathbf{p}, \mathbf{q} \in \mathcal{M}_{k+1}(L, \beta)$ with $\mathrm{Im}\psi_{\mathbf{p}} \cap \mathrm{Im}\psi_{\mathbf{q}} \neq \emptyset,$ the $L_{\infty}[1]$-morphism
\[
\widehat{\varepsilon}_{\mathbf{q},\phi_{\mathbf{pq}}(\mathbf{x}),\phi_{\mathbf{pq}}} : \mathcal{C}_{\mathbf{q},\phi_{\mathbf{pq}}(\mathbf{x})} \rightarrow \mathcal{C}_{\mathbf{q},\phi_{\mathbf{pq}}(\mathbf{x}),\phi_{\mathbf{pq}}}
\]
for each $\mathbf{x} \in s_{\mathbf{p }}^{-1}(0) \cap U_{\mathbf{pq}}$
induced from the coordinate change $\Phi_{\mathbf{pq}}$ of $L_{\infty}$-Kuranishi space is a quasi-isomorphism.
\end{cor}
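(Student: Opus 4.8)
The plan is to reduce the statement directly to Lemma \ref{cochqim}, which already handles the asymmetric FOOO-type pairing. Recall from Subsection \ref{bccd} that the coordinate change $\Phi_{\mathbf{pq}}$ for a symmetric pair $(\mathbf{p},\mathbf{q})$ with $\mathrm{Im}\psi_{\mathbf{p}} \cap \mathrm{Im}\psi_{\mathbf{q}} \neq \emptyset$ is defined through an implicitly chosen intermediate point $\mathbf{p}' \in \mathrm{Im}\psi_{\mathbf{p}} \cap \mathrm{Im}\psi_{\mathbf{q}}$, with base map $\phi_{\mathbf{pq}} := \phi_{\mathbf{p'q}}|_{U_{\mathbf{pq}}}$ on $U_{\mathbf{pq}} = U_{\mathbf{p'p}} \cap U_{\mathbf{p'q}}$. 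Since $\mathbf{p}' \in \mathrm{Im}\psi_{\mathbf{q}}$, Lemma \ref{cochqim} applies to the pair $(\mathbf{p}',\mathbf{q})$ and tells us that $\widehat{\varepsilon}_{\mathbf{q},\phi_{\mathbf{p'q}}(\mathbf{x}),\phi_{\mathbf{p'q}}}$ is a quasi-isomorphism. Hence the only remaining task is to identify $\widehat{\varepsilon}_{\mathbf{q},\phi_{\mathbf{pq}}(\mathbf{x}),\phi_{\mathbf{pq}}}$ with $\widehat{\varepsilon}_{\mathbf{q},\phi_{\mathbf{p'q}}(\mathbf{x}),\phi_{\mathbf{p'q}}}$ in a neighborhood of each zero point.

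First I would observe that $\phi_{\mathbf{pq}}$ is merely the restriction of $\phi_{\mathbf{p'q}}$ to the open subset $U_{\mathbf{pq}} \subseteq U_{\mathbf{p'q}}$; in particular $\phi_{\mathbf{pq}}(\mathbf{x}) = \phi_{\mathbf{p'q}}(\mathbf{x})$ for each zero point $\mathbf{x}$, so the two morphisms share a common source $\mathcal{C}_{\mathbf{q},\phi_{\mathbf{pq}}(\mathbf{x})} = \mathcal{C}_{\mathbf{q},\phi_{\mathbf{p'q}}(\mathbf{x})}$. Next, since $U_{\mathbf{pq}}$ is open in $U_{\mathbf{p'q}}$ and contains $\mathbf{x}$, its image $\phi_{\mathbf{p'q}}(U_{\mathbf{pq}})$ contains a full neighborhood of $\mathbf{y} := \phi_{\mathbf{pq}}(\mathbf{x})$ inside $\mathrm{Im}\phi_{\mathbf{p'q}}$; after shrinking the local neighborhood $W'_{\mathbf{y}}$ if necessary — which alters the local $L_\infty[1]$-algebra only up to quasi-isomorphism by Example \ref{dcotopwx} — we obtain $\mathrm{Im}\phi_{\mathbf{pq}} \cap W'_{\mathbf{y}} = \mathrm{Im}\phi_{\mathbf{p'q}} \cap W'_{\mathbf{y}}$. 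Because the localizations $C^\infty_{\phi}(W'_{\mathbf{y}}) = \lim\limits_{\longleftarrow} C^\infty(W'_{\mathbf{y}})/I_\phi^j$ depend only on the germ of $\mathrm{Im}\phi$ along $W'_{\mathbf{y}}$, this coincidence of images forces $I_{\phi_{\mathbf{pq}}} = I_{\phi_{\mathbf{p'q}}}$ as ideals of $C^\infty(W'_{\mathbf{y}})$; consequently both the Koszul and the de Rham localized algebras agree, the structure maps $\widehat{\varepsilon}$ literally coincide, and the conclusion follows from Lemma \ref{cochqim}.

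As a safeguard against the notational identification, I would also note that the expansion argument of Lemma \ref{cochqim} can be re-run verbatim: the restricted data $(\phi_{\mathbf{pq}}, \widetilde{\phi}_{\mathbf{pq}})$ still constitute an FOOO embedding satisfying the tangent bundle condition and Condition \ref{addcond} (both being pointwise conditions at the zero points, preserved under restriction to an open subset), so Proposition \ref{afec} applies to the Koszul component. Combined with the commutative-up-to-homotopy diagram of Lemma \ref{cochqim} relating $\widehat{\pi}$, $\widehat{\phi}_{\mathbf{pq}}$, and $\widehat{\varepsilon}^{\mathrm{K}}_{\mathbf{q},\phi_{\mathbf{pq}}(\mathbf{x}),\phi_{\mathbf{pq}}}$, a two-out-of-three argument forces the Koszul part of $\widehat{\varepsilon}$ to be a quasi-isomorphism, while the de Rham part is automatically one by Corollary \ref{corpoinc}.

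The main obstacle will be the bookkeeping of the chart identifications rather than any analytic content: one must carefully track that a zero point $\mathbf{x} \in s_{\mathbf{p}}^{-1}(0) \cap U_{\mathbf{pq}}$, viewed inside $U_{\mathbf{p}'}$, corresponds under $\psi$ and the FOOO coordinate change $\phi_{\mathbf{p'p}}$ to a genuine zero point to which Lemma \ref{cochqim} applies, and that the shrinking of $W'_{\mathbf{y}}$ can be performed compatibly over all such $\mathbf{x}$. Once these definitional matters are unwound, no input beyond Lemma \ref{cochqim} and the germ-level invariance of the localization is required.
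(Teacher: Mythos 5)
Your proposal is correct and takes essentially the same route as the paper: the paper's proof simply re-runs the argument of Lemma \ref{cochqim} on the smaller presymplectic neighborhood $W_{\mathbf{p'q},\mathbf{x}} \cap U_{\mathbf{p'p}}$, which is exactly your "safeguard" argument, while your primary route (identifying $I_{\phi_{\mathbf{pq}}}$ with $I_{\phi_{\mathbf{p'q}}}$ after shrinking $W'_{\mathbf{y}}$ via Example \ref{dcotopwx}) is only a minor repackaging of the same shrinking step.
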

\begin{proof}
We can apply the proof of the previous lemma with the smaller presymplectic neighborhood $W_{\mathbf{pq,x}} := W_{\mathbf{p'q,x}} \cap U_{\mathbf{p'p}}$ for each zero point $\mathbf{x} \in s_{\mathbf{p }}^{-1}(0) \cap U_{\mathbf{pq}}.$
\end{proof}

We now state our main result in this section:
\begin{thm-defn}\label{krnsthm}
$\mathcal{M}_{k+1}(L, \beta) := \left( \mathcal{M}_{k+1}(L, \beta), \left[\left\{\mathcal{U}_{\mathbf{p}}\}, \{ \Phi_{\mathbf{p} \mathbf{q}} \right\}\right] \right)$ is an $L_{\infty}$-Kuranishi space, which we call \textit{the moduli space of pseudoholomorphic disks.}
\end{thm-defn}

\section{Morphisms of Kuranishi spaces $\mathcal{M}_{k+1}(L, \beta)$}\label{morphsec}

In this section, we present two examples of morphisms concerning the $L_{\infty}$-Kuranishi space $\mathcal{M}_{k+1}(L, \beta)$: the evaluation and forgetful morphisms.

\subsection{Evaluation morphisms}
Recall that the topological moduli space $\mathcal{M}_{k+1}(L, \beta)$ allows a natural map that evaluates at the boundary marked points
\begin{equation}\nonumber
\begin{split}
\mathrm{ev}_{i} : \mathcal{M}_{k+1}(L, \beta) &\rightarrow L, \ \ i = 0, 1, \cdots, k,\\
\big[\big((\Sigma, \vec{z}), u\big)\big] &\mapsto u(z_i).
\end{split}
\end{equation}
In this subsection, we would like to lift $\mathrm{ev}_{i}$ to the Kuranishi space level, where the Kuranishi space structures on the domain and the target are as in Theorem-Definition \ref{krnsthm} and Example \ref{man}, respectively. 

In Section \ref{ksmod}, we considered the local base chart
\begin{equation}\nonumber
U_{[((\Sigma, \vec{z}), u)]} := \{ \mathbf{x} \in \mathscr{U}_{[((\Sigma, \vec{z}), u)]} \mid \overline{\partial}u_{\mathbf{x}} \in E_{[((\Sigma, \vec{z}), u)]}(\mathbf{x}) \}.
\end{equation}
Using this, we define
\begin{equation}\nonumber
\begin{split}
\mathrm{ev}_{i, [((\Sigma, \vec{z}), u)]} : U_{[((\Sigma, \vec{z}), u)]} &\rightarrow \mathbb{R}^n_{u(z_i)},\\
\mathbf{x} := \big((\Sigma', \vec{z}'), u'\big) &\mapsto u'(z'_i).
\end{split}
\end{equation}
for $\mathbf{x} \in \mathscr{U}_{\mathbf{p}} \subset \mathcal{X}_{k+1}(L, \beta).$ Here, $\mathbb{R}^n_{u(z_i)}$ is the Euclidean model of the manifold $L^n$ at $u(z_i)$ (cf. Example \ref{man}).

From now on, we write $\mathbf{p}$ for $\big[\big((\Sigma, \vec{z}), u\big)\big]$ and $\mathrm{ev}_i({\mathbf{p}})$ for $u(z_i)$ and similarly for other cases. 

Note that the atlas $\widehat{\mathcal{U}}$ on the moduli can be replaced with the equivalent $\widehat{\mathcal{U}} \times V$ with larger dimension of the base $U_{\mathbf{p}} \times V$ with $\dim V \geq \dim L,$ if necessary. Then we extend the base chart map $\mathrm{ev}_{i,\mathbf{p}}$ to $U_{\mathbf{p}} \times V$ properly, so that
\[
{\mathrm{ev}}_{i,\mathbf{p}} : U_{\mathbf{p}} \times V \rightarrow U'_{\mathrm{ev}_i(\mathbf{p})}
\]
is surjective for each $\mathbf{p}$, hence so is $\mathrm{ev}_{i,\mathbf{p}}|_{W_{\mathbf{x}}} : W_{\mathbf{x}} \times V \rightarrow W'_{{\mathrm{ev}_{i,\mathbf{p}}}(\mathbf{x})}$ for every $\mathbf{x} \in (s_{\mathbf{p}} \times \mathrm{id}_V)^{-1}(0) \simeq s_{\mathbf{p}}^{-1}(0).$ (Such an extension always exists under the assumption that $U_{\mathbf{p}}$ is contractible.) As a result, we can assume that
\[
\mathcal{C}'_{\mathrm{ev}_i(\mathbf{p}), \mathrm{ev}_i(\mathbf{x}), \mathrm{ev}_{i, \mathbf{p}}} = \mathcal{C}'_{\mathrm{ev}_i(\mathbf{p}), \mathrm{ev}_i(\mathbf{x})}
\]
(cf. Lemma \ref{surjcp}).

The $L_\infty$-component
\[
\widehat{\mathrm{ev}}_{i,\mathbf{p},\mathbf{x}} : \mathcal{C}'_{\mathrm{ev}_i(\mathbf{p}), \mathrm{ev}_i(\mathbf{x}), \mathrm{ev}_{i,\mathbf{p}}} \rightarrow \mathcal{C}_{\mathbf{p},\mathbf{x}}
\]
 is defined as follows. Note that the domain here is given by
\begin{equation}\label{evmodo}
\mathcal{C}'_{\mathrm{ev}_i(\mathbf{p}), \mathrm{ev}_i(\mathbf{x}), \mathrm{ev}_{i,\mathbf{p}}} = \mathcal{C}'_{\mathrm{ev}_i(\mathbf{p}), \mathrm{ev}_i(\mathbf{x})} = \Omega_{\mathrm{aug}}^{\bullet+1}(W_{\mathrm{ev}_i(\mathbf{x})})
\end{equation}
for an open neighborhood $W_{\mathrm{ev}_i(\mathbf{x})}$ of $\mathrm{ev}_i(\mathbf{x})$ in $L$, while the target is
\[
\mathcal{C}_{\mathbf{p},\mathbf{x}} = \bigwedge\nolimits^{-\bullet} \Gamma(E^*|_{W_{\mathbf{x}}}) \oplus \Omega_{\mathrm{aug}}^{\bullet+1}(\mathcal{F}_{\mathbf{x}}).
\]
In (\ref{evmodo}), the first equation holds as a consequence of the above surjectivity assumption. The second one follows from the choice of Kuranishi space structure for smooth manifolds in Example \ref{man}. We then define
$$\widehat{\mathrm{ev}}_{i,\mathbf{p},\mathbf{x},k}: \Omega^{\bullet+1}_{\text{aug}}(W_{\mathrm{ev}_i(\mathbf{x})}) \rightarrow \bigwedge\nolimits^{-\bullet}\Gamma(E^*|_{W_{\mathbf{x}}}) \oplus \Omega_{\text{aug}}^{\bullet+1}(W_{\mathbf{x}}; \mathcal{F}_{\mathbf{x}})$$
by 
\begin{equation}\nonumber
\widehat{\mathrm{ev}}_{i,\mathbf{p},\mathbf{x},k}(\xi_1, \ldots, \xi_k) 
:=\begin{cases}
\mathrm{ev}_{i,\mathbf{p}}^*(\xi_1) &\text{if } k=1,\\
0 &\text{if } k \geq 2.
\end{cases}
\end{equation}
\begin{lem}\label{levipzx}
$\widehat{\mathrm{ev}}_{i,\mathbf{p},\mathbf{x}} := \{\widehat{\mathrm{ev}}_{i,\mathbf{p},\mathbf{x},k}\}_{k \geq 1}$ is an $L_\infty[1]$-morphism.
\end{lem}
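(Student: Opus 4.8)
The plan is to verify the $L_\infty[1]$-morphism relations (\ref{frel}) directly, exploiting that $\widehat{\mathrm{ev}}_{i,\mathbf{p},\mathbf{x},k}$ is concentrated in arity one. First I would record the two structural simplifications that collapse these relations. On the source side, $\Omega^{\bullet+1}_{\mathrm{aug}}(W_{\mathrm{ev}_i(\mathbf{x})})$ is the augmented de Rham complex of the manifold $L$ carrying the zero $2$-form, so by Lemma \ref{liiifdr}(iii) and the description in Example \ref{man} its operations satisfy $l_1=d$ and $l_{k}=0$ for $k\ge 2$; it is a genuine chain complex. On the map side $\widehat{\mathrm{ev}}_{i,\mathbf{p},\mathbf{x},k}=0$ for $k\ge 2$. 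Feeding these into (\ref{frel}), every term containing a higher source bracket or a higher component of the morphism drops out, and the relation of arity $k$ reduces to
\[
l_k'\bigl(\widehat{\mathrm{ev}}_{i,\mathbf{p},\mathbf{x},1}(\xi_1),\dots,\widehat{\mathrm{ev}}_{i,\mathbf{p},\mathbf{x},1}(\xi_k)\bigr)=\widehat{\mathrm{ev}}_{i,\mathbf{p},\mathbf{x},1}\bigl(l_k(\xi_1,\dots,\xi_k)\bigr),
\]
whose right-hand side vanishes identically for $k\ge 2$ since the source higher operations vanish. Thus it suffices to treat $k=1$ and to prove that the target operations $l_k'=l_k^{\mathrm{K}}\oplus l_k^{\mathrm{dR}}$ annihilate tuples of $\mathrm{ev}$-pullbacks for $k\ge 2$.

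For $k=1$ I would note that $\widehat{\mathrm{ev}}_{i,\mathbf{p},\mathbf{x},1}=\mathrm{ev}_{i,\mathbf{p}}^*$ takes values entirely in the de Rham summand $\Omega^{\bullet+1}_{\mathrm{aug}}(\mathcal{F}_{\mathbf{x}})$, so the Koszul differential $l_1^{\mathrm{K}}=\iota_{s|_{W_{\mathbf{x}}}}$ plays no role, and the chain-map identity is precisely the statement that the foliation pullback commutes with $l_1^{\mathrm{dR}}=d_{\mathcal{F}_{\mathbf{x}}}$. This follows from the naturality of the ordinary de Rham differential under pullback together with the compatibility of restriction to the foliation directions $T\mathcal{F}_{\mathbf{x}}\subset TW_{\mathbf{x}}$ with $d_{\mathcal{F}_{\mathbf{x}}}$; on the degree $-2$ augmentation piece the identity is immediate from the naturality of the defining inclusion $C^{\infty}(\cdot)_{\mathcal{F}}\hookrightarrow\Omega^0(\cdot)$. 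Since the target $W_{\mathrm{ev}_i(\mathbf{x})}\subset L$ is trivially foliated, no foliation-tangency condition on $\mathrm{ev}_{i,\mathbf{p}}$ is needed for the pullback of a foliation form to be defined.

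The main obstacle is the vanishing of $l_k^{\mathrm{dR}}(\mathrm{ev}_{i,\mathbf{p}}^*\xi_1,\dots,\mathrm{ev}_{i,\mathbf{p}}^*\xi_k)$ for $k\ge 2$ (the Koszul higher operations $l^{\mathrm{K}}_{k\ge 2}$ being zero, only the de Rham part can contribute), where $l_k^{\mathrm{dR}}(\eta_1,\dots,\eta_k)=\Pi[\cdots[[P,\eta_1],\eta_2],\dots,\eta_k]$ is the Voronov bracket for the Gotay Poisson structure $P$ of Example \ref{gtyemb}. In the local form (\ref{pico}) the only source of arity $\ge 2$ contributions is the curvature term $\tfrac12\sum\omega^{ij}e_i\wedge e_j$, which couples base derivatives $\partial/\partial y_j$ with the connection coefficients $R^{\alpha}_j$, while the foliation differential of Lemma \ref{liiifdr}(ii) accounts for the whole of $l_1^{\mathrm{dR}}$. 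My plan is to show, by a direct computation modeled on the intertwining identity (the equality marked $*$) in the proof of Proposition \ref{afec}, that these curvature brackets are transparent to $\mathrm{ev}$-pullbacks, yielding
\[
l_k^{\mathrm{dR}}\bigl(\mathrm{ev}_{i,\mathbf{p}}^*\xi_1,\dots,\mathrm{ev}_{i,\mathbf{p}}^*\xi_k\bigr)=\mathrm{ev}_{i,\mathbf{p}}^*\bigl(l_k^{L}(\xi_1,\dots,\xi_k)\bigr),
\]
whose right-hand side is zero for $k\ge 2$ because $L$ carries the zero $2$-form and hence has vanishing higher operations by Lemma \ref{liiifdr}(iii). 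Establishing this intertwining is exactly where the bulk of the calculation lies, since $\mathrm{ev}_{i,\mathbf{p}}$ is a general surjective smooth map rather than one of the bundle-compatible maps used in Proposition \ref{afec}; I expect to reduce it to the vanishing of the repeated Nijenhuis--Schouten brackets of the curvature part of $P$ against $\mathrm{ev}$-pullbacks in the fibre directions, using the freedom in the choice of splitting (Lemma \ref{liiifdr}(iv),(v)) to place the computation in a convenient adapted coordinate system.

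Finally I would promote the verified relations from the foliation de Rham complex to its augmentation. Since the augmentations on both source and target are attached through the recursive construction of Proposition \ref{augomega}, and the relations involving degree $-2$ inputs are governed by the extension procedure of Proposition \ref{auglmo} together with Remark \ref{auglmoloc}, the morphism relations for $\widehat{\mathrm{ev}}_{i,\mathbf{p},\mathbf{x}}$ on the augmented complexes follow formally from their validity on $\Omega^{\bullet+1}(\mathcal{F}_{\mathbf{x}})$ and the chain-map property already established. Combining the $k=1$ and $k\ge 2$ cases then shows that $\widehat{\mathrm{ev}}_{i,\mathbf{p},\mathbf{x}}=\{\widehat{\mathrm{ev}}_{i,\mathbf{p},\mathbf{x},k}\}_{k\ge 1}$ satisfies all $L_\infty[1]$-morphism relations, as desired.
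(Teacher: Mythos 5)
Your reduction of the $L_\infty[1]$-relations is correct: with $\widehat{\mathrm{ev}}_{i,\mathbf{p},\mathbf{x},k}=0$ for $k\ge 2$ and the source operations trivial beyond $l_1$, everything comes down to the chain-map identity at $k=1$ and the vanishing of $l_k^{\mathrm{dR}}\bigl(\mathrm{ev}_{i,\mathbf{p}}^*\xi_1,\dots,\mathrm{ev}_{i,\mathbf{p}}^*\xi_k\bigr)$ for $k\ge 2$, and your handling of the Koszul summand and of the augmentation is consistent with how the paper sets things up. The gap is in the step you yourself flag as ``where the bulk of the calculation lies.'' You propose to establish the intertwining $l_k^{\mathrm{dR}}(\mathrm{ev}^*\xi_1,\dots,\mathrm{ev}^*\xi_k)=\mathrm{ev}^*\bigl(l_k^{L}(\xi_1,\dots,\xi_k)\bigr)$ by a formal Nijenhuis--Schouten computation, using the freedom in the choice of splitting to place yourself in adapted coordinates. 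This cannot work as stated: for an arbitrary smooth map into $L$ the curvature part $\tfrac12\sum\omega^{ij}e_i\wedge e_j$ of the Gotay Poisson structure produces, already at $k=2$, terms proportional to the \emph{transverse} derivatives $e_i(a_\alpha)$ of the coefficient functions of the pullback, which survive the projection $\Pi$ and have no reason to vanish; and changing the splitting only alters the $L_\infty[1]$-structure by an isomorphism (Lemma \ref{liiifdr}(iv)), so no choice of adapted coordinates makes these contributions disappear. The vanishing is not formal.

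The missing idea is geometric and specific to the evaluation map at a \emph{boundary} marked point. The paper's proof first shows that $d\,\mathrm{ev}_{i,\mathbf{p}}^*(\xi)(Y)=0$ whenever the vector field $Y$ satisfies $Y_{\mathbf{y}}|_{z_i}=0$, so only directions represented by vector fields supported arbitrarily near $z_i\in\partial\Sigma$ matter; for such $Y$ the pairing $\omega_{\mathbf{p}}(Y,-)=\bigl\{\int_\Sigma u_{\mathbf{y}}^*\omega(Y_{\mathbf{y}},-)\,\mathrm{dvol}_\Sigma\bigr\}$ vanishes by the Lagrangian boundary condition, hence all such directions lie in $\ker\omega_{\mathbf{p}}=T\mathcal{F}_{\mathbf{x}}$. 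Consequently the differential of an $\mathrm{ev}^*$-pullback is already concentrated in the foliation directions (so $l_1$ acts as the ordinary $d$, giving $k=1$), and the transverse derivatives feeding the curvature brackets vanish, reducing the $k\ge 2$ computation to the trivially foliated case of Lemma \ref{liiifdr}(iii), where the repeated brackets die. Without invoking the Lagrangian boundary condition and the integral formula (\ref{o2f}) for $\omega_{\mathbf{p}}$, your argument has no mechanism to kill the $k\ge 2$ terms, so as written the proof is incomplete at its central step.
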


\begin{proof}
From the definition of the base evaluation map $\mathrm{ev}_{i,\mathbf{p}}$, we first observe that the pulled back differential form $\mathrm{ev}_{i,\mathbf{p}}^*(\xi) \in \Omega^*(U_{\mathbf{p}})$ satisfies the following property for any $\xi \in \Omega^*(L)$: The directional derivative $d \mathrm{ev}_{i,\mathbf{p}}^*(\xi) (Y)$ vanishes for every vector field $Y \in \Gamma(TU_{\mathbf{p}})$ such that its restriction $Y_{\mathbf{y}} \in C^{\infty}(\Sigma, u_{\mathbf{y}}^*TX)$ is zero at the marked point $z_i \in \partial \Sigma,$ that is, $Y_{\mathbf{y}}|_{z_i} = 0$. Hence (as far as the foliation differentiation of $\mathrm{ev}_{i,\mathbf{p}}^*(\xi)$ is concerned) we only need to consider the directions of the vector fields $Y$ whose value at each $\mathbf{y} \in U_p$ is supported on an open neighborhood arbitrarily near to $z_i.$ But the closed two-form $\omega_{\mathbf{p}}$ evaluated at such a vector field $Y$,
\[
\omega_{\mathbf{p}}(Y, -) = \left\{\int \omega(Y_{\mathbf{y}}, -)\mathrm{dvol}_\Sigma\right\}_{\mathbf{y} \in U_{\mathbf{p}}} \text{ for } Y \in \Gamma(TU_{\mathbf{p}})
\]
vanishes by the Lagrangian boundary condition for $u_{\mathbf{y}}: \Sigma \to X$. In other words, we have $\{\text{Such vector fields }Y\text{'s}\} \subset \Gamma(T\mathcal{F}_{\mathbf{x}}).$ This implies that the foliation differential can be regarded simply as the ordinary one in our case. Thus, we have for $k=1$,
\begin{equation}\nonumber
\begin{split}
l_1\left(\widehat{\mathrm{ev}}_{i,\mathbf{p}, \mathbf{x}, 1}(\xi_1)\right) &= \Pi\left[P_{\mathbf{x}}, {\mathrm{ev}}_{i,\mathbf{p}}^*(\xi_1)\right] \\ &= d\left(\mathrm{ev}_{i,\mathbf{p}}^*(\xi_1)\right) =\mathrm{ev}_{i,\mathbf{p}}^*(d\xi_1) = \widehat{\mathrm{ev}}_{i,\mathbf{p}, \mathbf{x}, 1}\left(l^{\mathrm{man}}_1(\xi_1)\right)
\end{split}
\end{equation}
from Lemma \ref{liiifdr}, where $P_{\mathbf{x}}$ denotes the Poisson structure on the presymplectic neighborhood $W_{\mathbf{x}}.$

For $k \geq 2$, we have
\begin{equation}\label{evk2}
l_k\left(\widehat{\mathrm{ev}}_{i, \mathbf{p}, \mathbf{x}, 1}(\xi_1), \cdots, \widehat{\mathrm{ev}}_{i, \mathbf{p}, \mathbf{x}, 1}(\xi_k)\right) = \Pi\left[\cdots \left[P_{\mathbf{x}}, \mathrm{ev}_{i, \mathbf{p}}^*(\xi_j)\right], \cdots, \mathrm{ev}_{i, \mathbf{p}}^*(\xi_k)\right] = 0
\end{equation}
by Lemma \ref{strlinf} (ii). Then (\ref{evk2}) further equals
\[
0 = \widehat{\mathrm{ev}}_{i, \mathbf{p}, \mathbf{x}, 1}(0) = \widehat{\mathrm{ev}}_{i, \mathbf{p}, \mathbf{x}, 1}\left(l^{\text{man}}_k(\xi_1, \cdots, \xi_k)\right),
\]
which amounts to the $L_{\infty}[1]$-relation.
\end{proof}

\begin{thm-defn}[Evaluation morphisms]
The equivalence class 
\[\mathrm{Ev}_i := \left[\left(\widehat{\mathcal{U}}_{\mathbf{p}}, \widehat{\mathcal{U}}^{\mathrm{man}}_{\mathrm{ev}_i(\mathbf{p})}, \mathrm{ev}_i, \{\mathrm{ev}_{i,\mathbf{p}}\}, \{\widehat{\mathrm{ev}}_{i,\mathbf{p},\mathbf{x}}\} \right)\right]
\] 
defines a morphism of Kuranishi spaces
\[ 
\mathrm{Ev}_i : \mathcal{M}_{k+1}(L, \beta) \rightarrow L
\]
for each $i$, which we call the \textit{evaluation morphism} of the moduli space $\mathcal{M}_{k+1}(L, \beta).$
\end{thm-defn}

\begin{proof}
First, we know that the map $\mathrm{ev}_i$ is continuous (cf. \cite[Lemma 7.10]{FOOO3}). We further show that the axioms (i) and (ii) in Definition \ref{morphismkur}. 

(i) $\psi_{\mathrm{ev}_i(\mathbf{p})}' \circ \mathrm{ev}_{i, \mathbf{p}} = \mathrm{ev}_i \circ \psi_{\mathbf{p}}$ on $s_{\mathbf{p}}^{-1}(0)$ is straightforward to verify because the homeomorphism $\psi_{\mathbf{p}}$ is given simply by $\mathbf{x} \mapsto \mathbf{x}.$

(ii) Consider a pair of points $\mathbf{p}, \mathbf{q} \in X$ with $\mathrm{Im} \psi_{\mathbf{p}} \cap \mathrm{Im} \psi_{\mathbf{q}} \neq \emptyset.$  Since $L$ is a manifold, we have $s_{\mathrm{ev}_{i} (\mathbf{p})}^{-1}(0) = U_{\mathrm{ev}_{i}(\mathbf{p})}.$ Then using (i), we also have
\begin{equation}\label{peppe}
\psi'_{\mathrm{ev}_{i}(\mathbf{p})} \circ \mathrm{ev}_{i,\mathbf{p}} = \mathrm{ev}_i \circ \psi_{\mathbf{p}} \overset{(1)}{=} \mathrm{ev}_i \circ \psi_{\mathbf{q}} \circ \phi_{\mathbf{p}\mathbf{q}} = \psi'_{\mathrm{ev}_{i}(\mathbf{q})} \circ \mathrm{ev}_{i,\mathbf{q}} \circ \phi_{\mathbf{p}\mathbf{q}},
\end{equation}
where the equality (1) follows from the definitions of the coordinate change $\phi_{\mathbf{p}\mathbf{q}} : U_{\mathbf{p}\mathbf{q}} \hookrightarrow U_{\mathbf{q}}$ and the maps $\psi_{\mathbf{p}}, \psi_{\mathbf{q}} : \mathbf{x} \mapsto \mathbf{x}$ on $s^{-1}_{\mathbf{p}}(0) \cap U_{\mathbf{p}\mathbf{q}}.$ It follows that
\begin{equation}\nonumber
\phi'_{\mathrm{ev}_{i}(\mathbf{p})\mathrm{ev}_i(\mathbf{q})} \circ \mathrm{ev}_{i, \mathbf{p}} \overset{(2)}{=} \psi^{'-1}_{\mathrm{ev}_i(\mathbf{q})} \circ \psi_{\mathrm{ev}_{i}(\mathbf{p})}^{'} \circ \mathrm{ev}_{i, \mathbf{p}} \overset{(3)}{=}  \psi^{'-1}_{\mathrm{ev}_i(\mathbf{q})} \circ \psi'_{\mathrm{ev}_{i}(\mathbf{q})} \circ \mathrm{ev}_{i,\mathbf{q}} \circ \phi_{\mathbf{p}\mathbf{q}} = \mathrm{ev}_{i,\mathbf{q}} \circ \phi_{\mathbf{p}\mathbf{q}},
\end{equation}
where the equalities (2) and (3) are the consequences of the definition of coordinate change for manifolds
\[
\phi_{\mathrm{ev}_i(\mathbf{p})\mathrm{ev}_i(\mathbf{q})} := \psi^{'-1}_{\mathrm{ev}_i(\mathbf{q})} \circ \psi^{'}_{\mathrm{ev}_i(\mathbf{p})}
\]
and the relation (\ref{peppe}), respectively.
The $(\Gamma_{\mathbf{p}}, \Gamma_{\mathrm{ev}_i(\mathbf{p})})$-equivariance of $\mathrm{ev}_{i, \mathbf{p}}$ follows from the fact that the automorphisms in $\textrm{Aut}(\mathbf{p})$ preserve the marked points, and that $\Gamma_{\mathrm{ev}_i(\mathbf{p})}$ is trivial.

(iii) Since one can assume the surjectivity of $\mathrm{ev}_{i, \mathbf{p}}|_{W_{\mathbf{x}}}$, we only need to show the homotopy commutativity of the following diagram:

\begin{equation}\label{octa3}
\begin{aligned}
\begin{tikzcd}
\mathcal{C}_{\mathbf{q}, \phi_{\mathbf{pq}}(\mathbf{x})} \arrow{d}[swap]{\widehat{\varepsilon}_{\mathbf{q}, \phi_{\mathbf{pq}}(\mathbf{x}), \phi_{\mathbf{pq}}}, \simeq} & \mathcal{C}'_{\mathrm{ev}_i(\mathbf{q}), \mathrm{ev}_{\mathbf{q}} \circ \phi_{\mathbf{pq}}(\mathbf{x}), \mathrm{ev}_{i, \mathbf{q}}} \arrow{l}[swap]{\widehat{\mathrm{ev}}_{i, \mathbf{q}, \phi_{\mathbf{pq}}(\mathbf{x})}} \cdots&  \quad \quad \quad \quad \quad \quad \quad \quad \quad \quad \quad \quad \quad \quad \quad \quad \quad \quad \quad \quad \quad \quad \quad \quad \quad \quad \quad \quad \quad \quad \quad \quad \quad \quad \quad \quad \quad \quad \quad \quad \\
\mathcal{C}_{\mathbf{q}, \phi_{\mathbf{pq}}(\mathbf{x}), \phi_{\mathbf{pq}}} \arrow{d}[swap]{\widehat{\phi}_{\mathbf{pq},\mathbf{x}}, \simeq} & {} & \quad \quad \quad \quad \quad \quad \quad \quad \quad \quad \quad \quad \quad \quad \quad \quad \quad \quad \quad \quad \quad \quad \quad \quad \quad \quad \quad \quad \quad \quad \quad \quad \quad \quad \quad \quad \quad \quad \quad \quad  \\
\mathcal{C}_{\mathbf{p}, \mathbf{x}} & \mathcal{C}'_{\mathrm{ev}_i(\mathbf{p}),\mathrm{ev}_{i,\mathbf{p}}(\mathbf{x}),\mathrm{ev}_{i,\mathbf{p}}} \arrow{l}[swap]{{\widehat{\mathrm{ev}}_{i, \mathbf{p},\mathbf{x}}}} \cdots & \quad \quad \quad \quad \quad \quad \quad \quad \quad \quad \quad \quad \quad \quad \quad \quad \quad \quad \quad \quad \quad \quad \quad \quad \quad \quad \quad \quad \quad \quad \quad \quad \quad \quad \quad \quad \quad \quad \quad \quad \\
\end{tikzcd}\\
\begin{tikzcd}
\cdots & \mathcal{C}'_{\mathrm{ev}_i(\mathbf{q}),\mathrm{ev}_{i,\mathbf{q}} \circ \phi_{\mathbf{pq}}(\mathbf{x})} = \Omega^{\bullet+1}_{\mathrm{aug}}(W_{\mathrm{ev}_{i, \mathbf{q}} \circ \phi_{\mathbf{pq}}(\mathbf{x})}) = \mathcal{C}'_{\mathrm{ev}_i(\mathbf{q}), \phi'_{\mathrm{ev}_i(\mathbf{p})\mathrm{ev}_i(\mathbf{q})} \circ \mathrm{ev}_{i,\mathbf{p}}(\mathbf{x})} \arrow{l}[swap]{\widehat{\varepsilon}_{\mathrm{ev}_i(\mathbf{q}), \mathrm{ev}_{i, \mathbf{q}}\circ \phi_{\mathbf{pq}}(\mathbf{x}), \mathrm{ev}_{i,\mathbf{q}},} \simeq} \arrow{d}{ \widehat{\varepsilon}_{\mathrm{ev}_i(\mathbf{q}),\phi'_{\mathrm{ev}_i(\mathbf{p})\mathrm{ev}_i(\mathbf{q})}\circ \mathrm{ev}_{i, \mathbf{p}}(\mathrm{x}), \phi'_{\mathrm{ev}_i(\mathbf{p})\mathrm{ev}_i(\mathbf{q})},} \simeq} & {} & \quad \quad \quad \quad \quad \quad \quad \quad \quad \quad \quad \quad \quad \quad \quad \quad\quad \quad \quad \quad \\
{} & \mathcal{C}'_{\mathrm{ev}_i(\mathbf{q}),\phi'_{\mathrm{ev}_i(\mathbf{p})\mathrm{ev}_i(\mathbf{q})}\circ \mathrm{ev}_{i,\mathbf{p}}(\mathbf{x}), \phi'_{\mathrm{ev}_i(\mathbf{p})\mathrm{ev}_i(\mathbf{q})}} \arrow{d}{\widehat{\phi}'_{\mathrm{ev}_i(\mathbf{p})\mathrm{ev}_i(\mathbf{q}), \mathrm{ev}_i(\mathbf{x})}, \simeq} & {} & \quad \quad \quad \quad \quad \quad \quad \quad \quad \quad \quad \quad \quad \quad \quad \quad \quad \quad \quad \quad\\
\cdots & \mathcal{C}'_{\mathrm{ev}_i(\mathbf{p}),\mathrm{ev}_{i,\mathbf{p}}(\mathbf{x})} = \Omega^{\bullet+1}_{\mathrm{aug}}(W_{\mathrm{ev}_{i,\mathbf{p}}(\mathbf{x})}) \arrow{l}[swap]{\widehat{\varepsilon}_{\mathrm{ev}_i(\mathbf{p}),\mathrm{ev}_{i,\mathbf{p}}(\mathbf{x}),\mathrm{ev}_{i,\mathbf{p}}}, \simeq} & {} & \quad \quad \quad \quad \quad \quad \quad \quad \quad \quad \quad \quad \quad \quad \quad \quad \quad \quad \quad \quad\\
\end{tikzcd}
\end{aligned}
\end{equation}
This diagram may look complicated; however, checking the homotopy commutativity is almost trivial. The common domain of the two $L_{\infty}[1]$-morphisms under consideration has no Koszul part, so we only need to examine the de Rham part of the morphisms. Hence the $L_{\infty}[1]$-morphisms
\[
\widehat{\phi}_{\mathbf{p}\mathbf{q},\mathbf{x}} \circ \widehat{\varepsilon}_{\mathbf{q}, \phi_{\mathbf{pq}}(\mathbf{x}), \phi_{\mathbf{pq}}} \circ \widehat{\mathrm{ev}}_{i,\mathbf{q},\phi_{\mathbf{pq}}(\mathbf{x})}\circ \widehat{\varepsilon}_{\mathrm{ev}_i(\mathbf{q}), \mathrm{ev}_{i, \mathbf{q}}\circ \phi_{\mathbf{pq}}(\mathbf{x}), \mathrm{ev}_{i,\mathbf{q}}}
\]
and 
\[
\widehat{\mathrm{ev}}_{i, \mathbf{p},\mathbf{x}} \circ \widehat{\varepsilon}_{\mathrm{ev}_i(\mathbf{p}),\mathrm{ev}_{i,\mathbf{p}}(\mathbf{x}),\mathrm{ev}_{i,\mathbf{p}}} \circ \widehat{\phi}'_{\mathrm{ev}_i(\mathbf{p})\mathrm{ev}_i(\mathbf{q}),\mathrm{ev}_i(\mathbf{x})} \circ \widehat{\varepsilon}_{\mathrm{ev}_i(\mathbf{q}),\phi'_{\mathrm{ev}_i(\mathbf{p})\mathrm{ev}_i(\mathbf{q})}\circ \mathrm{ev}_{i, \mathbf{p}}(\mathrm{x}), \phi'_{\mathrm{ev}_i(\mathbf{p})\mathrm{ev}_i(\mathbf{q})}}
\] 
are defined from an acyclic $L_{\infty}[1]$-algebra (i.e., $\Omega_{\mathrm{aug}}^{\bullet +1}(W_{\mathrm{ev}_{i, \mathbf{q}} \circ \phi_{\mathbf{pq}}(\mathbf{x})})$) to another, hence are quasi-isomorphisms. Then we know from Corollary \ref{anhp} that there exists an $L_{\infty}[1]$-homotopy between them.
\end{proof}

\subsection{Forgetful morphisms}
Suppose that we are given the family \textit{topological} moduli spaces 
\[
\{\mathcal{M}_{k+1}(L, \beta)\}_{k \geq 0},
\]
for each $0 \leq i \leq k,$ we have the forgetful map
\[
\mathrm{ft}_i : \mathcal{M}_{k+1}(L, \beta), \rightarrow \mathcal{M}_{k}(L, \beta)
\]
that forgets the $i$-th marked point. By forgetting the marked points, some components may become unstable. Then we shrink it and the resulting (equivalence class of) map is defined to be the value of $\mathrm{ft}_i.$ In this subsection, we show that it can be given an interpretation as an $L_{\infty}$-Kuranishi morphism.

First, using the Kuranishi space structure on $\mathcal{M}_{k+1}(L, \beta),$ constructed in Section \ref{ksmod}, we provide its description with respect to the local data.

The following lemma is an $L_{\infty}$-version of \cite[Lemma 7.3.8]{FOOO2}.
\begin{lem}\label{idks}
Let $\mathbf{p} \in \mathcal{M}_{k}(L, \beta)$ and $\mathbf{p}^+ \in \mathcal{M}_{k+1}(L, \beta)$ be points on the moduli spaces that satisfy $\mathrm{ft}_i(\mathbf{p}^+) = \mathbf{p}.$ Then Kuranishi charts $\mathcal{U}_{\mathbf{p}} = (U_{\mathbf{p}}, E_{\mathbf{p}}, s_{\mathbf{p}}, \Gamma_{\mathbf{p}}, \psi_{\mathbf{p}}) $ at $\mathbf{p}$ and $\mathcal{U}_{\mathbf{p}^+}=(U_{\mathbf{p}^+}, E_{\mathbf{p}^+}, s_{\mathbf{p}^+}, \Gamma^+_{\mathbf{p}}, \psi_{\mathbf{p}^+})$ at $\mathbf{p}^+$ can be taken in such a way that the following hold.
\begin{enumerate}[label = (\roman*)]
\item ${U}_{\mathbf{p^+}} \simeq U_{\mathbf{p}} \times \mathcal{W}_{\mathbf{p}} \times \mathcal{W}'_{\mathbf{p}^+},$ where $\mathcal{W}_{\mathbf{p}} \subset \mathbb{R}$ is an open interval containing 0, and $\mathcal{W}'_{\mathbf{p^+}}$ is an open neighborhood of $\mathbb{R}^{c(\mathbf{p}^+)},$ where the non-negative integer $c(\mathbf{p}^+)$ is given by
\begin{equation}\nonumber
c(\mathbf{p}^+) := \begin{cases} 1 & \text{ if } \mathrm{ft}_i(\mathbf{p}^+) \text{ is unstable,}\\
0 & \text{ otherwise.}
\end{cases}
\end{equation}
\item The closed two-form $\omega_{\mathbf{p}^+}$ is given in the same way as $\omega_{\mathbf{p}}$ in (\ref{o2f}).
\item The isotropy group $\Gamma_{\mathbf{p}^+} \simeq \Gamma_{\mathbf{p}}$ on $V_{\mathbf{p}^+}$ acts trivially on $\mathcal{W}_{\mathbf{p}} \times \mathcal{W}'_{\mathbf{p}^+}.$ The action on $U_{\mathbf{p}}$ coincides with that by $\Gamma_{\mathbf{p}}$ on $U_{\mathbf{p}}.$ 
\item ${E}_{\mathbf{p}^+} \simeq \bigoplus\limits_{i} {E}_i,$ where ${E}_i$ is a finite dimensional subspace of $\Gamma(\Sigma, T^*\Sigma^{0,1} \otimes w^{*}TM),$ each element of which consists of sections supported on a compact subset of the $i$-th irreducible component of $\Sigma.$
\item $ {E}_{\mathbf{p}^+}|_{U_{\mathbf{p}} \times \mathcal{W}_{\mathbf{p}} \times \{0\}} \simeq \pi^*({E}_{\mathbf{p}}) \oplus \mathbb{R}^{c(\mathbf{p}^+)},$ where $\pi : U_{\mathbf{p}} \times \mathcal{W}_{\mathbf{p}} \rightarrow U_{\mathbf{p}}$ is the projection, and $\mathbb{R}^{c(\mathbf{p}^+)}$ is the trivial vector bundle on ${U}_{\mathbf{p^+}}$ for the positive integer $c(\mathbf{p}^+)$ in (i).
\item $s_{\mathbf{p}^+}|_{U_{\mathbf{p}} \times \mathcal{W}_{\mathbf{p}} \times \{0\}} \simeq \pi^*s_{\mathbf{p}} \oplus \{0\},$ so that $\mathrm{Im}\left(s_{\mathbf{p}^+}|_{U_{\mathbf{p}} \times \mathcal{W}_{\mathbf{p}} \times \{0\}}\right) \subset \pi^*({E}_{\mathbf{p}}).$
\item The differential map $ds_{\mathbf{p}^+}|_{U_{\mathbf{p}} \times \mathcal{W}_{\mathbf{p}} \times \{0\}, \vec{0}}$ is given by the obvious embedding $T_0\mathcal{W}'_{\mathbf{p}^+} \hookrightarrow T_0 \mathbb{R}^{c(\mathbf{p}^+)} \simeq \mathbb{R}^{c(\mathbf{p}^+)} \subset {E}_{\mathbf{p}^+}|_{U_{\mathbf{p}} \times W_{\mathbf{p}} \times \{0\},\vec{0}}.$
\item The local (presymplectic) neighborhood is given by ${W}_{\mathbf{x}^+} := {W}_{\mathbf{x}} \times \mathcal{W}_{\mathbf{p}} \times \mathcal{W}'_{\mathbf{p}^+}.$
\item $s_{\mathbf{p}^+}(0) = 0,$ where $0 \in U_{\mathbf{p}^+}$ is the point such that its class of the isotropy group action gives $\psi_{\mathbf{p}^+}^+([0]) = \mathbf{p}^+.$
\item The evaluation maps $\mathrm{ev}_0^+ : U_{\mathbf{p}^+} \rightarrow L$ and $\mathrm{ev}_0 : U_{\mathbf{p}} \rightarrow L$ satisfy $\mathrm{ev}^+_0 = ev \circ \pi',$ where $\pi' : U^+_{\mathbf{p}^+} \rightarrow U_{\mathbf{p}}$ is the projection from (i).
\end{enumerate}
\end{lem}

\begin{proof}[Proof-sketch]
The proofs of all parts except (ii) and (viii) are given in \cite[Lemma 7.3.8]{FOOO2}, while (ii) and (viii) follow directly from the definitions.
\end{proof}

Roughly speaking, $\mathcal{W}_{\mathbf{p}}$ in (i) amounts to the location of the marked point that is forgotten, while the $\mathcal{W}'_{\mathbf{p}^+}$-direction in $TU_{\mathbf{p}^+}$ amounts to the $\Gamma_{\mathbf{p}^+}$-orbits of the map with one marked point being removed, which shrinks to a point after stabilization. 

Lemma \ref{idks} allows us to have the following projection for each $\mathbf{p}$
\begin{equation}\nonumber
\mathrm{ft}_{i,\mathbf{p}} : {U}^+_{\mathbf{p^+}} \simeq U_{\mathbf{p}} \times \mathcal{W}_{\mathbf{p}} \times \mathcal{W}'_{\mathbf{p}^+} \twoheadrightarrow U_{\mathbf{p}},\\
\end{equation}
as our base component map.

For the $L_{\infty}$-component map, we need the following lemma.
\begin{lem}\label{agssagaga}
For $\mathbf{x}^+ \in (s^+_{\mathbf{p}^+})^{-1}(0)$ and $\mathrm{ft}_{i, \mathbf{p}}(\mathbf{x}^+) = \mathbf{x} \in s_{\mathbf{p}}^{-1}(0),$ we have  a decomposition of the foliation tangent bundle
\[
T\mathcal{F}^+_{\mathbf{x}^+}|_{W_{\mathbf{x}^+}} \simeq \mathrm{ft}_{i, \mathbf{p}}^*T\mathcal{F}_{\mathbf{x}}|_{W_{\mathbf{x}^+}} \oplus \mathcal{W}_{\mathbf{p}} \oplus \mathcal{W }'_{\mathbf{p}^+}, 
\]
\end{lem}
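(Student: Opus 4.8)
The plan is to reduce the claimed splitting to the single identity $\omega_{\mathbf{p}^+} = \mathrm{ft}_{i,\mathbf{p}}^*\omega_{\mathbf{p}}$ on $U_{\mathbf{p}^+}$, in direct analogy with Lemma \ref{kwpq}. Once this is established, the decomposition of the kernel becomes a formal consequence: for a closed $2$-form pulled back along the projection $\mathrm{ft}_{i,\mathbf{p}} : U_{\mathbf{p}} \times \mathcal{W}_{\mathbf{p}} \times \mathcal{W}'_{\mathbf{p}^+} \twoheadrightarrow U_{\mathbf{p}}$ (cf. Proposition \ref{idks} (i)), the radical at a point $(\mathbf{y}, b)$ is exactly $\mathrm{ft}_{i,\mathbf{p}}^*(\ker\omega_{\mathbf{p}})_{\mathbf{y}}$ together with the whole fiber direction $T_b(\mathcal{W}_{\mathbf{p}} \times \mathcal{W}'_{\mathbf{p}^+})$, since every vector tangent to a fiber is automatically annihilated by a projection-pullback form.

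First I would record the local product descriptions from Proposition \ref{idks}: the bases satisfy $U_{\mathbf{p}^+} \simeq U_{\mathbf{p}} \times \mathcal{W}_{\mathbf{p}} \times \mathcal{W}'_{\mathbf{p}^+}$ and the presymplectic neighborhoods satisfy $W_{\mathbf{x}^+} = W_{\mathbf{x}} \times \mathcal{W}_{\mathbf{p}} \times \mathcal{W}'_{\mathbf{p}^+}$ (item (viii)). Because the stratification and its system of tubular neighborhoods are governed by $\ker\omega_{\mathbf{p}^+}$, the identity $\omega_{\mathbf{p}^+} = \mathrm{ft}_{i,\mathbf{p}}^*\omega_{\mathbf{p}}$ forces each stratum of $U_{\mathbf{p}^+}$ to be of the form $\mathrm{ft}_{i,\mathbf{p}}^{-1}(\mathcal{S}_j)$, so one may take the tubular-neighborhood projection $\pi_i^+$ to be $\pi_i \times \mathrm{id}$. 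Consequently $\beta_{W_{\mathbf{x}^+}} = \mathrm{ft}_{i,\mathbf{p}}^*\beta_{W_{\mathbf{x}}}$, and restricting $T\mathcal{F}^+_{\mathbf{x}^+} = \ker\beta_{W_{\mathbf{x}^+}}$ to $W_{\mathbf{x}^+}$ yields precisely the asserted splitting.

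The substantive step is the identity $\omega_{\mathbf{p}^+} = \mathrm{ft}_{i,\mathbf{p}}^*\omega_{\mathbf{p}}$, which I would prove exactly as Lemma \ref{kwpq}, using that both forms arise from the common formula (\ref{o2f}) with the same ambient $\omega$ (Proposition \ref{idks} (ii)). Under $\mathrm{ft}_{i,\mathbf{p}}$ a configuration $\mathbf{y}^+$ and its image $\mathbf{y} = \mathrm{ft}_i(\mathbf{y}^+)$ carry the same underlying map into $M$ away from the components that become unstable and are collapsed; on those collapsed components the map is constant, so $u^*\omega$ vanishes there and the two integrals in (\ref{o2f}) agree. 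For a tangent vector $X$ lying in the base factor $U_{\mathbf{p}}$ this gives $\omega_{\mathbf{p}^+}(X, Y) = \omega_{\mathbf{p}}(\mathrm{ft}_{i,\mathbf{p}*}X, \mathrm{ft}_{i,\mathbf{p}*}Y)$, whereas for $X$ tangent to the $\mathcal{W}_{\mathbf{p}}$ factor — an infinitesimal displacement of the forgotten marked point — the induced section of $u_{\mathbf{y}}^*TM$ vanishes identically (moving a labeled point does not deform the map), so $\iota_X\omega_{\mathbf{p}^+} = 0$; likewise for $X$ tangent to $\mathcal{W}'_{\mathbf{p}^+}$, whose deformation is supported on the collapsed constant-map component where $u^*\omega \equiv 0$.

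The main obstacle I anticipate is the honest treatment of the $\mathcal{W}'_{\mathbf{p}^+}$ (bubbling/gluing) direction: one must verify that this deformation parameter genuinely corresponds to a variation supported on the component that becomes unstable after forgetting the marked point, so that its contribution to the integral (\ref{o2f}), and hence to $\iota_X\omega_{\mathbf{p}^+}$, vanishes. Here I would lean on the explicit local model of Proposition \ref{idks} (items (i) and (iv)--(vii)), in particular the fact (item (vi)) that $s_{\mathbf{p}^+}$ restricts to $\pi^*s_{\mathbf{p}} \oplus \{0\}$, to confirm that the extra factors are genuine radical directions and do not mix into the $U_{\mathbf{p}}$-deformations.
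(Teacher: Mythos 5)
Your argument is correct in substance and reaches the same underlying mechanism as the paper's proof, but it is packaged differently, and the difference is worth recording. The paper does not assert the global identity $\omega_{\mathbf{p}^+}=\mathrm{ft}_{i,\mathbf{p}}^*\omega_{\mathbf{p}}$; instead it proves three separate facts --- that the $\mathcal{W}_{\mathbf{p}}$-direction lies in $\ker\omega_{\mathbf{p}^+}$ (because the position of the forgotten marked point does not enter the formula (\ref{o2f})), that the $\mathcal{W}'_{\mathbf{p}^+}$-direction lies in the kernel, and that the cross terms vanish so that $\mathrm{ft}_{i,\mathbf{p}}^*T\mathcal{F}_{\mathbf{x}}\subset T\mathcal{F}^+_{\mathbf{x}^+}$ --- and then closes the argument with a rank/dimension inequality after extending the identification from the slice $W_{\mathbf{x}}\times\mathcal{W}_{\mathbf{p}}\times\{0\}$ to a sufficiently small $W_{\mathbf{x}^+}$. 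Your route, deducing the splitting formally from the pullback identity, is cleaner and in fact proves something slightly stronger (the exact analogue of Lemma \ref{kwpq} for $\mathrm{ft}_{i,\mathbf{p}}$), at the cost of having to justify that identity on all of $U_{\mathbf{p}^+}$ rather than only near the slice; you correctly flag this as the substantive step. The one genuine divergence is the treatment of the $\mathcal{W}'_{\mathbf{p}^+}$-direction: the paper identifies it with the $\Gamma_{\mathbf{p}^+}$-orbit directions of the configuration with the marked point removed and then invokes the $\mathrm{Aut}(\Sigma,\vec z)$-invariance of $\omega_{\mathbf{p}^+}$ established in Lemma \ref{wpiii}(ii), whereas you argue that this deformation is supported on the component that becomes unstable after forgetting $z_i$, on which $u$ is necessarily constant (stability forces this), so $u^*\omega\equiv 0$ there and the contribution to (\ref{o2f}) vanishes. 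Both arguments are valid and close the same gap; yours avoids the group-invariance lemma but requires the geometric input that unstable-after-forgetting components are ghost components, while the paper's requires identifying the $\mathcal{W}'_{\mathbf{p}^+}$ factor with an automorphism orbit. If you adopt your version, you should make the identification of the $\mathcal{W}'_{\mathbf{p}^+}$-tangent direction with a variation supported on the ghost component explicit, since Proposition \ref{idks} describes that factor only through the local product structure and the section data in items (v)--(vii).
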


\begin{proof}
Consider the restriction of the tangent bundle to the local neighborhood $W_{\mathbf{x}^+}$,
\[
W_{\mathbf{x}^+} \times \mathcal{W}_{\mathbf{p}} \subset TU_{\mathbf{p}^+}|_{W_{\mathbf{x}^+}}.
\]
We claim that
\[
\mathcal{W}_{\mathbf{p}} \subset \ker \omega_{\mathbf{p}^+}|_{{W}_{\mathbf{x}^+}} = T\mathcal{F}^+_{\mathbf{x}^+}|_{{W}_{\mathbf{x}^+}}.
\]
In fact, infinitesimal changes in the $\mathcal{W}_{\mathbf{p}}$-direction make no difference to the closed two-form $\omega_{\mathbf{p}}$ (and $\mathrm{ft}_{i,\mathbf{p}}^* \omega_{\mathbf{p}}$). This is because the location of boundary marked points is irrelevant to the way in which $\omega_{\mathbf{p}}$ is defined.

Since the crossing terms for the closed two-form are all zero, we have
\[
\mathrm{ft}_{i,\mathbf{p}}^*T\mathcal{F}_{\mathbf{x}}|_{{W}_{\mathbf{x}^+}} \subset T\mathcal{F}^+_{\mathbf{x}^+}|_{{W}_{\mathbf{x}^+}}.
\]
Then, over a sufficiently small contractible $W_{\mathbf{x}^+}$, we obtain the desired decomposition (as an isomorphism between trivial bundles) from the following relation for the ranks and dimensions:
\[
\mathrm{rk}\left(T\mathcal{F}^+_{\mathbf{x}^+}|_{W_{\mathbf{x}^+}}\right) - \mathrm{rk}\left(\mathrm{ft}_{i, \mathbf{p}}^*T\mathcal{F}_{\mathbf{x}}|_{W_{\mathbf{x}^+}}\right) \leq \dim W_{\mathbf{x}^+} - \dim W_{\mathbf{x}} = \dim \mathcal{W}_{\mathbf{p}} + \dim \mathcal{W'}_{\mathbf{p}^+}.
\]
\end{proof}

We now define the $L_\infty$-component map
\[
\widehat{\mathrm{ft}}_{i, \mathbf{p},\mathbf{x}}: \mathcal{C}_{\mathbf{p},\mathbf{x}, \mathrm{ft}_{i, \mathbf{p}}} \rightarrow \mathcal{C}^+_{\mathbf{p}^+, \mathbf{x}^+}
\]
for each $\mathbf{x} \in s^{-1}(0)$ and $\mathbf{x}^+ \in (s^+)^{-1}(0)$ with $\mathrm{ft}_{i, \mathbf{p}}(\mathbf{x}^+) = \mathbf{x}$. Here, we have
\[
\mathcal{C}_{\mathbf{p},\mathbf{x}, \mathrm{ft}_{\mathbf{p}}} = \mathcal{C}_{\mathbf{p},\mathbf{x}} = \bigwedge\nolimits^{-\bullet}\Gamma\left(E_{\mathbf{p}}^*|_{W_\mathbf{x}}\right) \oplus \Omega_{\text{aug}}^{\bullet+1}(\mathcal{F}_{\mathbf{x}})
\]
from the surjectivity of $\mathrm{ft}_{\mathbf{p}}$ and
\[
\mathcal{C}^+_{\mathbf{p}^+, \mathbf{x}^+} = \bigwedge\nolimits^{-\bullet}\Gamma\left(E_{\mathbf{p}^+}^*|_{W_{\mathbf{x}^+}}\right) \oplus \Omega_{\text{aug}}^{\bullet+1}(\mathcal{F}^+_{\mathbf{x}^+})
\]
with the $L_{\infty}[1]$-algebra structure $\l^+_{\mathbf{p}^+,\mathbf{x}^+} := \left\{l^+_{\mathbf{p}^+,\mathbf{x}^+,k}\right\}_{k \geq 1}.$

It is important to note that we can take the section $s_{\mathbf{p}^+}$ of the following special form:
\begin{equation}\label{sftsp}
s_{\mathbf{p}^+} = \mathrm{ft}_{\mathbf{p}}^* s_{\mathbf{p}},
\end{equation}
so that it depends only on $U_{\mathbf{p}}$. This choice can be justified by the fact that a pseudoholomorphic disk itself is determined independently of the marked points and that $ds_{\mathbf{p}^+}|_{U_{\mathbf{p}} \times \mathcal{W}_{\mathbf{p}} \times \{0\},\vec{0}}$ is an embedding by Lemma \ref{idks} (vii). Note that this embedding property implies 
\[
s_{\mathbf{p}^+}^{-1}(0)|_{U_{\mathbf{p}} \times \mathcal{W}_{\mathbf{p}}\times \mathcal{W}_{\mathbf{p}^+}} \subset s_{\mathbf{p}^+}^{-1}(0)|_{U_{\mathbf{p}} \times \mathcal{W}_{\mathbf{p}}\times \{0\}}
\]
for sufficiently small $\mathcal{W}'_{\mathbf{p}^+}$.

We then obtain the expression
\[
\mathcal{C}^+_{\mathbf{p}, \mathbf{x}^+} \simeq \bigwedge\nolimits^{-\bullet}\Gamma\left(\left(\pi^* E_{\mathbf{p}}^* \oplus \mathbb{R}^{c(\mathbf{p}^+)}\right)\Big|_{W_\mathbf{x}^+}\right) \oplus \Gamma_{\text{aug}}\left(\bigwedge\nolimits^{\bullet+1}\left(\mathrm{ft}_{\mathbf{p}}^* T\mathcal{F}_{\mathbf{x}}|_{{W}_{\mathbf{x}^+}} \oplus \mathcal{W}_{\mathbf{p}} \oplus\mathcal{W}'_{\mathbf{p}^+}\right)\right),
\]
where $\Gamma_{\mathrm{aug}}(\cdots)$ stands for the augmentation of the $L_{\infty}[1]$-algebra structure as in Proposition \ref{augomega}.
We now define
\[
\widehat{\mathrm{ft}}_{i, \mathbf{p},\mathbf{x},k}: \mathcal{C}_{\mathbf{p},\mathbf{x}}^{\otimes k} \rightarrow \mathcal{C}^+_{\mathbf{p}^+, \mathbf{x}^+}
\]
by 
\begin{equation}\nonumber
\widehat{\mathrm{ft}}_{i, \mathbf{p},\mathbf{x},k}\left((a_1, \xi_1), \cdots, (a_k, \xi_k)\right) := 
\begin{cases}
\left(\mathrm{ft}_{\mathbf{p}}^* a_1, 0\right) \oplus \left(\mathrm{ft}_{\mathbf{p}}^* \xi_1, 0, 0\right) & \text{if } k=1, \\ 
0 & \text{if } k \geq 2. 
\end{cases}
\end{equation}

\begin{prop}\label{qcqisom}
$\widehat{\mathrm{ft}}_{i, \mathbf{p}, \mathbf{x}} := \left\{\widehat{\mathrm{ft}}_{i, \mathbf{p}, \mathbf{x}, k}\right\}_{k \geq 1}$ is a quasi-isomorphic $L_{\infty}[1]$-morphism.
\end{prop}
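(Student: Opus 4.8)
The plan is to treat the two defining properties separately: first that $\widehat{\mathrm{ft}}_{i,\mathbf{p},\mathbf{x}}$ is an $L_\infty[1]$-morphism, and then that it is a quasi-isomorphism. Throughout I will use that $\mathrm{ft}_{i,\mathbf{p}}$ is surjective, so that by Lemma \ref{surjcp} the domain is $\mathcal{C}_{\mathbf{p},\mathbf{x},\mathrm{ft}_{i,\mathbf{p}}} = \mathcal{C}_{\mathbf{p},\mathbf{x}}$ with no genuine localization, together with the structural identifications of Proposition \ref{idks}: the section splits as $s_{\mathbf{p}^+} = \mathrm{ft}_{i,\mathbf{p}}^*s_\mathbf{p}$ (equation (\ref{sftsp})), the obstruction bundle splits as $E_{\mathbf{p}^+} \simeq \pi^*E_\mathbf{p}\oplus\mathbb{R}^{c(\mathbf{p}^+)}$ with the $\mathbb{R}^{c(\mathbf{p}^+)}$-component of $s_{\mathbf{p}^+}$ vanishing on the slice $\mathcal{W}'_{\mathbf{p}^+}=0$, and the foliation tangent bundle splits as $T\mathcal{F}^+_{\mathbf{x}^+}\simeq \mathrm{ft}_{i,\mathbf{p}}^*T\mathcal{F}_\mathbf{x}\oplus\mathcal{W}_\mathbf{p}\oplus\mathcal{W}'_{\mathbf{p}^+}$.

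For the morphism property, since $\widehat{\mathrm{ft}}_{i,\mathbf{p},\mathbf{x},k}=0$ for $k\geq 2$, the $L_\infty[1]$-relation reduces to the single family of identities $l^+_k\circ(\widehat{\mathrm{ft}}_{i,\mathbf{p},\mathbf{x},1})^{\otimes k}=\widehat{\mathrm{ft}}_{i,\mathbf{p},\mathbf{x},1}\circ l_k$ for all $k\geq 1$, exactly as in the proofs of Lemma \ref{vecpf} and Lemma \ref{phxv}. On the Koszul summand this follows from (\ref{sftsp}): the pullback $\mathrm{ft}_{i,\mathbf{p}}^*$ intertwines $\iota_{s_\mathbf{p}|_{W_\mathbf{x}}}$ with $\iota_{s_{\mathbf{p}^+}|_{W_{\mathbf{x}^+}}}$ (landing in the $\pi^*E_\mathbf{p}^*$-summand) precisely because $s_{\mathbf{p}^+}=\mathrm{ft}_{i,\mathbf{p}}^*s_\mathbf{p}$ and the contraction respects pullback, while all higher Koszul operations vanish. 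On the de Rham summand I would argue as in the computation of $\widehat{\eta}_x$ in Proposition \ref{afec}: under the foliation splitting the Poisson bivector on $W_{\mathbf{x}^+}$ decomposes into a part pulled back from $W_\mathbf{x}$ plus terms lying purely in the flat $\mathcal{W}_\mathbf{p}$ and $\mathcal{W}'_{\mathbf{p}^+}$ directions; since the forms $\mathrm{ft}_{i,\mathbf{p}}^*\xi$ are constant along those directions, the repeated Nijenhuis--Schouten brackets against the flat terms vanish, and the resulting identity $l^{+,\mathrm{dR}}_k(\mathrm{ft}_{i,\mathbf{p}}^*\xi_1,\dots)=\mathrm{ft}_{i,\mathbf{p}}^*\,l^{\mathrm{dR}}_k(\xi_1,\dots)$ follows from the pullback-compatibility of the projection $\Pi$, exactly as in the equalities (1)--(4) there. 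The augmented extension is then handled by Proposition \ref{auglmo} and Remark \ref{auglmoloc}, just as for the coordinate changes.

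For the quasi-isomorphism property I would split $\widehat{\mathrm{ft}}_{i,\mathbf{p},\mathbf{x}}$ into its Koszul and de Rham components. The de Rham component is automatic: both $\Omega^{\bullet+1}_{\mathrm{aug}}(\mathcal{F}_\mathbf{x})$ and $\Omega^{\bullet+1}_{\mathrm{aug}}(\mathcal{F}_{\mathbf{x}^+})$ are acyclic by the foliation Poincar\'e lemma together with the augmentation (Corollary \ref{corpoinc} and the remark following Proposition \ref{augomega}), so any $L_\infty[1]$-morphism between them is trivially a quasi-isomorphism. The Koszul component is the substantive point: here I would observe that the slice inclusion $W_\mathbf{x}\times\mathcal{W}_\mathbf{p}\hookrightarrow W_{\mathbf{x}^+}$ together with the bundle inclusion $\pi^*E_\mathbf{p}\hookrightarrow E_{\mathbf{p}^+}$ and the complement $E^c=\mathbb{R}^{c(\mathbf{p}^+)}$ is exactly an FOOO embedding in the sense of Proposition \ref{afec}, whose extra hypotheses Condition \ref{addcond} (iv) and (v) are supplied verbatim by Proposition \ref{idks} (vi) (the complement section vanishes on the image) and (vii) (the tangent bundle condition $ds_{\mathbf{p}^+}:T_0\mathcal{W}'_{\mathbf{p}^+}\xrightarrow{\simeq}\mathbb{R}^{c(\mathbf{p}^+)}$). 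The purely flat factor $\mathcal{W}_\mathbf{p}$ adds no obstruction and only enlarges the base, so the pullback across it is a quasi-isomorphism by the same flat base-change argument used for the trivial expansion in Lemma \ref{phxv}. Consequently the Koszul part of $\widehat{\mathrm{ft}}_{i,\mathbf{p},\mathbf{x}}$ coincides with the map $\widehat{\eta}$ of Proposition \ref{afec} for this embedding, and is a quasi-isomorphism by Lemma \ref{etqi}.

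The main obstacle I anticipate is the Koszul quasi-isomorphism: one must genuinely verify that Proposition \ref{idks} places us in the FOOO-embedding situation so that the regular-sequence and bounded double-complex argument of Proposition \ref{afec} applies, in particular that the complement section $s_c$ cut out by the $\mathcal{W}'_{\mathbf{p}^+}$-direction is a regular sequence lying in $I_{\mathrm{ft}_{i,\mathbf{p}}}\setminus I_{\mathrm{ft}_{i,\mathbf{p}}}^2$ as required by Lemma \ref{lemab}. A secondary technical point, which I would dispatch using the flatness together with the compatibility of the foliation splitting, is checking that the flat directions $\mathcal{W}_\mathbf{p}$ and $\mathcal{W}'_{\mathbf{p}^+}$ contribute nothing to the de Rham $L_\infty[1]$-operations, so that the pullback is a strict morphism before augmentation.
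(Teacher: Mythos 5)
Your proposal is correct and follows essentially the same route as the paper: the morphism property is checked via the splitting $s_{\mathbf{p}^+}=\mathrm{ft}_{i,\mathbf{p}}^*s_{\mathbf{p}}$ on the Koszul side and the decomposition $P_{\mathbf{x}^+}=\mathrm{ft}_{i,\mathbf{p}}^*(P_{\mathbf{x}})+P_{\mathbf{x}^+}^{\mathcal{W}_{\mathbf{p}},\mathcal{W}'_{\mathbf{p}^+}}$ (with the flat terms bracketing to zero against pulled-back forms) on the de Rham side, and the quasi-isomorphism is obtained by acyclicity of the augmented de Rham complexes together with the reduction of the Koszul component to the FOOO-embedding/double-complex argument of Proposition \ref{afec} and Lemma \ref{etqi}. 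The only cosmetic difference is that the paper phrases the quasi-isomorphism step via acyclicity of the quotient complex of the injective $\widehat{\mathrm{ft}}_{i,\mathbf{p},\mathbf{x},1}$, whereas you argue the de Rham part directly from acyclicity of source and target; these are equivalent, and your explicit identification of Condition \ref{addcond} (iv) and (v) with Proposition \ref{idks} (vi) and (vii) fills in a step the paper leaves implicit.
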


\begin{proof}
We first show that $\widehat{\mathrm{ft}}_{i, \mathbf{p}, \mathbf{x}, 1}$ is a chain map; for $a \in \bigwedge^{-\bullet} \Gamma(E_{\mathbf{p},\mathbf{x}}^*|_{W_{\mathbf{x}}})$, $\xi \in \Omega_{\mathrm{aug}}^{\bullet+1}(\mathcal{F}_{\mathbf{x}})$, we have
\begin{equation}\nonumber
\begin{split}
l^+_{\mathbf{p}^+, \mathbf{x}^+,1} \circ \widehat{\mathrm{ft}}_{i, \mathbf{p}, \mathbf{x}, 1}(a, \xi) &= l_{\mathbf{p}^+, \mathbf{x}^+,1}^{+ \mathrm{K}}\left((\mathrm{ft}_{i, \mathbf{p}}^*a),0 )\right) \oplus l_{\mathbf{p}^+, \mathbf{x}^+, 1}^{+\mathrm{dR}}\left(\mathrm{ft}_{i, \mathbf{p}}^*\xi, 0 \right) \\
&{=} \iota_{s_{\mathbf{p}}^+|_{W_{\mathbf{x}^+}}}(\mathrm{ft}_{i,\mathbf{p}}^*a) \oplus d_{\mathcal{F}_{\mathbf{x}^+}^+}\left((\mathrm{ft}_{i, \mathbf{p}}^*\xi),0\right)\\ &\overset{(1)}{=} \left(\mathrm{ft}_{i, \mathbf{p}}^*(\iota_{s_{\mathbf{p}}}(a)),0 \right) \oplus \left(\mathrm{ft}_{i, \mathbf{p}}^*d_{\mathcal{F}_{\mathbf{x}}}(\xi), 0 \right) \\
&= \widehat{\mathrm{ft}}_{i, \mathbf{p}, \mathbf{x}, 1}\left(\iota_{s_{\mathbf{p}}}|_{W_{\mathbf{x}}}(a), d_{\mathcal{F}_{\mathbf{x}}}(\xi)\right) = \widehat{\mathrm{ft}}_{i, \mathbf{p}, \mathbf{x}, 1}\left(l_1(a, \xi)\right).
\end{split}
\end{equation}
Here the equality (1) holds for our choice of (\ref{sftsp}): $s_{\mathbf{p^+}} = \mathrm{ft}_{\mathbf{p}}^*s_{\mathbf{p}}$, so we have
\[
\left(\mathrm{ft}_{i, \mathbf{p}}^*(a)\right)(s_{\mathbf{p^+}}) = \mathrm{ft}_{i, \mathbf{p}}^*(a)\left(\mathrm{ft}_{i, \mathbf{p}}^*(s_{\mathbf{p}})\right) = \mathrm{ft}_{i, \mathbf{p}}^*\left(a(s_{\mathbf{p}})\right).
\]

Also, we have
\[
d_{\mathcal{F}_{\mathbf{x}^+}^+}\left(\mathrm{ft}_{i, \mathbf{p}}^*\xi\right) = \mathrm{ft}_{i, \mathbf{p}}^*d_{\mathcal{F}_{\mathbf{x}}}(\xi)
\]
from Lemma \ref{agssagaga} and the definition of $ft_{i, \mathbf{p}}.$

In general, we have for $k \geq 2$, $a_i \in \bigwedge^{-\bullet} \Gamma(E_{\mathbf{p}}^*|_{W_{\mathbf{x}}})$, $\xi_i \in \Omega^{\bullet+1}_{\mathrm{aug}}(\mathcal{F}_{\mathbf{x}}),$ and $1 \leq i \leq k$,
\begin{equation}\nonumber
\begin{split}
l^+_{\mathbf{p}^+,\mathbf{x}^+,k}&\left(\mathrm{ft}_{i, \mathbf{p},\mathbf{x},1}(a_1, \xi_1), \cdots, (a_k, \xi_k)\right) = l^+_{\mathbf{p}^+,\mathbf{x}^+,k}\left(\left(\pi^*(a_1), \mathrm{ft}_{i, \mathbf{p}}^*(\xi_1)\right), \cdots, \left(\pi^*(a_k), \mathrm{ft}_{i, \mathbf{p}}^*(\xi_k)\right)\right)\\
&= l^+_{\mathbf{p}^+,\mathbf{x}^+,_k}\big(\big(\pi^*(a_1), \cdots, \pi^*(a_k)\big) \oplus \big(\mathrm{ft}_{i, \mathbf{p}}^*(\xi_1), \cdots, \mathrm{ft}_{i, \mathbf{p}}^*(\xi_k)\big) \big)\\
&= \Pi \big[\cdots [P_{{\mathbf{x}}^+}, \mathrm{ft}_{i, \mathbf{p}}^*(\xi_1)], \cdots, \mathrm{ft}_{i, \mathbf{p}}^*(\xi_k) \big]\\
&\overset{(2)}{=} \Pi \big[\cdots \left[\mathrm{ft}_{i, \mathbf{p}}^*(P_{\mathbf{x}}), \mathrm{ft}_{i, \mathbf{p}}^*(\xi_1)\big], \cdots, \mathrm{ft}_{i, \mathbf{p}}^*(\xi_k)] \right]\\
&\quad \quad  \quad \quad  \quad \quad  \quad \quad \quad \quad  \quad \quad + \Pi \big[\cdots \left[P_{\mathbf{x}}^{\mathcal{W}_{\mathbf{p}}, \mathcal{W}'_{\mathbf{p}^+}}, \mathrm{ft}_{i, \mathbf{p}}^*(\xi_1)\right], \cdots, \mathrm{ft}_{i, \mathbf{p}}^*(\xi_k) \big]\\
&\overset{(3)}{=} \Pi \mathrm{ft}_{i, \mathbf{p}}^* \big( \big[\cdots [P_{\mathbf{x}}, \xi_1], \cdots, \xi_k] \big] \big) \overset{(4)}{=}  \mathrm{ft}_{i, \mathbf{p}}^* \Pi \big( \big[ \cdots [P_{\mathbf{x}}, \xi_1], \cdots, \xi_k \big] \big)\\
&= \mathrm{ft}_{i, \mathbf{p}}^* \big(l_{\mathbf{p},\mathbf{x},k}(\xi_1, \cdots, \xi_k)\big) = \mathrm{ft}_{i, \mathbf{p}}^* \big(l_{\mathbf{p},\mathbf{x},k}\big((a_1, \xi_1), \cdots, (a_k, \xi_k)\big)\big)\\
&= \widehat{\mathrm{ft}}_{i, \mathbf{p},\mathbf{x},1} \circ l_{\mathbf{p},\mathbf{x},k}\big((a_1, \xi_1), \cdots, (a_k, \xi_k)\big).
\end{split}
\end{equation}
Here, for the equality $(2)$, we use the decomposition of the Poisson structure
\[
P_{\mathbf{x}^+} = \mathrm{ft}_{\mathbf{p}}^*(P_{\mathbf{x}}) + P_{\mathbf{x}^+}^{\mathcal{W}_{\mathbf{p}}, \mathcal{W}'_{\mathbf{p}}},
\]
where $\mathrm{ft}_{\mathbf{p}}^+(P_{\mathbf{x}})$ (by abuse of notation) denotes the Poisson structure with respect to the presymplectic form $\mathrm{ft}_{i, \mathbf{p}}^*(\omega|_{W_{\mathbf{x}}})$ and $P_{\mathbf{x}^+}^{\mathcal{W}_{\mathbf{p}}, \mathcal{W}'_{\mathbf{p}^+}}$ the term that consists of the factors with the differentiations in the $\mathcal{W}_{\mathbf{p}}$- or $\mathcal{W}_{\mathbf{p}^+}'$-direction.
Now note that we have
\[
\left[P_{\mathbf{x}^+}^{\mathcal{W}_{\mathbf{p}}, \mathcal{W}'_{\mathbf{p}^+}}, \mathrm{ft}_{i, \mathbf{p}}^*(\xi_i)\right] = 0
\]
because $\mathrm{ft}_{\mathbf{p}}^*(\xi_i)$ is constant in the $\mathcal{W}_{\mathbf{p}}$- and $\mathcal{W}_{\mathbf{p}^+}'$-directions, hence we obtain the equality (3). (4) follows from a straightforward computation.

Finally, we show that $\widehat{\mathrm{ft}}_{i, \mathbf{p}, \mathbf{x}}$ is a quasi-isomorphism. Since $\widehat{\mathrm{ft}}_{i, \mathbf{p},\mathbf{x},1}$ is injective, it suffices to show that the quotient complex
\[
\frac{\mathcal{C}^+_{\mathbf{p}^+, \mathbf{x}^+}}{\widehat{\mathrm{ft}}_{i, \mathbf{p},\mathbf{x},1}(\mathcal{C}_{\mathbf{p},\mathbf{x}})}
\simeq \frac{\bigwedge^{-\bullet}\Gamma\left((\pi^* E_{\mathbf{p}}^* \oplus \mathbb{R}^c)|_{W_\mathbf{x}}\right)}{\widehat{\mathrm{ft}}_{i, \mathbf{p},\mathbf{x},1}^{\mathrm{K}}\left(\bigwedge^{-\bullet}\Gamma\left(E_{\mathbf{p}}^*|_{W_\mathbf{x}}\right)\right)} \oplus \frac{\Gamma_{\text{aug}}\left(\bigwedge^{\bullet+1}\left(\mathrm{ft}_{i,\mathbf{p}}^* T^*\mathcal{F}_{\mathbf{x}} \oplus \mathcal{W}_{\mathbf{p}} \oplus \mathcal{W}_{\mathbf{p}^+}'\right)\right)}{\widehat{\mathrm{ft}}_{i, \mathbf{p},\mathbf{x},1}^{\mathrm{dR}}\left(\Omega_{\text{aug}}^{\bullet+1}(\mathcal{F}_\mathbf{x})\right)}
\]
is acyclic. The de Rham part, being the quotient of acyclic chain complexes, is acyclic, where $\Gamma_{\mathrm{aug}}(\cdots)$ stands for the augmentation of the $L_{\infty}[1]$-algebra equipped with the $L_{\infty}[1]$-structure as in Proposition \ref{augomega}. For the Koszul part, the proof is essentially the same as of \cite[Lemma 2.32]{Kim1}, so we omit it.
\end{proof}

\begin{thm-defn}[Forgetful morphisms]\label{thqi}
The equivalence class
\[
\mathrm{Ft}_i:=\left[\left(\mathcal{U}_{\mathbf{p}^+}, \mathcal{U}_{\mathbf{p}}, {\mathrm{ft}_i}, \left\{{\mathrm{ft}_i}_{\mathbf{p}^+}\right\}, \left\{\widehat{\mathrm{ft}}_{i, \mathbf{p}^+, \mathbf{x}^+}\right\}\right)\right]
\]
defines a morphism of Kuranishi spaces
\begin{equation}\nonumber
{\mathrm{Ft}_i} : \left( \mathcal{M}_{k+1}(L, \beta), \left[\widehat{\mathcal{U}}^+\right] \right) \rightarrow \left( \mathcal{M}_{k}(L, \beta), \left[\widehat{\mathcal{U}}\right] \right)
\end{equation}
for each $i,$ which we call the $i$-th \textit{forgetful morphism} of the moduli space $\mathcal{M}_{k+1}(L, \beta).$
\end{thm-defn}

\begin{proof}
We show the compatibility with coordinate changes, 

First, we know that $\mathrm{ft}_{i}$ is continuous (cf. \cite[Definitoin 10.3]{FO}). We verify axioms (i) and (iii) of Definition \ref{morphismkur}.

(i) $\psi'_{\mathbf{p}} \circ \mathrm{ft}_{i, \mathbf{p}^+} = \mathrm{ft}_i \circ \psi_{\mathbf{p}^+}$ on $s^{-1}_{\mathbf{p}^+}(0)$ follows immediately from the definitions of $\mathrm{ft}_{i, \mathbf{p}^+}$ and $\mathrm{ft}_i.$

(ii) For $\mathbf{p}^+, \mathbf{q}^+$ with $\mathrm{Im} \psi_{\mathbf{p }^+} \cap \mathrm{Im} \psi_{\mathbf{q}^+}\neq \emptyset,$ the compatibility with respect to the base coordinate change, 
\begin{equation}\nonumber
\phi'_{\mathbf{p}\mathbf{q}} \circ \mathrm{ft}_{i, \mathbf{p}^+} = \mathrm{ft}_{i, \mathbf{q}^+} \circ \phi_{\mathbf{p}^+\mathbf{q}^+}
\end{equation}
follows immediately from the definitions of $\mathrm{ft}_{i, \mathbf{p}^+}$ and $\mathrm{ft}_{i, \mathbf{q}^+}.$ The $(\Gamma_{\mathbf{p}^+}, \Gamma_{\mathbf{p}})$-equivariance of $\mathrm{ft}_{i, \mathbf{p}}$ is an easy consequence of the group $\Gamma_{\mathbf{p}^+} = \Gamma_{\mathbf{p}}$ as in Lemma \ref{idks} (iii). 

(iii) For the $L_{\infty}[1]$-component, the diagram
\begin{equation}\label{octa4}
\begin{aligned}
\begin{tikzcd}
\mathcal{C}_{\mathbf{q}^+, \phi_{\mathbf{p^+q^+}}(\mathbf{x}^+)} \arrow{d}[swap]{\widehat{\varepsilon}_{\mathbf{q}^+, \phi_{\mathbf{p^+q^+}}(\mathbf{x}^+), \phi_{\mathbf{p^+q^+}}}, \simeq} & \mathcal{C}'_{\mathbf{q}, \mathrm{ft}_{i,\mathbf{q}^+} \circ \phi_{\mathbf{p^+q^+}}(\mathbf{x}^+), \mathrm{ft}_{i, \mathbf{q}^+}} \arrow{l}[swap]{\widehat{\mathrm{ft}}_{i, \mathbf{q}^+, \phi_{\mathbf{p^+q^+}}(\mathbf{x}^+)}, \simeq} \cdots & \quad \quad\quad \quad \quad \quad \quad \quad \quad \quad \quad \quad\\
\mathcal{C}_{\mathbf{q}^+, \phi_{\mathbf{p^+q^+}}(\mathbf{x}), \phi_{\mathbf{p^+q^+}}} \arrow{d}[swap]{\widehat{\phi}_{\mathbf{p^+q^+},\mathbf{x}}, \simeq} & {} & \quad \quad \quad \quad \quad \quad \quad \quad\quad \quad \quad \quad \\
\mathcal{C}_{\mathbf{p}^+, \mathbf{x}} & \mathcal{C}'_{\mathbf{p},\mathrm{ft}_{i,\mathbf{p}^+}(\mathbf{x}^+),\mathrm{ft}_{i,\mathbf{p}^+}} \arrow{l}[swap]{{\widehat{\mathrm{ft}}_{i, \mathbf{p}^+,\mathbf{x}}}, \simeq} \cdots & \quad \quad\quad \quad \quad \quad \quad \quad\quad \quad \quad \quad\\
\end{tikzcd}\\
\begin{tikzcd}
\quad \quad\quad \quad \quad \quad \quad \quad\quad \quad \quad \quad \cdots & \mathcal{C}'_{\mathbf{q},\mathrm{ft}_{i,\mathbf{q}^+} \circ \phi_{\mathbf{p^+q^+}}(\mathbf{x}^+)} = \mathcal{C}'_{\mathbf{q}, \phi'_{\mathbf{p}\mathbf{q}} \circ \mathrm{ft}_{i,\mathbf{p}^+}(\mathbf{x}^+)} \arrow{l}[swap]{\widehat{\varepsilon}_{\mathbf{q}, \mathrm{ft}_{i, \mathbf{q}^+}\circ \phi_{\mathbf{p^+q^+}}(\mathbf{x}^+), \mathrm{ft}_{i,\mathbf{q}^+}}, \simeq} \arrow{d}{\widehat{\varepsilon}_{\mathbf{q},\phi'_{\mathbf{p}\mathbf{q}}\circ \mathrm{ft}_{i, \mathbf{p}^+}(\mathrm{x}^+), \phi'_{\mathbf{p}\mathbf{q}}}, \simeq} &{}\\
\quad \quad\quad \quad \quad \quad \quad \quad\quad \quad \quad \quad & \mathcal{C}'_{\mathbf{q},\phi'_{\mathbf{p}\mathbf{q}}\circ \mathrm{ft}_{i,\mathbf{p}^+}(\mathbf{x}^+), \phi'_{\mathbf{p}\mathbf{q}}} \arrow{d}{\widehat{\phi}'_{\mathbf{p}\mathbf{q}}, \mathrm{ft}_{i,\mathbf{p}^+}(\mathbf{x}^+), \simeq} &{}\\
\quad \quad\quad \quad \quad \quad \quad \quad\quad \quad \quad \quad \cdots & \mathcal{C}'_{\mathbf{p},\mathrm{ft}_{i,\mathbf{p}^+}(\mathbf{x}^+)} \arrow{l}[swap]{\widehat{\varepsilon}_{\mathbf{p},\mathrm{ft}_{i,\mathbf{p}^+}(\mathbf{x}^+),\mathrm{ft}_{i,\mathbf{p}^+}}, \simeq} & {}\\
\end{tikzcd}
\end{aligned}
\end{equation}
commutes up to $L_{\infty}[1]$-homotopy. This diagram may look complicated; however, the homotopy commutativity follows immediately from the fact that both sides are quasi-isomorphisms by Proposition \ref{qcqisom} and Corollary \ref{coinle}. Then we can apply Corollary \ref{anhp} to obtain an $L_{\infty}[1]$-homotopy between them.
\end{proof}

Consider the forgetful morphism $\mathrm{Ft}_{i}$ that forgets the $i$-th marked point of each element in $\mathcal{M}_{k+1}(L, \beta)$ for $k \geq 1.$ For the 0-th evaluation morphism $\mathrm{Ev}_0^{(k+1)}$ and  $\mathrm{Ev}_0^{(k)}$ from  $\mathcal{M}_{k+1}(L, \beta)$ and  $\mathcal{M}_{k}(L, \beta)$ to $L,$ respectively, we have the following diagram.
\[
\begin{tikzcd}
\mathcal{M}_{k+1}(L, \beta) \arrow{rd}[swap]{\mathrm{Ev}_0^{(k+1)}} \arrow{rr}{\mathrm{Ft}_{i}} & {} & \mathcal{M}_{k}(L, \beta) \arrow{ld}{\mathrm{Ev}_0^{(k)}}\\
{} & L. & {}
\end{tikzcd}
\]

\begin{cor}\label{efe}
As morphisms of Kuranishi spaces, the equality
\begin{equation}\nonumber
\mathrm{Ev}^{(k)}_0 \circ \mathrm{Ft}_i = \mathrm{Ev}^{(k+1)}_0
\end{equation}
holds for each $1 \leq i \leq k.$
\end{cor}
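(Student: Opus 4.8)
The plan is to prove the identity at the level of representing pre-morphisms, using the equivalence relation of Definition \ref{defmoreq}. The underlying topological identity $\mathrm{ev}_0^{(k)} \circ \mathrm{ft}_i = \mathrm{ev}_0^{(k+1)}$ is immediate: since $1 \leq i \leq k$, forgetting the $i$-th marked point leaves the $0$-th marked point (and the map $u$) untouched, so evaluating at $z_0$ before or after forgetting $z_i$ yields the same point of $L$. Thus the two morphisms share the same underlying continuous map, and it remains to check that the composite pre-morphism $\mathrm{Ev}_0^{(k)} \circ \mathrm{Ft}_i$ produced by the composition rule is equivalent to the pre-morphism representing $\mathrm{Ev}_0^{(k+1)}$.

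First I would unwind the composition at the base level. Choosing charts for $\mathcal{M}_{k+1}(L,\beta)$, $\mathcal{M}_k(L,\beta)$, and $L$ compatibly via Proposition \ref{idks}, the composition rule outputs the base chart maps of the form $\widetilde{\mathrm{ev}}_{0,\,\mathrm{ft}_i(\mathbf{p}^+)} \circ \widetilde{\mathrm{ft}}_{i,\mathbf{p}^+}$. Restricting to the zero loci and invoking Proposition \ref{idks}(x), which asserts $\mathrm{ev}_0^+ = \mathrm{ev}_0 \circ \pi'$ where $\pi' = \mathrm{ft}_{i,\mathbf{p}^+}$ is the projection $U_{\mathbf{p}^+} \twoheadrightarrow U_{\mathbf{p}}$, I obtain $\mathrm{ev}_{0,\mathbf{p}} \circ \mathrm{ft}_{i,\mathbf{p}^+} = \mathrm{ev}_{0,\mathbf{p}^+}$ on $(s_{\mathbf{p}^+}^+)^{-1}(0)$. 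This verifies conditions (i) and (ii) of Definition \ref{defmoreq}.

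The substance lies in the $L_\infty[1]$-component, i.e.\ condition (iii), where I would compute both sides of the square \eqref{octa} explicitly. By construction both $\widehat{\mathrm{ev}}_{0,\mathbf{p}^+,\mathbf{x}^+}$ and $\widehat{\mathrm{ft}}_{i,\mathbf{p}^+,\mathbf{x}^+}$ are linear (their $k \geq 2$ components vanish) and act by pullback of forms landing in the de Rham summand; hence the composite $\widehat{\mathrm{ft}}_{i,\mathbf{p}^+,\mathbf{x}^+} \circ \widehat{\mathrm{ev}}_{0,\mathbf{p},\mathbf{x}}$ is again linear, with degree-one part $\mathrm{ft}_{\mathbf{p}}^* \circ \mathrm{ev}_{0,\mathbf{p}}^* = (\mathrm{ev}_{0,\mathbf{p}} \circ \mathrm{ft}_{\mathbf{p}})^* = \mathrm{ev}_{0,\mathbf{p}^+}^*$, which is precisely the degree-one part of $\widehat{\mathrm{ev}}_{0,\mathbf{p}^+,\mathbf{x}^+}$. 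Since the localization and expansion maps $\widehat{\varepsilon}_{(\cdots)}$ and $\widehat{\pi}_{(\cdots)}$ entering \eqref{octa2} are themselves pullback-type morphisms, the evaluation pullback commutes past them, so the two composites around the square \eqref{octa} agree on the nose, and a fortiori up to $L_\infty[1]$-homotopy. I emphasize that, unlike in the proofs of the evaluation and forgetful Theorem-Definitions, Corollary \ref{anhp} is \emph{not} available here: the morphisms are not quasi-isomorphisms, because the target moduli algebra carries nontrivial Koszul cohomology. The required homotopy therefore comes from the strict equality just described, not from a Whitehead-type filling.

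The main obstacle I anticipate is the expansion and localization bookkeeping. One must choose the expansions $\mathbb{R}^{n_d}$, $\mathbb{R}^{\underline{n}'_t}$ in the composition and the extensions in Definition \ref{defmoreq} compatibly, take the domain chart for $\mathcal{M}_{k+1}(L,\beta)$ in the product form of Proposition \ref{idks} so that $\mathrm{ft}_{i,\mathbf{p}^+}$ is the literal projection, and verify that the identification quasi-isomorphisms $\widehat{\pi}$ and $\widehat{\varepsilon}$ intertwine the two evaluation pullbacks. Once these identifications are in place, the equality of the two $L_\infty[1]$-morphisms is a direct consequence of the functoriality of pullback, and the corollary follows.
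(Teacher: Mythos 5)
Your proof is correct in outline and reaches the right conclusion, but it takes a genuinely different route from the paper at the one step that matters. The paper does \emph{not} verify strict commutativity: it observes that both composites $\widehat{\mathrm{ft}}_{i,\mathbf{p}^+,\mathbf{x}^+}\circ\widehat{\mathrm{ev}}^{(k)}_{0,\mathbf{p},\mathbf{x}}$ and $\widehat{\mathrm{ev}}^{(k+1)}_{0,\mathbf{p}^+,\mathbf{x}^+}$ have vanishing Koszul components and are $L_\infty[1]$-morphisms out of the acyclic augmented de Rham complex of the chart of $L$, declares them quasi-isomorphisms, and invokes Corollary \ref{anhp} to produce the homotopy --- exactly the Whitehead-type filling you say is unavailable. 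Your alternative is to compute the composite explicitly: since both morphisms have vanishing higher components, the composition formula degenerates to $\mathrm{ft}_{i,\mathbf{p}^+}^*\circ\mathrm{ev}_{0,\mathbf{p}}^* = (\mathrm{ev}_{0,\mathbf{p}}\circ\mathrm{ft}_{i,\mathbf{p}^+})^* = \mathrm{ev}_{0,\mathbf{p}^+}^*$ via Proposition \ref{idks}(x), giving equality on the nose. This is a stronger and more transparent conclusion, and it is robust against the worry you raise. What each approach buys: yours avoids any appeal to acyclicity and exhibits the homotopy as trivial, at the cost of the $\widehat{\varepsilon}$/$\widehat{\pi}$ bookkeeping you flag; the paper's is shorter and uniform with the proofs of the two Theorem-Definitions, but leans on the reader accepting that ``quasi-isomorphism'' is being tested against the de Rham summand only.

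Two caveats on your write-up. First, your assertion that Corollary \ref{anhp} is ``not available'' overstates the case: since both composites land in the de Rham summand and the $L_\infty[1]$-structure on $\mathcal{C}^+_{\mathbf{p}^+,\mathbf{x}^+}$ splits as $l^{\mathrm{K}}_k\oplus l^{\mathrm{dR}}_k$, the paper is implicitly comparing two quasi-isomorphisms between the acyclic augmented foliation de Rham complexes, where the corollary does apply; a homotopy there extends by zero on the Koszul factor. Your criticism would only bite if one insisted on quasi-isomorphicity into the full direct sum. Second, your claim of on-the-nose equality should be checked in degree $-2$: the augmented extensions produced by Proposition \ref{auglmo} generally involve choices of primitives, so strict equality of the augmented morphisms holds here only because $\widehat{\mathrm{ev}}$ and $\widehat{\mathrm{ft}}$ are \emph{defined} directly on the augmented complexes with vanishing higher components (Lemma \ref{levipzx} and the construction preceding Proposition \ref{qcqisom}), rather than obtained by the inductive extension procedure. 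With that point made explicit, your argument is complete.
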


\begin{proof}
Consider $\mathbf{p}^+ \in \mathcal{M}_{k+1}(L,\beta)$ and $\mathbf{p} \in \mathcal{M}_k(L, \beta)$ that satisfy $\mathrm{ft}_i(\mathbf{p}^+) = \mathbf{p}.$

From the definitions of evaluation and forgetful maps, one can easily show
\begin{equation}\nonumber
\begin{cases}
\mathrm{ev}^{(k)}_{0} \circ \mathrm{ft}_i  = \mathrm{ev}_0^{(k+1)},\\
\mathrm{ev}^{(k)}_{0, \mathbf{p}} \circ \mathrm{ft}_{i, \mathbf{p}^+} = \mathrm{ev}^{(k+1)}_{0, \mathbf{p}^+}.
\end{cases}
\end{equation}

Since $\widehat{\mathrm{ft}}_{i, \mathbf{p}^+, \mathrm{ev}^{(k)}_{0, \mathbf{p}}} \circ  \widehat{\mathrm{ev}}_{0, \mathbf{p}}^{{(k)}}$ and $\widehat{\mathrm{ev}}_{0, \mathbf{p}^+}^{{(k+1)}}$ involve no Koszul parts, being an $L_{\infty}[1]$-morphisms from an acyclic complex (i.e., an augmented foliation de Rham complex) to another, they must be quasi-isomorphic. Thus, we have
\begin{equation}
\widehat{\mathrm{ft}}_{i, \mathbf{p}^+, \mathrm{ev}^{(k)}_{0, \mathbf{p}}} \circ  \widehat{\mathrm{ev}}_{0, \mathbf{p}}^{{(k)}} = \widehat{\mathrm{ev}}_{0, \mathbf{p}^+}^{{(k+1)}},
\end{equation}
up to $L_{\infty}[1]$-homotopy by Corollary \ref{anhp}.

By (\ref{morcom}), we then have
\begin{equation}\nonumber
\begin{split}
\mathrm{Ev}^{(k)}_0 \circ \mathrm{Ft}_i &= \left[\left(\widehat{\mathcal{U}}, \widehat{\mathcal{U}}^{\mathrm{man}}, \mathrm{ev}^{(k)}_{0}, \mathrm{ev}^{(k)}_{0, \mathbf{p}}, \widehat{\mathrm{ev}}_{0,\mathbf{p}}^{{(k)}}\right)\right] \circ \left[\left(\widehat{\mathcal{U}}^+, \widehat{\mathcal{U}}, \mathrm{ft}_i, \mathrm{ft}_{i, \mathbf{p}^+}, \widehat{\mathrm{ft}}_{i, \mathbf{p}^+}\right)\right]\\
&\overset{*}{=} \left[\left(\widehat{\mathcal{U}}^+, \widehat{\mathcal{U}}^{\mathrm{man}}, \mathrm{ev}^{(k)}_{0} \circ \mathrm{ft}_i, \mathrm{ev}^{(k)}_{0, \mathbf{p}} \circ \mathrm{ft}_{i, \mathbf{p}^+}, \widehat{\mathrm{ft}}_{i, \mathbf{p}^+, \mathrm{ev}^{(k)}_{0, \mathbf{p}}} \circ  \widehat{\mathrm{ev}}_{0, \mathbf{p}}^{{(k)}} \right)\right]\\
&=  \left[\left(\widehat{\mathcal{U}}^+,  \widehat{\mathcal{U}}^{\mathrm{man}}, \mathrm{ev}_0^{(k+1)}, \mathrm{ev}^{(k+1)}_{0, \mathbf{p}^+}, \widehat{\mathrm{ev}}_{0, \mathbf{p}^+}^{{(k+1)}}\right)\right] = \mathrm{Ev}^{(k+1)}_0,
\end{split}
\end{equation}
where we do not need to consider extensions of pre-morphisms of (\ref{morcom}) other than themselves for the equality $*$; $\mathrm{ev}^{(k)}_{0,\mathbf{p}}$ and $\mathrm{ft}_{i,\mathbf{p}^+}$ are already surjective (cf. the discussion in the paragraph preceding Lemma \ref{levipzx}).
\end{proof}

\appendix

\section{$L_{\infty}[1]$-algebras and their homotopy theory}

In this section, we briefly introduce $L_{\infty}[1]$-algebras and their homotopy theories, following \cite{Kim2}. We first recall the notion of \textit{graded symmetric algebra} $SC$ of a vector space $C$ over a field $\mathbf{k},$
$$
S C := TC/ \langle v \otimes v' - (-1)^{|v|\cdot|v'|} v' \otimes v \rangle,
$$
with its degree $k$ component $S^k C := \{v \in S C \mid v \text{ is homogeneous of degree } k\}.$
We have a decomposition
$$
S C = \bigoplus\limits_{k=0}^{\infty} S^k C
$$
with the induced product $\odot$ on each component.
We denote by $\text{Sh}(i, k-i)$ the set of $(i, k-i)$-unshuffles, and the sign $\sgn(\tau)$ for $\tau \in \text{Sh}(i, k-i)$ is defined for homogeneous elements $a_1, \cdots, a_k \in C$, we write
$$
a_{\tau(1)} \odot \cdots \odot a_{\tau(k)} = \sgn(\tau) a_1 \odot \cdots \odot a_k.
$$

\begin{defn}
An \textit{$L_{\infty}[1]$-algebra} is a pair $\left(C, \{l_k\}\right)$ consisting of a vector space $C$ and a family of degree 1 linear maps
$$
l_k : S^k C \rightarrow C, \ k \geq 0,
$$
satisfying the relations
\begin{equation}\label{quadrel}
\sum\limits_{i = 0}^{k} \sum\limits_{\tau \in {\text{Sh}}(i, k-i)} {\sgn(\tau)} l_{k-i+1}\left(l_i(a_{\tau(1)}, \cdots, a_{\tau(i)}),a_{\tau(i+1)}, \cdots, a_{\tau(k)}\right) = 0.
\end{equation}
\end{defn}

\begin{defn}
Let $(C,\{l_k\})$ and $(C', \{l'_k\})$ be two $L_{\infty}[1]$-algebras. An $L_{\infty}[1]$\textit{-algebra morphism}, or simply $L_{\infty}[1]$\textit{-morphism}
\begin{equation}\label{lrel}
f : C \rightarrow C'
\end{equation}
is a family of degree 0 linear maps
$$
f_k : S^kC \rightarrow C', \ k \geq 0,
$$
satisfying the relations
\begin{equation}\label{frel}
\begin{split}
\sum\limits_{i = 0}^{k}& \sum\limits_{\tau \in {\text{Sh}}(i, k-i)} {\sgn(\tau)} f_{k-i+1}\left(l_i(a_{\tau(1)}, \cdots, a_{\tau(i)}),a_{\tau(i+1)}, \cdots, a_{\tau(k)}\right)\\
&= \sum\limits_{\substack{t, j_1, \cdots, j_t \geq 1,\\ j_1 + \cdots + j_t = k}} \sum\limits_{\tau \in S_k}  \frac{\sgn(\tau)}{t! j_1! \cdots j_t!} \  l'_{t}\bigl(f_{j_1}(a_{\tau(1)}, \cdots, a_{\tau(j_1)}), \cdots, \\
& \quad \quad \quad \quad \quad \quad \quad \quad \quad \quad \quad \quad f_{j_t}(a_{\tau(k -(j_1 + \cdots + j_{t-1}))}, \cdots, a_{\tau(k)})\bigr).
\end{split}
\end{equation}
Here, $S_k$ denotes the symmetric group of permutations of $k$ elements. 
\end{defn}

\begin{defn}
For two $L_{\infty}[1]$-morphisms
$$
f : C \rightarrow C', \ g: C' \rightarrow C'',
$$
we define their \textit{composition}
$$
g \circ f : C \rightarrow C''
$$
by a family of linear maps of degree 0 for $k \geq 0$
\begin{equation}\nonumber
\begin{split}
(g \circ f)_k := &\sum\limits_{i = 0}^{k} \sum\limits_{\tau \in S_k}  \frac{\sgn(\tau)}{t! j_1! \cdots j_t!} \ g_{t}\bigl(f_{j_1}(a_{\tau(1)}, \cdots, a_{\tau(j_1)}), \cdots,\\
&\quad \quad\quad \quad\quad \quad\quad \quad f_{j_t}(a_{\tau(k -(j_1 + \cdots + j_{t-1}))}, \cdots, a_{\tau(k)})\bigr).
\end{split}
\end{equation}
It is straightforward to verify that $\{(g \circ f)_k\}_{k \geq 0}$ satisfies the relation (\ref{frel}).
\end{defn}

\begin{defn}
We say an $L_{\infty}[1]$-algebra $\{l_k\}_{k \geq 0}$ is \textit{strict} if $l_0 = 0.$ Otherwise, we say it is \textit{curved.} We similarly define \textit{strict/curved} $L_{\infty}[1]$-morphisms.
\end{defn}

In the strict case, the relations (\ref{lrel}) and (\ref{frel}) coincide with the differential and the chain map relations, respectively. That is, they satisfy
\[
l_1 \left( l_1(a) \right) = 0; \ l'_1 \left(f_1 (a) \right) = f_1 \left( l_1(a) \right).
\]
\begin{defn}
We say that a strict $L_{\infty}[1]$-algebra $(C,\{l_k\})$ is \textit{acyclic} if its cohomology for each degree vanishes, that is, if 
\[
H^*(C) = \frac{\ker{l_1}}{\mathrm{Im}l_1} = 0.
\]
We say that a strict $L_{\infty}[1]$-morphism $\{f_k\}_{k \geq 1}$ between strict $L_{\infty}[1]$-algebras is a \textit{quasi-isomorphism} if $f_1$ is a quasi-isomorphic chain map.
\end{defn}

From now on and elsewhere in this paper, we always assume the strictness of $L_{\infty}[1]$-algebras and their morphisms.

To define homotopies between two $L_{\infty}[1]$-morphisms, we need to introduce the following notion:
\begin{defn}[Models of $\Delta^1 \times C$]\label{mdl1}
Let $C$ be an $L_{\infty}[1]$-algebra. We say an $L_{\infty}[1]$-algebra $\mathfrak{C}$ is a \textit{model} of $\Delta^1 \times C$ if there exist $L_{\infty}[1]$-morphisms
\[
\Eval_j : \mathfrak{C} \rightarrow C, \ \ j = 0,1\\
\]
and
\[
\Incl : {C} \rightarrow \mathfrak{C},
\]
with the following properties:
\begin{enumerate}[label = (\roman*)]
\item $\left(\Eval_j\right)_{k \geq 2} \equiv 0, \ j =0,1, \ \Incl_{k \geq 2} \equiv 0.$
\item $\Eval_j, \ j =0,1$ and  $\Incl$ are quasi-isomorphisms.
\item $(\Eval_j)_1 \circ \Incl = \mathrm{id}_C.$
\item $(\Eval_0)_1 \oplus (\Eval_{1})_1 : \mathfrak{C} \rightarrow {C} \oplus C$ is surjective.
\end{enumerate}
\end{defn}

Using the notion of models, we can define homotopies between $L_{\infty}[1]$-morphisms:

\begin{defn}[Homotopy]\label{defn:homotopy}
We say that two $L_{\infty}[1]$-morphisms $f_0, f_1 : (C, \{l_k\}) \rightarrow (C', \{l'_k\})$ are \textit{homotopic} if there exist a \textit{model of} $\Delta^1 \times C'$ denoted by $\mathfrak{C}'$ and an $L_{\infty}[1]$-morphism $h: C \rightarrow \mathfrak{C}'$ such that we have $f_j = \Eval_j \circ h, \ j = 0,1.$
\end{defn}

\begin{lem}\cite[Lemma 3.3]{Kim2}
Homotopies define an equivalence relation.
\end{lem}

We introduce  homotopy equivalences for $L_\infty[1]$-algebras.

\begin{defn}
An $L_{\infty}[1]$-morphism $f : C \rightarrow C'$ is a \textit{homotopy equivalence} if there exists another $L_{\infty}[1]$-morphism $g : C' \rightarrow C$ such that $g \circ f$ and  $f \circ g$ are homotopic to $\mathrm{id}_C$ and $\mathrm{id}_{C'},$ respectively.
\end{defn}

\begin{prop}\cite[Proposition 3.7]{Kim2}\label{prop:heer}
Homotopy equivalences define an equivalence relation.
\end{prop}

We show that a key theorem on quasi-isomorphisms, which shows the usefulness of our definition.
\begin{thm}\label{anhp}
Arbitrarily given two quasi-isomorphic $L_{\infty}[1]$-morphisms $f_0, f_1 : C \rightarrow C'$ are homotopic.
\end{thm}

\begin{proof}
This theorem is a special case of \cite[Corollary 4.6]{Kim2} for two morphisms.
\end{proof}

We close this section by stating an $L_{\infty}[1]$-algebra version of the Whitehead theorem over a field.

\begin{thm}[Whitehead theorem]\cite[Theorem 3.13]{Kim2}\label{wht}
Over a field and for strict $L_{\infty}[1]$-algebras, a quasi-isomorphism is a homotopy equivalence.
\end{thm}

\section{Presymplectic Neighborhoods and Local $L_{\infty}[1]$-Algebras}

In this section, we provide the details on the structures that we put on a Kuranishi chart. For more detail, we refer the reader to \cite{Kim1}.

Let a Kuranishi chart be given with the base $U.$ We equip $U$ with a closed two-form $\beta \in \Omega^2(U).$ 

We require that $U$ has a decomposition 
\begin{equation}\label{aabbbb}
U = \bigcup\limits_i \mathcal{S}_i,
\end{equation}
into (possibly non-connected) submanifolds,
\[
\mathcal{S}_i := \{x \in U \mid \text{rk} (\ker\beta_x) =i \}, \ 0 \leq i \leq \dim U
\]
together with their tubular neighborhoods:
\[
\begin{cases}
\iota_{i} : N_i \rightarrow  U, \text{ an open neighborhood of each }  \mathcal{S}_{i} \text{ in }U,\\
\pi_{i} : N_i \rightarrow \mathcal{S}_{i}, \text{ the associated projection}.
\end{cases}
\]

In case we have $\partial U \neq \emptyset$, we further require the decomposition (\ref{aabbbb}) restricts to $\partial U$, that is, we have a decomposition
\[
\partial U = \bigcup_i (\mathcal{S}_i \cap \partial U),
\]
where $\mathcal{S}_i \cap \partial U$ is a submanifold of $\partial U$ given by
\[
\mathcal{S}_i \cap \partial U = \{x \in \partial U \mid \text{rk} (\ker\beta_x) = i - 1 \}, \ 1 \leq i \leq \dim U,
\]
together with the corresponding tubular neighborhood in $\partial U$ for each $i$:
\[
\begin{cases}
\iota^{\partial}_{i} : N^{\partial}_i \hookrightarrow  \partial U, \text{ an open neighborhood of each }  \mathcal{S}_{i} \text{ in } \partial U,\\
\pi^{\partial}_{i} : N^{\partial}_i \rightarrow \mathcal{S}_{i} \cap \partial U, \text{ the associated projection}.
\end{cases}
\]
We denote the collar neighborhood $N^{\partial}_i$ of $\partial U$ in $U$ by
\[
\kappa^{\partial}_i : KN^{\partial}_i \rightarrow N^{\partial}_i.
\]

\subsubsection*{The presymplectic open neighborhood $W_x$} We associate an open contractible submanifold $W_x$ to each zero point $x \in s^{-1}(0).$

(i) $x \in U \setminus \partial U.$ For each zero point $x \in s^{-1}(0) \cap \mathcal{S}_i$ for some $i$, we choose $\overset{\circ}{W}_x \subset \mathcal{S}_i,$ an open ball containing $x$ in $\mathcal{S}_i.$ For the inclusion ${\iota_x} : \overset{\circ}{W}_x {\hookrightarrow} U,$ we denote $\beta|_{\overset{\circ}{W}_x} := \iota_x^*\beta.$ We have $d\left(\beta|_{\overset{\circ}{W}_x}\right) = d\iota_x^*\beta = \iota_x^* d\beta = 0$ and that $\beta|_{\overset{\circ}{W}_x}$ is of constant rank by construction. In other words, $\left(\overset{\circ}{W_x}, \beta|_{\overset{\circ}{W_x}}\right)$ is a presymplectic manifold.

(ii) $x \in \partial U.$  For each zero point $x \in s^{-1}(0) \cap \mathcal{S}_i \cap \partial U$ for some $i$, we choose $\overset{\circ}{W}_x \subset \mathcal{S}_i \cap \partial U,$ an open ball containing $x$ in $\mathcal{S}_i \cap \partial U.$ For the inclusion ${\overline{\iota}_x} : \overset{\circ}{W}_x {\hookrightarrow} \partial U \hookrightarrow U,$ we denote $\beta|_{\overset{\circ}{W}_x} := \overline{\iota}_x^*\beta.$ We have $d\left(\beta|_{\overset{\circ}{W}_x}\right) = d\overline{\iota}_x^*\beta = \overline{\iota}_x^* d\beta = 0$ and that $\beta|_{\overset{\circ}{W}_x}$ is of constant rank by construction. In other words, $\left(\overset{\circ}{W_x}, \beta|_{\overset{\circ}{W_x}}\right)$ is a presymplectic manifold.

Then we obtain another presymplectic manifold
\begin{equation}\label{adslfjkdh}
W_x = (W_x, \beta_{W_x}) := 
\begin{cases}
\left(\pi_i^{-1}(\overset{\circ}{W_x}), \pi^*_i(\beta|_{\overset{\circ}{W_x}})\right) &\text{ if } x \in U \setminus \partial U \\
\left({\left(\pi^{\partial}_i \circ \kappa^{\partial}_i \right)}^{-1}(\overset{\circ}{W_x}), (\pi_i^\partial \circ \kappa^{\partial}_i)^*(\beta|_{\overset{\circ}{W_x}})\right) &\text{ if } x \in \partial U 
\end{cases}
\end{equation}
and call it a \textit{local presymplectic neighborhood of} $x \in s^{-1}(0).$ We write 
\[
T\mathcal{F}_x := \ker \beta_{W_x}
\]
for the regular foliation (i.e., each leaf having the same dimension) determined by the kernel of $\beta_{W_x}.$

\subsubsection*{The $L_{\infty}[1]$-algebra $\mathcal{C}_x$}
At each zero point $x \in s^{-1}(0),$ we associate a \textit{local $L_{\infty}[1]$-algebra,}
\begin{equation}\nonumber 
\mathcal{C}_x := \overbrace{\bigwedge\nolimits^{-\bullet}\Gamma(E^*|_{W_x})}^{\text{Koszul}} \oplus \overbrace{\Omega^{\bullet + 1}_{\mathrm{aug}}(\mathcal{F}_x)}^{\text{de Rham}},
\end{equation}
which consists of the two parts: Koszul and de Rham.

The Koszul part, $\bigwedge\nolimits^{-\bullet}\Gamma(E^*|_{W_x})$ is the Koszul complex,
\[
0 \rightarrow \overbrace{\bigwedge\nolimits^{r}\Gamma(E^*|_{W_x})}^{\text{deg} = -r} \xrightarrow{\iota_{s|_{W_x}}} \cdots \xrightarrow{\iota_{s|_{W_x}}}  \overbrace{\Gamma(E^*|_{W_x})}^{\text{deg} = -1} \xrightarrow{\iota_{s|_{W_x}}}  \overbrace{C^{\infty}(W_x)}^{\text{deg} = 0} \rightarrow 0
\]
with the differential $l_1^{\mathrm{K}} := \iota_{s|_{W_x}},$ given by:
\[
l_1^{\mathrm{K}} : a_1 \wedge \cdots \wedge a_m \mapsto \sum\limits_{i=1}^{m} (-1)^{i+1}a_i(s|_{W_x}) \cdot a_1 \wedge \cdots \wedge \widehat{a_i} \wedge \cdots \wedge a_m,
\]
with all higher $l^{\mathrm{K}}_{k \geq 2}$ being set to zero.

The de Rham part, $\Omega^{\bullet + 1}_{\mathrm{aug}}(\mathcal{F}_x)$ is the augmented foliation de Rham complex degree shifted by 1 equipped with the $L_{\infty}[1]$-algebra structure $\{l^{\mathrm{dR}}_k\}_{k \geq 1}$ obtained from Proposition \ref{augomega}. 

In Lemma \ref{valinf} and (\ref{dasfda}), we equip $\Omega^{\bullet +1}(\mathcal{F}_x)$ with an $L_{\infty}[1]$-algebra structure $\{l^{\mathcal{F}}_k\}.$ Furthermore, we have:

\begin{prop}\label{augomega}
In the above situation, there exists an $L_{\infty}[1]$-algebra structure on the chain complex $\Omega^{\bullet +1}_{\mathrm{aug}}(\mathcal{F}_x)$ that extends $\{l^{\mathcal{F}}_k\}$ on $\Omega^{\bullet +1}(\mathcal{F}_x).$ Moreover, this $L_{\infty}[1]$-algebra has trivial cohomology.
\end{prop}

\begin{proof}
If $x \in U \setminus \partial U$, we refer the reader to \cite[Corollary 2.10, Proposition B.16]{Kim1}. If $x \in \partial U$, we only need to check that the Poincar\'{e} lemma holds for $W_x$ with $\partial W_x \neq \emptyset$. Consider the chain map
\[
\Omega^{\bullet+1}\left(W_x; \mathcal{F}_x\right) \rightarrow \Omega^{\bullet+1}\left(\mathrm{int}(W_x); \mathcal{F}_x\right)
\]
defined by restriction to the interior. Surjectivity and injectivity are easily verified, hence the Poincar\'{e} lemma for $\Omega^{\bullet+1}\left(\mathrm{int}(W_x); \mathcal{F}_x\right)$ implies that for $\Omega^{\bullet+1}\left(W_x; \mathcal{F}_x\right)$.
\end{proof}

The $L_{\infty}[1]$-structure on $\mathcal{C}_x$ is then given by
\[
l_k : \mathcal{C}_x^{\otimes k} \rightarrow \mathcal{C}_x; \ l_k := l^{\mathrm{K}}_k \oplus l^{\mathrm{dR}}_k,
\]
where the direct sum notation indicates that the operations on the two components are defined separately. It is immediate that the family $\{l_k\}_{k \geq 1}$ satisfies the $L_{\infty}[1]$-relation.

\begin{lem}\cite[Lemma 2.3]{Kim1}\label{lemdw}
For different choices of $\overset{\circ}{W}_x$, we obtain isomorphic de Rham $L_\infty[1]$-algebras.  
\end{lem}

For the definition of chart morphisms we need the notion of completed algebras:
\begin{defn}[Completed algebras]\label{ladef}
Given a smooth map $\phi: V \rightarrow U,$ We define the \textit{completion of} $\mathcal{C}_{x}$ by
\begin{equation}\nonumber 
\mathcal{C}_{x, \phi} := \left(\bigwedge\nolimits^{-\bullet}\Gamma(E^*|_{W_x})\right)_{\phi} \oplus \Omega^{\bullet + 1}_{\mathrm{aug}, \phi}(\mathcal{F}_x),
\end{equation}
where the Koszul part
\begin{equation}\label{kdzpt}
\left(\bigwedge\nolimits^{-\bullet}\Gamma(E^*|_{W_x})\right)_{\phi} :=  C^{\infty}_{\phi}(W_x) \otimes_{C^{\infty}(W_x)} \bigwedge\nolimits^{-\bullet}\Gamma(E^*|_{W_x}),
\end{equation}
where we consider the inverse limit
\begin{equation}\label{ivlmt}
C_{\phi}^{\infty}(W) := \lim_{\longleftarrow} C^{\infty}(W_x)/ I_{\phi}^{j} \cdot C^{\infty}(W_x),
\end{equation}
where we denote $I_{\phi} := \left\{ f \in C^{\infty}(W_x) \mid f|_{\text{Im}\phi} \equiv 0\right\}.$

Its $L_{\infty}[1]$-structure
\[
l^{\mathrm{K}, \phi}_k: {\left(\bigwedge\nolimits^{i}\Gamma(E^*|_{W_x})\right)_{\phi}}^{\otimes k} \rightarrow \left(\bigwedge\nolimits^{i-1}\Gamma(E^*|_{W_x})\right)_{\phi},\\
\]
for each $1 \leq i \leq r$ is defined as follows: For each $j\geq 1,$ $h \in C^{\infty}(W_x)^{(j)},$ and $a \in \bigwedge\nolimits^{-\bullet}\Gamma(E^*|_{W_x}),$ we set
\[
(j) : l^{\mathrm{K}, \phi}_1(h \otimes a) := [1]_{j-2} \otimes \iota_{s|_{W_x}}(\widetilde{h}_1a_1),
\]
where $\widetilde{h}$ is a choice of representative in $C^{\infty}(W_x)$ such that $h = \widetilde{h} + I_{\phi}^j,$ and we set $[1]_{j-2} := 0$ for $j \leq 2$ by definition. All higher $l_{\phi, k \geq 2}^{\mathrm{K}}$'s are set to zero, so the $L_{\infty}$-relation of $\{l^{\mathrm{K}}_{\phi, k}\}_{k \geq 1}$ holds trivially.

The de Rham part $\Omega^{\bullet + 1}_{\mathrm{aug}, \phi}(\mathcal{F}_x)$ is the completed foliation de Rham complex \textit{with augmentation,} given by
\[
\Omega^{\bullet + 1}_{\mathrm{aug}, \phi}(\mathcal{F}_x) := \overbrace{\Omega^{\bullet + 1}(\mathcal{F}_x)_{\phi}}^{\deg \geq -1} \oplus \overbrace{\left(C^{\infty}(W_x)_{\mathcal{F}_x}\right)_{\phi}}^{\deg = -2},
\]
where we denote
\[
\begin{cases}
\Omega^{\bullet + 1}(\mathcal{F}_x)_{\phi} := C_{\phi}^{\infty}(W_x) \otimes_{C^{\infty}(W_x)} \Omega^{\bullet + 1}(\mathcal{F}_x),\\
\big(C^{\infty}(W_x)_{\mathcal{F}_x}\big)_{\phi} : = \ker \big(l_1^{\mathrm{dR}} : \Omega^{-1}(\mathcal{F}_x)[1]_{\phi} \rightarrow \Omega^{0}(\mathcal{F}_x)[1]_{\phi}\big).
\end{cases}
\]
The de Rham part $L_{\infty}[1]$-structure $l_{\phi, k}^{\mathrm{dR}}$ is obtained by applying the following Proposition \ref{augomega}.

The formula can be given similarly to $l^{\mathrm{K}, \phi}_{\phi, k}$; however, it has to be rewritten as (\ref{dasfda}) in the setting of $V$-algebras in Appendix C, and so we omit its precise formulation here. For more details, we refer the reader to \cite[Definition B.19]{Kim1}.

In conclusion, $\mathcal{C}_{x,\phi}$ with $\left\{l_{\phi, k} := l^{\mathrm{K}}_{\phi, k} \oplus l^{\mathrm{dR}}_{\phi, k}\right\}$ is an $L_{\infty}[1]$-algebra with the $L_{\infty}[1]$-relation, which can be verified in a straightforward manner.
\end{defn}

Given a local algebra $\mathcal{C}_x,$ there exists a natural map to its completion:
For each $k \geq 1$, We define
\begin{equation}\label{varep}
\widehat{\varepsilon}_{\phi(x), \phi,k} : \mathcal{C}_x^{\otimes k} \rightarrow \mathcal{C}_{x, \phi}
\end{equation}
by
\begin{equation}\nonumber
\widehat{\varepsilon}_{\phi(x), \phi, k}\big((a_1, \xi_1), \cdots, (a_k,\xi_k)\big) := \begin{cases}
1 \otimes (a_1, \xi_1) = (1 \otimes a_1, 1 \otimes \xi_1) &\text{ if } k = 1,\\
0 &\text{ if } k \geq 2,
\end{cases}
\end{equation}
and consider the family $\widehat{\varepsilon}_{\phi(x), \phi} := \left\{\widehat{\varepsilon}_{\phi(x), \phi,k}\right\}_{k \geq 1}.$
\begin{lem}\cite[Lemma 2.8]{Kim1}\label{vecpf}
$\widehat{\varepsilon}_{\phi(x), \phi}$ is an $L_{\infty}[1]$-morphism.
\end{lem}

In the context of Definition \ref{ladef} regarding local $L_{\infty}[1]$-algebras, several special cases are worth discussing. First, we have a simple type of completions for surjective $\phi.$
\begin{lem}\cite[Lemma 2.12]{Kim1}\label{surjcp}
If $\phi$ is surjective, then we have
\[
\mathcal{C}_{\phi(x),\phi} \simeq \mathcal{C}_{\phi(x)}.
\]
\end{lem}

Second, we consider the completion for open subcharts: Let  $o : U \hookrightarrow U'.$ be an open inclusion and $\mathcal{U} := \mathcal{U}'|_U$ the open subchart on $U.$

\begin{lem}\cite[Lemma 2.13]{Kim1}\label{itavsteai}
In the above situation, there exists an $L_\infty[1]$-quasi-isomorphism:
\[
\widehat{o}_x : \mathcal{C}'_{o(x),o} \simeq \mathcal{C}_x.
\]
\end{lem}

\section{$L_{\infty}[1]$-structures arising from V-algebras}

In this section, we deal with examples of $L_{\infty}[1]$-algebras arising from presymplectic foliations. For this purpose, we introduce V-algebras and define their completions.

\subsubsection*{V-algebras}

We introduce V-algebras of \cite{Voronov1} and \cite{CS}.

\begin{defn}[V-algebras]\cite{Voronov1}\label{Voronov1}
A \textit{V-algebra} is defined by a triple $(\mathfrak{h}, \mathfrak{a}, \Pi)$ such that

\begin{enumerate}
\item[--] $\mathfrak{h}$ is a graded Lie algebra over a field $\mathbf{k}.$
\item[--] $\mathfrak{a}$ is an abelian subalgebra of $\mathfrak{h}.$
\item[--] $\Pi : \mathfrak{h} \rightarrow \mathfrak{a}$ is the associated projection.
\item[--] $\ker \Pi$ is a Lie subalgebra of $\mathfrak{h}.$
\end{enumerate}
Let $P$ be an Maurer-Cartan element in $\mathfrak{h},$ i.e., an element of degree 1 with $[P,P]=0.$ The triple $(\mathfrak{h}, \mathfrak{a}, \Pi)$ together with such a choice of $P$ determines a family of operators:
\begin{equation}\label{dlpk}
\begin{split}
&l^{P}_k : \mathfrak{a}^{\otimes k} \rightarrow \mathfrak{a},\\
&\begin{cases}
(x_1, \cdots x_k) &\mapsto \Pi[ \cdots [[P, {x_1}], {x_2}], \cdots, {x_k}], \quad \text{if }k \geq 1,\\
\quad \quad 1 &\mapsto \Pi P,  \quad \quad \quad \quad \quad \quad \quad \quad \quad \quad  \ \text{if } k = 0.
\end{cases}
\end{split}
\end{equation}
\end{defn}

Then we have:
\begin{lem}\cite[Lemma B.2]{Kim1}\label{valinf}
The family $\{l^P_k\}_{k \geq 0}$ forms a curved $L_{\infty}[1]$-algebra.
\end{lem}

Consider a smooth 1-parameter family of V-algebras 
\[
\CV(t) = \big(\mathfrak{h}(t), \frak{a}(t), \Pi(t)\big), \ t \in [0,1].
\]
with a family of Maurer-Cartan elements $P(t) \in h(t)^1.$ Then the smooth family $\{{\mathfrak{h}}(t)\}_{t \in [0,1]}$ determines a flow 
\[
\phi_t : {\mathfrak{h}}(0) \rightarrow {\mathfrak{h}}(t), \ t \in [0,1].
\]
with the generating vector field of $\phi_t$ by $m_t \in T{\mathfrak{h}}(t),$ so that they satisfy the differential equation $\frac{d\phi_t}{dt} = m_t \circ \phi_t.$ We assume that the family satisfies
\begin{equation}\label{pkkpt}
\phi_t\big(\ker\big(\Pi(0)\big)\big) \simeq \ker\big(\Pi(t)\big) \simeq \ker\big(\Pi(0)\big) \ \text{ for all } t \in [0,1].
\end{equation}

We have:
\begin{cor}\cite[Corollary B.8]{Kim1}\label{indliso}
For a smooth 1-parameter family of V-algebras with a Maurer-Cartan element, satisfying the condition (\ref{pkkpt}), there exists an induced $L_{\infty}[1]$-isomorphism
\[
U(t) : \mathfrak{a}(0) \xrightarrow{\simeq} \mathfrak{a}(t)
\]
for each $t \in [0,1].$
\end{cor}

\subsubsection*{Example from the Gotay's embedding} \cite{Gotay} proves that a presymplectic manifold can be embedded as a coisotropic submanifold in a symplectic manifold. The foliation cotangent bundle arising from a presymplectic structure, by this theorem, provides an interesting example, which was studied in \cite{OP} using physics-inspired methods. Indeed, we can reformulate their results using V-algebras.

Let $(W^n, \omega_W)$ be a presymplectic manifold. We consider the distribution 
\[
T\mathcal{F} := \ker \omega_W \subset TW.
\]
It then follows readily from the closedness of $\omega_W$ that $T\mathcal{F}$ is involutive, hence is integrable by the Frobenius theorem. 

We choose a splitting of $TW,$ that is, a vector bundle $G$ satisfying
\begin{equation}\label{wdecomp}
TW = T\mathcal{F} \oplus G.
\end{equation}
Let $(y_1, \cdots, y_k, q_1, \cdots q_{n-k})$ be a local coordinate of $x$ in $W,$ where $(q_1, \cdots q_{n-k})$ is the foliation coordinates, that is $y_i = c_i, \ i = 1, \cdots, k$ form the plaque for the foliation near $x$. In this coordinates, we have
\begin{equation}\label{txfgxq}
\begin{split}
T_x\mathcal{F} &= \text{span} \Bigg\{ \frac{\partial}{\partial q_1}, \cdots, \frac{\partial}{\partial q_{n-k}} \Bigg\}, \\
G_x &= \text{span} \Bigg\{ \frac{\partial}{\partial y_i} + \sum\limits_{\alpha =1}^{m} R^{\alpha}_i \frac{\partial}{\partial q_{\alpha}} \Bigg\}_{1 \leq i \leq k}
\end{split}
\end{equation}
for some functions ${R^{\alpha}_i}$'s in $y_i$'s and $q_{\alpha}$'s. Here, $ R^{\alpha}_i $ can be regarded as the Christoffel symbol for the 'connection' determined by the decomposition (\ref{wdecomp}).

\begin{exam}\label{gtyemb}
We present an example due to Gotay \cite{Gotay}: Any presymplectic manifold can be coisotropically embedded into a symplectic manifold. Let $T^*\mathcal{F} \to W$ be the foliation cotangent bundle, that is, the dual bundle to the foliation tangent bundle arising from the involutive distribution $T\mathcal{F} \subset TW$. His theorem is realized by the vector bundle $F := T^*\mathcal{F}$ equipped with the symplectic form 
\begin{equation}\label{otf}
\omega_{T*\mathcal{F}} := \pi^*\omega_W - d \theta,
\end{equation}
where $\theta$ is the canonical 1-form. It is easy to show that $\omega_{T^*\mathcal{F}}$ is nondegenerate, hence a symplectic form. Gotay's theorem says that on $T^*\mathcal{F}$ we have a coisotropic embedding
\begin{equation}\nonumber
\sigma : (W, \omega_{W}) \hookrightarrow (T^*\mathcal{F}, \omega_{T*\mathcal{F}}),
\end{equation}
so that $\sigma(W)$ coincides with the 0-section in $T^*\mathcal{F}.$

With respect to the symplectic structure from $\omega_{T^*\mathcal{F}}$, we obtain a Poisson structure $P \in \Gamma(T^*\mathcal{F}, \bigwedge\nolimits^2 TT^*\mathcal{F}),$ i.e., a bivector field  $P \in \Gamma(F, \bigwedge\nolimits^{2}TF)$ such that $[P,P]=0$ for the Nijenhuis-Schouten bracket $[ \ , \ ].$ Then, in the local coordinates, it is written as 
\begin{equation}\label{pico}
P = \frac{1}{2} \sum\limits_{i,j}\omega^{ij} e_i \wedge e_j + \sum\limits_{\alpha}\frac{\partial}{\partial q^{\alpha}} \wedge \frac{\partial}{\partial p_{\alpha}},
\end{equation}
where we denote 
\begin{equation}\label{eppy}
e_j := \frac{\partial}{\partial y_j} + \sum\limits_{\alpha} R_j^{\alpha} \frac{\partial}{\partial q^{\alpha}} - \sum\limits_{\beta, \nu} p_{\beta} \frac{\partial R^{\beta}_j}{\partial q^{\nu}} \frac{\partial}{\partial p_{\nu}},
\end{equation}
where $R^{\alpha}_j$ is from (\ref{txfgxq}). We refer the reader to Sections 7 through 9 in \cite{OP} for the detailed analysis.

For the zero section $\sigma \equiv 0$ of $T^*\mathcal{F}$, there exists a canonical decomposition.
\[
T_{(x,0)}T^*F = T_xW \oplus T^*_xF
\]
at $x \in W$ into the horizontal and the vertical components. Then we have 
\[
NW = \bigcup\limits_{x \in W} T^*_x\mathcal{F} = T^*\mathcal{F}.
\]
In this case, $\mathfrak{h}$ and $\mathfrak{a},$ and $\Pi$ in Lemma \ref{valinf} for the V-algebra are identified as follows:
\begin{equation}\label{dasfda}
\begin{split}
\mathfrak{h} &:= 
\lim\limits_{\overset{\longleftarrow}{m}}\frac{\Gamma(T^*\mathcal{F}, \bigwedge\nolimits^{\bullet +1}TT^*\mathcal{F})}{(I(W)|_{T^*\mathcal{F}})^m \cdot \Gamma(T^*\mathcal{F}, \bigwedge\nolimits^{\bullet +1}TT^*\mathcal{F})},\\
\mathfrak{a} &:= \Gamma\left(W; \bigwedge\nolimits^{\bullet + 1}NW \right) = \Gamma\left(W; \bigwedge\nolimits^{\bullet + 1}T^*\mathcal{F}\right) = \Omega^{\bullet+1}\left(\mathcal{F}\right),
\end{split}
\end{equation}
and the map $\Pi$ is the projection to the subspace (of $\mathfrak{h}$) generated by elements of the form $\frac{\partial}{\partial p_{i_1}} \wedge \cdots \wedge \frac{\partial}{\partial p_{i,l}}$ for $i_1, \cdots, i_l$ and $l \geq 1,$ followed by the evaluation at $p_i =0, \ \forall i.$ With a choice of the Poisson structure, we obtain an $L_{\infty}[1]$-algebra structure on $\mathfrak{a} = \Omega^{\bullet+1}(\mathcal{F}),$ that is, on the (degree shifted) foliation de Rham complex by Lemma \ref{valinf}. We write $\left\{l^{\mathcal{F}}_k\right\}_{k \geq 0}$ for the resulting $L_{\infty}[1]$-algebra.
\end{exam}

\begin{lem}\cite[Lemma B.12]{Kim1}\label{liiifdr} We have:
\begin{enumerate}[label=(\roman*)]\label{strlinf}
\item $\left\{l^{\mathcal{F}}_k\right\}$ is strict, i.e., $l^{\mathcal{F}}_0 =0.$ 
\item $l_1^{\mathcal{F}}$ coincides with the foliation de Rham differential $d_{\mathcal{F}}.$
\item For the zero presymplectic form, $i.e.$, the case when $T\mathcal{F} = TU,$ $l^{\mathcal{F}}_1$ is the ordinary de Rham differential with all the other $l^{\mathcal{F}}_k$ with $k \geq 2$ being 0.
\item For different choices of the splitting (\ref{wdecomp}), we have \textit{isomorphic} $L_{\infty}[1]$-algebras.
\item For different choices of the local coordinate system, we obtain \textit{isomorphic} $L_{\infty}[1]$-algebras.
\end{enumerate}
\end{lem}

We specialize to the particular V-algebra of Example \ref{gtyemb}, which leads to an $L_{\infty}[1]$-algebra structure on the foliation de Rham complex.

Recall the Proposition \ref{augomega} shows that there exists an $L_{\infty}[1]$-algebra that extends $\{l^{\mathcal{F}}_k\}_{k \geq 1}.$ We have its morphism analogue:

\begin{lem}\cite[Lemma B.20]{Kim1}\label{auglmo}
Given an $L_{\infty}[1]$-morphism,
\begin{equation}\label{htphi}
\widehat{\phi} : \Omega^{\bullet +1}(\mathcal{F}') \rightarrow \Omega^{\bullet +1}(\mathcal{F}),
\end{equation}
there exists an $L_{\infty}[1]$-algebra quasi-isomorphism (still denoted by $\widehat{\phi}$)
\[
\widehat{\phi} : \Omega^{\bullet +1}_{\mathrm{aug}}(\mathcal{F}') \rightarrow \Omega^{\bullet +1}_{\mathrm{aug}}(\mathcal{F})
\]
between the augmented de Rham complexes, extending (\ref{htphi}).
\end{lem}

\section{Equivalence relations}

In this section, we review the equivalence relations that are needed in the definition of $L_{\infty}$-Kuranishi atlases and spaces. More details are provided in \cite{Kim1}.

We define expansion of a chart as follows:
\begin{defn}[Expansion of a chart]\label{expvs}
Let $\mathcal{U} = (U, E, s, \Gamma, \psi)$ be a Kuranishi chart on $X$ as in Definition \ref{kurdef} and $V$ a finite dimensional vector space. From $\mathcal{U},$ we can construct another chart called an \textit{expansion of $\mathcal{U}$ by $V,$} 
\[
\mathcal{U} \times V := (U \times V, E \times V, s \times \mathrm{id}_V, \Gamma, \psi)
\] 
on $X$ consisting of:
\begin{itemize}
\item[--] $U \times V$ with the closed two-form $\pi^* \beta$, where $\pi: U \times V \to U$ denotes the projection to the $U$-component.
\item[--] $E \times V \to U \times V$ is the vector bundle obviously obtained from $E \to U.$
\item[--] $s \times \text{id}_V: U \times V \longrightarrow E \times V$, $(y,v) \mapsto \big(s(y), v\big)$ is the section.
\item[--] $\Gamma$ acts only on the $U$-component of $U \times V.$
\item[--] $\psi : {(s \times \text{id}_V)^{-1}(0) }/{\Gamma}\simeq {s^{-1}(0)}/{\Gamma} \xhookrightarrow{\psi} X$ is the homeomorphism that coincide with $\psi$ of $\mathcal{U}.$
\item[--] $W_{(x,0)} := W_x \times V$ is the open neighborhood near the zero point $(x,0).$
\item[--] $\mathcal{C}^V_{(x,0)} := \bigwedge^{-\bullet} \Gamma\big((\pi^* E \oplus V)^*\big|_{W_{(x,0)}}\big) \oplus \Gamma_{\mathrm{aug}}\big(\bigwedge^{\bullet+1}(\pi^*T\mathcal{F} \oplus V)^*\big|_{W_{(x,0)}}\big)$ is the local $L_\infty[1]$-algebra at $(x,0) \in (s \times \mathrm{id}_V)^{-1}(0).$
\item[--] $\left\{l_{(x,0),k}^V: {\mathcal{C}^V}_{(x,0)}^{\otimes k} \rightarrow \mathcal{C}_{(x,0)}^V\right\}_{k \geq 1}$ is the $L_{\infty}[1]$-operations on $\mathcal{C}^V_{(x,0)}$
given by $l_{(x,0), k}^{V, \mathrm{K}} \oplus l_{(x,0), k}^{V, \mathrm{dR}},$ where each component is given by:
\[
\begin{cases}
&\begin{split}
l_{(x,0), k}^{V, \mathrm{K}}&\big((a_1, w_1^*) , \cdots , (a_k, w_k^*)\big)\\ 
&\quad \quad := \begin{cases}
\iota_{s \times \mathrm{id}_V}(a_1,w^*_1) = \left\{\iota_s(a_1)(x), \iota_{v}w_1^*(x,v)\right\}_{(x,v) \in W_x \times V} &\text{ if } k =1,\\
0 &\text{ if } k \geq 2,
\end{cases}
\end{split}\\
&l^{V,\mathrm{dR}}_{(x,0),k} \big((\xi_1, \tau_1), \cdots , (\xi_k, \tau_k)\big) := \left(l^{\mathrm{dR}}_{x,k} \left(\xi_1, \cdots,\xi_k \right), l^V_{k}(\tau_1, \cdots, \tau_k)\right).
\end{cases}
\]
Here, we define
\[
 l^V_{k}(\tau_1, \cdots, \tau_k), := \begin{cases}
\sum_i\tau_1(v_i) \text{ for a fixed basis } \{v_i\} \text{ for } V &\text{ if } k = 1,\\
0  & \text{ if } k \geq 2
\end{cases}
\]
for $a_i \in \Gamma\left(\pi^*E^*\big|_{W_{(x,v)}}\right), \ \xi_i \in \pi^*\left(\Omega^{\bullet +1}\left(\mathcal{F}\big|_{W_{(x,v)}}\right)\right)$, and $w^*_i, \tau_i \in \Gamma\left(V^*|_{W_{(x,v)}}\right)$. It follows immediately that the family $\{l_{(x,0),k}^V\}$ forms an $L_\infty[1]$-algebra. Moreover, we have
\[
H^*\left(\Gamma\big(\bigwedge\nolimits^{\bullet+1}(\pi^*T\mathcal{F} \oplus V)^*\big|_{W_{(x,0)}}\big), l^{V, \mathrm{dR}}_{(x,0),1}\right) \simeq H^*\big(\Omega^{\bullet + 1}(\mathcal{F}_x), l^{\mathrm{dR}}_{x,1}\big),
\] 
which can be verified by observing the following: Since $l^{\mathrm{dR}}_{x,k}$ and $l^V_k$ are defined separately, we can apply the K\"{u}nneth theorem and use the fact that the cohomology of $l_1$ computes that of an affine space, which is trivial.
 
We then add the augmentation to the de Rham part to obtain
\[
\mathcal{C}^V_{(x,0)}:= \Gamma\left((\pi^* E \oplus V)^*\big|_{W_{(x,0)}}\right) \oplus \Gamma_{\mathrm{aug}}\left(\bigwedge\nolimits^{\bullet+1}(\pi^*T\mathcal{F} \oplus V)^*\big|_{W_{(x,0)}}\right),
\]
and equip it with an $L_{\infty}[1]$-structure using Lemma \ref{augomega}.

\end{itemize}
\end{defn}

\begin{defn}[Expanded atlases]
Given a Kuranishi atlas $\widehat{\mathcal{U}}$ and a nonnegative number $m,$ we define the \textit{expanded atlas} of $\widehat{\mathcal{U}}$ by
\[
\widehat{\mathcal{U}} \times \mathbb{R}^{{m}}
:= 
\Big(
\left\{\mathcal{U}_p \times \mathbb{R}^{{m}} \right\}_p,\,
\left\{ \left(U_{pq} \times \mathbb{R}^{{m}}, \phi_{pq}^{\mathbb{R}^{{m}}}, \left\{\widehat{\phi}_{pq,x}^{\mathbb{R}^{{m}}} \right\}\right)\right\}_{p,q}
\Big),
\]
where each component is given by:
\begin{itemize}
    \item[--] $\mathcal{U}_p \times \mathbb{R}^{{m}}$ is the expanded chart for each $p \in X$.

    \item[--] $U_{pq} \times \mathbb{R}^{{m}}$ is an open subset of $U_p \times \mathbb{R}^{{m}}$.
    
    \item[--] $\phi_{pq}^{\mathbb{R}^{{m}}} : U_{pq} \times \mathbb{R}^{{m}} \to U_q \times \mathbb{R}^{{m}}$ is the base coordinate change given by
    \[
    \phi_{pq}^{\mathbb{R}^{{m}}} := \phi_{pq} \times \mathrm{id}_{\mathbb{R}^{{m}}}.
    \]
    \item[--] $\widehat{\phi}_{pq,x}^{\mathbb{R}^{{m}}} : \mathcal{C}_{q,\,\widehat{\phi}_{pq}^{\mathbb{R}^{{m}}}(x,0),\phi_{pq}^{\mathbb{R}^{{m}}}}^{\mathbb{R}^{{m}}} 
    \to \mathcal{C}_{p,(x,0)},$ for each $x \in s_{p}^{-1}(0),$ is the $L_{\infty}[1]$-coordinate change given by the composition
    \[
    \mathcal{C}^{\mathbb{R}^{{m}}}_{q,\,\widehat{\phi}_{pq}^{\mathbb{R}^{{m}}}(x,0), \phi_{pq}^{\mathbb{R}^{{m}}}}
    \xrightarrow{(1)^{-1}, \simeq} 
    \mathcal{C}_{q,\,\phi_{pq}(x),\phi_{pq}}
    \xrightarrow{\widehat{\phi}_{pq,x}, \simeq}
    \mathcal{C}_{p,x} \xrightarrow{(2), \simeq} \mathcal{C}^{\mathbb{R}^{m}}_{p, (x,0)}.
    \]
Here, the $L_{\infty}[1]$-quasi-isomorphisms $(1)$ and $(2)$ are defined as in Example 2.23 and Lemma 2.23 of \cite{Kim1}.
\end{itemize}
\end{defn}

Any chart naturally restricts on an open subset of the base under the condition that the group action is closed:

\begin{defn}[Open subchart]
Let $\mathcal{U}= (U, E, s, \Gamma, \psi)$ be an $L_{\infty}$-Kuranishi chart of $X$ and $U_0 \subset U$ an open subset with $\Gamma \cdot U_0 \subset U_0.$ Then the restricted tuple 
\[
\mathcal{U}|_{U_0}= (U_0, E|_{U_{0}}, s|_{U_{0}}, \Gamma, \psi |_{(U_0 \cap s^{-1}(0))/\Gamma})
\]
canonically determines an $L_{\infty}$-Kuranishi chart called the \textit{open subchart} of $\mathcal{U}$ on $U_0.$
\end{defn}

\begin{notation}
Let $\left(X, \widehat{\mathcal{U}}\right)$ be a Kuranishi atlas. We write 
\[
\left(X, \widehat{\mathcal{U}}^0\right) < \left(X, \widehat{\mathcal{U}}\right), \text{ or simply } \widehat{\mathcal{U}}^0 <\widehat{\mathcal{U}}, 
\]
for its open subatlas $\left(X, \widehat{\mathcal{U}}^0\right)$ that consist of the open subcharts and the coordinate changes induced to them.
\end{notation}
With this notation, we define an equivalence relation between the atlases.

\begin{defn}[Equivalence of atlases]\label{eqats}
Let $\left(X, \widehat{\mathcal{U}}_1\right)$ and $\left(X, \widehat{\mathcal{U}}_2\right)$ be Kuranishi atlases. We say that they are \textit{equivalent} and write
\[
\left(X, \widehat{\mathcal{U}}_1\right) \sim \left(X, \widehat{\mathcal{U}}_2\right), \text{ or simply } \widehat{\mathcal{U}}_1 \sim \widehat{\mathcal{U}}_2
\]
if
\begin{equation}\label{urur}
\widehat{\mathcal{U}}^0_1 \times \mathbb{R}^{n_1} = \widehat{\mathcal{U}}^0_2 \times \mathbb{R}^{n_2}
\end{equation}
by which we mean that the following conditions hold:

\begin{enumerate}[label = (\roman*)]
    \item There exists a commutative diagram as follows
    \[
    \begin{tikzcd}[row sep=large, column sep=large]
    E^0_{1,p}|_{U_{1,p}^0 \times \mathbb{R}^{n_1}} \arrow[r, "\simeq"] 
    & E^0_{2,p}|_{U_{2,p}^0 \times \mathbb{R}^{n_2}}\\
    U_{1,p}^0 \times \mathbb{R}^{n_1} \arrow[r, "\simeq"] \arrow[u, "s_{1,p}^0 \times \mathrm{id}_{\mathbb{R}^{n_1}}"'] 
    & U_{2,p}^0 \times \mathbb{R}^{n_2} \arrow[u, "s_{2,p}^0 \times \mathrm{id}_{\mathbb{R}^{n_2}}"]\\
   (s_{1,p}^0 \times \mathrm{id}_{\mathbb{R}^{n_1}})^{-1}(0) \arrow{r}{(1), \simeq} \arrow[u, hook] 
& (s_{2,p}^0 \times \mathrm{id}_{\mathbb{R}^{n_2}})^{-1}(0). \arrow[u, hook]
    \end{tikzcd}
    \]

    \item There exists a group isomorphism $\Gamma^0_{1,p} \simeq \Gamma^0_{2,p}$.
    
    \item There exists a commutative diagram as follows
  \[
    \begin{tikzcd}[row sep=large, column sep=large]
     \frac{(s_{1,p}^0)^{-1}(0)}{\Gamma_{1,p}}
    \arrow[r, "\simeq"] \arrow[d, swap, "\simeq"] 
    & \frac{(s_{1,p}^0 \times \mathrm{id}_{\mathbb{R}^{n_1}})^{-1}(0)}{\Gamma_{1,p}} \arrow[r, hook, "\psi_{1,p}^0"] & X  \\
   \frac{(s_{2,p}^0)^{-1}(0)}{\Gamma_{2,p}} \arrow[r, "\simeq"] 
    &  \frac{(s_{2,p}^0 \times \mathrm{id}_{\mathbb{R}^{n_2}})^{-1}(0)}{\Gamma_{2,p}}. \arrow[ru, hook, "\psi_{2,p}^0"] &{}
    \end{tikzcd}
    \]
    \item For each pair of the zero points $x_1 \overset{(1), \simeq}{\leftrightarrow} x_2$, there exists an \textit{isomorphism}
    \[
    \mathcal{C}_{{1,p}, (x_1, 0)}^{0,\mathbb{R}^{n_1}} \xrightarrow{\simeq} \mathcal{C}_{{2,p}, (x_2, 0)}^{0,\mathbb{R}^{n_2}}.
    \]
    
    \item There exists a commutative diagram as follows
    \[
    \begin{tikzcd}[row sep=large, column sep=large]
   U_{1,p}^0 \times \mathbb{R}^{n_1} \arrow[r, "\simeq"] 
    & U_{2,p}^0 \times \mathbb{R}^{n_2}\\
    U_{1,pq}^0 \times \mathbb{R}^{n_1} 
    \arrow{r}{(2), \simeq} \arrow[u, hook] 
    & U_{2,pq}^0 \times \mathbb{R}^{n_2}. \arrow[u, hook]
    \end{tikzcd}
    \]
    
    \item We have $\phi_{1,pq}^{0, \mathbb{R}^{n_1}} = \phi_{2,pq}^{0, \mathbb{R}^{n_2}}$ modulo the diffeomorphism $(2), \simeq$ in (v).
\end{enumerate}

\end{defn}

We list some of the properties of the above-mentioned equivalence by the following lemma whose proof can be found in \cite[Lemma 3.8]{Kim1}.
\begin{lem}\cite[Lemma 3.8]{Kim1}
We have:
\begin{enumerate}[label=(\roman*)]
\item $\widehat{\mathcal{U}}^0 \sim \widehat{\mathcal{U}}$ for an open subatlas $\widehat{\mathcal{U}}^0 < \widehat{\mathcal{U}}.$
\item $\widehat{\mathcal{U}} \sim \widehat{\mathcal{U}} \times V$ for a finite dimensional vector space $V.$
\item $\sim$ is an equivalence relation.
\item $(\widehat{\mathcal{U}} \times \mathbb{R}^{n}) \times \mathbb{R}^{n'} = \widehat{\mathcal{U}} \times \mathbb{R}^{n + n'}$ for all $n, n' \geq 0.$
\end{enumerate}
\end{lem}

\begin{defn}[Equivalence of pre-morphisms]\label{defmoreq}
Without loss of generality, one can assume that $n_1 \geq n_2$ in (\ref{urur}). We say that two pre-morphisms are \textit{equivalent} and write
\begin{equation}\label{morrel}
\overline{F}_1 \sim \overline{F}_2
\end{equation}
if there exist extensions $\overline{F}^{n_1,n'_1}_1$ and $\overline{F}^{n_2,n'_2}_2$ as in (\ref{extfg1}), so that the following hold:

\begin{enumerate}[label = (\roman*)]
\item $f_1 = f_2,$
\item $\widetilde{f}_{1, p}|_{(s_{1,p}^{0})^{-1}(0) \times \{0\}} =  \widetilde{f}_{2, p}|_{(s_{2,p}^{0})^{-1}(0) \times \{0\}}$ (precise meaning provided below),
\item the following diagram commutes up to $L_{\infty}[1]$-homotopy
\begin{equation}\label{octa}
\begin{tikzcd}
\mathcal{C}^{'\mathbb{R}^{n_1'}}_{f(p),(\widetilde{f}_{1,p}(x),0)} \arrow{r}{=} \arrow{d}{\widehat{\pi}_{(\widetilde{f}_{1,p}(x),0)}} & \mathcal{C}^{'\mathbb{R}^{n_1'}}_{f(p),(\widetilde{f}_{1,p}(x),0), \widetilde{f}_{1,p}} \arrow{r}{\widetilde{\widehat{f}}_{1,p,x}} &\mathcal{C}^{\mathbb{R}^{n_1}}_{p, (x,0)} \arrow{d}{\widehat{\pi}_{(x,0)}}\\
\mathcal{C}'_{f(p),\widetilde{f}_{1,p}(x)} = \mathcal{C}'_{f(p),\widetilde{f}_{2,p}(x)} \arrow{d}{\widehat{\pi}^{-1}_{(\widetilde{f}_{2,p}(x),0)}} &{}& \mathcal{C}_{p, x} \arrow{d}{\widehat{\pi}^{-1}_{(x,0)}}\\
\mathcal{C}^{'\mathbb{R}^{n_2'}}_{f(p),(f_{2,p}(x),0)}  \arrow{r}{=}  & \mathcal{C}^{'\mathbb{R}^{n_2'}}_{f(p),(f_{2,p}(x),0), \widetilde{f}_{2,p}} \arrow{r}{\widetilde{\widehat{f}}_{2,p,x}} & \mathcal{C}^{\mathbb{R}^{n_2}}_{p, (x,0)}
\end{tikzcd}
\end{equation}
for each $x \in (s_{1,p}^{0})^{-1}(0) \times \{0\}.$ 
\end{enumerate}
Here ${(s_{i,p}^{0})^{-1}(0) \times \{0\}}$'s in the conditions (ii) and (iii) are to be understood as the same subset of both ${{U}}^0_1 \times \mathbb{R}^{n_1}$ and ${{U}}^0_2 \times \mathbb{R}^{n_2}$ modulo the identification from (\ref{urur}).

\end{defn}

\begin{lem}\cite[Lemma 3.13]{Kim1}
$\sim$ is an equivalence relation.
\end{lem}

\label{appen:forms}

\end{document}